\documentclass[preprint]{imsart}
\usepackage{fullpage}

\RequirePackage{amsthm,amsmath,amsfonts,amssymb}
\RequirePackage[numbers]{natbib}
\RequirePackage[colorlinks,citecolor=blue,urlcolor=blue]{hyperref}
\usepackage{graphicx, xcolor}
\usepackage{amsmath, amssymb, amsfonts, amsthm}
\usepackage{booktabs}
\usepackage{tikz}
\usetikzlibrary{arrows,patterns}

\startlocaldefs
\theoremstyle{plain}
\newtheorem{theorem}{Theorem}[section]
\newtheorem{lemma}[theorem]{Lemma}
\newtheorem{proposition}[theorem]{Proposition}

\theoremstyle{definition}

\newtheorem{example}[theorem]{Example}
\newtheorem{remark}[theorem]{Remark}

\definecolor{pythonblue}{RGB}{31 119 180}
\definecolor{pythongreen}{RGB}{44 160 44}
\definecolor{pythonred}{RGB}{214 39 40}

\newcommand{\Rbb}{\mathbb{R}}
\newcommand{\Zbb}{\mathbb{Z}}
\newcommand{\Nbb}{\mathbb{N}}
\newcommand{\Cbb}{\mathbb{C}}

\DeclareSymbolFont{bbold}{U}{bbold}{m}{n}
\DeclareSymbolFontAlphabet{\mathbbold}{bbold}
\newcommand{\ind}{\mathbbold{1}}

\newcommand{\Pbb}{\mathbb{P}}
\newcommand{\Ebb}{\mathbb{E}}
\newcommand{\Var}{\textnormal{Var}}
\newcommand{\Cov}{\textnormal{Cov}}

\newcommand{\Ncal}{\mathcal{N}}
\newcommand{\Pcal}{\mathcal{P}}

\newcommand{\Mcal}{\mathcal{M}}
\newcommand{\Hcal}{\mathcal{H}}
\newcommand{\diff}{\textnormal{d}}
\newcommand{\Span}{\textnormal{span}}
\newcommand{\trace}{\operatorname{Tr}}

\DeclareMathOperator{\Tan}{Tan}
\newcommand{\vol}{\textnormal{vol}}
\newcommand{\B}{\mathcal{B}}
\newcommand{\N}{\mathcal{N}}
\newcommand{\T}{\mathcal{T}}

\DeclareMathOperator{\Log}{Log}
\DeclareMathOperator{\Exp}{Exp}

\DeclareMathOperator{\diam}{diam}
\DeclareMathOperator{\SO}{SO}

\DeclareMathOperator{\KL}{D_{\textnormal{KL}}}

\endlocaldefs

\begin{document}

\begin{frontmatter}
\title{Nonparametric Riemannian Empirical Bayes,~\\and Denoising Measurements on Manifolds}
\runtitle{Nonparametric Riemannian Empirical Bayes}

\begin{aug}
\author[A]{\fnms{Adam Quinn}~\snm{Jaffe}\ead[label=e1]{a.q.jaffe@columbia.edu}}
\author[B]{\fnms{Leonardo}~V.~\snm{Santoro}\ead[label=e2]{leonardo.santoro@epfl.ch}}
\author[A]{\fnms{Bodhisattva}~\snm{Sen}\ead[label=e3]{b.sen@columbia.edu}}
\address[A]{Department of Statistics, Columbia University, New York, NY, USA\printead[presep={,\ }]{e1,e3}}

\address[B]{Institut de Math\'ematiques, \'Ecole Polytechnique F\'ed\'erale de Lausanne, CH\printead[presep={,\ }]{e2}}
\end{aug}

\begin{abstract}
    We initiate the study of nonparametric empirical Bayes denoising methods in the setting where both the latent variables and their measurements lie on a compact Riemannian manifold, and where the likelihood is a Riemannian Gaussian distribution.
    Our starting point is a novel Tweedie-Eddington formula for Riemannian Gaussian mixture models which identifies a certain surrogate oracle denoiser in terms of the marginal distribution of the measurements; it avoids the explicit computation of the posterior Fr\'echet mean (as required by the Bayes denoiser) via a first-order approximation, hence we refer to it as the ``tangential'' Bayes denoiser.
    We show that this surrogate oracle achieves nearly the Bayes risk in a low-noise regime, we construct a fully data-driven approximation of it using the spectral theory of the Laplace-Beltrami operator, and we establish finite-sample rates of convergence for the distance between the the surrogate oracle and its approximation.
    Contrasting the nearly-parametric rates from the Euclidean setting, the rates in the Riemannian setting are slower due to the singularities of the Riemannian Gaussian density at the cut locus of its Fr\'echet mean; in the special case of the circle we establish matching lower bounds which show that our proposed denoiser is minimax-optimal, and that the denoising problem exhibits a genuinely nonparametric rate of convergence.
    Our theory and simulations reveal that pooling information across different measurements---namely, by shrinking towards high-density regions and stretching away from low-density regions---can substantially outperform the na\"ive denoiser that treats each measurement separately.
    Lastly, we implement our methodology in two scientific applications: in astronomy, the sphere-valued problem of denoising the locations of gamma ray bursts; in structural biology, the torus-valued problem of denoising pairs of torsion angles of adjacent amino acids in a protein (i.e., the Ramachandran plot).
\end{abstract}

\begin{keyword}[class=MSC]
\kwd{62C12}
\kwd{62G05}
\kwd{62H11}
\kwd{62R30}
\end{keyword}

\begin{keyword}
\kwd{density and score estimation}
\kwd{directional statistics}
\kwd{empirical Bayes and $f$-modeling}
\kwd{Fr\'echet means}
\kwd{latent variable model}
\kwd{Lie groups}
\kwd{Riemannian Gaussian mixture model}
\kwd{Riemannian optimization}
\kwd{smeariness}
\kwd{spectral theory of Laplace-Beltrami operator}
\end{keyword}

\end{frontmatter}
\setcounter{tocdepth}{1}
\tableofcontents

    
\section{Introduction}

    Let $\Mcal$ be a compact connected Riemannian manifold with geodesic distance $d$, and consider the problem of estimating a \textit{latent variable} $\Theta\in\Mcal$ given a \textit{measurement} $X\in\Mcal$ thereof. This problem arises naturally in many scientific settings, and in this paper we are particularly motivated by two applications:
    the first takes place on the sphere $\Mcal:=\mathbb{S}^2$ and requires denoising the positions of a catalog of observed gamma ray bursts, which is a long-standing problem in astronomy known as \textit{gamma-ray burst localization} \cite{UlyssesGRB, Connaughton_2015, GRB_errors};
    the second takes place on the torus $\mathcal{M}:=\mathbb{S}^1\times \mathbb{S}^1$ and requires denoising the pairs of torsion angles formed by adjacent amino acids along a protein, usually referred to as a \textit{Ramachandran plot} \cite{Ramachandran,Hollingsworth,Pertsemlidis,Park}.
    We are also motivated by potential applications to the \textit{orientation estimation} problem from cryogenic electron microscopy \cite{Harpaz_Shkolnisky_2023, Mendez, BayesianOrientationCryoEM} which takes place on the Lie group of rotations, $\Mcal=\SO(3)$.
    
    To formulate our problem more precisely, suppose that $G$ is a fixed but unknown probability distribution on $\Mcal$, and consider the following model for a pair of random variables $(\Theta,X)\in\Mcal\times\Mcal$:
    \begin{equation}\label{eqn:pop-model}
        \Theta \sim G, \qquad \mbox{and} \qquad (X \mid \Theta = \theta) \sim P_{\theta,\sigma^2}.
    \end{equation}
Here, $\{P_{\theta,\sigma^2}\}_{\theta\in\Mcal}$ is the family of \textit{Riemannian Gaussian distributions} \cite{PennecRiemNorm} on $\Mcal$, defined so that $P_{\theta,\sigma^2}$ has a density with respect to the volume form $\diff x$ of $\Mcal$ given by
\begin{equation}\label{eqn:Likelihood}
    p_{\theta,\sigma^2}(x) \propto \exp\left(-\frac{d^2(x,\theta)}{2\sigma^2}\right)
\end{equation}
for all $\theta,x\in\Mcal$, where $\sigma^2>0$ is a fixed and known quantity representing the scale of the noise of the measurement $X$ of $\Theta$.
We emphasize that both $\Theta$ and $X$ are elements of the Riemannian manifold $\Mcal$.
We also write
\begin{equation*}
    f(x):= \int_{\Mcal}p_{\theta,\sigma^2}(x)\,\diff G(\theta), \qquad \mbox{for } x \in \Mcal
\end{equation*}
for the marginal density of $X$. Further, we consider the setting that $(\Theta_1,X_1),\ldots ,(\Theta_n,X_n)$ are i.i.d.~from model~\eqref{eqn:pop-model}, meaning $\Theta_1,\ldots, \Theta_n$ is an ensemble of unobserved latent variables and $X_1,\ldots, X_n$ are measurements thereof.
Our goal is to estimate (or predict) the latent variables $\Theta_1,\ldots,\Theta_n$ from the measurements $X_1,\ldots, X_n$, which can be seen as the analog of a standard \textit{denoising} problem but for data living on a Riemannian manifold.

Mathematically, we pose the denoising problem as aiming to find a function $\delta:\Mcal\to\Mcal$, called a \textit{denoiser}, achieving a small value of the following risk, which is the average expected squared geodesic distance when integrated over the joint distribution of $(\Theta_1,X_1),\ldots, (\Theta_n,X_n)$:
\begin{equation}\label{eqn:emp-prob}
		\begin{cases}
			\textnormal{minimize} &\Ebb\left[\frac{1}{n}\sum_{i=1}^{n}d^2(\delta(X_i),\Theta_i)\right] \\
            \textnormal{over} &\delta:\Mcal\to\Mcal.
		\end{cases}
	\end{equation}
    By exchangeability, problem~\eqref{eqn:emp-prob} is equivalent to the following, in which we integrate over the joint distribution of $(\Theta,X)$:
	\begin{equation}\label{eqn:pop-prob}
		\begin{cases}
			\textnormal{minimize} &\Ebb[d^2(\delta(X),\Theta)] \\
			\textnormal{over} &\delta:\Mcal\to\Mcal,
		\end{cases}
	\end{equation}
    and standard Bayesian arguments show that the solution to \eqref{eqn:pop-prob} is the minimizer of the posterior loss, namely the \textit{posterior Fr\'echet mean}
    \begin{equation}\label{eqn:pop-FM}
        \delta_{\B}(x) \; \in \;\underset{\vartheta\in\Mcal}{\arg\min}\;\Ebb[d^2(\vartheta,\Theta)\,|\,X=x].
    \end{equation}
    (In fact, $\delta_{\B}$ is also optimal in a generalization of problem~\eqref{eqn:emp-prob} in which a different denoiser $\delta_i$ can be applied to each coordinate and each $\delta_i$ can depend on all of $X_1,\ldots, X_n$.)    
    We refer to $\delta_{\B}$ as the \textit{oracle Bayes denoiser}, since it depends on the unknown distribution $G$ of $\Theta$ and hence cannot be implemented in most statistical settings.
    
    Nonetheless, the \textit{empirical Bayes} approach to denoising is a collection of methods for constructing a fully data-driven denoiser $\hat{\delta}$ which achieves nearly the same risk as the oracle denoiser $\delta_{\B}$; such $\hat{\delta}$ often have a substantially smaller risk than the \textit{na\"ive denoiser}
    \begin{equation}
        \delta_{\N}(x):=x
    \end{equation}
    which is implicitly used in works that do not employ techniques from empirical Bayes.
    In the standard Euclidean setting, there is a large literature on statistical theory and methodology for empirical Bayes approaches to denoising, e.g., \cite{Zhang1997, JiangZhang, SahaGuntuboyina, Soloff, Robbins1951, Robbins1956, efron2011tweedie, Efron2014, efron2019bayes}.

    In this paper, we develop a nonparametric empirical Bayes approach to denoising on compact Riemannian manifolds.
    For the Riemannian Gaussian mixture model in~\eqref{eqn:pop-model} and~\eqref{eqn:Likelihood}, we derive an intrinsic version of the Tweedie-Eddington formula~\cite{dyson1926method, Robbins1956, IgnatiadisSen} which does not directly characterize the oracle Bayes denoiser $\delta_{\B}$ but rather characterizes a surrogate denoiser that we call the \emph{oracle tangential Bayes denoiser} $\delta_\T$.
    This suggests denoising by a plug-in procedure, leading to a fully data-driven denoiser that we call the \textit{empirical tangential Bayes denoiser} $\hat{\delta}_{\T}$: first, estimate the marginal density $f$ of $X$ by $\hat f_n$, then compute its score $\nabla \log \hat f_n$, and finally adjust each data point along the resulting vector field (hence shrinking the data towards high-density regions and stretching the data away from low-density regions).
    In the terminology of \cite{Efron2014,efron2019bayes}, this is an instance of \textit{$f$-modeling}, and the resulting estimator is simple to implement and computationally tractable.
    It turns out that $\delta_{\T}$ is provably close to $\delta_{\B}$ when $\sigma^2$ is small, and that $\hat{\delta}_{\T}$ is provably close to $\delta_{\T}$ when $n$ is large, so our proposed denoiser can achieve nearly the Bayes risk in the asymptotic regime of interest.

    There are two reasons for using $\delta_\T$ instead of $\delta_{\B}$ in this work.
    The first is that $\delta_{\T}$ is directly identifiable in terms of the marginal density $f$ of $X$ via our Tweedie-Eddington formula (see Lemma~\ref{lem:Tweedie}), while $\delta_{\B}$ is not.
    The second is that $\delta_{\T}$ is much easier to compute than $\delta_{\B}$, which typically must be numerically approximated using iterative procedures from Riemannian optimization (e.g., \cite{FrechetCircle, Chakraborty_2015_ICCV, StiefelStatistics, FrechetSphere, GeomStats, TreeOptimality}); even worse, such computations must be performed separately for each evaluation $\delta_{\B}(X_1),\ldots, \delta_{\B}(X_n)$, which can be expensive in practice.    
    However, in order to reasonably change our target from $\delta_{\B}$ to $\delta_{\T}$, we must establish that $\delta_{\B}$ and $\delta_{\T}$ are indeed close in some sense, and this is a technical challenge; the oracle Bayes denoiser $\delta_{\B}$ is exactly a (posterior) Fr\'echet mean, and existing analyses of Fr\'echet means require strict assumptions that exclude our setting of interest.
    For example, some results (e.g., \cite{BrunelSerresI,SchoetzHadamard,YunPark}) restrict to nonpositively curved Riemannian manifolds $\Mcal$, but this excludes the basic setting of the sphere $\mathbb{S}^2$ which is fundamental in applications of directional statistics.
    Other results (e.g., \cite{Escande,BrunelSerresII,EltznerHuckemann,HundrieserEltznerHuckemann,SchoetzQuadruple}) allow general Riemannian manifolds $\Mcal$ and instead require the distribution of the data to satisfy some small-support condition, with respect to the curvature of $\Mcal$; however, this excludes the empirical Bayes setting when $G$ can be  fully-supported.
    Thus, in order to establish quantitative results on the closeness of $\delta_{\B}$ and $\delta_{\T}$ for general compact Riemannian manifolds, we develop a new technique for analyzing the posterior Fre\'chet mean in the small-noise asymptotic regime, $\sigma^2\to 0$.
    
    Another technical difficulty in this work is that Riemannian Gaussian mixture densities have worse smoothness than Euclidean Gaussian mixture densities, owing to the fact that the squared geodesic distance in~\eqref{eqn:Likelihood} can be nonsmooth at its cut locus.
    Thus, in the Riemannian setting our denoiser $\hat{\delta}_{\T}$ exhibits genuinely nonparametric rates of convergence depending on the smoothness of the density $g$ of $G$, rather than the nearly-parametric behavior familiar from Euclidean empirical Bayes~\cite{JiangZhang, brown2009nonparametric, SahaGuntuboyina}.
    While we can explicitly describe these rates of convergence in several special cases of interest (and provide matching lower bounds in the case of the circle $\Mcal=\mathbb{S}^1$), a complete analysis would require precise quantitative control on the smoothing properties of Riemannian Gaussian mixture densities and their singular behavior near cut loci.
    Such a theory is not yet available in full generality, except in some special cases.

    \begin{figure}[t]
        \centering
        \includegraphics[width=1\linewidth]{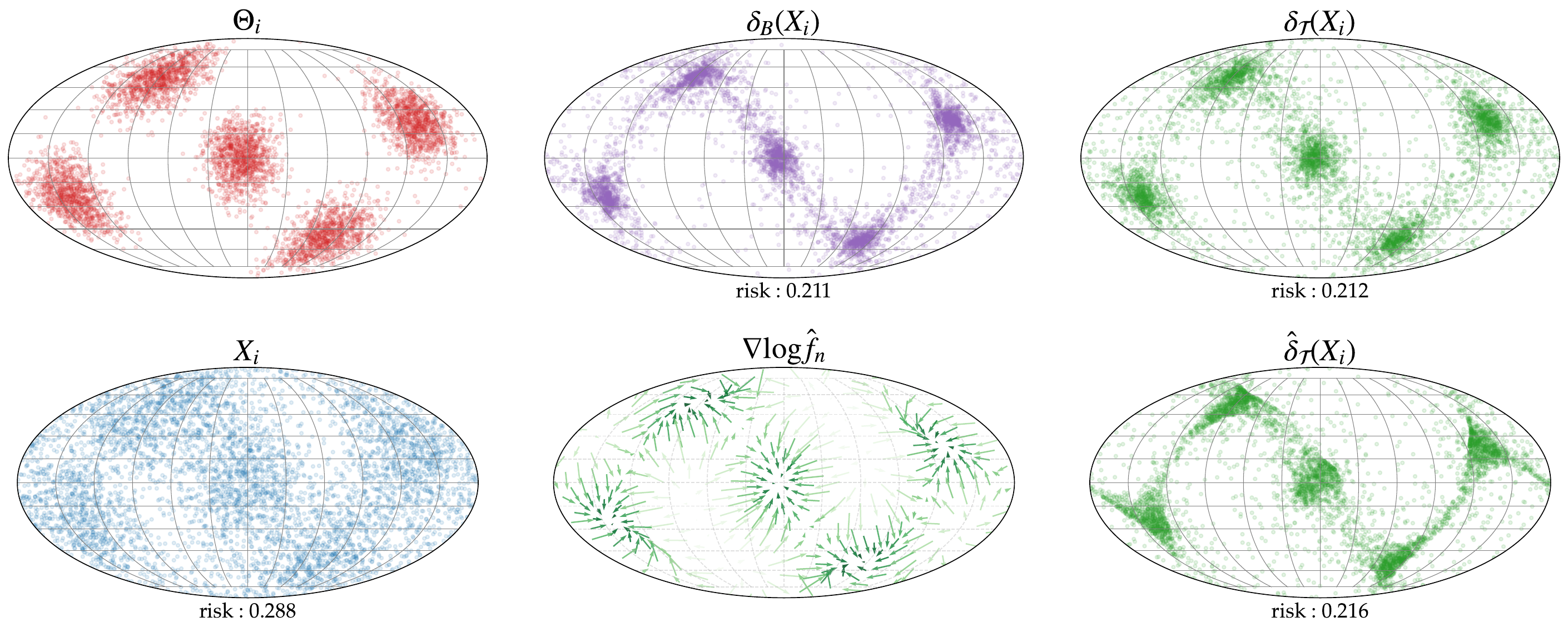}
        \caption{
        An illustration of the oracle and empirical denoisers, in the sphere $\mathbb{S}^2$.
        The latent variables $\Theta_1,\ldots,\Theta_n$  (top left) come from a five-component Riemannian Gaussian mixture model, and the measurements $X_1,\ldots,X_n$ (bottom left) come from a Riemannian Gaussian likelihood.
        The oracle Bayes denoiser $\delta_{\B}(X_1),\ldots, \delta_{\B}(X_n)$ (top center) and the oracle tangential Bayes denoiser $\delta_{\T}(X_1),\ldots, \delta_{\T}(X_n)$ (top right) recover the latent structure.
        The empirical tangential Bayes denoiser $\hat{\delta}_{\T}(X_1),\ldots, \hat{\delta}_{\T}(X_n)$ (bottom right) achieves nearly the Bayes risk by adjusting each measurement along an estimated score field $\nabla \log \hat{f}_n$ (bottom center).
        Below the plot of each denoiser, we display a Monte Carlo estimation of its risk.}
    \label{fig:intro}
    \end{figure}

    As a preview of our methodology, we show Figure~\ref{fig:intro} depicting a simulation with {$n=5000$} points on the sphere, $\Mcal = \mathbb{S}^2$.
    Here, $G$ is a mixture of 5 Riemannian Gaussian distributions, at roughly equally-spaced positions where the squared scale parameter of each component is $\tau^2=0.05$, and the likelihood is Riemannian Gaussian with squared scale parameter equal to $\sigma^2=0.25$.
    Concretely, we note that the Bayes risk is $0.211$ while our empirical Bayes denoiser $\hat{\delta}_{\T}$ achieves a risk of $0.216$ and the na\"ive denoiser $\delta_{\N}$ achieves a risk of $0.288$; in other words, the risk of $\hat{\delta}_{\T}$ is only 2.4\% larger than the Bayes risk, and it is 25.0\% smaller than the risk of the denoiser that treats each measurement separately and does not use ideas from empirical Bayes.    

    In this work we chose the likelihood \eqref{eqn:Likelihood} to be a particular generalization of Gaussian distributions to Riemannian manifolds that is often attributed to Pennec \cite{PennecRiemNorm}, but it is worth noting that there are multiple possible generalizations, e.g., wrapped Gaussians \cite{ExponentialWrappedGaussian} and the heat-kernel Gaussians \cite{hsustochastic}.
    Our choice of Pennec's Riemannian Gaussian \eqref{eqn:Likelihood} is precisely because it admits a Tweedie-Eddington formula (see Lemma~\ref{lem:Tweedie}) which is the starting point of all empirical Bayes analysis, as it connects the unknown $G$ with the observable $f$.
    An interesting avenue for future work is to study denoising problems with these other likelihoods, but we do not know whether they are amenable to empirical Bayes analysis.
    
    \subsection{Summary of Results}

    We recall some basic elements of Riemannian manifolds needed to state our results, but we direct the reader to Section~\ref{subsec:background} for further detail.
    Recall that each $x\in\Mcal$ admits a Hilbert space $\Tan_x(\Mcal)$ called the \textit{tangent space} of $\Mcal$ at $x$ which represents the best linear approximation of $\Mcal$, and $\Tan_x(\Mcal)$ is endowed with an inner product $\langle\cdot,\cdot\rangle_x$ and a norm $\|\cdot\|_x$.
    Further, the \emph{exponential map} $\Exp_x:\Tan_x(\Mcal)\to\Mcal$ for each $x\in\Mcal$ is a smooth map sending each tangent vector $v\in\Tan_x(\Mcal)$ to the point on $\Mcal$ which results from ``shooting'' distance $\|v\|_x$ in the direction $v/\|v\|_x$ from $x$.
    Conversely, the \textit{logarithmic map} $\Log_x:\Mcal\to\Tan_x(\Mcal)$ for each $x\in\Mcal$ sends a point $\theta\in\Mcal$ to a vector $v\in\Tan_x(\Mcal)$ satisfying $\Exp_x(v) = \theta$; in other words, $\Log_x$ is a smooth right inverse of $\Exp_x$, wherever it is well-defined.
    When $\Mcal=\Rbb^m$, the exponential and logarithmic maps are simply given by $\Exp_x(v) = v+x$ and $\Log_x \theta = \theta - x$, hence $\Log_x\theta$ can be regarded as a natural, intrinsic notion of displacement of $\theta$ from $x$ in $\Mcal$.

    The first main result of this paper, as is common in most works on nonparametric empirical Bayes \cite{dyson1926method, Robbins1956, IgnatiadisSen}, is to establish a Tweedie-Eddington formula which identifies a suitable oracle denoiser in terms of the marginal density $f$ of $X$.
    In the Riemannian setting, we use the simple geometric fact $\nabla_xd^2(x,\theta) = -2\Log_x\theta$ to derive (Lemma~\ref{lem:Tweedie}) the following Tweedie-Eddington formula
    \begin{equation}\label{eqn:Tweedie}
        \Ebb[\Log_{x}\Theta\,|\,X=x] = \sigma^{2}\nabla_{x}\log f(x).
    \end{equation}
    While this does not directly characterize the Bayes denoiser $\delta_{\B}$ in terms of $f$, it suggests considering the following oracle-level denoiser as a surrogate:
    \begin{equation}\label{eq:Tan-Bayes}
        \delta_{\T}(x) :=\Exp_x\left(\sigma^2 \nabla_x \log f(x)\right) =  \Exp_x\left(\Ebb[\Log_x\Theta\,|\,X=x]\right).
    \end{equation}
    To gain some intuition for this denoiser, note for each $x\in\Mcal$ that $\delta_{\T}(x)$ is precisely the result of approximately solving the optimization problem~\eqref{eqn:pop-FM} with a single step of Riemannian gradient descent, with stepsize $1/2$ and with initialization $\delta_{\N}(x) =x$.
    Indeed, at the point $\vartheta\in\Mcal$, the objective function $\Phi_x(\vartheta):=\Ebb[d^2(\vartheta,\Theta)\,|\,X=x]$ satisfies $\nabla_\vartheta\Phi_x(\vartheta) = -2\Ebb[\Log_{\vartheta}\Theta\,|\,X=x]$, hence we have
    \begin{equation}\label{eq:Tan-Bayes-GD}
        \delta_{\T}(x) = \Exp_x\left(-\frac{1}{2}\nabla_{\vartheta}\Phi_x(x)\right).
    \end{equation}
    (As explained in \cite[Section~4.6]{PennecRiemNorm} on general numerical algorithms for Fr\'echet means, this can also be seen as a single step of an approximate Riemannian Newton's method, where the Hessian of $\Phi_x$ is approximated by ignoring certain curvature effects.)
    Since $\delta_{\T}$ is thus a first-order approximation of $\delta_{\B}$, we refer to $\delta_{\T}$ as the \textit{oracle tangential Bayes denoiser}.
    Note in the Euclidean case of $\Mcal=\Rbb^m$ that \eqref{eq:Tan-Bayes} coincides with the usual Tweedie-Eddington formula and yields $\delta_{\T} = \delta_{\B}$.

    The second main result of the paper establishes that $\delta_{\T}$ has nearly the same risk as $\delta_{\B}$ as $\sigma^2\to 0$, which is the regime of interest in our scientific applications where $\sigma^2>0$ is known to be small.
    (The regime $\sigma^2\to 0$ has been studied in some recent empirical Bayes works, e.g., \cite{LiangShrinkageI,LiangShrinkageII}.)
    To state this, let us define the risk of any denoiser $\delta:\Mcal\to\Mcal$ as
    \begin{equation}\label{eqn:def-risk}
        R(\delta)=R(\delta,\sigma^2) := \Ebb\left[d^2(\delta(X),\Theta)\right] = \int_{\Mcal}\int_{\Mcal}d^2(\delta(x),\theta)p_{\theta,\sigma^2}(x)\diff x\,\diff G(\theta)
    \end{equation}
    where we emphasize the (squared) scale $\sigma^2>0$ of the likelihood in~\eqref{eqn:Likelihood} when necessary.
    Then, our main oracle-level result (Theorem~\ref{thm:uncond-main}) states that for any compact Riemannian manifold $\Mcal$ we have
    \begin{equation}\label{eqn:oracle-risk-thm}
        0 \le R(\delta_{\T},\sigma^2)-R(\delta_{\B},\sigma^2)  \le O(\sigma^5),
    \end{equation}
    meaning $\delta_{\B}$ and $\delta_{\T}$ have approximately the same risk as each other (relative to the usual scaling of $\sigma^2$).
    This result may be expected in a heuristic sense since, if $\sigma^2>0$ is small enough so that the conditional distribution of $\Theta$ given $\{X = x\}$ is highly concentrated near $x$ where $\Mcal$ is well-approximated by $\Tan_x(\Mcal)$, then we have
    \begin{equation}\label{eqn:deltaT-heuristic}
        \delta_{\T}(x) = \Exp_x(\Ebb[\Log_x\Theta\,|\,X=x]) \approx \Ebb[\Exp_x(\Log_x\Theta)\,|\,X=x]) = \Ebb[\Theta\,|\,X=x] \approx \delta_{\B}(x).
    \end{equation}
    However, rigorously establishing~\eqref{eqn:oracle-risk-thm} is difficult, because existing techniques for quantitatively analyzing Fr\'echet means do not apply in our setting where the conditional distribution of $\Theta$ given $\{X=x\}$ has full support for every $\sigma^2>0$.
    To resolve this, we introduce a new technique which uses the fixed-point equation for Fr\'echet means (i.e., the first-order optimality conditions for~\eqref{eqn:pop-FM}) to recursively bound $\Log_x(\delta_{\T}(x))-\Log_x(\delta_{\B}(x))$ in terms of $\Log_x(\delta_{\B}(x))$; in order to prevent this recursive bound from diverging, we need very sharp quantitative control on $\Log_x(\delta_{\T}(x))$ and related quantities, uniformly in $x\in\Mcal$.
    We also point out that, in the simulation studies of Section~\ref{sec:sim}, the denoisers $\delta_{\B}$ and $\delta_{\T}$ have extremely close risks for even moderate values of $\sigma^2>0$, to the point that the resulting curves are nearly indistinguishable.
  
    Our last contribution is to develop an empirical Bayes approximation $\hat{\delta}_{\T}$ of $\delta_{\T}$ and provide statistical guarantees for it; the approximation is based on an estimate $\hat{f}_n$ of the density $f$ which we then plug-in for $f$ in expression~\eqref{eq:Tan-Bayes}.
    Following the works \cite{Hendriks1990, InverseProbManifold}, we form a density estimate by utilizing the eigendecomposition of the Laplace-Beltrami operator $\Delta:L^2(\Mcal)\to L^2(\Mcal)$, which is a useful method for adapting Fourier analysis to Riemannian manifolds.
    More precisely, we consider the density estimator defined, for $\{\phi_j\}_{j=0}^{\infty}$ being a countable basis of eigenfunctions of $\Delta$, via
    \begin{equation}
        \hat{f}_n(x) := \frac{1}{n}\sum_{i=1}^{n}K(x,X_i,\ell_n), \qquad \textnormal{ where}\qquad K(x,y,\ell) = \sum_{j=0} ^{\ell}\phi_j(x)\overline{\phi_j(y)}
    \end{equation}
    for all $x\in\Mcal$.
    Then, we define the \textit{empirical tangential Bayes denoiser} as
    \begin{equation}
        \hat{\delta}_{\T}(x):=\Exp_x\left(\frac{\sigma^2\nabla_x \hat{f}_n(x)}{\max\{\hat{f}_n(x),\rho\}}\right)
    \end{equation}
    for some carefully chosen $\ell_n\ge 0$ and $\rho>0$. 
    We emphasize that $\hat{\delta}_{\T}(x)$ is very easy to compute; after computing the density estimate $\hat{f}_n$ and its gradient $\nabla_x\hat{f}_n(x)$, all subsequent evaluations of $\hat{\delta}_{\T}(x)$ require only computing the exponential map $\Exp_x(\cdot)$, which relies on the local geometry of $\Mcal$ near $x$.

    We establish explicit rates of convergence for the empirical tangential Bayes denoiser $\hat{\delta}_{\T}$.
    To state this, let us define the distance of an arbitrary denoiser $\delta:\Mcal\to\Mcal$ from $\delta_{\T}$ as
    \begin{equation}\label{eqn:def-dist}
        D_{\T}(\delta) := \Ebb\left[d^2(\delta(X),\delta_{\T}(X))\right] =\int_{\Mcal}d^2(\delta(x),\delta_{\T}(x))f(x)\diff x.
    \end{equation}
    While in the Euclidean setting of $\Mcal=\Rbb^m$ we have that $D_{\T}(\delta)$ exactly equals the excess risk relative to the tangential Bayes (equivalently, Bayes) denoiser, i.e., $R(\delta)=R(\delta_{\T})+D_{\T}(\delta)$, in the Riemannian setting we have only
    $R(\delta)\le (1+\varepsilon)R(\delta_{\T})+C_{\varepsilon}D_{\T}(\delta)$ for all $\varepsilon>0$ (Lemma~\ref{lem:risk-to-dist}); as such, bounding $D_{\T}$ leads to a non-sharp oracle inequality for the risk, but which can be made arbitrarily close to sharp.
    Using this natural notion of distance, we show (Theorem~\ref{thm:EB-risk}) that, under suitable conditions, there exists $0<\gamma<1$, such that choosing $\ell_n$ and $\rho$ appropriately leads to the bound
    \begin{equation}\label{eqn:EB-result}
        \Ebb\left[D_{\T}\left(\hat{\delta}_{\T}\right)\right] = \Ebb\left[\int_{\Mcal}d^2\left(\hat{\delta}_{\T}(x),\delta_{\T}(x)\right)f(x)\diff x\right] \lesssim_{\Mcal,G,\sigma^2,\gamma} \frac{1}{n^{\gamma}}
    \end{equation}
    with constants that do not depend on $n$.
    Along the way, we also develop (Proposition~\ref{prop:kde-error}) some results on the rate of convergence of density estimation and gradient density estimation for Riemannian Gaussian mixtures, which slightly extend and sharpen some results from the earlier work~\cite{Hendriks1990}.

    It turns out that the rate of convergence of $\hat{\delta}_{\T}$ (i.e., the exponent $0<\gamma<1$ above) depends directly on the smoothness of the marginal density $f$ of $X$, which can suffer from the fact that the squared geodesic distance functional is not globally smooth and its Hessian exhibits singular behaviors near the cut locus (Example~\ref{ex:sphere}).
    This is a phenomenon not seen in the Euclidean setting, where the density of a Euclidean Gaussian mixture is always infinitely differentiable (in fact, analytic) irrespective of the regularity of the mixing measure.
    So, while our theory holds under direct assumptions on the smoothness of $f$, it is natural to ask whether smoothness of $f$ can be instead guaranteed from suitable smoothness of $g$.   It turns out that this is a hard question which depends on the geometry of $\Mcal$ and must be worked out on a case-by-case basis; we address this problem in detail (Subsection~\ref{subsec:sobolev}) for several geometries of interest (e.g., $\mathbb{S}^1,\mathbb{S}^2,\mathbb{S}^1\times\mathbb{S}^1$, and $\SO(3)$).
    
    Consequently, our rates of convergence for $\hat{\delta}_{\T}$ are polynomial but slower than parametric.
    In the case of the circle $\Mcal=\mathbb{S}^1$, we show (Example~\ref{ex:empirical-S1-smooth} and Theorem~\ref{thm:S1-lower-bd}) matching upper and lower bounds; if $\mathcal{G}_s(\alpha,\beta,L)$ denotes the set of densities $g:\mathbb{S}^1\to\Rbb$ with $\alpha\le g\le \beta$ and $\|g^{(s)}\|_{L^2(\mathbb{S}^1)}\le L$ and we infimize over all measurable functions $\hat{\delta}:\mathbb{S}^1\times(\mathbb{S}^1)^n\to \mathbb{S}^1$, then (including the dependence on $g$ for emphasis) we have
    \begin{equation}\label{eqn:S1-minimax}
        \inf_{\hat{\delta}}\sup_{g\in\mathcal{G}_s(\alpha,\beta,L)}\Ebb_g\left[\int_{\mathbb{S}^1}d^2\left(\hat{\delta}(x;X_1,\ldots, X_n),\delta_{\T,g}(x)\right)f_g(x)\diff x\right]\asymp_{\sigma^2,s,\alpha,\beta,L} \frac{1}{n^{\frac{2s+2}{2s+5}}}
    \end{equation}
    with constants that do not depend on $n$, and where $\hat{\delta}_{\T}$ achieves the infimal rate up to constants.
    In other words, our proposed empirical tangential Bayes denoiser is minimax-optimal (with no additional logarithmic factors), and the minimax rate is genuinely nonparametric.
    The rate should be understood as $n^{-2p/(2p+1)}$ for $p=(s+2)-1$ which reflects the fact that the difficulty of denoising reduces to the difficulty of estimating the gradient of $f$, which is $(s+2)$-smooth whenever $g$ is $s$-smooth (see Lemma~\ref{lem:Sobolev-bd-S1}); our precise control of this smoothing effect in the special case of $\Mcal=\mathbb{S}^1$ comes from the fact that $f$ is given by a convolution of $g$ with a kernel that is differentiable but not continuously differentiable.


    These main results highlight the practical value of the Riemannian empirical Bayes approach to denoising.
    That is, we have shown $\delta_{\T}$ has comparable risk to $\delta_{\B}$ when $\sigma^2$ is small and that $\hat{\delta}_{\T}$ has comparable risk to $\delta_{\T}$ when $n$ is large; hence, we conclude that $\hat{\delta}_{\T}$ has comparable risk to $\delta_{\B}$ when both $\sigma^2$ is small and $n$ is large.
    Moreover, the empirical Bayes denoiser $\hat{\delta}_{\T}$ is easy to compute.
    While our theory determines an oracle choice of the hyperparameters $\ell_n$ and $\rho$, we also emphasize that they can be selected in a fully data-driven way via cross-validation with an adaptation of Hyv\"arinen’s score-matching objective \cite{hyvarinen2005,vincent2011connection} to the Riemannian setting 
    \cite{de2022riemannian}.
    We further explore these practical considerations in several detailed simulations in Section~\ref{sec:sim}, and in two scientific applications in Section~\ref{sec:appl}, which highlight the remarkable ability of empirical Bayes denoisers to adapt to latent structure in the data.
    The proofs of all results (and the statements and proofs of some auxiliary results) are given in Appendix~\ref{App:Proofs}, detail on the implementation of our denoisers is given in Appendix~\ref{app:implementation}, and additional figures are given in Appendix~\ref{app:additional-figs}.
    All figures in this paper can be reproduced with the code available at \url{https://github.com/aqjaffe/RiemannianEB}, which also contains an easy-to-use Python package integrated with the \texttt{Geomstats} paradigm \cite{GeomStats}.
    
\subsection{Related Literature}

The primary context for our work is the large and growing literature on nonparametric empirical Bayes, which has been summarized in the recent tutorials \cite{IgnatiadisSen,Koenker_Gu_2026}.
In particular, we provide the first extension to the Riemannian setting of the principle that there exist fully data-driven denoisers that can be computed efficiently and achieve a risk that is nearly equal to the Bayes risk, even in the setting where the distribution of the latent variables is not assumed to have any particular parametric form; analogous results in the Euclidean setting have been established for Gaussian \cite{Zhang1997, JiangZhang, SahaGuntuboyina, Soloff} and Poisson \cite{JanaPolyanskiyWu, ShenWu} likelihoods.
Such empirical Bayes methods are often traced back to the work of Robbins \cite{Robbins1956}, although they are known to have a longer history in astronomy \cite{dyson1926method,eddington1940correction}; recent interest in nonparametric empirical Bayes (and much of our modern terminology) was popularized by Efron \cite{efron2011tweedie,Efron2014, efron2019bayes}.

There is also a large literature on parametric empirical Bayes methods, where, typically, the assumption of a conjugate prior allows one to directly approximate the Bayes denoiser by generalized method of moments.
In the Euclidean case, this was initiated by the empirical Bayes interpretation of the James-Stein shrinkage estimator due to Efron and Morris \cite{EfronMorris1973}, and the very many subsequent developments can be found in textbook treatments \cite{CarlinLouis, Maritz}.
In the Riemannian case, we are only aware of one previous work developing empirical Bayes methodology, namely \cite{YangDossVemuri} which provides an empirical Bayes interpretation for a particular shrinkage estimator on the manifold of positive semi-definite matrices.
There is also the closely-related is work \cite{SteinFrechet} which shows, in the setting of metric spaces with non-positive (Alexandrov) curvature, that a certain geodesic James-Stein estimator strictly dominates the Fr\'echet mean, although without utilizing any conjugate prior structure directly.

Another context for our work is the field of statistics on Riemannian manifolds, which includes theory and methodology in the general setting \cite{ManifoldsI, ManifoldsII,GeomStats} and in specific geometries of interest; of particular importance are the Riemannian manifolds studied in directional statistics, outlined in \cite{JuppMardia}.
The phenomena in this work that the singularity of a Riemannian Gaussian density at the cut locus of its Fr\'echet mean can deteriorate the rate of convergence of empirical Bayes denoising is closely related to the ``smeariness'' for Fr\'echet mean estimation (which has been established in spheres \cite{HotzHuckemann} and in other geometries, e.g., \cite{EltznerHuckemann,HundrieserEltznerHuckemann}).
There is some literature on Bayesian statistics on Riemannian manifolds \cite{BhattacharyaDunsonI, BhattacharyaDunsonII, RiemannianBayesianDTI}, which is typically focused on particular models tailored to each geometry of interest.
Overall, we emphasize that our assumptions on the geometry of $\Mcal$ are much more general than what is typically assumed in this field; we require only that $\Mcal$ is a compact connected Riemannian manifold, while other works occasionally require much further structure that can be limiting in some applications (e.g., non-positive curvature, constant curvature, homogeneity, Lie group structure).

Many aspects of this paper are closely related to existing literature on density estimation, deconvolution, and score matching on Riemannian manifolds.
Regarding density estimation, two popular methods are based on kernel smoothing using kernels that are made either from the geodesic distance \cite{Pelletier, HenryRodriguez} or the eigenfunctions of Laplace-Beltrami operator \cite{Hendriks1990}.
Regarding deconvolution, a few methods have been developed in particular geometries of interest, e.g., hyperspheres \cite{HealyKim}, the special orthogonal group \cite{Kim}, and the hyperbolic plane \cite{MobiusDeconvolution} (but all with likelihoods that are not Riemannian Gaussian); these works develop methods based on Fourier analysis (or generalization thereof), and they exhibit slow rates of convergence also seen in the Euclidean setting \cite{FanDeconvolution}.
There is also a recent interest in score-matching on Riemannian manifolds \cite{de2022riemannian} which studies Riemannian versions of the generative modeling procedures that have become popular in the Euclidean setting \cite{vincent2011connection, hyvarinen2005}.

\section{Geometric Background}
\label{subsec:background}

In this section we collect the basic geometric notions required throughout the paper.
We will provide references to specialized results along the way, but we direct the reader to standard sources like \cite{Aubin, Chavel,Petersen} for further details; for a particularly accessible introduction to these ideas, we recommend consulting \cite{SommerFletcherPennec,Boumal}.

It may be helpful for the reader to keep in mind a few concrete examples in order to build intuition throughout this section.
Towards this end, we will occasionally remark on examples like the usual Euclidean space $\Mcal=\Rbb^m$, the circle $\Mcal=\mathbb{S}^1$ (which may be parameterized either as $\Rbb/2\pi\Zbb$ or as $\{x\in\Rbb^2: \|x\|=1\}$), the two-dimensional sphere $\Mcal=\mathbb{S}^2:=\{x\in\Rbb^3: \|x\|=1\}$, and the space of $3\times 3$ special orthogonal matrices $\Mcal=\SO(3):=\{X\in\Rbb^{3\times 3}: XX^{\top} = X^{\top}X = I_3 \textnormal{ and } \det(X)=1\}$.
We also include in Figure~\ref{fig:background} a cartoon of a compact Riemannian manifold of dimension $m=2$ in order to visualize these notions.

\begin{figure}
    \begin{center}
        \begin{tikzpicture}[scale=1.3]
			\fill[smooth, color=gray!20] (2.35,0) to[out=90,in=210] (3,1) to[out=30,in=150] (5,1) to[out=-30,in=40] (5,-0.6) to[out=220,in=-20] (3,-0.85) to[out=160,in=-90] (2.35,0);
			\draw[thin, smooth] (2.35,0) to[out=90,in=210] (3,1) to[out=30,in=150] (5,1) to[out=-30,in=40] (5,-0.6) to[out=220,in=-20] (3,-0.85) to[out=160,in=-90] (2.35,0);
			\fill[smooth, color=white] (3,-0.31) .. controls (3.2,-0.1) and (3.8,-0.1) .. (4,-0.31) .. controls (3.8,-0.51) and (3.2,-0.51) .. (3.0,-0.31);
			\draw[smooth] (2.9,-0.2) .. controls (3.1,-0.55) and (3.9,-0.55) .. (4.1,-0.2);
			\draw[smooth] (3,-0.31) .. controls (3.2,-0.1) and (3.8,-0.1) .. (4,-0.31);
            \node[color=black] at (3, 1.45) {$\mathcal{M}$};
			
			\draw[very thick, smooth, color=gray,-stealth] (4.5,0.65) to[out=210, in=60] (2.7, 0.3) to[out=240,in=120] (2.7, -0.35);					
			
			\filldraw[thin, opacity=0.75, color=pythongreen!30, xscale=1, yscale=0.6,shift={(3,0.65)}] (0, 0) to (2, -1) to (3,1) to (1, 2) to cycle;
			\draw[thin, color=pythongreen, xscale=1, yscale=0.6,shift={(3,0.65)}] (0, 0) to (2, -1) to (3,1) to (1, 2) to cycle;
            \draw[thick, color=pythongreen,-stealth] (4.5, 0.65) -- ++(-0.7125, -0.3);
			\filldraw[pythonblue] (4.5, 0.65) circle (0.04) node[above right] {$x$};
		    \node[color=pythongreen] at (3.5, 0.424) {$v$};
			\node[color=pythongreen] at (5.5, 1.45) {$\textnormal{Tan}_x(\mathcal{M})$};
            \filldraw[pythonblue] (2.725, -0.375) circle (0.04);
            \draw[pythonblue] node at (3.35, -0.65) {$\Exp_x(v)$};
		\end{tikzpicture}
		\begin{tikzpicture}[scale=1.3]
			\fill[smooth, color=gray!20] (2.35,0) to[out=90,in=210] (3,1) to[out=30,in=150] (5,1) to[out=-30,in=40] (5,-0.6) to[out=220,in=-20] (3,-0.85) to[out=160,in=-90] (2.35,0);
			\draw[thin, smooth] (2.35,0) to[out=90,in=210] (3,1) to[out=30,in=150] (5,1) to[out=-30,in=40] (5,-0.6) to[out=220,in=-20] (3,-0.85) to[out=160,in=-90] (2.35,0);
			\fill[smooth, color=white] (3,0.-0.31) .. controls (3.2,-0.1) and (3.8,-0.1) .. (4,-0.31) .. controls (3.8,-0.51) and (3.2,-0.51) .. (3.0,-0.31);
			\draw[smooth] (2.9,-0.2) .. controls (3.1,-0.55) and (3.9,-0.55) .. (4.1,-0.2);
			\draw[smooth] (3,0.-0.31) .. controls (3.2,-0.1) and (3.8,-0.1) .. (4,-0.31);
			
			\draw[very thick, color=gray, domain=2.775:4, smooth,stealth-,cm={cos(-100) ,-sin(-100) ,sin(-100) ,cos(-100) ,(5.31,-3.26)}] plot (\x,{0.5*(\x-3.5)^2});
			\filldraw[pythonblue] (4.555, -0.6) circle (0.04) node[above left] {$y$};
			
			\filldraw[thin, opacity=0.75, color=pythongreen!30, xscale=1, yscale=0.6,shift={(3,0.65)}] (0, 0) to (2, -1) to (3,1) to (1, 2) to cycle;
			\draw[thin, color=pythongreen, xscale=1, yscale=0.6,shift={(3,0.65)}] (0, 0) to (2, -1) to (3,1) to (1, 2) to cycle;
            \draw[thick, color=pythongreen,-stealth] (4.5, 0.65) -- ++(0.225, -0.3375);
			\filldraw[pythonblue] (4.5, 0.65) circle (0.04) node[above right] {$x$};
			\node[color=pythongreen] at (4.05, 0.4) {$\textnormal{Log}_xy$};
		\end{tikzpicture}
        \begin{tikzpicture}[scale=1.3]
			\fill[smooth, color=gray!20] (2.35,0) to[out=90,in=210] (3,1) to[out=30,in=150] (5,1) to[out=-30,in=40] (5,-0.6) to[out=220,in=-20] (3,-0.85) to[out=160,in=-90] (2.35,0);
			\draw[thin, smooth] (2.35,0) to[out=90,in=210] (3,1) to[out=30,in=150] (5,1) to[out=-30,in=40] (5,-0.6) to[out=220,in=-20] (3,-0.85) to[out=160,in=-90] (2.35,0);

            \draw[very thick, color=gray, domain=2.75:4, smooth,-,cm={cos(-100) ,-sin(-100) ,sin(-100) ,cos(-100) ,(5.31,-3.26)}] plot (\x,{0.5*(\x-3.5)^2});
            \draw[very thick, smooth, color=gray,-stealth] (4.555,-0.6) to[out=-120, in=-20] (2.955, -0.555);
			\draw[very thick, smooth, color=gray,-stealth] (4.5,0.65) to[out=210, in=60] (2.7, 0.3) to[out=240,in=90] (2.6, -0.1) to[out=-90] (2.925, -0.525);
            \node[color=gray] at (2.9, 0.2) {$\gamma_1$};
            \node[color=gray] at (4., -0.6) {$\gamma_2$};
            
            \draw[very thick, smooth, color=pythonblue,-] (3.2, -0.4) to[out=180,in=120] (2.9, -0.8125);
            \node[color=pythonblue] at (2.3, -0.65) {$C_x$};

            \fill[smooth, color=white] (3,0.-0.31) .. controls (3.2,-0.1) and (3.8,-0.1) .. (4,-0.31) .. controls (3.8,-0.51) and (3.2,-0.51) .. (3.0,-0.31);
            \draw[smooth] (2.9,-0.2) .. controls (3.1,-0.55) and (3.9,-0.55) .. (4.1,-0.2);
			\draw[smooth] (3,0.-0.31) .. controls (3.2,-0.1) and (3.8,-0.1) .. (4,-0.31);
			
			\filldraw[thin, opacity=0.75, color=pythongreen!30, xscale=1, yscale=0.6,shift={(3,0.65)}] (0, 0) to (2, -1) to (3,1) to (1, 2) to cycle;
			\draw[thin, color=pythongreen, xscale=1, yscale=0.6,shift={(3,0.65)}] (0, 0) to (2, -1) to (3,1) to (1, 2) to cycle; 
            \draw[thick, color=pythongreen,-stealth] (4.5, 0.65) -- ++(-0.95, -0.4);
            \draw[thick, color=pythongreen,-stealth] (4.5, 0.65) -- ++(0.45, -0.675);
            
            \filldraw[pythonblue] (4.5, 0.65) circle (0.04) node[above right] {$x$};
		\end{tikzpicture}
	\end{center}
    \caption{Cartoon of some elements of the geometric background from Section~\ref{subsec:background}.
    For a compact Riemannian manifold $\Mcal$ of dimension $m=2$,
    we show the exponential map (left), the logarithmic map (center), and the cut locus (right).}
    \label{fig:background}
\end{figure}

\medskip

\noindent \textbf{Basics.} Throughout, we assume that $\Mcal$ is a Riemannian manifold of dimension $m\in\Nbb=\{1,2,\ldots\}$.
This means each point $x\in\Mcal$ has a \textit{tangent space} $\Tan_x(\Mcal)$, which is an $m$-dimensional vector space representing the best linear approximation of $\Mcal$ near $x$, as well as an inner product $\langle\,\cdot\,,\,\cdot\,\rangle_x$ (hence also a norm $\|\,\cdot\,\|_x$) on $\Tan_x(\Mcal)$; we refer to $\{\Tan_x(\Mcal)\}_{x\in\Mcal}$ as the \textit{tangent bundle} of $\Mcal$ and $\{\langle\,\cdot\,,\,\cdot\,\rangle_x\}_{x\in\Mcal}$ as the \textit{metric tensor} of $\Mcal$.
There also exists a $\sigma$-finite measure on $\Mcal$ called the \textit{volume form} which is a generalization of the Lebesgue measure on $\Rbb^m$, and we write $\diff x$ for integration against the volume form (e.g., $\int_{\Mcal}h(x)\diff x$ for any bounded, measurable function $h:\Mcal\to\Rbb$).
When $\Mcal=\Rbb^m$ the tangent bundle is $\Tan_x(\Mcal) = \Rbb^m$ and the volume form is just the Lebesgue measure on $\Rbb^m$; when $\Mcal=\mathbb{S}^2$ the tangent bundle is $\Tan_x(\Mcal) = (\Span(x))^{\bot}$ and the volume form is the uniform measure on $\mathbb{S}^2$; when $\Mcal=\SO(3)$, the tangent bundle is $\Tan_X(\Mcal) =\{XA\in\Rbb^{3\times 3}:A=-A^{\top}\}$ is the space of $3\times 3$ antisymmetric matrices multiplied on the left by $X$, the metric tensor is $\langle V,W\rangle_X = -\frac{1}{2}\trace(X^{-1}VX^{-1}W)$, and the volume form is the Haar measure.

\medskip

\noindent \textbf{Metric.}  One may also view $\Mcal$ as a metric space $(\Mcal,d)$, where $d$ is a metric on $\Mcal$ called the \textit{geodesic distance} defined as $d(x,y) := \inf_{\gamma(0)=x,\gamma(1)=y}\mathcal{L}(\gamma)$ where the infimum is taken over all curves $\gamma:[0,1]\to\Mcal$ which are infinitely differentiable in the locally Euclidean structure of $\Mcal$ and where $\mathcal{L}(\gamma):=\int_{0}^{1}\|\gamma'(t)\|_{\gamma(t)}\diff t$; we always have $\gamma'(0)\in\Tan_x(\Mcal)$, and $\Tan_x(\Mcal)$ can in fact be defined as the closure of the span of all $\gamma'(0)$, i.e., tangent vectors at $x$ exactly correspond to velocities of curves starting at $x$.
Note that $\mathcal{L}(\gamma)$ is the \textit{length} of the curve $\gamma$, so that the geodesic distance $d(x,y)$ is just the length of the shortest smooth curve connecting $x$ to $y$.
A curve $\gamma:[0,1]\to \Mcal$ is called a \textit{constant-speed geodesic} if it satisfies $d(\gamma(s),\gamma(t)) = (t-s)d(\gamma(0),\gamma(1))$ for all $0\le s\le t \le 1$; if $\gamma$ is a constant-speed geodesic then it is infinitely differentiable and we have $\|\gamma'(t)\|_{\gamma(t)} = d(\gamma(0),\gamma(1))$, for all $t \in [0,1]$.
We say that $\Mcal$ is \textit{compact} if the metric space $(\Mcal,d)$ is compact, and that $\Mcal$ is \textit{connected} if $(\Mcal,d)$ is connected.
We define the \textit{diameter} of $\Mcal$ as $\diam(\Mcal):=\sup_{x,y\in\Mcal}d(x,y)$.

\medskip

\noindent \textbf{Exponential and Logarithm.} 
For each point $x\in\Mcal$, there is a natural correspondence between $\Mcal$ and its tangent bundle given as follows.
The \textit{exponential map} $\Exp_x:\Tan_x(\Mcal)\to\Mcal$ is defined via $\Exp_x(v):=\gamma(1)$ for sufficiently small $\|v\|_x$ where $\gamma$ is the unique constant-speed geodesic with $\gamma(0)=x$ and $\gamma'(0)=v$ (which is guaranteed to exist and be unique under suitable assumptions on $\Mcal$, e.g. compactness) and then extended to all $v\in\Tan_x(\Mcal)$; in other words, $\Exp_x(v)$ is the result of ``shooting'' from $x$ in direction $v/\|v\|_x$ for length $\|v\|_x.$ The \textit{logarithmic map} is defined as $\Log_x(y) = \gamma'(0)$ where $\gamma$ is the constant-speed geodesic with $\gamma(0)=x$ and $\gamma(1) = y$,  if it exists and is unique; consequently, we have $\|\Log_x(y)\|_x= d(x,y)$ whenever the left side is well-defined. Also, it is known that the exponential map is infinitely-differentiable on all of $\Tan_x(\Mcal)$; in fact the map $(x,v)\mapsto \Exp_x(v)$ is smooth (i.e., infinitely-differentiable) as a function of both arguments \cite[Chapter~5.5.1]{Petersen}.
In the Euclidean case $\Mcal=\Rbb^m$, we simply have $\Log_x(y) = y-x$ and $\Exp_x(v) = v+x$, so the logarithmic map in general provides a natural notion of displacement in $\Mcal$; in $\Mcal=\SO(3)$ we have $\Exp_X(V) = X\exp(X^{-1}V)$ where $\exp(\cdot)$ denotes the matrix exponential (defined via the usual power series) which explains the notation.
See Figure~\ref{fig:background} for an illustration of the logarithmic and exponential maps.

\medskip

\noindent \textbf{Cut Locus.} 
For $x\in\Mcal$, the \textit{cut locus} of $x$, denoted $C_x\subseteq\Mcal$, is (roughly speaking) given by the closure of the set of all $y\in\Mcal$ such that there is more than one constant-speed geodesic from $x$ to $y$; see \cite[Definition~1.3.9]{Aubin} for a precise definition.
Then, we may view the logarithmic map as a function $\Log_x:\Mcal\setminus C_x\to\Tan_x(\Mcal)$, and this is in fact its maximal domain of well-definition.
It is also known that the cut locus $C_x$ has measure zero under the volume form, for each $x\in\Mcal$.
If $\Mcal$ is compact, then the cut locus $C_x$ is non-empty for all $x\in\Mcal$.
For example, if $\Mcal=\mathbb{S}^m$ is a hypersphere then the cut locus of any point $x$ is exactly the antipode of $x$.
See Figure~\ref{fig:background} for an illustration of the cut locus, which need not be a single point.

\medskip

\noindent \textbf{Gradients.} If $h:\Mcal\to\Rbb$ is a sufficiently regular function, its gradient $\nabla h$ is a vector field on $\Mcal$, meaning it assigns a vector $\nabla h(x)\in\Tan_x(\Mcal)$ for each $x\in\Mcal$.
More precisely, we say that $h$ is differentiable if we have
\begin{equation}\label{eqn:Gradient}
    \frac{h(\gamma(t))-h(\gamma(0))}{t} \to \langle\nabla h(x),\gamma'(0)\rangle_x
\end{equation}
as $t\to 0$, for all sufficiently regular paths $\gamma$ with $\gamma(0)=x$.
For example, it is known that the squared-distance function has gradient
\begin{equation}\label{eqn:squared-dist-grad}
    \nabla_xd^2(x,y) = -2\Log_x(y)
\end{equation}
for all $x\in\Mcal\setminus C_y$; additionally, it is known that $x\mapsto d(x,y)$ is Lipschitz on all $\Mcal$, but it may fail to be differentiable on $C_y \cup \{y\}$.
Higher-order derivatives of $h$ are defined similarly, namely for $r\in\Nbb$ the derivative $\nabla^{r}h(x)$ is an $r$-linear function from $\Tan_x(\Mcal)$ to $\Rbb$, meaning it is a linear functional of $r$ many vectors in $\Tan_x(\Mcal)$; the space of all $r$-linear functions is a finite-dimensional vector space that can be endowed with the norm $\|H\|_{x,r}:=\max_{\|v_1\|_x=\cdots=\|v_r\|_x=1}|H(v_1,\ldots, v_r)|$ (or, alternatively, many other norms since they are all equivalent up to constants). 

\medskip

\noindent \textbf{Fr\'echet Means.} For a probability measure $P$ on a Riemannian manifold $\Mcal$, its \textit{Fr\'echet variance} is the infimal value of the functional $y\mapsto \int_{\Mcal}d^2(x,y)\diff P(x)$, and a \textit{Fr\'echet mean} is any minimizer, if it exists.
It is known \cite[Lemma~2.13]{EvansJaffeSLLN} that a Fr\'echet mean always exists if $\int_{\Mcal}d^2(x,y)\diff P(x)<\infty$ for some (equivalently, all) $y\in\Mcal$ (but it may be non-unique if $P$ is not sufficiently concentrated); in particular, on a compact and connected Riemannian manifold $\Mcal$, every probability measure admits a Fr\'echet mean.
If $P$ possesses a density with respect to the volume form, then the first-order optimality conditions for $y$ to be a Fr\'echet mean of $P$ are exactly $\int_{\Mcal}\Log_y(x)\diff P(x) =0$, but this condition is not sufficient in general.
If $\Mcal=\Rbb^m$, then the Fr\'echet mean of $P$ is unique and equals the expectation $\int_{\Rbb^m}x\diff P(x)$ of $P$, but in general there is no closed-form expression for Fr\'echet means.

\medskip

\noindent \textbf{Laplace-Beltrami Operator.}
We write $L^2(\Mcal)$ for the space of square-integrable (complex) functions with respect to the volume form of $\Mcal$, and write $L^{\infty}(\Mcal)$ for the space of essentially bounded functions with respect to the volume form of $\Mcal$.
There exists a unique nonnegative linear operator $\Delta:L^2(\Mcal)\to L^2(\Mcal)$ called the \textit{Laplace-Beltrami operator} which acts like a second-order differential operator when restricted to smooth functions; in other words, it is a generalization of the usual Laplace operator $\Delta = -(\partial^2/\partial x_1^2+\cdots +\partial^2/\partial x_m^2)$ from $\Rbb^m$, where, for smooth $f$, the value $\Delta f(x)$ represents the difference between the value of $f(x)$ compared to the average value of $f$ on infinitesimally small balls around $x$.
If $\Mcal$ is compact, then $L^2(\Mcal)$ admits a countable basis of eigenfunctions $\{\phi_j\}_{j=0}^{\infty}$ of $\Delta$ and corresponding eigenvalues $\{\lambda_j\}_{j=0}^{\infty}$, which are infinitely-differentiable functions.
The smallest eigenvalue is always $\lambda_0=0$ and corresponds to a (suitably normalized) constant function $\phi_0$.
As $j\to\infty$, it is known (see \cite[Section~VI.4]{Chavel}) that the eigenvalues satisfy Weyl's law $\lambda_j\sim 4\pi^2(\omega_{m}\vol(\Mcal))^{-2/m}j^{2/m}$ where $\omega_m$ denotes the Lebesgue measure of the unit ball in $\Rbb^m$, and $\vol(\Mcal):=\int_{\Mcal}1\diff x$ denotes the volume of $\Mcal$; in particular, the asymptotic behavior of the eigenvalues depends only on coarse geometric features of $\Mcal$, namely its dimension and volume.
The spectral theory of the Laplace-Beltrami operator provides a natural generalization of Fourier analysis to functions on compact Riemannian manifolds; for example, if $\Mcal=\mathbb{S}^1$ then $\{\phi_{j}\}_{j=0}^{\infty}$ are the complex exponentials and the resulting basis is the Fourier series, and if $\Mcal=\mathbb{S}^2$ then $\{\phi_{j}\}_{j=0}^{\infty}$ are the spherical harmonics.

\medskip

\noindent \textbf{Homogeneity.} An \textit{isometry} of a Riemannian manifold is a map $\psi:\Mcal\to\Mcal$ such that for all $x,y\in\Mcal$ we have $d(\psi(x),\psi(y)) = d(x,y)$, and the set of all isometries of $\Mcal$ is a group under composition denoted $\textnormal{Iso}(\Mcal)$ and called the \textit{isometry group}.
We say that a Riemannian manifold is \textit{homogeneous} if for all $x,x'\in\Mcal$ there exists $\psi\in \textnormal{Iso}(\Mcal)$ such that $\psi(x)= x'$ (i.e., the natural action of $\textnormal{Iso}(\Mcal)$ on $\Mcal$ is transitive).
None of the results in this paper require homogeneity; however, most examples in applications are indeed homogeneous (e.g., $\mathbb{S}^1, \mathbb{S}^1\times \mathbb{S}^1, \mathbb{S}^2, \SO(3)$) and many of our proofs could be slightly simplified under the additional assumption of homogeneity.

\medskip

\noindent \textbf{Lie Groups.} 
Many of the Riemannian manifolds in this paper can also be endowed with a group
structure so that the multiplication map $\Mcal\times\Mcal\to\Mcal$ and inversion
map $\Mcal\to\Mcal$ are smooth; examples include $\mathbb S^1$,
$\mathbb S^1\times\mathbb S^1$, and $\SO(3)$, but not $\mathbb S^2$.
When a Lie group is equipped with a bi-invariant Riemannian metric tensor, many of these geometric notions simplify; the
Riemannian volume form is proportional to Haar measure and is invariant under
left and right translations, the tangent space $\Tan_x(\Mcal)$ can be identified with the Lie algebra
$\Tan_e(\Mcal)$ by left or right translation where $e$ is the identity element of $\Mcal$, and the geodesic distance between $x,y\in\Mcal$ can be
written as a function of the group-theoretic difference $d(x,y)=d(e,x^{-1}y)=d(e,y^{-1}x)$.
In general there are many possible Riemannian metric tensors on a Lie group; whenever we use Lie-group structure in this paper, we assume that the group has been equipped with a fixed bi-invariant Riemannian metric tensor.
In particular, such a Lie group is homogeneous.

\section{Statistical Theory}\label{sec:theory}

In this section we state and prove the main results of the paper.
Subsection~\ref{subsec:prelim} contains preliminary results on Riemannian Gaussian mixture models (including our version of the Tweedie-Eddington formula), Subsection~\ref{subsec:oracle} provides a comparison of the risks of $\delta_{\N},\delta_{\B}$ and $\delta_{\T}$ at the oracle level, Subsection~\ref{subsec:sobolev} introduces some bounds on the Sobolev norms of densities of Riemannian Gaussian mixtures, and Subsection~\ref{subsec:empirical} provides a comparison of the risks of $\hat{\delta}_{\T}$ with $\delta_{\T}$ at the empirical level.

\subsection{Riemannian Gaussian Mixtures}\label{subsec:prelim}

We first give some introductory results related to Riemannian Gaussian distributions and mixtures of Riemannian Gaussian distributions.
The proofs of all results in this subsection appear in Appendix~\ref{subsec:proofs-prelim}, except that we give a sketch of proof of the Tweedie-Eddington formula (Lemma~\ref{lem:Tweedie}) since it is simple and instructive.
Throughout this section we will assume that $\Mcal$ is a compact connected Riemannian manifold.

We begin by defining a particular notion of Gaussian distributions on Riemannian manifolds, which is usually attributed to Pennec \cite{PennecRiemNorm}.
That is, for $\theta\in\Mcal$ and $\sigma^2>0$, the \textit{Riemannian Gaussian distribution with location parameter 
$\theta$ and scale parameter $\sigma$} is the probability measure $P_{\theta,\sigma^2}$ on $\Mcal$ whose density with respect to the volume form  $\diff x$ is 

\begin{equation}\label{eqn:ptheta}
    p_{\theta,\sigma^2}(x) := \exp\left(-\frac{d^2(x,\theta)}{2\sigma^2}-A(\sigma^2,\theta)\right) \qquad \textnormal{where}\qquad
    A(\sigma^2,\theta):=\log \int_{\Mcal}\exp\left(-\frac{d^2(x,\theta)}{2\sigma^2}\right)\diff x.
\end{equation}
As explained in \cite{PennecRiemNorm}, the Riemannian Gaussian distribution $P_{\theta,\sigma^2}$ is centered in the sense that $\theta$ is always a local minimizer (but not necessarily a global minimizer) of the Fr\'echet functional $y\mapsto \int_{\Mcal}d^2(x,y)\diff P_{\theta,\sigma^2}(x)$.
If $\Mcal$ is homogeneous, then (see~\cite[Theorem~2]{EquivariantFrechet}) the point $\theta$ is the unique global minimizer (i.e, the Fr\'echet mean) of $P_{\theta,\sigma^2}$, and $A$ does not depend on $\theta$. 
We also emphasize that it is not a restriction to assume that $\Mcal$ is connected; $P_{\theta,\sigma^2}$ assigns zero probability to all connected components of $\Mcal$ which do not contain $\theta$.


Now we recall the setting from the Introduction, where $\Theta$ is an $\Mcal$-valued random variable with distribution $G$, and the conditional distribution of $X$ given $\{\Theta=\theta\}$ is $P_{\theta,\sigma^2}$; in this setting, the marginal distribution of $X$ is a mixture of Riemannian Gaussian distributions, hence its density is $f(x) =\int_{\Mcal}p_{\theta,\sigma^2}(x)\diff G(\theta)$, for $x \in \Mcal$.
The starting point for this entire paper is the following version of a Tweedie-Eddington formula for Riemannian Gaussian mixture models.

\begin{lemma}\label{lem:Tweedie}
    If $\Mcal$ is a compact connected Riemannian manifold and the cut locus $C_x$ of $x\in\Mcal$ satisfies $G(C_x) = 0$, then $f$ is differentiable at $x\in\Mcal$ and we have \begin{equation}\label{eqn:tweedie}
        \Ebb[\Log_{x}\Theta\,|\,X=x] = \sigma^{2}\nabla_{x}\log f(x).
    \end{equation}
    Consequently, if $G$ has a density with respect to the volume form,  then $G(C_x) = 0$ for every $x \in \Mcal$, hence $f$ is differentiable, and \eqref{eqn:tweedie} holds for all $x\in\Mcal$.
\end{lemma}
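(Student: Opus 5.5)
The plan is to differentiate $f(x)=\int_{\Mcal}p_{\theta,\sigma^2}(x)\,\diff G(\theta)$ under the integral sign and then read off the posterior expectation. For each fixed $\theta$ with $x\notin C_\theta$, the gradient formula \eqref{eqn:squared-dist-grad} gives
\[
\nabla_x p_{\theta,\sigma^2}(x)=p_{\theta,\sigma^2}(x)\,\frac{1}{\sigma^2}\Log_x\theta .
\]
Here I use the symmetry $y\in C_x\iff x\in C_y$, so that $x\notin C_\theta$ is the same as $\theta\notin C_x$. The normalizing term $A(\sigma^2,\theta)$ does not depend on $x$, so it contributes nothing to the gradient.

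Assuming the interchange is justified, we get
\[
\nabla f(x)=\sigma^{-2}\int_{\Mcal}\Log_x\theta\,p_{\theta,\sigma^2}(x)\,\diff G(\theta).
\]
Dividing by $f(x)>0$ (positive because $p_{\theta,\sigma^2}(x)\ge e^{-\diam(\Mcal)^2/(2\sigma^2)-A}>0$, with $A$ bounded by continuity in $\theta$ on compact $\Mcal$) recognises the right side as $\sigma^{-2}\Ebb[\Log_x\Theta\mid X=x]$. This uses Bayes' rule: the posterior has density $p_{\theta,\sigma^2}(x)/f(x)$ with respect to $G$. Note that $\Log_x\Theta$ is defined $G$-a.s. precisely because $G(C_x)=0$.

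The main obstacle is justifying differentiation under the integral along a curve $\gamma$ with $\gamma(0)=x$. I will control the difference quotient
\[
t^{-1}\bigl(p_{\theta,\sigma^2}(\gamma(t))-p_{\theta,\sigma^2}(x)\bigr)
\]
by dominated convergence with respect to $G$. Domination holds because $\theta\mapsto d^2(\cdot,\theta)$ is uniformly Lipschitz: $|d^2(y,\theta)-d^2(x,\theta)|\le 2\diam(\Mcal)\,d(x,y)$. Combined with the Lipschitz property of $u\mapsto e^{-u/(2\sigma^2)}$ on bounded sets, this bounds the quotient by a constant times $\|\gamma'(0)\|$ for small $t$, uniformly in $\theta$. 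Pointwise convergence of the quotient to $\langle p\,\sigma^{-2}\Log_x\theta,\gamma'(0)\rangle_x$ holds for every $\theta\notin C_x\cup\{x\}$ by \eqref{eqn:squared-dist-grad}. At $\theta=x$ the function $d^2(\cdot,x)$ is still differentiable with gradient $0=\Log_x x$, so that point also works. The exceptional set $C_x$ is $G$-null by hypothesis.

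One subtlety: differentiability in the sense of \eqref{eqn:Gradient} requires the limit to be linear in $\gamma'(0)$. This is immediate from the limit formula, since the result is the inner product with a fixed vector. Hence $f$ is differentiable at $x$, and $\nabla\log f=\nabla f/f$ yields \eqref{eqn:tweedie}.

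For the consequence, I use the fact recalled in Section~\ref{subsec:background} that $C_x$ has volume measure zero. If $G$ is absolutely continuous with respect to the volume form, then $G(C_x)=0$ for every $x$, and the first part applies at every $x\in\Mcal$.
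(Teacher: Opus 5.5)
Your proposal is correct and follows the paper's proof essentially step for step. Like the paper, you differentiate $f$ under the integral along a curve through $x$, and dominate the difference quotients uniformly in $\theta$ using the uniform Lipschitz bound on $d^2(\cdot,\theta)$ together with a bound on $e^{-A(\sigma^2,\theta)}$. You then get pointwise convergence from $G(C_x)=0$ and recognize the posterior density $p_{\theta,\sigma^2}(x)/f(x)$. You also make explicit two points the paper leaves implicit: the symmetry $\theta\notin C_x \iff x\notin C_\theta$ needed to invoke \eqref{eqn:squared-dist-grad}, and the strict positivity of $f(x)$. These tighten the argument rather than change its route.
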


\begin{proof}[Sketch of proof of Lemma~\ref{lem:Tweedie}]
    For any constant-speed geodesic $\gamma:[0,1]\to\Mcal$ with $\gamma(0) = x$ and $\gamma'(0) = v$, differentiate under the integral and use the chain rule to obtain (see~\eqref{eqn:Gradient} and~\eqref{eqn:squared-dist-grad})
    \begin{align*}
        \lim_{t\to 0}\frac{f(\gamma(t))-f(x)}{t} &= \lim_{t\to 0}\int_{\Mcal}\frac{p_{\theta,\sigma^2}(\gamma(t))-p_{\theta,\sigma^2}(x)}{t}\,\diff G(\theta) \\
        &= \int_{\Mcal}
        \left\langle
            \sigma^{-2}\Log_x(\theta)\,p_{\theta,\sigma^2}(x),\,v
        \right\rangle_x
        \diff G(\theta) \\
        &= \left\langle
            \sigma^{-2}\int_{\Mcal}\Log_x(\theta)\,p_{\theta,\sigma^2}(x)\,\diff G(\theta),\,v
        \right\rangle_x.
    \end{align*}
    As this holds for all $v\in\Tan_x(\Mcal)$, we see that $f$ is differentiable at $x$, with gradient equal to
    \[
        \nabla f(x)=\sigma^{-2}\int_{\Mcal}\Log_x(\theta)\,p_{\theta,\sigma^2}(x)\,\diff G(\theta).
    \]
    Multiplying this expression by $\sigma^2/f(x)$ gives
    \[
        \sigma^2\nabla_x\log f(x)
        =
        \frac{\int_{\Mcal}\Log_x(\theta)\,p_{\theta,\sigma^2}(x)\,\diff G(\theta)}{f(x)},
    \]
    and we recognize on the right side that $q_x(\theta):=p_{\theta,\sigma^2}(x)/f(x)$ is the density, with respect to $G$, of the conditional distribution of $\Theta$ given $\{X=x\}$; this finishes the proof.
\end{proof}

Although $\Mcal=\Rbb^m$ is non-compact and hence does not satisfy our hypotheses, it is useful to note that the conclusion of Lemma~\ref{lem:Tweedie} in this Euclidean setting coincides with the usual Tweedie-Eddington formula
\begin{equation*}
    \Ebb[\Theta\,|\,X=x] = x + \sigma^2\nabla_x \log f(x),
\end{equation*}
since we have $\Log_x(\theta) = \theta-x$.

One key difference between Riemannian Gaussian mixture densities and Euclidean Gaussian mixture densities is that the former can be uniformly lower bounded; we show this in the following result, where ess inf and ess sup denote the essential infimum and essential supremum, with respect to the volume form of $\Mcal$, respectively.

\begin{lemma}\label{lem:f-lower-bd}
    If $\Mcal$ is a compact connected Riemannian manifold and $G$ has a density $g$ with respect to the volume form, then we have 
    \begin{equation*}
        \inf_{x\in\Mcal}f(x)\ge c_{\Mcal}\,\underset{\theta\in\Mcal}{\textnormal{ess}\inf}\,g(\theta)\qquad \textnormal{ and }\qquad\sup_{x\in\Mcal}f(x)\le C_{\Mcal} \,\underset{\theta\in\Mcal}{\textnormal{ess}\sup}\,g(\theta)
    \end{equation*}
    for all $\sigma^2\le(\diam(\Mcal))^2$, where $c_{\Mcal},C_{\Mcal}>0$ are constants depending only on $\Mcal$.
\end{lemma}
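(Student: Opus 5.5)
Write $f(x)=\int_{\Mcal}p_{\theta,\sigma^2}(x)\,g(\theta)\,\diff\theta$. Since $g$ is essentially bounded between its essential infimum and supremum, it suffices to bound the integral $\int_{\Mcal}p_{\theta,\sigma^2}(x)\,\diff\theta$ above and below uniformly in $x$ and in $\sigma^2\le(\diam(\Mcal))^2$. Writing $Z(\sigma^2,\theta):=e^{A(\sigma^2,\theta)}=\int_{\Mcal}e^{-d^2(y,\theta)/2\sigma^2}\diff y$, we have
\[
\int_{\Mcal}p_{\theta,\sigma^2}(x)\,\diff\theta=\int_{\Mcal}\frac{e^{-d^2(x,\theta)/2\sigma^2}}{Z(\sigma^2,\theta)}\,\diff\theta .
\]
The plan is to show there are constants $0<a_{\Mcal}\le b_{\Mcal}$ with $a_{\Mcal}\sigma^m\le Z(\sigma^2,\theta)\le b_{\Mcal}\sigma^m$ for all $\theta$ and all $\sigma\le D:=\diam(\Mcal)$. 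Applying this with $\theta$ replaced by $x$ then gives $\int e^{-d^2(x,\theta)/2\sigma^2}\diff\theta\asymp\sigma^m$. Since $d$ is symmetric, this is the same integral as $Z(\sigma^2,x)$. Taking ratios yields
\[
a_{\Mcal}/b_{\Mcal}\le\int p_{\theta,\sigma^2}(x)\,\diff\theta\le b_{\Mcal}/a_{\Mcal},
\]
which proves the lemma with $c_{\Mcal}=a_{\Mcal}/b_{\Mcal}$ and $C_{\Mcal}=b_{\Mcal}/a_{\Mcal}$.

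To bound $Z$ uniformly, I will use volume comparison for geodesic balls on a compact manifold. There are constants $0<v_{\Mcal}\le V_{\Mcal}$ with $v_{\Mcal}r^m\le\vol(B(\theta,r))\le V_{\Mcal}r^m$ for all $\theta\in\Mcal$ and $0<r\le D$. The upper bound follows from Bishop–Gromov with the Ricci lower bound $-(m-1)\kappa$ (bounded by compactness), giving $\vol(B(\theta,r))\le V_{-\kappa}(r)\le C r^m$ on $[0,D]$. The lower bound comes from Bishop–Gromov monotonicity: $\vol(B(\theta,r))/V_{-\kappa}(r)\ge\vol(\Mcal)/V_{-\kappa}(D)$, and since $V_{-\kappa}(r)\ge\omega_m r^m$ this gives $\vol(B(\theta,r))\ge c\,r^m$ uniformly.

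With these ball estimates the bounds on $Z$ are routine. For the lower bound, restrict to $B(\theta,\sigma)$, where the integrand is at least $e^{-1/2}$; since $\sigma\le D$, this gives $Z\ge e^{-1/2}v_{\Mcal}\sigma^m$. For the upper bound, use the layer-cake formula:
\[
Z=\int_0^\infty \vol\{y:e^{-d^2(y,\theta)/2\sigma^2}>t\}\,\diff t ,
\]
or equivalently decompose into shells $B(\theta,(k+1)\sigma)\setminus B(\theta,k\sigma)$. Bounding $\vol(B(\theta,r))\le V_{\Mcal}\min(r,D)^m\le V_{\Mcal}r^m$ for all $r>0$ gives
\[
Z\le\sum_{k\ge0}V_{\Mcal}(k+1)^m\sigma^m e^{-k^2/2}=b_{\Mcal}\sigma^m .
\]

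The main obstacle is the uniform two-sided ball-volume estimate, specifically uniformity of the lower bound in $\theta$ up to radius $D$. I would handle it via Bishop–Gromov as above. Alternatively, compactness gives a uniform injectivity radius $i_0>0$, on which normal coordinates have volume density uniformly comparable to $1$; for $r\in[i_0,D]$ one then uses $\vol(B(\theta,r))\ge\vol(B(\theta,i_0))\ge c\,i_0^m\ge c(i_0/D)^m r^m$. The rest of the argument is bookkeeping.
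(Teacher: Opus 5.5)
Your proof is correct, and its skeleton matches the paper's. You pull out $\textnormal{ess}\inf g$ and $\textnormal{ess}\sup g$, and you show $e^{A(\sigma^2,\theta)}\asymp\sigma^m$ uniformly in $\theta$ for $\sigma\le\diam(\Mcal)$; this is the paper's Lemma~\ref{lem:normalizing-const}. You then bound $\int_{\Mcal}e^{-d^2(x,\theta)/2\sigma^2}\diff\theta$ from both sides.

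The difference is the tool for these two-sided estimates. The paper works in normal coordinates. It uses that exponential domains satisfy $B_r(0)\subseteq D_x\subseteq B_R(0)$ uniformly, and that the Jacobian $j_x(v)$ is bounded above on $D_x$ and below on a small ball $B_\varepsilon(0)$. It then compares with Euclidean Gaussian integrals: for the upper bound it extends to all of $\Tan_x(\Mcal)$, and for the lower bound it restricts to $B_\varepsilon(0)$ and uses monotonicity of $\Pbb(\|\Ncal(0,\sigma^2I_m)\|\le\varepsilon)$ in $\sigma$. The paper also redoes this computation at $x$ for the second integral, rather than observing, as you do, that by symmetry of $d$ it equals $e^{A(\sigma^2,x)}$.

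Your route replaces all of this with the volume bounds $v_{\Mcal}r^m\le\vol(B(\theta,r))\le V_{\Mcal}r^m$ for $0<r\le\diam(\Mcal)$, plus a shell decomposition. This is coordinate-free, and it makes clear that the lemma depends only on $m$-dimensional volume growth of balls. The cost is that you invoke Bishop--Gromov, a heavier theorem than what the paper needs: smoothness of $(x,v)\mapsto\Exp_x(v)$ and a positive injectivity radius. Your argument also yields only order-of-magnitude constants. The paper's normal-coordinate machinery, by contrast, is reused later for sharp Gaussian comparisons, such as the asymptotic constant in Lemma~\ref{lem:integral-limit} and the moment bounds in Proposition~\ref{prop:mux-sigmax-estimates}.

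There are two harmless technicalities. You use $\vol(B(\theta,D))=\vol(\Mcal)$, and you use $\vol(B(\theta,r))\le V_{\Mcal}\min(r,D)^m$ for $r>D$. For both, either note that metric spheres are null, or run the monotonicity up to some radius $R>D$ and enlarge the constants. Your fallback via a uniform injectivity radius is essentially the paper's method.
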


We will also need the following uniform bound on the gradient of a Riemannian mixture density.

\begin{lemma}\label{lem:f-diff-1}
    If $\Mcal$ is a compact connected Riemannian manifold and $G$ has a density $g$ with respect to the volume form, we have
    \begin{equation*}
        \sup_{x\in\Mcal}\|\nabla f(x)\|_x \le C_{\Mcal}\sigma^{-1}\|g\|_{L^{\infty}(\Mcal)}
    \end{equation*}
    for all $\sigma^2\le(\diam(\Mcal))^2$, where $C_\Mcal>0$ depends only on $\Mcal$.
\end{lemma}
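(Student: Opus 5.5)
We start from the gradient formula obtained in the proof of Lemma~\ref{lem:Tweedie}. Since $G$ has a density, $G(C_x)=0$ for every $x$, so
\[
    \nabla f(x)=\sigma^{-2}\int_{\Mcal}\Log_x(\theta)\,p_{\theta,\sigma^2}(x)\,g(\theta)\,\diff\theta .
\]
Using $\|\Log_x\theta\|_x=d(x,\theta)$ and the triangle inequality, we get
\[
    \|\nabla f(x)\|_x\le \sigma^{-2}\|g\|_{L^\infty(\Mcal)}\int_{\Mcal}d(x,\theta)\exp\Bigl(-\frac{d^2(x,\theta)}{2\sigma^2}-A(\sigma^2,\theta)\Bigr)\diff\theta .
\]
It therefore suffices to show that this integral is at most $C_{\Mcal}\sigma$, uniformly in $x\in\Mcal$ and $\sigma^2\le\diam(\Mcal)^2$. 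We treat the numerator and the normalizing constant $e^{-A}$ separately.

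\emph{Step 1: lower bound on the normalizer.} We claim $e^{A(\sigma^2,\theta)}\ge c_{\Mcal}\sigma^m$ uniformly in $\theta$. By compactness there is $r_0>0$, below the injectivity radius, such that on $\{v:\|v\|_\theta\le r_0\}$ the Jacobian of $\Exp_\theta$ lies in $[1/2,2]$, uniformly in $\theta$. Then
\[
    e^{A}\ge \tfrac12\int_{\|v\|\le \min(r_0,\sigma)}e^{-\|v\|^2/2\sigma^2}\,\diff v\ge c\,\min(r_0,\sigma)^m .
\]
Since $\sigma\le\diam(\Mcal)$, we have $\min(r_0,\sigma)\ge(r_0/\diam(\Mcal))\,\sigma$, which gives the claim. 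This is presumably the same estimate used for Lemma~\ref{lem:f-lower-bd}.

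\emph{Step 2: upper bound on the numerator.} We claim $\int_{\Mcal}d(x,\theta)e^{-d^2(x,\theta)/2\sigma^2}\diff\theta\le C_{\Mcal}\sigma^{m+1}$. We use polar coordinates about $x$, writing the volume in terms of the distance function from $x$. Bishop--Gromov volume comparison, with a lower Ricci bound $-(m-1)\kappa$ available by compactness, gives $\vol(B(x,r))\le C_{\Mcal} r^m$ for all $r\le\diam(\Mcal)$ and all $x$; the constant is uniform because $r$ is bounded. A layer-cake (integration by parts) argument then gives
\[
    \int d\,e^{-d^2/2\sigma^2}=\int_0^{\diam}\bigl(-\tfrac{\diff}{\diff r}(re^{-r^2/2\sigma^2})\bigr)\vol(B(x,r))\,\diff r .
\]
To avoid the sign change of that derivative, we instead bound $re^{-r^2/2\sigma^2}\le \sup_{s\ge r}se^{-s^2/2\sigma^2}$ and split into the dyadic shells $\{k\sigma\le d<(k+1)\sigma\}$. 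This gives
\[
    \le\sum_k (k+1)\sigma\, e^{-k^2/2}\,C(k+1)^m\sigma^m=C'_{\Mcal}\sigma^{m+1}.
\]

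Combining Steps 1 and 2 gives $\|\nabla f(x)\|_x\le \sigma^{-2}\|g\|_\infty C\sigma^{m+1}/(c\sigma^m)=C_{\Mcal}\sigma^{-1}\|g\|_{L^\infty(\Mcal)}$, uniformly in $x$. The main obstacle is making the volume and Jacobian bounds uniform over base points. This uniformity follows from compactness together with Bishop--Gromov (or continuity of the metric in normal coordinates), so we would state it as a short auxiliary geometric lemma shared with Lemma~\ref{lem:f-lower-bd}.
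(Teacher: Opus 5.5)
Your proof is correct, and its skeleton matches the paper's. You use the same gradient formula from Lemma~\ref{lem:Tweedie} and the same triangle-inequality reduction. Your Step~1 lower bound $e^{A(\sigma^2,\theta)}\gtrsim_{\Mcal}\sigma^m$ for $\sigma\le\diam(\Mcal)$ is exactly what the paper proves as Lemma~\ref{lem:normalizing-const}.

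The genuine difference is in how you bound the numerator $\int_{\Mcal}d(x,\theta)e^{-d^2(x,\theta)/2\sigma^2}\diff\theta\lesssim_{\Mcal}\sigma^{m+1}$.
\begin{itemize}
\item The paper pulls this integral back to $\Tan_x(\Mcal)$ via normal coordinates on an exponential domain $D_x\subseteq B_R(0)$. It bounds the Jacobian $j_x(v)$ uniformly over $\{(x,v):v\in D_x\}$ (Lemma~\ref{lem:jac-det-bdd}), enlarges the domain to all of $\Tan_x(\Mcal)$, and reads off the Euclidean Gaussian moment $\sigma\,\Ebb\|Z\|$.
\item You instead use only polynomial volume growth, $\vol(B(x,r))\le C_{\Mcal}r^m$, together with a decomposition into shells $\{k\sigma\le d<(k+1)\sigma\}$. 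These shells are arithmetic rather than dyadic, but the resulting sum converges either way.
\end{itemize}
Your route is coordinate-free and would work verbatim on any space with uniform polynomial volume growth. It never needs the fact that $\Exp_x$ parametrizes $\Mcal$ up to a null set. The paper's route reuses the normal-coordinate machinery it needs elsewhere in the appendix and yields an explicit Gaussian constant.

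Bishop--Gromov is heavier than necessary here. Normal coordinates give $\vol(B(x,r))\lesssim r^m$ for $r$ below a uniform injectivity-radius bound $r_0$. For $r\ge r_0$ you can simply use $\vol(B(x,r))\le\vol(\Mcal)\le(\vol(\Mcal)/r_0^m)\,r^m$. This also explicitly covers the shells with $(k+1)\sigma>\diam(\Mcal)$, which your ``for all $r\le\diam(\Mcal)$'' statement leaves implicit.
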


\subsection{Oracle-Level Denoising}\label{subsec:oracle}

In this subsection we analyze the risks of the oracle-level denoisers $\delta_{\N},\delta_{\B}$, and $\delta_{\T}$, in the asymptotic regime where $\sigma^2\to 0$, for any compact connected Riemannian manifold.
Our main results show (Proposition~\ref{prop:naive-risk}) that the risk of $\delta_{\N}$ is asymptotically equivalent to $m\sigma^2$, and (Theorem~\ref{thm:uncond-main}) that the risks of $\delta_{\B}$ and $\delta_{\T}$ differ by at most $O(\sigma^5)$.
The proofs of all results (and some auxiliary results needed along the way) are provided in Appendix~\ref{subsec:proofs-oracle}.

To begin, we consider the na\"ive denoiser $\delta_{\N}(x) = x$ which is implicitly used in works that do not use techniques from empirical Bayes.
In the general setting, we can describe the risk of $\delta_{\N}$ as follows.

\begin{proposition}\label{prop:naive-risk}
    If $\Mcal$ is a compact connected Riemannian manifold, then $R(\delta_{\N},\sigma^2) \sim m\sigma^2$ as $\sigma^2\to 0$.
\end{proposition}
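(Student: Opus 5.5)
The risk of the na\"ive denoiser is
\[
R(\delta_{\N},\sigma^2)=\int_{\Mcal}\Ebb_{\theta}[d^2(X,\theta)]\,\diff G(\theta),\qquad \Ebb_\theta[d^2(X,\theta)]=\frac{\int_{\Mcal}d^2(x,\theta)e^{-d^2(x,\theta)/2\sigma^2}\diff x}{\int_{\Mcal}e^{-d^2(x,\theta)/2\sigma^2}\diff x}.
\]
The plan is to show that $\sigma^{-2}\Ebb_\theta[d^2(X,\theta)]\to m$ uniformly in $\theta\in\Mcal$, and then integrate against $G$. Uniformity makes the conclusion hold for arbitrary $G$, with no density assumption.

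To analyze the ratio, I would use normal coordinates at $\theta$. By compactness, the injectivity radius satisfies $\iota:=\inf_\theta \mathrm{inj}(\theta)>0$. Fix $0<r<\iota$ and split $\Mcal$ into the ball $B(\theta,r)$ and its complement.

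On $B(\theta,r)$, write $x=\Exp_\theta(v)$ with $\|v\|_\theta<r$. Then $d(x,\theta)=\|v\|_\theta$ and $\diff x=J_\theta(v)\,\diff v$, where $J_\theta$ is the Jacobian of the exponential map. This Jacobian is smooth jointly in $(\theta,v)$, satisfies $J_\theta(0)=1$, and $|J_\theta(v)-1|\le C\|v\|^2$ uniformly in $\theta$, again by compactness. Substituting $v=\sigma u$ turns the two local integrals into
\[
\sigma^{m}\int_{\|u\|<r/\sigma}e^{-\|u\|^2/2}J_\theta(\sigma u)\,\diff u
\]
and $\sigma^{m+2}$ times the same integral with an extra factor $\|u\|^2$. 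Dominated convergence (the domination is $\sup J\cdot e^{-\|u\|^2/2}$) sends these to $(2\pi)^{m/2}\sigma^m$ and $m(2\pi)^{m/2}\sigma^{m+2}$ respectively. The Jacobian bound gives an explicit error of relative size $O(\sigma^2)$, uniform in $\theta$.

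On the complement $\Mcal\setminus B(\theta,r)$, both integrands are bounded by $\diam(\Mcal)^2\,\vol(\Mcal)\,e^{-r^2/2\sigma^2}$. This is $o(\sigma^{m+2})$ uniformly in $\theta$.

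Dividing the two local expansions gives $\Ebb_\theta[d^2(X,\theta)]=m\sigma^2(1+o(1))$ uniformly in $\theta$. Integrating over $G$ then yields $R(\delta_{\N},\sigma^2)\sim m\sigma^2$.

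The only delicate step is the uniformity in $\theta$ of the normal-coordinate estimates. This needs both the positive lower bound on the injectivity radius and the joint smoothness of $(\theta,v)\mapsto J_\theta(v)$ on the compact set $\{\|v\|\le r\}$. Both are standard for compact manifolds, and the joint smoothness follows from the smoothness of $(x,v)\mapsto\Exp_x(v)$ cited in Section~\ref{subsec:background}.
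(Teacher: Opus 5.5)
Your proposal is correct and is essentially the paper's argument: the paper deduces the result from Lemma~\ref{lem:integral-limit} with $k=2$ (uniformly in $\theta$, then integrated against $G$), which is proved exactly as you propose. That proof uses normal coordinates at $\theta$, a small ball where the Jacobian is uniformly close to $1$, and an exponentially negligible complement. The only difference is cosmetic: you obtain the normalizing-constant asymptotics by the same computation with $k=0$, whereas the paper cites Pennec's result, so your version is slightly more self-contained.
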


The preceding result shows that the risk of $\delta_{\N}$ is equivalent to its risk in the Euclidean setting $\Rbb^m$; intuitively speaking, this comes from the fact that, for small $\sigma^2>0$, a Riemannian Gaussian distribution is well-approximated by a Euclidean Gaussian distribution in the tangent space.

Next we introduce the oracle-level denoisers of interest whose analysis will occupy the remainder of this subsection.
That is, we define the \textit{oracle Bayes denoiser} as
\begin{equation}\label{eqn:def-deltaB}
    \delta_{\B}(x)\in \underset{\vartheta\in\Mcal}{\arg\min}\,\Ebb[d^2(\vartheta,\Theta)\,|\,X=x]
\end{equation}
and the \textit{oracle tangential Bayes denoiser} as
\begin{equation}\label{eqn:def-deltaT}
    \delta_{\T}(x):=\Exp_x\left(\Ebb[\Log_x\Theta\,|\,X=x]\right)
\end{equation}
for all $x\in\Mcal$.
As we explained in the Introduction, one can view $\delta_{\T}(x)$ as a one-step approximation of $\delta_{\B}(x)$, by initializing Riemannian gradient descent at the output of the na\"ive denoiser $\delta_{\N}(x)= x$; also, by Lemma~\ref{lem:Tweedie}, the oracle tangential Bayes denoiser can be written as $\delta_{\T}(x) =\Exp_x(\sigma^2\nabla_x\log f(x))$, which depends only on the marginal density of $X$ and hence can be approximated in a fully data-driven manner.

\begin{example}[homogeneous $\Mcal$, uniform $G$]\label{ex:oracle-uniform}
    If $\Mcal$ is a compact connected homogeneous Riemannian manifold and $G$ is proportional to the volume form (i.e., the uniform distribution), then the denoisers $\delta_{\N},\delta_{\B}$, and $\delta_{\T}$ are all equal.
    To see $\delta_{\B} = \delta_{\N}$, note that the conditional distribution of $\Theta$ given $\{X=x\}$ is a Riemannian Gaussian distribution with location parameter $x$ and scale parameter $\sigma$, hence $\delta_{\B}(x) = x$ by definition.
    To see $\delta_{\T} = \delta_{\N}$, note that the marginal distribution of $X$ is uniform, hence $\delta_{\T}(x) = \Exp_x(\sigma^2\cdot 0) = x$.
    This behavior is distinct from the Euclidean case, where the na\"ive denoiser cannot be written as the Bayes denoiser with respect to any $G$.
    In fact, this argument shows that $\delta_{\N}$, being the minimizer of a posterior expected loss, is admissible; this somewhat contrasts the recent work \cite{SteinFrechet} on the James-Stein estimator in spaces of non-positive curvature.
\end{example}

Next we turn to a comparison of the risks of $\delta_{\T}$ and $\delta_{\B}$, in the regime where $\sigma^2\to 0$.
Since $\delta_{\T}$ directly involves the random variable $\Ebb[\Log_X\Theta\,|\,X]$, we must first develop some auxiliary results for conditional distributions of this random variable and some related quantities.

\begin{proposition}\label{prop:mux-sigmax-estimates}
Suppose that $\Mcal$ is a compact connected Riemannian manifold and that $G$ has a strictly positive, continuously differentiable density with respect to the volume form. Then we have
\begin{equation*}
\sup_{x\in\Mcal} \big\|\Ebb\left[\Log_x\Theta\,|\,X=x\right]\big\|_x = O(\sigma^2)\qquad \textnormal{ and }\qquad \sup_{x\in\Mcal}\big\|\Cov\left(\Log_x\Theta\,|\,X=x\right)\big\|_{x,2} = O(\sigma^2)
\end{equation*}
as $\sigma^2\to 0$.
\end{proposition}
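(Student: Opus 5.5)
Both bounds come from writing the posterior in normal coordinates at $x$ and comparing it with a Euclidean Gaussian of covariance $\sigma^2 I$ in $\Tan_x(\Mcal)$; the prior density contributes a drift of order $\sigma^2\|\nabla g\|/g$ and curvature contributes errors of order $\sigma^2$ or smaller.

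\textbf{Normal coordinates.} Write $g$ for the density of $G$. Since $g>0$ is continuous on compact $\Mcal$, we have $0<\alpha\le g\le\beta$ and $\|\nabla g\|\le L$. Fix $x$ and let $q_x(\theta)\propto g(\theta)\exp(-d^2(x,\theta)/(2\sigma^2))$ be the posterior density with respect to $\diff\theta$. The normalizing factor $e^{-A(\sigma^2,\theta)}$ depends on $\theta$ and must be kept, but it is smooth in $\theta$ for small $\sigma$. By the usual small-$\sigma$ expansion, $e^{-A(\sigma^2,\theta)}\asymp \sigma^{-m}(1+O(\sigma^2))$ with $\theta$-gradient $O(1)$. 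So it can be absorbed into an effective prior $\tilde g$ that is bounded above and below, with gradient bounded uniformly in $\sigma$.

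Let $r_0>0$ be smaller than the injectivity radius of $\Mcal$, which is positive by compactness. Split $\Mcal$ into $B(x,r_0)$ and its complement. On the complement the weight is at most $e^{-r_0^2/(2\sigma^2)}$. The normalizer is at least of order $\sigma^m$, by the same computation as in Lemma~\ref{lem:f-lower-bd}, and $\|\Log_x\theta\|\le\diam(\Mcal)$. Hence the far region contributes $O(\sigma^{-m}e^{-r_0^2/2\sigma^2})=o(\sigma^k)$ for every $k$, uniformly in $x$, to both the mean and the second moment.

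On $B(x,r_0)$, set $v=\Log_x\theta$. Then $d^2(x,\theta)=\|v\|^2$, and the volume element is $J_x(v)\,\diff v$ with $J_x(v)=1+O(\|v\|^2)$ smooth, uniformly in $x$ by compactness. The local posterior density in $v$ is therefore proportional to $h_x(v)e^{-\|v\|^2/2\sigma^2}$, where $h_x(v)=\tilde g(\Exp_x v)J_x(v)$. This $h_x$ is bounded below by a constant times $\alpha$, and it satisfies $h_x(v)=h_x(0)+\langle \nabla h_x(0),v\rangle+o(\|v\|)$ with $\|\nabla h_x(0)\|$ uniformly bounded.

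\textbf{Rescaling.} Substitute $v=\sigma u$. For the mean,
\[
\Ebb[\Log_x\Theta\mid X=x]=\sigma\,\frac{\int_{\|u\|<r_0/\sigma} u\,h_x(\sigma u)e^{-\|u\|^2/2}\diff u}{\int_{\|u\|<r_0/\sigma} h_x(\sigma u)e^{-\|u\|^2/2}\diff u}+o(\sigma^k).
\]
By symmetry the zeroth-order term $h_x(0)$ integrates against $u$ to zero. The remainder $|h_x(\sigma u)-h_x(0)|\le C\sigma\|u\|$ then gives a numerator of $O(\sigma)$ and a denominator bounded below. The mean is therefore $O(\sigma\cdot\sigma)=O(\sigma^2)$.

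For the covariance, it suffices to bound the second moment, because $\Cov\preceq\Ebb[vv^\top\mid X=x]$ in the operator norm. The same rescaling gives $\sigma^2\int uu^\top(\cdots)/\int(\cdots)=O(\sigma^2)$, since $h_x$ is bounded above and below. All constants depend only on $\alpha$, $\beta$, $L$, the $C^1$ bounds of $J$ and $A$, and $r_0$, so the estimates hold uniformly in $x$.

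\textbf{Main obstacle.} The delicate step is the uniform first-order control of $h_x$. For this I need a uniform Lipschitz bound on $v\mapsto\tilde g(\Exp_x v)J_x(v)$ over $\|v\|<r_0$ and $x\in\Mcal$. The hardest part is the $\theta$-regularity of $A(\sigma^2,\theta)$ uniformly in $\sigma$. I would try first to prove that $\sigma^m e^{-A(\sigma^2,\theta)}$ has $\theta$-gradient $O(1)$. The approach is to write $e^{A}$ in normal coordinates at $\theta$ as $\sigma^m\int e^{-\|u\|^2/2}J_\theta(\sigma u)\diff u$ plus an exponentially small far-field term, and then differentiate $J_\theta$ in $\theta$ using the smoothness of $(\theta,v)\mapsto\Exp_\theta v$.
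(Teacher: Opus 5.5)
Your proposal is correct and follows the paper's proof essentially step for step: normal coordinates at $x$, an exponentially small far field, a first-order expansion of the prior-times-Jacobian whose zeroth-order term cancels by symmetry, a denominator bounded below, and a second-moment bound for the covariance. The one real difference is that you keep the $\theta$-dependence of $e^{-A(\sigma^2,\theta)}$ and require a uniform Lipschitz bound on $\sigma^m e^{-A(\sigma^2,\cdot)}$, whereas the paper replaces $e^{-A(\sigma^2,\Exp_x v)}$ by $(\sqrt{2\pi}\sigma)^{-m}$ ``up to constants'' inside a vector-valued integral, where two-sided bounds alone do not preserve the symmetric cancellation --- so your extra step is genuinely needed for non-homogeneous $\Mcal$, and the simplest way to get it is to differentiate under the integral as in Lemma~\ref{lem:Tweedie}, $\nabla_\theta e^{A(\sigma^2,\theta)}=\sigma^{-2}\int_{\Mcal}\Log_\theta(y)\,e^{-d^2(y,\theta)/(2\sigma^2)}\diff y$, and rerun your symmetric-cancellation estimate with $g\equiv 1$, which avoids differentiating $J_\theta$ in $\theta$ and tracking a moving far-field region.
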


Although $\Mcal=\Rbb$ is non-compact and hence does not satisfy our hypotheses, it is useful to note that we can do explicit calculations in this Euclidean setting in order to confirm that the rates in Proposition~\ref{prop:mux-sigmax-estimates} are correct.
That is, suppose $\Mcal= \Rbb$ and $G=\Ncal(0,\tau^2)$, so that the conjugate prior structure allows us to directly calculate 
\begin{equation*}
    \Ebb[\Theta-x\,|\,X=x] = -\frac{\sigma^2}{\tau^2 + \sigma^2}x = O(\sigma^2)\qquad\textnormal{and}\qquad\Cov(\Theta-x\,|\,X=x) = \frac{\tau^2\sigma^2}{\tau^2 + \sigma^2} = O(\sigma^2).
\end{equation*}
    If we restrict $\Mcal$ to be a bounded subset of $\Rbb$, then these bounds also hold uniformly over $x$.
    
In the previous result we established that $\Ebb[\Log_X\Theta\,|\,X]$ is of order $O(\sigma^2)$; the next result shows that $\Log_X(\delta_{\B}(X))$ is also of order $O(\sigma^2)$, but that the difference $\Ebb[\Log_X\Theta\,|\,X]-\Log_X(\delta_{\B}(X))$ is of the smaller order $O(\sigma^4)$.

\begin{proposition}\label{prop:wB-estimates}
Suppose that $\Mcal$ is a compact connected Riemannian manifold and that $G$ has a strictly positive, continuously differentiable density with respect to the volume form. Then we have
\begin{equation*}
    \sup_{x\in\Mcal} \big\|\Log_x(\delta_{\B}(x)) \big\|_x = O(\sigma^2) \qquad \textnormal{ and }\qquad \sup_{x\in\Mcal}\big\|\Ebb[\Log_x\Theta\,|\,X=x]-\Log_x(\delta_{\B}(x)) \big\|_x = O(\sigma^4)
\end{equation*}
as $\sigma^2\to 0$.
\end{proposition}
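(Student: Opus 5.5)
Write $\mu_x:=\Ebb[\Log_x\Theta\,|\,X=x]$, $b_x:=\delta_{\B}(x)$ and $w_x:=\Log_x(b_x)$; write $\Pi_x$ for the posterior of $\Theta$ given $\{X=x\}$, with density $q_x(\theta)=p_{\theta,\sigma^2}(x)/f(x)$ relative to $G$. The plan has three steps: first show that $b_x$ is close to $x$ uniformly in $x$, then linearize the first-order optimality condition of the Fr\'echet mean around $x$, and finally bootstrap a crude bound on $\|w_x\|_x$ into the sharp ones.

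\textbf{Step 1 (localization).} First I show $\sup_x d(x,b_x)\to 0$. Since $G$ has a density bounded above and below, Lemma~\ref{lem:f-lower-bd} and the Gaussian factor $\exp(-d^2/2\sigma^2)$ give a tail bound: for each $r>0$, $\Pi_x(d(\Theta,x)>r)\le C\sigma^{-m}e^{-r^2/(2\sigma^2)}$ uniformly in $x$. I then compare Fr\'echet functionals. At $\vartheta=x$ we have $\Phi_x(x)=\Ebb[d^2(x,\Theta)|X=x]=O(\sigma^2)$, by Proposition~\ref{prop:mux-sigmax-estimates} (trace of covariance plus squared mean). If instead $d(\vartheta,x)>2r$, the posterior mass within distance $r$ of $x$ forces $\Phi_x(\vartheta)\ge r^2(1-o(1))$. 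Hence $d(x,b_x)\le 2r$ for small $\sigma$. Refining the same comparison gives $d(x,b_x)=O(\sigma)$, so $b_x$ lies well inside the injectivity radius, uniformly in $x$. By compactness, the injectivity radius of $\Mcal$ is bounded below by some $r_0>0$.

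\textbf{Step 2 (fixed-point equation and Taylor expansion).} The posterior has a density and $b_x$ is a minimizer, so first-order optimality gives $\int\Log_{b_x}(\theta)\,d\Pi_x(\theta)=0$. I split $\Mcal$ into $B(x,r_0/4)$ and its complement. The complement contributes $O(e^{-c/\sigma^2})$, since $\|\Log\|\le\diam(\Mcal)$. On the ball I use the standard smooth expansion, valid uniformly for $x,b,\theta$ in a ball of radius below the injectivity radius:
\[
\Log_b\theta = P_{x\to b}\bigl(\Log_x\theta - \Log_x b + E(x,\theta,b)\bigr),\qquad \|E\|\le C\,d(x,b)\,\bigl(d(x,b)+\|\Log_x\theta\|_x\bigr)\,\|\Log_x\theta\|_x ,
\]
where $P_{x\to b}$ is parallel transport. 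The key structural fact is that the curvature correction vanishes when $\theta=x$ and when $b=x$, so it is bilinear-type. I average this expansion over $\Pi_x$ and use $\Ebb[\|\Log_x\Theta\|^2|X=x]=O(\sigma^2)$, which follows from Proposition~\ref{prop:mux-sigmax-estimates}. This yields the recursive inequality
\[
\|\mu_x-w_x\|_x\le C\|w_x\|_x\bigl(\|w_x\|_x+\sigma\bigr)\sigma + C\|w_x\|_x^2 + O(e^{-c/\sigma^2}) .
\]

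\textbf{Step 3 (bootstrap).} From Step 1, $\|w_x\|=o(1)$. Combining this with $\|\mu_x\|=O(\sigma^2)$ from Proposition~\ref{prop:mux-sigmax-estimates}, the inequality gives $\|w_x\|\le O(\sigma^2)+o(1)\|w_x\|$, hence $\|w_x\|=O(\sigma^2)$ uniformly. Substituting back gives $\|\mu_x-w_x\|\le C\sigma^2\cdot\sigma^2+C\sigma^4=O(\sigma^4)$.

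\textbf{Main obstacle.} The hard part is the error term $E$. A naive bound $\|E\|\le C d(x,b)\|\Log_x\theta\|$ would give only $O(\sigma^3)$. I need the quadratic structure: the leading curvature term is the Jacobi-field correction $\tfrac13 R(\cdot,\cdot)\cdot$, which is trilinear in $(w,v,v)$ for $v=\Log_x\theta$. Its posterior mean is then bounded by $\|w\|\,\Ebb\|v\|^2=O(\sigma^2)\cdot O(\sigma^2)$. So the estimate I would prove first is $\|E\|\le C\,d(x,b)\,\|\Log_x\theta\|^2+C\,d(x,b)^2\|\Log_x\theta\|$, obtained from a second-order Taylor expansion of $(b,\theta)\mapsto P_{b\to x}\Log_b\theta$ in normal coordinates at $x$. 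The constants must be uniform, and compactness of the set $\{d(x,b),d(x,\theta)\le r_0/4\}$ gives that.
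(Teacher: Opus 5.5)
Your proof is correct. Its architecture is the paper's: localize $\delta_{\B}(x)$ near $x$, linearize the first-order condition around $x$, then bootstrap with Proposition~\ref{prop:mux-sigmax-estimates}. Both technical ingredients, however, differ.

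\emph{Localization.} The paper argues that the posterior converges weakly to the point mass at $x$ and invokes continuity of Fr\'echet means. That is a pointwise qualitative statement, and its uniformity in $x$ is only asserted via compactness. Your comparison $\Phi_x(b_x)\le\Phi_x(x)=O(\sigma^2)$ is elementary and uniform in $x$. Combined with $d^2(x,b_x)\le 2\Phi_x(x)+2\Phi_x(b_x)$, it immediately gives $d(x,b_x)=O(\sigma)$.

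\emph{Linearization.} The paper differentiates in $w$ the averaged sectional-curvature expansion~\eqref{eqn:sq-dist-Taylor}. This produces a recursive bound containing $\|w_{\B x}\|\|\mu_x\|\|w_{\B x}-\mu_x\|$, which must be iterated as a geometric series. You instead transport the vector equation $\int\Log_{b_x}\theta\,\diff\Pi_x(\theta)=0$ to $\Tan_x(\Mcal)$ and bound the error $E$. This yields $\|\mu_x-w_x\|_x\lesssim\|w_x\|_x\sigma^2+\|w_x\|_x^2\sigma$ up to an exponentially small term, with no iteration needed.

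Your route also makes explicit two points the paper treats informally. First, the expansion is only local, so the posterior tail must be split off. Second, no $O(\cdot)$ remainder ever has to be differentiated in $w$.

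The price is the extra lemma on $E$. Its crux is the vanishing of $\partial_v\partial_wE$ at the origin. This follows from $\partial_wE|_{w=0}=I-\nabla_x^2\tfrac12 d^2(x,\theta)$ together with $\nabla_x^2\tfrac12 d^2(x,\theta)=I+O(d^2(x,\theta))$. The identity $E(v,w)=\int_0^1\!\int_0^1\partial_v\partial_wE(sv,tw)[v,w]\,\diff s\,\diff t$ then gives your bound with uniform constants.

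In exchange, the paper's route reuses the same expansion it needs for Theorem~\ref{thm:uncond-main}, and it keeps the curvature terms explicit in $\mu_x$ and $\Sigma_x$.

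One small point: the stray $C\|w_x\|_x^2$ in your recursive inequality is not produced by averaging $E$. It is harmless, since it is $O(\sigma^4)$ after the bootstrap.
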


    While Proposition~\ref{prop:mux-sigmax-estimates} is a relatively straightforward calculation using normal coordinates (see~\eqref{eq:normal-coor} in Appendix~\ref{subsec:proofs-oracle}), Proposition~\ref{prop:wB-estimates} is more difficult because $\delta_{\B}(x)$, for $x\in\Mcal$, is only implicitly defined via \eqref{eqn:def-deltaB}.
    Nonetheless, the proof proceeds by observing that the first-order optimality conditions for $\delta_{\B}(x)$ provide a recursive bound on the distance between $\Log_x(\delta_{\B}(x))$ and $\Ebb[\Log_x\Theta\,|\,X=x]$, which upon iterating and combining with Proposition~\ref{prop:mux-sigmax-estimates} implies the desired estimates.

Now we combine Proposition~\ref{prop:mux-sigmax-estimates} and Proposition~\ref{prop:wB-estimates} to prove the following, which is our main result comparing the risks of $\delta_{\B}$ and $\delta_{\T}$.
The proof (given in Appendix~\ref{subsec:proofs-oracle}) reveals that the Taylor expansions of the squared geodesic distances $d^2(\theta,\delta_{\B}(x))$ and $d^2(\theta,\delta_{\T}(x))$ agree up to second-order, i.e., the denoiser $\delta_{\T}$ captures precisely the impact of the sectional curvature of $\Mcal$ on $\delta_{\B}$.

\begin{theorem}
\label{thm:uncond-main}
Suppose that $\Mcal$ is a compact connected Riemannian manifold and that $G$ has a strictly positive, continuously differentiable density with respect to the volume form. Then we have $0 \le R(\delta_{\T},\sigma^2) - R(\delta_{\B},\sigma^2) = O(\sigma^5)$ as $\sigma^2\to 0$.
\end{theorem}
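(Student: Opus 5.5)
The lower bound $0\le R(\delta_{\T},\sigma^2)-R(\delta_{\B},\sigma^2)$ is immediate from the definition \eqref{eqn:def-deltaB}. Write $\Phi_x(\vartheta):=\Ebb[d^2(\vartheta,\Theta)\,|\,X=x]$, so that $R(\delta)=\Ebb[\Phi_X(\delta(X))]$. Since $\delta_{\B}(x)$ minimizes $\Phi_x$ pointwise, $\Phi_x(\delta_{\T}(x))-\Phi_x(\delta_{\B}(x))\ge 0$ for every $x$, and integrating against $f$ gives the claim. For the upper bound I will prove the pointwise estimate
\[
\sup_{x\in\Mcal}\bigl(\Phi_x(\delta_{\T}(x))-\Phi_x(\delta_{\B}(x))\bigr)=O(\sigma^5)
\]
and then integrate over $x$.

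Fix $x$ and set $b:=\delta_{\B}(x)$ and $t:=\delta_{\T}(x)$. Let $\gamma:[0,1]\to\Mcal$ be a minimizing geodesic from $b$ to $t$.

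The first ingredient is a one-sided second-order expansion. On a compact manifold the sectional curvature is bounded below, so $\vartheta\mapsto d^2(\vartheta,\theta)$ is semiconcave with a constant $C_\Mcal$ uniform in $\theta$. Concretely, for every $\theta\notin C_b$,
\[
d^2(\gamma(1),\theta)\le d^2(b,\theta)-2\langle \Log_b\theta,\gamma'(0)\rangle_b+C_\Mcal\, d^2(b,t).
\]
This is the standard Hessian comparison, and it holds across the cut locus because there $d^2$ only has concave kinks. Because $G$ has a density, the posterior $q_x$ also has a density, so it assigns zero mass to $C_b$. We may therefore integrate the inequality against the posterior and get
\[
\Phi_x(t)\le\Phi_x(b)-2\langle\Ebb[\Log_b\Theta\,|\,X=x],\gamma'(0)\rangle_b+C_\Mcal d^2(b,t).
\]
The posterior having a density also means that $b$ satisfies the first-order optimality condition $\Ebb[\Log_b\Theta\,|\,X=x]=0$, as recalled in Section~\ref{subsec:background}. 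The middle term therefore vanishes, leaving $\Phi_x(t)-\Phi_x(b)\le C_\Mcal\,d^2(b,t)$.

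The second ingredient converts the tangent-space estimates into a bound on $d(b,t)$. By Proposition~\ref{prop:mux-sigmax-estimates} and Proposition~\ref{prop:wB-estimates}, both $\Log_x b$ and $\Ebb[\Log_x\Theta\,|\,X=x]=\Log_x t$ have norm $O(\sigma^2)$ uniformly in $x$, and their difference has norm $O(\sigma^4)$. For small $\sigma$ both vectors lie in a ball of radius below the injectivity radius of the compact manifold. On such a ball $(x,v)\mapsto\Exp_x(v)$ is uniformly Lipschitz, so $d(b,t)\le L_\Mcal\|\Log_x t-\Log_x b\|_x=O(\sigma^4)$ uniformly in $x$. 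Consequently the excess is $O(\sigma^8)$ pointwise, which is in particular $O(\sigma^5)$, and integrating against $f$ finishes the argument.

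The step I expect to need the most care is the uniform semiconcavity inequality: it must hold for all $\theta$, including near the cut locus of $b$, with a constant independent of $\theta$, $b$ and $\sigma$. If a clean reference is unavailable, my fallback is a direct split. For $\theta$ within a fixed small radius $r$ of $x$, I would use a genuine Taylor expansion with bounded Hessian. For the remaining $\theta$, I would bound $|d^2(t,\theta)-d^2(b,\theta)|\le 2\diam(\Mcal)\,d(b,t)$ and use the posterior tail bound $\Pbb(d(x,\Theta)>r\,|\,X=x)\le e^{-c/\sigma^2}$ from the normal-coordinate computations behind Proposition~\ref{prop:mux-sigmax-estimates}. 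Either route yields a rate at least as good as $O(\sigma^5)$.
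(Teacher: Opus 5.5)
Your proof is correct, and it takes a genuinely different route from the paper's.

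\emph{The paper's route.} It expands both conditional risks around the measurement $x$, using the fourth-order expansion \eqref{eqn:sq-dist-Taylor} in normal coordinates at $x$. Each of $\Ebb[d^2(\Theta,\delta_{\T}(X))\,|\,X=x]$ and $\Ebb[d^2(\Theta,\delta_{\B}(X))\,|\,X=x]$ is written as $\trace(\Sigma_x)$, plus $O(\sigma^6)$ mean and curvature terms (controlled by Propositions~\ref{prop:mux-sigmax-estimates} and~\ref{prop:wB-estimates}), plus a fifth-order remainder $\Ebb[O(\|\Log_x\Theta\|_x^5)\,|\,X=x]$. These two remainders are never compared with each other. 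The difference is therefore bounded only by $\Ebb[d^5(X,\Theta)]\sim c_m\sigma^5$ via Lemma~\ref{lem:integral-limit}, and that is where the exponent $5$ comes from.

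\emph{Your route.} You expand around $\delta_{\B}(x)$ instead, where first-order optimality removes the linear term. You replace the local Taylor expansion by the global semiconcavity of $d^2(\cdot,\theta)$, which follows from Hessian comparison under the lower sectional-curvature bound and holds across cut loci. This gives $\Phi_x(\vartheta)-\Phi_x(\delta_{\B}(x))\le C_{\Mcal}\,d^2(\vartheta,\delta_{\B}(x))$ for every $\vartheta\in\Mcal$, with no remainder term and no need to track the posterior tail.

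\emph{What each buys.} You combine this with the $O(\sigma^4)$ estimate of Proposition~\ref{prop:wB-estimates}. Both arguments need that estimate essentially; the $O(\sigma^2)$ bounds alone would give only $O(\sigma^4)$, which falls short of $O(\sigma^5)$. The result is an excess of $O(\sigma^8)$, strictly sharper than the stated $O(\sigma^5)$. As a byproduct you also get $R(\delta)-R(\delta_{\B})\le C_{\Mcal}\,\Ebb[d^2(\delta(X),\delta_{\B}(X))]$ for every denoiser $\delta$. In return, the paper's computation gives an explicit second-order description of each risk separately: both equal $\trace(\Sigma_x)$ to leading order, with the curvature correction displayed. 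It also avoids the semiconcavity fact. Since your argument rests on that fact, cite it (e.g.\ the Hessian-of-distance comparison in \cite{VillaniOldAndNew}) rather than treating it as folklore.
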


\subsection{Sobolev Norm Bounds}\label{subsec:sobolev}

In the Euclidean setting, the key to the fast rates of convergence of empirical Bayes denoising is that the density $f$ of a Gaussian mixture is always infinitely differentiable (in fact, analytic), regardless of $G$.
In the Riemannian setting, it turns out that smoothness of the density $f$ of a Riemannian Gaussian mixture is more delicate. In this subsection, we will develop some sufficient conditions on the geometry of $\Mcal$ and the density $g$ of $G$ which imply a bound on a suitable Sobolev norm of $f$.
The proofs of all results in this subsection are contained in Appendix~\ref{subsec:proofs-sobolev}.

To state the desired estimates precisely, let $\{\phi_{j}\}_{j=0}^{\infty}$ and $\{\lambda_{j}\}_{j=0}^{\infty}$ denote the eigenfunctions and eigenvalues of the Laplace-Beltrami operator $\Delta$ on $\Mcal$.
For any function $h:\Mcal\to\Cbb$, we define $h^{\ast}(j):=\int_{\Mcal}h(x)\overline{\phi_j(x)}\diff x$ for all $j=0,1,\ldots$, which are just its coefficients in the basis $\{\phi_j\}_{j=0}^{\infty}$, i.e., we have $h = \sum_{j=0}^{\infty}h^{\ast}(j)\phi_j$ with the sum convergent in $L^2(\Mcal)$.
For $r\ge 0$, we also define
\begin{equation}\label{eqn:def-Sobolev}
    \|h\|_{\Hcal^r(\Mcal)}^2:= \sum_{j=0}^{\infty}(1 + \lambda_{j})^r|h^{\ast}(j)|^2,
\end{equation}
called the \textit{Sobolev norm of order $r$} of $h$.

To understand these Sobolev norms, it is helpful to consider some special cases.
For example, for $r=0$ we see from Parseval's identity that $\|h\|_{\Hcal^0(\Mcal)} = \|h\|_{L^2(\Mcal)}$.
A more interesting example is $r=1$, where integration by parts allows us to compute
\begin{align*}
    \int_{\Mcal}\|\nabla h(x)\|_x^2\diff x &= \int_{\Mcal}\overline{h(x)}\Delta h(x)\diff x \\
    &= \int_{\Mcal} \left(\sum_{j=0}^{\infty}\overline{h^{\ast}(j)}\,\overline{\phi_j(x)}\right)\left(\sum_{j=0}^{\infty}h^{\ast}(j)\lambda_j\phi_j(x)\right)\diff x = \sum_{j=0}^{\infty}\lambda_j |h^{\ast}(j)|^2;
\end{align*}
hence 
\begin{equation}\label{eqn:Sobolev-1-equiv}
    \|h\|^2_{\Hcal^1(\Mcal)} = \|h\|_{L^2(\Mcal)}^2+\int_{\Mcal}\|\nabla h(x)\|^2_x\diff x.
\end{equation}
More generally, for integer $r$, the spectral norm in~\eqref{eqn:def-Sobolev}
is equivalent to the usual Sobolev norm involving weak derivatives up to order
$r$; in particular, $\|h\|_{\Hcal^r(\Mcal)}<\infty$
if and only if all weak derivatives of $h$ up to order $r$ belong to the
corresponding tensor-valued $L^2$ spaces.
The definition in~\eqref{eqn:def-Sobolev}, however, makes sense for all
$r\ge0$, including non-integer values; see \cite[Chapter~2]{Aubin}.

Our empirical Bayes results will require the assumption that we have finite Sobolev norm $\|f\|_{\Hcal^r(\Mcal)}<\infty$ for some $r> 1$, and the resulting rates of convergence will depend explicitly on $\|f\|_{\Hcal^r(\Mcal)}$ and $r$; we will see that large values of $r$ are desirable, since the resulting rates approach the parametric rate of convergence $n^{-1}$ as $r\to\infty$.
To understand the assumption $\|f\|_{\Hcal^r(\Mcal)}<\infty$ for some $r> 1$, we give the following result (which is an immediate consequence of Lemma~\ref{lem:f-diff-1}) that states that this condition is nearly satisfied in general.

\begin{lemma}\label{lem:general-Sobolev}
    If $\Mcal$ is a compact connected Riemannian manifold and $G$ has a density $g\in L^\infty(\Mcal)$ with respect to the volume form, then we have $\|f\|_{\Hcal^1(\Mcal)}\le C_{\Mcal}\sigma^{-1}\|g\|_{L^{\infty}(\Mcal)}$, for all $\sigma^2\le(\diam(\Mcal))^2$, where $C_\Mcal>0$ is a constant depending only on $\Mcal$.
\end{lemma}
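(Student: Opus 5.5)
The plan is to use the identity \eqref{eqn:Sobolev-1-equiv}, which gives
\[
\|f\|^2_{\Hcal^1(\Mcal)} = \|f\|^2_{L^2(\Mcal)} + \int_{\Mcal}\|\nabla f(x)\|_x^2\,\diff x,
\]
and to bound each term separately by the sup-norm estimates already available. Before using this identity we must check that $f$ is regular enough. By Lemma~\ref{lem:Tweedie}, $f$ is differentiable everywhere. By Lemma~\ref{lem:f-diff-1}, its gradient is uniformly bounded. Hence $f$ is Lipschitz with respect to $d$, so it lies in $\Hcal^1(\Mcal)$. The integration by parts behind \eqref{eqn:Sobolev-1-equiv} is valid for Lipschitz functions, for instance by approximating $f$ with smooth functions, or simply by taking the weak-derivative characterization of $\Hcal^1$ as the definition.

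\textbf{Gradient term.} Lemma~\ref{lem:f-diff-1} gives $\sup_x\|\nabla f(x)\|_x\le C_\Mcal\sigma^{-1}\|g\|_{L^\infty(\Mcal)}$ for $\sigma^2\le(\diam\Mcal)^2$. Integrating this over $\Mcal$ yields
\[
\int_\Mcal\|\nabla f\|_x^2\,\diff x\le \vol(\Mcal)\,C_\Mcal^2\,\sigma^{-2}\|g\|^2_{L^\infty(\Mcal)}.
\]

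\textbf{$L^2$ term.} Lemma~\ref{lem:f-lower-bd} gives $\sup_x f(x)\le C_\Mcal\|g\|_{L^\infty(\Mcal)}$. Therefore
\[
\|f\|^2_{L^2(\Mcal)}\le \vol(\Mcal)\,C_\Mcal^2\,\|g\|^2_{L^\infty(\Mcal)}.
\]
Since $\sigma\le\diam(\Mcal)$, we have $1\le (\diam\Mcal)^2\sigma^{-2}$, so this term is also at most a constant depending only on $\Mcal$ times $\sigma^{-2}\|g\|^2_{L^\infty(\Mcal)}$.

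Adding the two bounds and taking square roots gives $\|f\|_{\Hcal^1(\Mcal)}\le C'_\Mcal\,\sigma^{-1}\|g\|_{L^\infty(\Mcal)}$, with $C'_\Mcal$ depending only on $\vol(\Mcal)$, $\diam(\Mcal)$ and the constants of the two earlier lemmas.

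The only point needing care is the justification of \eqref{eqn:Sobolev-1-equiv} for $f$, which is Lipschitz but need not be smooth near cut loci. I would handle it by noting that Lipschitz functions on a compact manifold belong to $W^{1,\infty}\subset W^{1,2}$. For such functions the spectral $\Hcal^1$ norm equals the weak-derivative norm, since $\Delta$ is the Friedrichs extension of the Dirichlet form. In addition, the weak gradient agrees almost everywhere with the pointwise gradient from Lemma~\ref{lem:Tweedie}, by Rademacher's theorem applied in charts.
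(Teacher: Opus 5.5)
Your proposal is correct and follows the paper's proof essentially verbatim. Both expand via \eqref{eqn:Sobolev-1-equiv}, bound the gradient term with Lemma~\ref{lem:f-diff-1} and the $L^2$ term with Lemma~\ref{lem:f-lower-bd}, and then absorb the latter using $\sigma\le\diam(\Mcal)$. Your additional argument fills in a regularity step the paper leaves implicit: $f$ is Lipschitz, so it lies in $\Hcal^1(\Mcal)$, and \eqref{eqn:Sobolev-1-equiv} then applies with the weak gradient equal to the pointwise one.
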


In the Riemannian setting, we will require further assumptions on $\Mcal$ and $g$ in order to upgrade the finite Sobolev norm of order 1 into a finite Sobolev norm of order $r>1$.
This is a distinct behavior from the Euclidean setting, where Gaussian location mixture models always have finite Sobolev norms of all orders.
For instance, we give the following result in the Euclidean setting which implies that $f$ has finite Sobolev norms of all orders, although where the Sobolev space $\Hcal^r(\Rbb^m)$ needs to be defined slightly differently since $\Mcal=\Rbb^m$ is non-compact (and its Laplace-Beltrami operator does not have a discrete spectrum).

\begin{lemma}\label{lem:Sobolev-est-Euclidean}
    If $\Mcal=\Rbb^m$ and $G$ has a density $g\in L^{\infty}(\Rbb^m)$ with respect to Lebesgue measure, then for all $r\in\Nbb$ we have
    \begin{equation*}
        \sup_{x\in\Rbb^m}\|\nabla^r f(x)\|_{r} \le C_{m,r}\sigma^{-r}\|g\|_{L^{\infty}(\Rbb^m)}
    \end{equation*}
    for all $\sigma^2>0$ where $C_{m,r}>0$ is a constant that depends only on $m$ and $r$. 
\end{lemma}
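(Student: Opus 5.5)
In the Euclidean case $f = g * \varphi_\sigma$, where $\varphi_\sigma(y) = (2\pi\sigma^2)^{-m/2}\exp(-\|y\|^2/(2\sigma^2))$ is the Gaussian density; in the notation of the paper, $p_{\theta,\sigma^2}(x)=\varphi_\sigma(x-\theta)$ and $A(\sigma^2,\theta)$ does not depend on $\theta$. The plan is to move all derivatives onto the kernel. We differentiate under the integral,
\[
\nabla^r f(x)=\int_{\Rbb^m}\nabla^r\varphi_\sigma(x-\theta)\,g(\theta)\,\diff\theta ,
\]
and then apply Hölder's inequality:
\[
\|\nabla^r f(x)\|_r\le \|g\|_{L^\infty(\Rbb^m)}\int_{\Rbb^m}\|\nabla^r\varphi_\sigma(y)\|_r\,\diff y .
\]
Differentiation under the integral sign is justified by dominated convergence. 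On a neighborhood of $x$, each derivative of $\varphi_\sigma(\cdot-\theta)$ is bounded by a polynomial times a Gaussian in $\theta$, uniformly in $x$, and this bound is integrable against the bounded function $g$.

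The remaining task is the scaling computation. Write $\varphi_\sigma(y)=\sigma^{-m}\varphi_1(y/\sigma)$, so the chain rule gives
\[
\nabla^r\varphi_\sigma(y)=\sigma^{-m-r}(\nabla^r\varphi_1)(y/\sigma).
\]
Changing variables $u=y/\sigma$ cancels the factor $\sigma^{-m}$ against the Jacobian $\sigma^m$, leaving
\[
\int\|\nabla^r\varphi_\sigma\|_r = \sigma^{-r}\int\|\nabla^r\varphi_1(u)\|_r\,\diff u .
\]
The derivative $\nabla^r\varphi_1(u)$ is a tensor whose entries have the form $H(u)\varphi_1(u)$, with $H$ a multivariate Hermite-type polynomial of degree $r$. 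Its operator norm is therefore bounded by the sum of the absolute values of the $m^r$ entries, and each entry is integrable. Setting $C_{m,r}:=\int\|\nabla^r\varphi_1(u)\|_r\,\diff u<\infty$, we obtain the claim, with $C_{m,r}$ depending only on $m$ and $r$ and the bound holding for every $\sigma^2>0$.

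No step is a real obstacle. The only care needed is the justification of differentiation under the integral and the finiteness of $C_{m,r}$, and both follow from Gaussian tails dominating polynomial growth.
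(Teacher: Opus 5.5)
Your proposal is correct and follows essentially the same route as the paper: differentiate $r$ times under the integral, pull out $\|g\|_{L^\infty(\Rbb^m)}$, and rescale to extract $\sigma^{-r}$. The paper writes $\nabla^r\varphi_1$ explicitly as a Hermite polynomial times the Gaussian, so its constant is $\Ebb[\|H_r(Z)\|_r]$, which is exactly your $\int\|\nabla^r\varphi_1(u)\|_r\,\diff u$. Your justification of differentiation under the integral is, if anything, more explicit than the paper's.
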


The crucial aspect of Euclidean spaces $\Mcal=\Rbb^m$ which permits a straightforward argument for the preceding result is that the functional $x\mapsto \|x-\theta\|^2$ has uniformly bounded $r$th order derivatives for all $r\ge 2$.
In the Riemannian setting such bounds can fail dramatically, because the $r$th order derivatives of the functional $x\mapsto d^2(x,\theta)$ for $r\ge 2$ may explode as $x$ approaches the cut locus $C_{\theta}$ of $\theta$.
We illustrate this with the following basic calculation.

\begin{example}[Hessian of distance squared on $\mathbb{S}^2$]\label{ex:sphere}
    Fix $\theta\in\mathbb{S}^2$ and $\sigma^2=1$ and  use \cite{PennecHessian} to calculate $\nabla_x^2d^2(x,\theta)$, (where we emphasize the differentiation with respect to the $x$ variable); to write the expression explicitly, we represent $\mathbb{S}^2 = \{x\in\Rbb^3:\|x\|=1\}$ and we have
    \begin{equation}\label{eqn:sphere-Hessian-sq-dist}
        \frac{1}{2}\nabla_x^2d^2(x,\theta) = \frac{\Log_x\theta (\Log_x\theta)^{\top}}{d^2(x,\theta)} + h(d(x,\theta))\left(I-xx^{\top}-\frac{\Log_x\theta (\Log_x\theta)^{\top}}{d^2(x,\theta)}\right)
    \end{equation}
    where $h(t) = t\cos(t)/\sin(t)$ for $0< t\le \pi$ with $h(0):=1$ by continuity.
    Observe that, as $x\to C_{\theta} = \{-\theta\}$, we have $h(d(x,\theta))\to -\infty$ and hence $\|\nabla_x^2d^2(x,\theta)\|_{x,2}\to \infty$.
\end{example}

Consequently, in the Riemannian setting, the Sobolev smoothness of $f$ relies more delicately on both the geometry of $\Mcal$ and the properties of $g$; in the rest of this subsection we give some results of this form.

For instance, most of the examples of interest in this work are Lie groups (e.g., $\mathbb{S}^1,\mathbb{S}^1\times\mathbb{S}^1$, and $\SO(3)$), where the convolution structure allows us to differentiate $f$ without directly differentiating the Riemannian logarithm.
This allows us to show the following, namely that $f$ inherits the same smoothness as $g$; we write $C^r(\Mcal)$ for the set of real functions on $\Mcal$ which are $r$-times continuously differentiable.

\begin{lemma}\label{lem:Sobolev-Lie}
    If $\Mcal$ is a compact connected Lie group and $G$ possesses a density $g\in C^r(\Mcal)$ with respect to the volume form, then we have $\|f\|_{\Hcal^r(\Mcal)} \le C_{\Mcal,r}(\|g\|_{L^{\infty}(\Mcal)}+\sup_{\theta\in\Mcal}\|\nabla^rg(\theta)\|_{\theta,r})$ for all $\sigma^2>0$, where $C_{\Mcal,r}>0$ is a constant depending only on $\Mcal$ and $r$.
\end{lemma}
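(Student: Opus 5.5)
The plan is to write $f$ as a group convolution and move all derivatives onto $g$. With a bi-invariant metric we have $d(x,\theta)=d(e,\theta^{-1}x)$. Hence $p_{\theta,\sigma^2}(x)=k_\sigma(\theta^{-1}x)$, where
\[
k_\sigma(y):=\exp\bigl(-d^2(e,y)/(2\sigma^2)-A(\sigma^2)\bigr).
\]
The normalizer $A$ does not depend on $\theta$ by homogeneity. Since Haar measure is invariant under inversion and translation, substituting $u=\theta^{-1}x$ gives
\[
f(x)=\int_{\Mcal}k_\sigma(\theta^{-1}x)g(\theta)\,\diff\theta=\int_{\Mcal}k_\sigma(u)\,g(xu^{-1})\,\diff u .
\]
In this form $k_\sigma$ is a probability density, i.e.\ $\int k_\sigma=1$ and $k_\sigma\ge 0$, and $x$ appears only inside $g$. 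The nonsmooth function $d^2(e,\cdot)$ is therefore never differentiated.

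Next I differentiate under the integral. Fix an orthonormal basis $E_1,\dots,E_m$ of the Lie algebra and the corresponding left-invariant vector fields $\tilde E_i$. For a left-invariant field, $\tilde E_i[g(\cdot\,u^{-1})](x)=(\tilde E_i^{(u)} g)(xu^{-1})$, where the field is transported by right translation by $u^{-1}$. By bi-invariance this is $\mathrm{Ad}_u E_i$, again a unit vector with respect to the same norm. Iterating $r$ times,
\[
\tilde E_{i_1}\cdots\tilde E_{i_r}f(x)=\int k_\sigma(u)\,\bigl(\tilde F_{1}\cdots\tilde F_{r} g\bigr)(xu^{-1})\,\diff u,
\]
with unit left-invariant fields $\tilde F_j=\widetilde{\mathrm{Ad}_uE_{i_j}}$. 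Differentiating under the integral is justified because $g\in C^r$ on a compact manifold, so all these derivatives are bounded and continuous. Since $k_\sigma$ is a probability density, this gives
\[
\sup_x|\tilde E_{i_1}\cdots\tilde E_{i_r}f(x)|\le \sup_{\|F_j\|=1}\sup_\theta|\tilde F_1\cdots\tilde F_r g(\theta)|,
\]
and the bound holds uniformly in $\sigma$.

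Then I convert to the Sobolev norm. The iterated left-invariant derivatives $\tilde F_1\cdots\tilde F_r g$ are controlled by covariant derivatives of order at most $r$, since the difference is expressed through connection terms with constant coefficients. This gives a bound $C_{\Mcal,r}\sum_{s\le r}\sup\|\nabla^s g\|$. Intermediate orders are interpolated between $\|g\|_{L^\infty}$ and $\sup\|\nabla^r g\|$ on the compact manifold, for instance via Gagliardo--Nirenberg or a crude Taylor/compactness argument. The Laplace--Beltrami operator with bi-invariant metric equals $-\sum_i\tilde E_i^2$, a Casimir element, so for integer $r$ the norm $\|f\|_{\Hcal^r}$ is equivalent to $\sum_{s\le r}\sum\|\tilde E_{i_1}\cdots\tilde E_{i_s}f\|_{L^2}$. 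For even $r$ this follows immediately from $(1+\Delta)^{r/2}$ being a polynomial in the $\tilde E_i$. For odd $r$ I use integration by parts as in \eqref{eqn:Sobolev-1-equiv}. Each $L^2$ norm is at most $\vol(\Mcal)^{1/2}$ times the sup, which gives the claim.

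I expect the main obstacle to be this bookkeeping in the last step: the norm equivalence for odd $r$, and bounding the lower-order derivatives of $g$ using only $\|g\|_{L^\infty}$ and $\sup\|\nabla^r g\|$. The convolution step itself is clean. If the interpolation step is delicate, I would fall back on Fourier coefficients. For a Laplacian eigenfunction, $\lambda_j^{r}|f^*(j)|^2$ can be computed through $\Delta^{\lceil r/2\rceil}$ applied to $g$, using that convolution with the class function $k_\sigma$ commutes with $\Delta$ and that $\Delta$ is self-adjoint. This gives $\|f\|_{\Hcal^r}\le\|g\|_{\Hcal^r}$ directly. It would then remain to bound $\|g\|_{\Hcal^r}$ by $C(\|g\|_{L^2}+\|\nabla^r g\|_{L^2})$, which is the standard Sobolev equivalence on compact manifolds.
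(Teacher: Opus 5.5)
Your proposal is correct and shares the paper's key idea. You write $f$ as a $k_\sigma$-weighted average of right translates of $g$, so derivatives fall only on $g$ and the nonsmooth $d^2(e,\cdot)$ is never differentiated.

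Where you differ is the derivative bookkeeping. The paper works with the Levi-Civita connection directly. Because right translation $R_\theta$ is an isometry, $\nabla^r(g\circ R_\theta)(x)$ is the pullback of $\nabla^r g(x\theta)$ and has the same norm. Averaging therefore gives $\sup_x\|\nabla^r f(x)\|_{x,r}\le\sup_\theta\|\nabla^r g(\theta)\|_{\theta,r}$ at once. The paper then finishes with Lemma~\ref{lem:equiv-Sobolev-defs}, which involves only $\|h\|_{L^2}$ and the top-order term $\int_\Mcal\|\nabla^r h(x)\|_{x,r}^2\diff x$, together with $\|f\|_{L^2}\lesssim\|g\|_{L^\infty}$.

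This sidesteps both obstacles you anticipate. There is no conversion between $\tilde F_1\cdots\tilde F_r g$ and the $\nabla^s g$ through the $\tfrac12[X,Y]$ connection terms. There is also no separate interpolation of intermediate orders, since that is hidden inside Lemma~\ref{lem:equiv-Sobolev-defs}. Your frame-based version still works, only with more effort:
\begin{itemize}
\item the interpolation $\sup\|\nabla^s g\|\lesssim\|g\|_{L^\infty}+\sup\|\nabla^r g\|$ follows from a routine Arzel\`a--Ascoli contradiction argument;
\item the odd-$r$ norm equivalence follows from $\langle\Delta h,h\rangle=\sum_i\|\tilde E_i h\|_{L^2}^2$.
\end{itemize}

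Your fallback is a genuinely different and cleaner route. The operator $Th:=\int_\Mcal k_\sigma(u)\,h(\cdot\,u^{-1})\diff u$ is a probability average of right translations, which are isometries. Hence $T$ commutes with $\Delta$ and is an $L^2$-contraction on each eigenspace. This gives $\|P_\lambda f\|_{L^2}\le\|P_\lambda g\|_{L^2}$ for every eigenspace projection $P_\lambda$. It follows that $\|f\|_{\Hcal^r}\le\|g\|_{\Hcal^r}$ for every real $r\ge 0$ and every $\sigma^2>0$, and Lemma~\ref{lem:equiv-Sobolev-defs} applied to $g$ concludes.

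State this step eigenspace by eigenspace, as above, rather than through $\Delta^{\lceil r/2\rceil}$. Going through $\Delta^{\lceil r/2\rceil}$ would cost an extra derivative of $g$ when $r$ is odd.

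Finally, your bound $\|f\|_{L^\infty}\le\|g\|_{L^\infty}$ from $\int k_\sigma=1$ holds for all $\sigma^2>0$, as the statement requires. The paper instead appeals to Lemma~\ref{lem:f-lower-bd}, which is stated only for $\sigma^2\le(\diam(\Mcal))^2$.
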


The sphere $\mathbb{S}^2$ is not a Lie group so the previous result does not apply, but we can nonetheless use the explicit calculations from Example~\ref{ex:sphere} to say something general in the following.

\begin{lemma}\label{lem:sphere-estimate}
    If $\Mcal=\mathbb{S}^2$ and $G$ possesses a density $g\in L^\infty(\mathbb S^2)$ with respect to the volume form, then $\|f\|_{\Hcal^2(\Mcal)}\le C_{\Mcal} \sigma^{-2}\|g\|_{L^{\infty}(\Mcal)}$ for all $\sigma^2\le (\diam(\Mcal))^2$, where $C_\Mcal>0$ is a constant depending only on $\Mcal$. 
\end{lemma}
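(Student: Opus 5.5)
Since $\Delta$ is elliptic of order two on the compact manifold $\mathbb{S}^2$, the spectral $\Hcal^2$ norm is bounded by $\|f\|_{L^2}+\|\Delta f\|_{L^2}$ up to constants: indeed $(1+\lambda_j)^2\le 2(1+\lambda_j^2)$. So it suffices to bound $\|f\|_{L^2(\Mcal)}$ and $\|\Delta f\|_{L^2(\Mcal)}$. The first is at most $C\|g\|_{L^\infty}$ by Lemma~\ref{lem:f-lower-bd}. For the second, I will exploit the rotation invariance of $\mathbb{S}^2$. The normalizer $A(\sigma^2,\theta)=A(\sigma^2)$ does not depend on $\theta$, and $p_{\theta,\sigma^2}(x)=k_\sigma(d(x,\theta))$ with $k_\sigma(t)=e^{-t^2/(2\sigma^2)-A(\sigma^2)}$. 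Then $f(x)=\int k_\sigma(d(x,\theta))g(\theta)\diff\theta$ is a zonal convolution. By the Funk--Hecke theorem, $f^{\ast}$ on the $\ell$-th harmonic space equals $\hat k_\sigma(\ell)\,g^\ast$, where $\hat k_\sigma(\ell)=2\pi\int_0^\pi k_\sigma(t)P_\ell(\cos t)\sin t\,\diff t$. Hence $\Delta f=(\Delta_x k_\sigma(d(x,\cdot)))\ast g$. I plan to avoid Fourier multipliers, because $g$ is only $L^\infty$, and instead apply Young's inequality on the homogeneous space: $\|\Delta f\|_{L^2}\le \|\Delta_x p_{\theta,\sigma^2}\|_{L^1(\diff x)}\,\|g\|_{L^2}$, where the kernel is zonal so its $L^1$ norm is independent of $\theta$. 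This holds provided $\Delta_x p_{\theta,\sigma^2}$ is a genuine $L^1$ function, i.e.\ the distributional Laplacian has no singular part at the cut point $-\theta$.

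The main step is computing $\Delta_x p_{\theta,\sigma^2}$ explicitly using Example~\ref{ex:sphere}. Write $t=d(x,\theta)$ and $u=e^{-t^2/(2\sigma^2)}$. The chain rule gives
\begin{equation*}
\nabla^2_x u = u\left(\frac{1}{4\sigma^4}\nabla d^2\otimes\nabla d^2-\frac{1}{2\sigma^2}\nabla^2 d^2\right),
\end{equation*}
with $\|\nabla d^2\|=2t$. Taking the trace of \eqref{eqn:sphere-Hessian-sq-dist}, I get $\tfrac12\trace\nabla^2 d^2 = 1+h(t)$, where $h(t)=t\cot t$. Therefore
\begin{equation*}
|\Delta_x p_{\theta,\sigma^2}(x)|\lesssim e^{-A}u\left(\frac{t^2}{\sigma^4}+\frac{1+|t\cot t|}{\sigma^2}\right).
\end{equation*}
Integrating in geodesic polar coordinates with area element $2\pi\sin t\,\diff t$, the factor $t\cot t\cdot\sin t=t\cos t$ is bounded. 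So the apparent singularity at the cut locus $t=\pi$ is integrable, and this is exactly why the order-$2$ estimate holds on $\mathbb{S}^2$. I also need to check that no delta mass appears at $-\theta$. Since $x\mapsto u$ is a function of $t$ with $\partial_t u$ finite at $t=\pi$, the flux through a small circle around $-\theta$ is of order $\sin\epsilon\to0$, which rules it out. The same argument near $t=0$ is trivial because $d^2$ is smooth there.

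It remains to bound the resulting integral. Using $e^{-A}\asymp\sigma^{-2}$ for $\sigma^2\le\pi^2$, which follows from the Laplace-type lower bound implicit in Lemma~\ref{lem:f-lower-bd}, the substitution $t=\sigma s$ gives
\begin{equation*}
\|\Delta p\|_{L^1}\lesssim \sigma^{-2}\int_0^\pi e^{-t^2/2\sigma^2}\left(\frac{t^2}{\sigma^4}+\frac{1}{\sigma^2}\right)t\,\diff t\lesssim\sigma^{-2}.
\end{equation*}
Combined with $\|g\|_{L^2}\le\sqrt{4\pi}\,\|g\|_{L^\infty}$, this yields $\|f\|_{\Hcal^2}\le C\sigma^{-2}\|g\|_{L^\infty}$. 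I expect the main obstacle to be making the distributional-Laplacian step at the cut locus rigorous. If it gives trouble, my fallback is to pair with test eigenfunctions: integrate by parts, $\int f\,\Delta\phi_j=\int (\Delta_x p)\ast g\;\phi_j$, on $\mathbb{S}^2$ minus small caps around $\pm\theta$, and show that the boundary terms vanish as the caps shrink.
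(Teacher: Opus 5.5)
Your proof is correct. Its core estimate is the paper's: both use the Hessian formula \eqref{eqn:sphere-Hessian-sq-dist}, the observation that in geodesic polar coordinates the area element tames the cut-locus factor, $|t\cot t|\sin t=t|\cos t|$, and the bound $e^{-A}\lesssim\sigma^{-2}$ (Lemma~\ref{lem:normalizing-const}).

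The difference is in how this estimate becomes an $\Hcal^2$ bound. The paper differentiates twice under the integral to bound the full Hessian pointwise, $\sup_x\|\nabla^2 f(x)\|_{x,2}\lesssim\sigma^{-2}\|g\|_{L^\infty}$. It then converts this to the spectral norm via Lemma~\ref{lem:equiv-Sobolev-defs}, which is elliptic regularity. You keep only the trace. The inequality $(1+\lambda_j)^2\le 2(1+\lambda_j^2)$ together with $(\Delta f)^\ast(j)=\lambda_j f^\ast(j)$ bounds the spectral norm by $\|f\|_{L^2}+\|\Delta f\|_{L^2}$ directly, with no appeal to G{\aa}rding's inequality. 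Schur's test with $\sup_\theta\|\Delta_x p_{\theta,\sigma^2}\|_{L^1}$ then finishes.

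Your route also makes rigorous a step the paper passes over. The kernel $p_{\theta,\sigma^2}$ has a conical singularity at $-\theta$: its gradient $\sigma^{-2}\Log_x\theta\,p_{\theta,\sigma^2}(x)$ is discontinuous there. Moreover $|\Delta_x p_{\theta,\sigma^2}(x)|\asymp d(x,-\theta)^{-1}$ near $-\theta$, which is not square-integrable on $\mathbb{S}^2$, so $p_{\theta,\sigma^2}\notin\Hcal^2$. Exchanging two derivatives with the $\theta$-integral therefore needs justification, and your Green's-identity argument (boundary terms of order $\sin\epsilon$) supplies exactly this. After that, Fubini, justified by the uniform $L^1$ bound, gives $\int f\,\Delta\phi\,\diff x=\int\phi(x)\int\Delta_x p_{\theta,\sigma^2}(x)g(\theta)\,\diff\theta\,\diff x$ for smooth $\phi$. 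Taking $\phi=\overline{\phi_j}$ identifies the coefficients directly, so neither the Funk--Hecke detour nor your fallback is needed.

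In exchange, the paper's route, once the interchange is justified, yields a uniform bound on the full Hessian. That is more than this lemma requires.
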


Note the qualitative difference between Lemma~\ref{lem:Sobolev-Lie} and Lemma~\ref{lem:sphere-estimate}.
In Lemma~\ref{lem:Sobolev-Lie} we show that $f$ is at least as smooth as $g$, while in Lemma~\ref{lem:sphere-estimate} we show that $f$ gains some smoothness over that of $g$.
We do not expect either of these results to be sharp, as we show next in the special case of the circle $\Mcal=\mathbb{S}^1$.
In this special case, we show that $f$ gains two derivatives over that of $g$, which can be understood via standard Fourier analysis since the density $f$ of the Riemannian Gaussian mixture is exactly a convolution of $g$ with a kernel that is differentiable but not continuously differentiable (because of its behavior at the cut locus).

\begin{lemma}\label{lem:Sobolev-bd-S1}
    If $\Mcal=\mathbb{S}^1$ and $G$ possesses a density $g\in\Hcal^s(\mathbb{S}^1)$ with respect to the volume form, then we have $c_{\sigma^2}\|g\|_{\Hcal^{s}(\mathbb{S}^1)}\le \|f\|_{\Hcal^{s+2}(\mathbb{S}^1)}\le C \sigma^{-2}\|g\|_{\Hcal^{s}(\mathbb{S}^1)}$ for all $s\ge 0$, where $C>0$ is a universal constant and $c_{\sigma^2}>0$ is a constant depending only on $\sigma^2$.
\end{lemma}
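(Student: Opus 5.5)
On $\mathbb{S}^1=\Rbb/2\pi\Zbb$ the density of the mixture is a convolution, so I will prove the lemma in Fourier space. Since the circle is homogeneous, $A(\sigma^2,\theta)=A(\sigma^2)$ does not depend on $\theta$. Hence $f=k_\sigma * g$ with the kernel
\[
k_\sigma(u)=e^{-A(\sigma^2)}\exp\!\big(-u^2/(2\sigma^2)\big),\qquad u\in[-\pi,\pi),
\]
extended $2\pi$-periodically. With the eigenfunctions $\phi_j$ taken to be the normalized complex exponentials $e^{iju}$, $j\in\Zbb$, and eigenvalues $j^2$, we get $f^{\ast}(j)=\sqrt{2\pi}\,k_\sigma^{\ast}(j)\,g^{\ast}(j)$. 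Then
\[
\|f\|^2_{\Hcal^{s+2}}=\sum_j (1+j^2)^{s}\big[(1+j^2)^2\,2\pi|k_\sigma^{\ast}(j)|^2\big]\,|g^{\ast}(j)|^2 ,
\]
so both inequalities reduce to two-sided bounds on the multiplier $m_\sigma(j):=(1+j^2)\sqrt{2\pi}\,|k^{\ast}_\sigma(j)|$. The upper bound needs $\sup_j m_\sigma(j)\le C\sigma^{-2}$, and the lower bound needs $\inf_j m_\sigma(j)\ge c_{\sigma^2}>0$.

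\textbf{Upper bound.} Integrate by parts twice on $[-\pi,\pi]$. The kernel $k_\sigma$ is continuous and periodic, so the first boundary term vanishes. However, $k_\sigma'(u)=-(u/\sigma^2)k_\sigma(u)$ jumps at the cut point $u=\pm\pi$, by the amount
\[
k_\sigma'(\pi^-)-k_\sigma'(-\pi^+)=-\frac{2\pi}{\sigma^2}\,k_\sigma(\pi).
\]
This produces
\[
j^2 k_\sigma^{\ast}(j)=\pm c\,\frac{2\pi}{\sigma^2}k_\sigma(\pi)(-1)^j+\big(k_\sigma''\big)^{\ast}(j),
\qquad k_\sigma''=\Big(\frac{u^2}{\sigma^4}-\frac1{\sigma^2}\Big)k_\sigma .
\]
Both terms are bounded uniformly in $j$ by $C\sigma^{-2}$. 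For the jump term, $k_\sigma(\pi)\le \sup k_\sigma\lesssim \sigma^{-1}$ (since $e^{-A}\asymp\sigma^{-1}$ for $\sigma\le\pi$), so it is at most $C\sigma^{-3}e^{-\pi^2/2\sigma^2}\le C\sigma^{-2}$. For the second term, $\|k_\sigma''\|_{L^1}\lesssim\sigma^{-2}$ by the Gaussian scaling $\int (u^2/\sigma^4+\sigma^{-2})\sigma^{-1}e^{-u^2/2\sigma^2}\,du$. Adding the trivial bound $|k_\sigma^{\ast}(j)|\le\|k_\sigma\|_{L^1}/\sqrt{2\pi}$ gives $m_\sigma(j)\le C\sigma^{-2}$. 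Large $\sigma$ requires a small check: with $\sigma^2$ arbitrary, $e^{-A}\le 1/(2\pi e^{-\pi^2/2\sigma^2})$, and the bound $C\sigma^{-2}$ still holds, or one may restrict to $\sigma^2\le\pi^2$ as in the other lemmas.

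\textbf{Lower bound.} This is the main obstacle: I must show $k_\sigma^{\ast}(j)\neq0$ for every $j$, with $|k_\sigma^{\ast}(j)|\gtrsim_\sigma j^{-2}$. For large $|j|$ the asymptotics come from the same integration by parts. Iterating once more, the smooth part contributes $O(j^{-3})$, since $k_\sigma''$ is $C^1$ on the open interval with bounded variation, giving $O(j^{-1})$ after one more integration by parts. The boundary jump term is exactly $(-1)^{j+1}(2\pi/\sigma^2)k_\sigma(\pi)/(\sqrt{2\pi}\,j^2)$ up to a normalizing constant, and it is nonzero because $k_\sigma(\pi)>0$. Hence $m_\sigma(j)\to (2\pi)k_\sigma(\pi)/\sigma^2\cdot c>0$, and $m_\sigma(j)\ge c'_\sigma$ for $|j|\ge J_\sigma$.

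For the finitely many $|j|<J_\sigma$ I need nonvanishing of
\[
\int_{-\pi}^{\pi}e^{-u^2/2\sigma^2}\cos(ju)\,du .
\]
I would prove this by writing the integral as a Gaussian expectation minus tails and showing it is positive. A cleaner route is to note that $e^{-u^2/2\sigma^2}$ restricted to $[0,\pi]$ is decreasing and log-concave. Moreover, $\sum_{\ell}(\text{its Fourier coefficient})$ can be expressed via the Pólya-type criterion: a convex-decreasing function on $[0,\pi]$ has positive cosine coefficients. That criterion does not apply directly, since the Gaussian is concave near $0$. As a fallback I would use the explicit formula
\[
\int_{-\pi}^{\pi}e^{-u^2/2\sigma^2}\cos(ju)\,du=\sqrt{2\pi}\,\sigma\, e^{-j^2\sigma^2/2}\,\mathrm{Re}\big[\mathrm{erf}\text{-term}\big]
\]
and verify positivity. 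If nonvanishing fails for some $j$, the lower bound as stated would be false, so I expect this positivity (for example via the identity $\int_{-\pi}^{\pi}e^{-u^2/2\sigma^2}\cos ju\,du=\int_{\Rbb}e^{-u^2/2\sigma^2}\cos ju\,du-2\int_\pi^\infty\ldots$, plus an integration-by-parts sign argument on the tail) to be the delicate step.

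Given positivity for all $j$ and the asymptotics, $c_{\sigma^2}:=\inf_j m_\sigma(j)>0$, which finishes the proof.
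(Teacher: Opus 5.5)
Your reduction to the multiplier $m_\sigma(j)=(1+j^2)\sqrt{2\pi}\,|k_\sigma^{\ast}(j)|$ and your upper bound coincide with the paper's argument. Both rest on two integrations by parts, the jump $-2\pi\sigma^{-2}k_\sigma(\pi)$ of $k_\sigma'$ at the cut point, and $\|k_\sigma''\|_{L^1}\lesssim\sigma^{-2}$, and they are fine for $\sigma^2\le\pi^2$. Drop the claim that the upper bound survives for arbitrary $\sigma$. Since $m_\sigma(0)=1$, uniform $g$ gives $\|f\|_{\Hcal^{s+2}}=\|g\|_{\Hcal^{s}}$, which contradicts $\|f\|_{\Hcal^{s+2}}\le C\sigma^{-2}\|g\|_{\Hcal^{s}}$ once $\sigma^2>C$. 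Note also that the paper's integration by parts only bounds $|K_\sigma^{\ast}(k)|$ from above and then asserts $\asymp$. So the paper offers no argument for the lower bound either.

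The genuine gap is the lower bound. The step you flag as delicate is not merely unproven but false. Your own asymptotics already show the tension. For even $j$ the cut-locus term has sign $(-1)^{j+1}=-1$. For fixed $j$ and small $\sigma$, however, the integral $\int_{-\pi}^{\pi}e^{-u^2/2\sigma^2}\cos(ju)\,du$ equals $\sqrt{2\pi}\,\sigma e^{-j^2\sigma^2/2}$ up to an $O(e^{-\pi^2/2\sigma^2})$ error, so it is positive. At $\sigma=\pi$, one integration by parts gives
\[
\int_{-\pi}^{\pi}e^{-u^2/(2\pi^2)}\cos(ju)\,du=\frac{2}{j}\int_0^1 te^{-t^2/2}\sin(j\pi t)\,dt<0\qquad\text{for even } j\ge 2.
\]
The sign holds because $te^{-t^2/2}$ is increasing on $[0,1]$, so each full period of $\sin(j\pi t)$ contributes negatively. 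By continuity in $\sigma$, for every even $j\ge2$ there is $\sigma_j\in(0,\pi)$ with $k_{\sigma_j}^{\ast}(j)=0$. For $j=2$ the integral is about $0.34$ at $\sigma=1$ and about $-0.11$ at $\sigma=2$.

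At $\sigma=\sigma_2$, the density $g=\frac{1}{2\pi}(1+\cos2\theta)$ produces the uniform marginal $f\equiv\frac{1}{2\pi}$. Then $\|f\|_{\Hcal^{s+2}}/\|g\|_{\Hcal^{s}}=(1+5^s/2)^{-1/2}\to0$ as $s\to\infty$. So, as you suspected, $\inf_j m_\sigma(j)=0$ there and no constant depending only on $\sigma^2$ can work.

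Your argument does establish two weaker facts:
\begin{itemize}
\item $m_\sigma(j)\ge c_\sigma>0$ for $|j|\ge J_\sigma$. This is all the proof of Theorem~\ref{thm:S1-lower-bd} actually uses, since it only involves frequencies $|k|\asymp 2^p$.
\item The full lower bound holds for $\sigma$ outside the countable set of zeros of the real-analytic functions $\sigma\mapsto k_\sigma^{\ast}(j)$.
\end{itemize}
The lower bound in the statement must be weakened in one of these ways before it can be proved.
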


Another special case where we can make our results precise is the following.

\begin{lemma}\label{lem:ex-uniform}
    If $\Mcal$ is a compact connected homogeneous Riemannian manifold and $G$ is proportional to the volume form (i.e., $G$ is the uniform distribution), then $\|f\|_{\Hcal^r(\Mcal)}=(\vol(\Mcal))^{-1/2}$ for all $r\ge 1$.
\end{lemma}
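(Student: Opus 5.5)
The plan is to show that $f$ is constant, equal to $(\vol(\Mcal))^{-1}$, and then read off its Sobolev norm from~\eqref{eqn:def-Sobolev} using that constants are exactly the $\lambda_0=0$ eigenspace.

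\textbf{Step 1: $f$ is constant.} Since $G$ is uniform, $g\equiv(\vol(\Mcal))^{-1}$. By~\eqref{eqn:ptheta},
\begin{equation*}
f(x)=\frac{1}{\vol(\Mcal)}\int_{\Mcal}\exp\left(-\frac{d^2(x,\theta)}{2\sigma^2}-A(\sigma^2,\theta)\right)\diff\theta .
\end{equation*}
As recalled after~\eqref{eqn:ptheta}, homogeneity makes $A(\sigma^2,\theta)=:A(\sigma^2)$ independent of $\theta$. The direct argument is as follows. By the Myers--Steenrod theorem, distance-preserving maps of $\Mcal$ are smooth Riemannian isometries, so they preserve the volume form. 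Given $x,x'$, pick $\psi\in\textnormal{Iso}(\Mcal)$ with $\psi(x)=x'$ and change variables $\theta=\psi(\theta')$:
\begin{equation*}
\int_{\Mcal}e^{-d^2(x',\theta)/2\sigma^2}\diff\theta=\int_{\Mcal}e^{-d^2(\psi(x),\psi(\theta'))/2\sigma^2}\diff\theta'=\int_{\Mcal}e^{-d^2(x,\theta')/2\sigma^2}\diff\theta' .
\end{equation*}
Hence $I(x):=\int_{\Mcal}e^{-d^2(x,\theta)/2\sigma^2}\diff\theta$ does not depend on $x$. By symmetry of $d$, this same quantity equals $e^{A(\sigma^2,\theta)}$ for every $\theta$. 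Therefore
\begin{equation*}
f(x)=\frac{I(x)\,e^{-A(\sigma^2)}}{\vol(\Mcal)}=\frac{1}{\vol(\Mcal)}\qquad\textnormal{for all }x\in\Mcal .
\end{equation*}
Alternatively, once $f$ is known to be constant, integrating the density $f$ over $\Mcal$ to $1$ fixes the constant.

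\textbf{Step 2: the Sobolev norm.} The eigenfunction $\phi_0$ is the normalized constant, so $|\phi_0|=(\vol(\Mcal))^{-1/2}$; after choosing the sign, $\phi_0=(\vol(\Mcal))^{-1/2}$.
\begin{itemize}
\item For $j\ge1$, orthogonality of $\phi_j$ to $\phi_0$ gives $f^{\ast}(j)=(\vol(\Mcal))^{1/2}\langle \phi_0,\phi_j\rangle_{L^2}/\vol(\Mcal)=0$.
\item For $j=0$, we get $f^{\ast}(0)=(\vol(\Mcal))^{-1}\cdot(\vol(\Mcal))^{-1/2}\cdot\vol(\Mcal)=(\vol(\Mcal))^{-1/2}$.
\end{itemize}
Since $\lambda_0=0$, only the $j=0$ term survives in~\eqref{eqn:def-Sobolev}:
\begin{equation*}
\|f\|_{\Hcal^r(\Mcal)}^2=(1+0)^r\,|f^{\ast}(0)|^2=(\vol(\Mcal))^{-1}.
\end{equation*}
This gives the claim for every $r\ge0$, in particular for all $r\ge1$.

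\textbf{Main obstacle.} The only nontrivial point is Step 1, namely that isometries in the metric sense preserve the volume form. This is what allows the change of variables, and it is supplied by the Myers--Steenrod theorem. Everything else is bookkeeping with the eigenbasis.
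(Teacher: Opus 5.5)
Your proof is correct and takes the same route as the paper. The paper simply computes $f^{\ast}(0)=(\vol(\Mcal))^{-1/2}$ and $f^{\ast}(j)=0$ for $j\ge1$, relying implicitly on the marginal of $X$ being uniform (as in Example~\ref{ex:oracle-uniform}); your Step~1 spells out why, via invariance of the volume form under isometries.
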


As the preceding examples show, obtaining sharp Sobolev regularity bounds for the marginal density $f$ in terms of the Sobolev regularity of the mixing density $g$ is a difficult problem which depends on the geometry of $\Mcal$.
One obstruction is the behavior of higher-order derivatives of the squared-distance function $x\mapsto d^2(x,\theta)$ near the cut locus $C_\theta$; already at second order, the Hessian $\nabla_x^2 d^2(x,\theta)$ can exhibit singular behavior near $C_\theta$ (cf.~Example~\ref{ex:sphere}).
Furthermore, the integrability of these singularities also depends on the (co)dimension and local structure of the cut loci $C_\theta$ themselves (cf.~the proof of Lemma~\ref{lem:sphere-estimate}).
In the case of compact Lie groups, the convolution structure allows us to avoid differentiating the Riemannian logarithm directly and gives partial results in this direction (cf.~Lemma~\ref{lem:Sobolev-Lie}), but a sharp analysis appears to require further tools from representation theory.
A detailed study of Sobolev bounds for densities of Riemannian Gaussian mixtures is therefore an interesting direction for future work, and lies beyond the scope of this paper.

\subsection{Empirical-Level Denoising}\label{subsec:empirical}

In this final subsection we establish (Theorem~\ref{thm:EB-risk}) a rate of convergence for the empirical tangential Bayes denoiser $\hat{\delta}_{\T}$ to the oracle tangential Bayes denoiser $\delta_{\T}$.
The proofs of all results in this section are given in Appendix~\ref{subsec:proofs-empirical}.
Combined with the results of Subsection~\ref{subsec:oracle}, this shows that the empirical tangential Bayes denoiser $\hat{\delta}_{\T}$ can achieve nearly the same risk as the oracle Bayes denoiser $\delta_{\B}$, when $\sigma^2$ is small and $n$ is large.

To begin, let $\{\lambda_j\}_{j=0}^{\infty}$ and $\{\phi_j\}_{j=0}^{\infty}$ denote the eigenvalues and eigenfunctions of the Laplace-Beltrami operator $\Delta$ on $L^2(\Mcal)$, and define
    \begin{equation*}
K(x,y,\ell):=\sum_{j=0}^{ \ell}\phi_j(x)\overline{\phi_j(y)},
    \end{equation*}
for $x,y \in \Mcal$, which is just a finite-rank projection of the identity operator, with respect to the basis $\{\phi_j\}_{j=0}^{\infty}$ of $L^2(\Mcal)$.
The function $K$ plays the role of the kernel in our methodology, in the sense that we define our estimator of the density $f$ of $X$ to be
\begin{equation}\label{eq:Density-Est}
        \hat f_n(x):=\frac{1}{n}\sum_{i=1}^{n}K(x,X_i,\ell_n),
    \end{equation}
    where $\ell_n$ is some sequence that will be determined later.

    Before we use this density estimator in our empirical Bayes approximation of $\delta_{\T}$, we give the following result which provides simultaneous control on the $L^2(\Mcal)$ estimation error of $\hat f_n$ and its gradient $\nabla_x \hat{f}_n$.

\begin{proposition}\label{prop:kde-error}
    Suppose that $\Mcal$ is a compact connected Riemannian manifold, that $G$ has a density $g\in L^{\infty}(\Mcal)$ with respect to the volume form, and that we have $\|f\|_{\Hcal^r(\Mcal)}<\infty$ for some $r>0$.
    Then setting
    \begin{equation*}
        \ell_n :=\left\lceil \left(\frac{n\|f\|^2_{\Hcal^r(\Mcal)}}{\|g\|_{L^{\infty}(\Mcal)}}\right)^{\frac{m}{2r+m}}\right\rceil
    \end{equation*}
    yields
    \begin{equation*}
        \Ebb\left[\int_{\Mcal}\left|\hat{f}_n(x)-f(x)\right|^2\diff x\right] \le \frac{C_{\Mcal,r}\|f\|_{\Hcal^{r}(\Mcal)}^{\frac{2m}{2r+m}}\|g\|_{L^{\infty}(\Mcal)}^{\frac{2r}{2r+m}}}{n^{\frac{2r}{2r+m}}}
    \end{equation*}
    and, for $r >1$,
    \begin{equation*}
        \Ebb\left[\int_{\Mcal}\left\|\nabla_x\hat{f}_n(x)-\nabla_x f(x)\right\|_x^2\diff x\right] \le \frac{C_{\Mcal,r}\|f\|_{\Hcal^{r}(\Mcal)}^{\frac{2m+4}{2r+m}}\|g\|_{L^{\infty}(\Mcal)}^{\frac{2r-2}{2r+m}}}{n^{\frac{2r-2}{2r+m}}},
    \end{equation*}
    for all $n$ and $\sigma^2\le (\diam(\Mcal))^2$, where $C_{\Mcal,r}>0$ is a constant depending only on $\Mcal$ and $r$.
\end{proposition}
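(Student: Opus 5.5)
We use the standard bias--variance decomposition for projection estimators. Write $f_\ell := \sum_{j=0}^{\ell} f^{\ast}(j)\phi_j = \Ebb[\hat f_n]$, since $\Ebb[K(x,X_1,\ell)] = \sum_{j\le \ell}\phi_j(x)\overline{\int \phi_j f} = \sum_{j\le\ell} f^\ast(j)\phi_j(x)$ (real-valuedness of $f$ lets us match conjugates; alternatively take a real eigenbasis). Then $\hat f_n - f = (\hat f_n - f_\ell) + (f_\ell - f)$. The two pieces are orthogonal in $L^2$ and also in the energy inner product $\int\langle\nabla\cdot,\nabla\cdot\rangle$, because the $\phi_j$ are orthogonal in both. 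It is cleanest to measure everything in coefficient space, using Parseval and the identity $\int\|\nabla h\|^2 = \sum_j \lambda_j |h^\ast(j)|^2$ from~\eqref{eqn:Sobolev-1-equiv}.

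\textbf{Bias.} We have
\[
\|f_\ell - f\|_{L^2}^2 = \sum_{j>\ell}|f^\ast(j)|^2 \le (1+\lambda_{\ell+1})^{-r}\|f\|^2_{\Hcal^r(\Mcal)}.
\]
Similarly, for the gradient,
\[
\sum_{j>\ell}\lambda_j|f^\ast(j)|^2 \le (1+\lambda_{\ell+1})^{1-r}\|f\|^2_{\Hcal^r(\Mcal)},
\]
which uses $r>1$. By Weyl's law, $\lambda_j \asymp_{\Mcal} j^{2/m}$ for $j\ge1$, so these are $\lesssim \ell^{-2r/m}\|f\|^2_{\Hcal^r}$ and $\ell^{-(2r-2)/m}\|f\|^2_{\Hcal^r}$ respectively.

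\textbf{Variance.} The coefficients $\hat f_n^\ast(j) = n^{-1}\sum_i \overline{\phi_j(X_i)}$ are unbiased, so
\[
\Ebb|\hat f_n^\ast(j) - f^\ast(j)|^2 \le n^{-1}\Ebb|\phi_j(X)|^2 = n^{-1}\int|\phi_j|^2 f \le n^{-1}\|f\|_{L^\infty}.
\]
Lemma~\ref{lem:f-lower-bd} gives $\|f\|_{L^\infty}\le C_\Mcal\|g\|_{L^\infty}$ for $\sigma^2\le\diam(\Mcal)^2$; this is where $g$ and the restriction on $\sigma^2$ enter. Summing gives an $L^2$ variance of at most $C(\ell+1)\|g\|_\infty/n$. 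For the gradient, weighting by $\lambda_j$ and using $\sum_{j\le\ell}\lambda_j\lesssim \ell^{1+2/m}$ (Weyl) gives $C\ell^{(m+2)/m}\|g\|_\infty/n$. Weighting with $\int|\phi_j|^2f\le\|f\|_\infty$ avoids the local Weyl law for $\sup_x\sum_{j\le\ell}|\phi_j(x)|^2$. Only the spectral counting asymptotics are needed, and since $\phi_0$ is constant, the $j=0$ term contributes nothing to the gradient.

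\textbf{Balancing.} Let $B:=\|f\|^2_{\Hcal^r}$ and $V:=\|g\|_\infty$. With $\ell_n \asymp (nB/V)^{m/(2r+m)}$ we get $\ell_n^{-2r/m}B \asymp \ell_n V/n$, and the common value is $B^{m/(2r+m)}V^{2r/(2r+m)}n^{-2r/(2r+m)}$, which is the first bound. For the gradient, the same $\ell_n$ gives a bias term
\[
\ell_n^{-(2r-2)/m}B = \ell_n^{-2r/m}B\cdot \ell_n^{2/m}
\]
and a variance term $(\ell_n V/n)\,\ell_n^{2/m}$. Both equal the first rate multiplied by $\ell_n^{2/m}\asymp (nB/V)^{2/(2r+m)}$. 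This yields $B^{(m+2)/(2r+m)}V^{(2r-2)/(2r+m)}n^{-(2r-2)/(2r+m)}$, exactly the stated exponents (with $\|f\|^{2(m+2)/(2r+m)}_{\Hcal^r}$).

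\textbf{Expected obstacles.} The steps that need care are minor:
\begin{itemize}
\item The ceiling in $\ell_n$. When $nB/V$ is small, $\ell_n=1$ and the bounds must still hold. This needs $B\gtrsim V$-type comparisons or a direct check using $\|f\|_{L^2}^2\le\|f\|_\infty\le C V$, and $B\ge|f^\ast(0)|^2=\vol(\Mcal)^{-1}$ (since $f$ is a density) to absorb constants.
\item Uniformity of the Weyl-law constants for small indices, which only affects $C_{\Mcal,r}$.
\end{itemize}
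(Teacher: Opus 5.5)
Your main argument is correct and is essentially the paper's proof. The ingredients match: the same coefficient-space bias--variance split, the variance bound $\int_{\Mcal}|\phi_j|^2 f\le\|f\|_{L^\infty(\Mcal)}\lesssim_{\Mcal}\|g\|_{L^\infty(\Mcal)}$ via Lemma~\ref{lem:f-lower-bd}, Weyl's law for the tails and for $\sum_{j\le\ell}\lambda_j$, and the same balancing choice of $\ell_n$. Using $(1+\lambda_{\ell+1})^{-r}$ in place of the paper's $\lambda_\ell^{-r}$ makes no difference.

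\textbf{The ceiling regime.} Your suggested resolution of the case $nB<V$ (i.e.\ $\ell_n=1$) does not go through as sketched. The paper's proof also skips this case, by writing $\asymp$ after substituting $\ell_n$.

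The regime genuinely occurs. On $\mathbb{S}^1$ with fixed $\sigma^2$ and $1<r<3/2$, one has $|f^{\ast}(k)|\lesssim_{\sigma^2}(1+k^2)^{-1}$ uniformly over densities $g$. Concentrating $g$ therefore sends $V=\|g\|_{L^\infty}\to\infty$ while $B=\|f\|^2_{\Hcal^r}$ stays bounded.

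Neither of your suggested fixes closes this case:
\begin{itemize}
\item There is no comparison $B\gtrsim V$ in general.
\item The variance bound $\lesssim V/n$ obtained through $\|f\|_{L^\infty}$ exceeds the target $B^{m/(2r+m)}V^{2r/(2r+m)}n^{-2r/(2r+m)}$ by the unbounded factor $(V/(nB))^{m/(2r+m)}$.
\end{itemize}

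What works is the following, with $\ell_n=1$.
\begin{itemize}
\item Variance: bound it without $g$. Since $\Var(\phi_0(X))=0$ and $\Var(\phi_1(X))\le\|\phi_1\|_{L^\infty}^2\lesssim_{\Mcal}1$, it is $\lesssim_{\Mcal}n^{-1}$ (times $\lambda_1$ for the gradient).
\item Bias: it is at most $B$ in both claims (using $r\ge 1$ for the gradient). Each target equals $B$ times a negative power of $nB/V<1$, so each target is at least $B$.
\item Absorbing the variance: $B\ge|f^{\ast}(0)|^2=\vol(\Mcal)^{-1}$ and $V\ge\vol(\Mcal)^{-1}$, so both targets are at least $\vol(\Mcal)^{-1}n^{-1}$, which absorbs the variance term.
\end{itemize}
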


    The work \cite{Hendriks1990} initiated the study of estimating a nonparametric density $f$ from a sample of i.i.d. random variables $X_1,\ldots, X_n$ on a compact Riemannian manifold $\Mcal$, using a kernel made from the eigenfunctions of the Laplace-Beltrami operator;
    our Proposition~\ref{prop:kde-error} is limited to the setting that $f$ is the density of a Riemannian Gaussian mixture, but our results are more precise in that the rate of convergence is more explicit and that we also establish a rate of convergence for the estimation of the gradient $\nabla f$.
    These minor extensions are needed for the analysis of our empirical Bayes denoiser in Subsection~\ref{subsec:empirical}, but the proofs are conceptually similar to the methods used in \cite{Hendriks1990}; see also the discussion in \cite[Section~4.6]{Hendriks1990}.
    Nonetheless, they may be of independent interest in the literature on density estimation on Riemannian manifolds.
    Also, note that this result provides a non-trivial rate of convergence for estimating $\nabla f$ only if $r>1$; this is precisely what requires us to develop the finer results of Subsection~\ref{subsec:sobolev}.

Now we define the denoiser of interest, namely
\begin{equation}\label{eq:Est-Denoiser}
    \hat{\delta}_{\T}(x):= \Exp_{x}\left(\sigma^2\frac{\nabla_x \hat f_n(x)}{\max\{\hat f_n(x),\rho\}}\right),
\end{equation}
where $\hat{f}_n$ is chosen with $\ell_n$ according to the previous result and $\rho>0$ will be chosen later.
By comparing with~\eqref{eq:Tan-Bayes}, we see that this is like a plug-in form of $\delta_{\T}$, unless $\hat{f}_n(x)$ provides a very low estimate for $f(x)$ in which case we regularize by replacing $\hat{f}_n(x)$ with $\rho$.

Our main result on the empirical Bayes denoiser $\hat{\delta}_{\T}$ is the following, which provides an explicit rate of convergence for the expected distance $\Ebb[D_{\T}(\hat{\delta}_{\T})]$ defined in \eqref{eqn:def-dist}.

\begin{theorem}\label{thm:EB-risk}
    Suppose that $\Mcal$ is a compact connected Riemannian manifold, that $G$ has a density $g\in L^\infty(\Mcal)$ with respect to the volume form, and that we have $\|f\|_{\Hcal^r(\Mcal)}<\infty$ for some $r>1$.
    Then setting $0<\rho\le\min_{x\in\Mcal}f(x)$ and
    \begin{equation*}
        \ell_n := \left\lceil\left(\frac{n\|f\|^2_{\Hcal^r(\Mcal)}}{\|g\|_{L^{\infty}(\Mcal)}}\right)^{\frac{m}{2r+m}}\right\rceil
    \end{equation*}
    yields
    \begin{equation*}
        \Ebb\left[D_{\T}(\hat{\delta}_{\T})\right]=\Ebb\left[\int_{\Mcal}d^2\left(\hat{\delta}_{\T}(x),\delta_{\T}(x)\right)f(x)\diff x\right] \le\frac{C_{\Mcal,r}\sigma^{2}\|f\|_{\Hcal^r(\Mcal)}^{\frac{2m+4}{2r+m}}\|g\|_{L^{\infty}(\Mcal)}^{\frac{8r+3m}{2r+m}}}{\rho^{4}n^{\frac{2r-2}{2r+m}}}
    \end{equation*}
    for all $n$ and $\sigma^2\le (\diam(\Mcal))^2$, where $C_{\Mcal,r}>0$ is a constant depending only on $\Mcal$ and $r$.
\end{theorem}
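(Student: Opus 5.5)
The plan is to reduce the geodesic distance between $\hat{\delta}_{\T}(x)$ and $\delta_{\T}(x)$ to a tangent-space distance between the two arguments of $\Exp_x$, and then feed in Proposition~\ref{prop:kde-error}. Write $u(x):=\sigma^2\nabla f(x)/f(x)$ and $\hat u(x):=\sigma^2\nabla\hat f_n(x)/\max\{\hat f_n(x),\rho\}$. Since $\rho\le f(x)$, we may write $u(x)=\sigma^2\nabla f(x)/\max\{f(x),\rho\}$, so both vectors come from the same regularized map. The first step is a Lipschitz bound for $\Exp_x$ on $\Tan_x(\Mcal)$ that is uniform in $x$: $d(\Exp_x(v),\Exp_x(w))\le C_{\Mcal}\|v-w\|_x$ for all $v,w$. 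This is the step I expect to be the main obstacle, because the vectors may be large and $\Exp_x$ is not globally Lipschitz on all of $\Tan_x(\Mcal)$ in general.

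I would handle this as follows. The curve $t\mapsto\Exp_x(v+t(w-v))$ has length at most $\sup\|d\Exp_x\|\,\|v-w\|_x$, and $\|d\Exp_x\|$ is bounded on compact sets of the tangent bundle by smoothness of $(x,v)\mapsto\Exp_x(v)$. For unbounded arguments I would use the cruder estimate $d\le\diam(\Mcal)\le \diam(\Mcal)\cdot\|v-w\|_x/\eta$ whenever $\|v-w\|_x\ge\eta$. If $\|v-w\|_x<\eta$ but both vectors are large, Jacobi-field growth can be exponential under negative curvature, so genuine care is needed. My first attempt is to note that $\|u\|_x\le \sigma^2 C\sigma^{-1}\|g\|_\infty/\rho$ by Lemma~\ref{lem:f-diff-1}, which is bounded. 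I would then truncate $\hat u$ to a ball of radius $R:=2\sup_x\|u(x)\|_x+1$, noting that truncation does not increase $\|\hat u-u\|$ when $u$ lies in the smaller ball. If the paper's estimator is untruncated, a Markov/Chebyshev-type argument should still control the event $\|\hat u - u\|\ge 1$ at the same rate.

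With this Lipschitz bound in hand, I bound $\|\hat u-u\|_x$ pointwise. Since $t\mapsto\max\{t,\rho\}$ is $1$-Lipschitz and bounded below by $\rho$,
\[
\|\hat u-u\|_x\le \frac{\sigma^2\|\nabla\hat f_n-\nabla f\|_x}{\rho}+\frac{\sigma^2\|\nabla f\|_x\,|\hat f_n-f|}{\rho^2}.
\]
Squaring, integrating against $f\le C_{\Mcal}\|g\|_\infty$ (Lemma~\ref{lem:f-lower-bd}), and using $\|\nabla f\|_x\le C\sigma^{-1}\|g\|_\infty$ (Lemma~\ref{lem:f-diff-1}) gives
\[
\Ebb[D_{\T}(\hat\delta_{\T})]\lesssim \|g\|_\infty\Big(\frac{\sigma^4}{\rho^2}\Ebb\|\nabla\hat f_n-\nabla f\|_{L^2}^2+\frac{\sigma^2\|g\|_\infty^2}{\rho^4}\Ebb\|\hat f_n-f\|_{L^2}^2\Big).
\]

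Finally, I insert the two rates from Proposition~\ref{prop:kde-error}. The gradient term dominates in $n$, since $(2r-2)/(2r+m)<2r/(2r+m)$. To merge the two terms into a single expression with prefactor $\sigma^2/\rho^4$, I would use $\sigma^2\le\diam(\Mcal)^2$ and $\rho\le f\le C\|g\|_\infty$, which gives $\rho^{-2}\lesssim \|g\|_\infty^2\rho^{-4}$. I would also use the lower bound $\|f\|_{\Hcal^r}\ge\|f\|_{L^2}\ge c_{\Mcal}$ (since $f$ is a density on a finite-volume manifold) together with $\|g\|_\infty\ge\vol(\Mcal)^{-1}$ to adjust the powers of $\|f\|_{\Hcal^r}$ and $\|g\|_\infty$ and trade the $n^{-2r/(2r+m)}$ factor for $n^{-(2r-2)/(2r+m)}$. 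The final step is bookkeeping of exponents, which should produce $\|f\|_{\Hcal^r}^{(2m+4)/(2r+m)}\|g\|_\infty^{(8r+3m)/(2r+m)}$ up to constants depending on $\Mcal$ and $r$.
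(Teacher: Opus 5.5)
\textbf{Gap in the Lipschitz step for $\Exp_x$.} Your decomposition of $\hat u-u$, the use of Lemma~\ref{lem:f-diff-1}, Lemma~\ref{lem:f-lower-bd} and Proposition~\ref{prop:kde-error}, and the exponent bookkeeping all match the paper's proof. The problem is your bound $\|u(x)\|_x\le C\sigma\|g\|_{L^\infty(\Mcal)}/\rho$. It is finite, but it is not controlled by $\Mcal$ alone, since $\rho$ may be arbitrarily small. The best constant $C_R$ with $d(\Exp_x(v),\Exp_x(w))\le C_R\|v-w\|_x$ on a ball of radius $R$ in $\Tan_x(\Mcal)$ can grow exponentially in $R$; on a compact hyperbolic surface it is of order $\sinh(R)/R$. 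So your argument yields a prefactor roughly like $\exp(C\sigma\|g\|_{L^\infty(\Mcal)}/\rho)$. That does not give the claimed bound with a constant $C_{\Mcal,r}$ depending only on $\Mcal$ and $r$.

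\textbf{The missing observation.} Lemma~\ref{lem:Tweedie} and Jensen's inequality give, for every $x$,
$\|u(x)\|_x=\|\Ebb[\Log_x\Theta\,|\,X=x]\|_x\le\Ebb[d(x,\Theta)\,|\,X=x]\le\diam(\Mcal)$.
This is the bound the paper uses.

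\textbf{How this closes the argument.} With $u$ bounded by a constant depending only on $\Mcal$, your own case split already finishes the proof. The troublesome case ``$\|v-w\|_x<\eta$ but both vectors large'' cannot occur, because $\|\hat u-u\|_x<\eta$ forces $\|\hat u\|_x\le\diam(\Mcal)+\eta$. Concretely, following the paper:
\begin{itemize}
\item If $\|\hat u(x)\|_x\le 2\diam(\Mcal)$, then $u(x)$, $\hat u(x)$ and the segment between them lie in a ball of radius $2\diam(\Mcal)$. Smoothness of $(x,v)\mapsto\Exp_x(v)$ on a compact set then gives a Lipschitz constant depending only on $\Mcal$.
\item Otherwise $\|\hat u(x)-u(x)\|_x\ge\diam(\Mcal)\ge d(\hat\delta_{\T}(x),\delta_{\T}(x))$.
\end{itemize}
Either way, $d^2(\hat\delta_{\T}(x),\delta_{\T}(x))\le C_{\Mcal}\|\hat u(x)-u(x)\|_x^2$ holds deterministically for the untruncated estimator. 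So no truncation of $\hat u$ (which would change the estimator being analyzed) and no Markov-type argument is needed. The rest of your proof then goes through unchanged.
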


It is easy to upgrade this result to a bound on the expected risk of $\hat{\delta}_{\T}$, although we obtain a non-sharp oracle inequality rather than a sharp one.

\begin{lemma}\label{lem:risk-to-dist}
    For any $\varepsilon>0$ there exists $C_{\varepsilon}>0$ such that we have $R(\delta) \le (1+\varepsilon)R(\delta_{\T}) + C_{\varepsilon}D_{\T}(\delta)$ for all denoisers $\delta:\Mcal\to\Mcal$, where $C_{\varepsilon}\to\infty$ as $\varepsilon\to 0$.
\end{lemma}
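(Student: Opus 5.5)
The plan is a pointwise triangle inequality followed by Young's inequality. For every $x,\theta\in\Mcal$ the triangle inequality for the geodesic distance gives
\begin{equation*}
    d(\delta(x),\theta)\le d(\delta(x),\delta_{\T}(x)) + d(\delta_{\T}(x),\theta).
\end{equation*}
Squaring and using $(a+b)^2\le(1+\varepsilon)b^2+(1+\varepsilon^{-1})a^2$, which follows from $2ab\le \varepsilon b^2+\varepsilon^{-1}a^2$, we get
\begin{equation*}
    d^2(\delta(x),\theta)\le (1+\varepsilon)\,d^2(\delta_{\T}(x),\theta) + (1+\varepsilon^{-1})\,d^2(\delta(x),\delta_{\T}(x)).
\end{equation*}

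Next we integrate this inequality against the joint law of $(\Theta,X)$, namely $p_{\theta,\sigma^2}(x)\,\diff x\,\diff G(\theta)$. By the definition~\eqref{eqn:def-risk}, the left side integrates to $R(\delta)$ and the first term on the right to $(1+\varepsilon)R(\delta_{\T})$. The second term on the right does not depend on $\theta$. Integrating out $\theta$ first and using $\int_{\Mcal}p_{\theta,\sigma^2}(x)\,\diff G(\theta)=f(x)$ (Tonelli's theorem applies since the integrand is nonnegative), it becomes $\int_{\Mcal}d^2(\delta(x),\delta_{\T}(x))f(x)\,\diff x=D_{\T}(\delta)$, by~\eqref{eqn:def-dist}.

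This gives the claim with $C_\varepsilon:=1+\varepsilon^{-1}$, and indeed $C_\varepsilon\to\infty$ as $\varepsilon\to0$. All quantities are finite because $d\le\diam(\Mcal)<\infty$ on the compact manifold.

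The only point needing care is measurability. We need $x\mapsto d^2(\delta(x),\delta_{\T}(x))$ and $(x,\theta)\mapsto d^2(\delta(x),\theta)$ to be measurable, which holds for any measurable denoiser $\delta$. For $\delta_{\T}$, measurability follows from its definition~\eqref{eqn:def-deltaT} as a composition of measurable maps (with $\Log_x$ extended measurably on the null set $C_x$). Apart from this there is no real obstacle: the argument uses only the metric structure, which is why the inequality is non-sharp, in contrast to the Euclidean Pythagorean identity.
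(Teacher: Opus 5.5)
Your proof is correct and matches the paper's argument: the paper likewise integrates the pointwise bound $d^2(\delta(x),\theta)\le(1+\varepsilon)\,d^2(\delta_{\T}(x),\theta)+C_\varepsilon\, d^2(\delta(x),\delta_{\T}(x))$ against the joint law of $(\Theta,X)$. The only difference is that the paper cites a lemma from the literature for this pointwise inequality, whereas you derive it directly from the triangle inequality and Young's inequality, with the explicit constant $C_\varepsilon=1+\varepsilon^{-1}$.
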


The above result, combined with Theorem~\ref{thm:EB-risk}, immediately yields
\begin{equation*}
        \Ebb\left[R\left(\hat{\delta}_{\T},\sigma^2\right)\right]\le (1+\varepsilon)R(\delta_{\T},\sigma^2) +\frac{C_{\Mcal,r,\varepsilon}\sigma^{2}\|f\|_{\Hcal^r(\Mcal)}^{\frac{2m+4}{2r+m}}\|g\|_{L^{\infty}(\Mcal)}^{\frac{8r+3m}{2r+m}}}{\rho^{4}n^{\frac{2r-2}{2r+m}}}
    \end{equation*}
    for all $n$ and $\sigma^2\le (\diam(\Mcal))^2$, where $C_{\Mcal,r,\varepsilon}>0$ is a constant depending only on $\Mcal$, $r$, and $\varepsilon$.
    
In the remainder of this subsection, we interpret the rate of convergence appearing in the previous result, in terms of $n$.
We note that $g$ is unknown but appears only in the constant, and that by Lemma~\ref{lem:f-lower-bd} we can choose $\rho$ to depend only on $\Mcal$ and $g$, provided that $g$ has a strictly positive lower bound.
In some cases it is also desirable to understand the dependence of the rate on $\sigma^2$, since the oracle-level results from Subsection~\ref{subsec:oracle} are asymptotic results in the regime where $\sigma^2\to 0$; however, this is a hard problem even in the Euclidean setting, and most existing rates of convergence for nonparametric empirical Bayes (e.g., \cite{Zhang1997, SahaGuntuboyina, Soloff}) either do not depend explicitly on $\sigma^2$ or in fact deteriorate as $\sigma^2\to 0$.
In the following, we use results of Subsection~\ref{subsec:sobolev} to explicitly describe the resulting rates of convergence in various settings.

First we consider Lie groups, where the rate of convergence depends explicitly on the smoothness of $g$.

\begin{example}[Lie group $\Mcal$, smooth $G$]\label{ex:Lie}
    Suppose $\Mcal$ is a compact connected Lie group (e.g., the circle $\mathbb{S}^1$, the torus $\mathbb{S}^1\times\mathbb{S}^1$, or the space of orientation-preserving rotations $\SO(3)$) and that the density $g$ of $G$ satisfies $g\in C^r(\Mcal)$.
    Then, Lemma~\ref{lem:Sobolev-Lie} implies $\|f\|_{\Hcal^r(\Mcal)}<\infty$.
    If $r\ge 2$, then,  in terms of constants depending only on $\Mcal,g$, and $\sigma^2$, we can set $\ell_n \asymp n^{m/(2r+m)}$ and achieve the rate of convergence $n^{-(2r-2)/(2r+m)}$.
    Note that this can be made arbitrarily close to the parametric rate $n^{-1}$ as $r\to\infty$, although the pre-factor may explode.
    To be concrete, consider the case that $g$ is twice-continuously differentiable so that we take $r=2$ above; if $m=1$ (e.g., the circle $\mathbb{S}^1$) then setting $\ell_n\asymp n^{1/5}$ leads to the rate of convergence $n^{-2/5}$, if $m=2$ (e.g., the torus $\mathbb{S}^1\times \mathbb{S}^1$) then setting $\ell_n\asymp n^{1/3}$ leads to the rate of convergence $n^{-1/3}$, and if $m=3$ (e.g., rotations $\SO(3)$) then setting $\ell_n\asymp n^{3/7}$ leads to the rate of convergence $n^{-2/7}$.
\end{example}

Second, we consider the sphere, where our result holds for all $g$ but with a slower rate of convergence.

\begin{example}[$\Mcal=\mathbb{S}^2$, general $G$]\label{ex:EB-sphere-rate}
    If $\Mcal$ is the sphere $\mathbb{S}^2$ and $g\in L^{\infty}(\mathbb{S}^2)$ has a strictly positive lower bound, then Lemma~\ref{lem:sphere-estimate} guarantees $\|f\|_{\Hcal^2(\Mcal)}\lesssim \sigma^{-2}\|g\|_{L^{\infty}(\Mcal)}$ hence we can take $r=m=2$ in the main result.
    Consequently, and again ignoring constants that depend on $\Mcal,g$, and $\sigma^2$, we see that setting $\ell_n\asymp n^{1/3}$ leads to the rate of convergence $n^{-1/3}$.
\end{example}

Last is the case of the circle, in which we can precisely identify the smoothness of $f$ in terms of the smoothness of $g$; note that the following result is sharper than what we get by treating $\mathbb{S}^1$ as a Lie group above.

\begin{example}[$\Mcal=\mathbb{S}^1$, smooth $G$]\label{ex:empirical-S1-smooth}
    If $\Mcal = \mathbb{S}^1$ and $G$ has a density $g$ with some smoothness, then we can obtain an explicit rate of convergence in terms of $\sigma^2$ and $n$.
    Indeed, if we have $\|g\|_{\Hcal^s(\Mcal)}<\infty$ for some $s\ge 0$, then Lemma~\ref{lem:Sobolev-bd-S1} shows that we may apply Theorem~\ref{thm:EB-risk} for $r=s+2$; then, ignoring factors that depend only on $\Mcal$ and $g$, it follows that setting $\ell_n\asymp \sigma^{-4/(2s+5)}n^{1/(2s+5)}$ leads to a rate of convergence of $\sigma^{(4s-2)/(2s+5)}n^{-(2s+2)/(2s+5)}$.
    We can understand this rate of convergence in a few cases of interest.
    For $s=0$, it coincides with the rate $\sigma^{-2/5}n^{-2/5}$ in Example~\ref{ex:Lie}, but with an explicit dependence on $\sigma^2>0$ which deteriorates as $\sigma^2\to 0$.
    For $s=1$, the rate is $\sigma^{2/7}n^{-4/7}$ which is faster in $n$ and vanishes as $\sigma^2\to 0$.
    (More generally, with respect to $\sigma^2\to 0$, the rate is non-deteriorating if and only if $s\ge 1/2$ and vanishing if and only if $s> 1/2$.)
    Lastly, we note that the rate can be made arbitrarily close to parametric rate $\sigma^2n^{-1}$ if $\|g\|_{\Hcal^s(\Mcal)}<\infty$ for all $s\ge 0$; indeed, for any $0<\varepsilon\le 1$, set $s:=(3-5\varepsilon)/(2\varepsilon)$ and $\ell_n\asymp \sigma^{-4/(2s+5)}n^{1/(2s+5)}\approx \sigma^{-4\varepsilon/3}n^{\varepsilon/3}$ to achieve the rate of convergence $\sigma^{2-4\varepsilon}n^{-(1-\varepsilon)}$.
    (However, the constant prefactor may explode as $\varepsilon\to 0$.) 
\end{example}

In the special case of the circle $\Mcal=\mathbb{S}^1$, we complement the rate of convergence in Example~\ref{ex:empirical-S1-smooth} by providing the following matching lower bound, uniformly over all densities $g$ in a suitable subset of a Sobolev ball of order $s\ge 0$.
In particular, this establishes that $\hat{\delta}_{\T}$ is minimax-optimal and that the minimax rate is nonparametric, $n^{-(2s+2)/(2s+5)}$.
For convenience, we emphasize the dependence of $\Ebb,f$, and $\delta_{\T}$ on $g$ by writing $\Ebb_g,f_g$, and $\delta_{\T,g}$, respectively.

\begin{theorem}\label{thm:S1-lower-bd}
    Suppose $\Mcal=\mathbb{S}^1$, fix $0<\alpha<(2\pi)^{-1}<\beta$, $s\ge0$, and $L>(2\pi)^{-1/2}$, and define
    \begin{equation*}
        \mathcal{G}_s(\alpha,\beta,L):=\left\{g:\mathbb{S}^1\to\Rbb: \int_{\mathbb{S}^1}g(\theta)\diff\theta =1,\alpha\le g(\theta)\le \beta \textnormal{ for all }\theta\in\mathbb{S}^1\textnormal{, and } \|g\|_{\Hcal^s(\mathbb{S}^1)}\le L\right\}.
    \end{equation*}
    Then, for all $0<\sigma^2<\pi^2$, we have
    \begin{equation*}
        \inf_{\hat{\delta}}\sup_{g\in \mathcal{G}_s(\alpha,\beta,L)}\Ebb_g\left[\int_{\mathbb{S}^1}d^2\left(\hat{\delta}(x;X_1,\ldots,X_n),\delta_{\T,g}(x)\right)f_g(x)\diff x\right] \ge\frac{C_{\sigma^2, s,\alpha,\beta,L}}{n^{\frac{2s+2}{2s+5}}}
    \end{equation*}
    for all $n$, where $C_{\sigma^2,s,\alpha,\beta,L}>0$ is a constant depending only on $\sigma^2,s,\alpha,\beta$, and $L$, and the infimum is taken over all measurable functions $\hat{\delta}:\mathbb{S}^1\times(\mathbb{S}^1)^n \to \mathbb{S}^1$.
\end{theorem}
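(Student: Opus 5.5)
The plan is a standard Assouad-type (or Fano) lower bound built from local perturbations of the uniform density on $\mathbb{S}^1=\Rbb/2\pi\Zbb$. Fix a smooth bump $\psi$ supported in $(0,1)$ with $\int\psi=0$, set $h:=h_n\asymp n^{-1/(2s+5)}$ and $M\asymp 1/h$ disjoint intervals $I_k$ of length $2\pi h$. For $\omega\in\{0,1\}^M$ define
\[
g_\omega(\theta):=\frac{1}{2\pi}+a\,h^{s}\sum_{k=1}^{M}\omega_k\psi\bigl((\theta-\theta_k)/h\bigr),
\]
with $a>0$ small. The mean-zero bumps preserve $\int g_\omega=1$, and $\alpha\le g_\omega\le\beta$ holds for small $a$. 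Since $\|\psi((\cdot-\theta_k)/h)\|_{\Hcal^s}^2\asymp h^{1-2s}$ and the bumps are disjoint, $\|g_\omega-\frac1{2\pi}\|^2_{\Hcal^s}\lesssim a^2 h^{2s}\cdot M h^{1-2s}\asymp a^2$. This fits in the ball because $L>(2\pi)^{-1/2}=\|1/(2\pi)\|_{\Hcal^s}$ (the $j=0$ mode is orthogonal to the perturbation). For non-integer $s$ I would bound the Sobolev norm by interpolation.

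The key structure on the circle is that $f_g=g*p_\sigma$, where $p_\sigma(u)\propto e^{-u^2/(2\sigma^2)}$ on $(-\pi,\pi]$. This kernel is $C^1$, and $p_\sigma''$ has a jump of size $c_\sigma>0$ at the cut point $u=\pi$. Hence the Fourier multiplier $\hat p_\sigma(k)$ decays like $\asymp c_\sigma k^{-2}$ (this is the content of the lower bound in Lemma~\ref{lem:Sobolev-bd-S1}). Since $\alpha\le f\le\beta'$ and $\sigma^2<\pi^2$, we have $\delta_{\T,g}(x)=x+\sigma^2 f_g'(x)/f_g(x)$ mod $2\pi$, with a displacement of size at most $O(1)<\pi$. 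So locally $d(\delta_{\T,g},\delta_{\T,g'})\gtrsim|f_g'/f_g-f_{g'}'/f_{g'}|$, and because $f$ is within a constant of $1/(2\pi)$, this is $\gtrsim|f_g'-f_{g'}'|-O(|f_g-f_{g'}|)$.

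\textbf{Separation (the main obstacle).} I need, for each $k$, that flipping $\omega_k$ changes $f'$ by $\gtrsim a h^{s+2}$ in $L^2$ norm-squared per unit of the $k$-th cell, i.e.\ $\int|(\psi_k*p_\sigma)'|^2\gtrsim h^{2s}\cdot h^{4}\cdot h^{-2}\cdot h = h^{2s+3}$. The convolution with $p_\sigma$ is not local, and I would handle this in two steps.
\begin{enumerate}
\item Write $p_\sigma=p^{\rm sm}+p^{\rm sing}$, where $p^{\rm sing}$ is, up to a smooth remainder, $c_\sigma$ times a quadratic-spline kink placed at $\pi$. Then $(\psi_h*p_\sigma)'$ equals $c\,\Psi_h(\cdot-\pi)$, with $\Psi$ the second antiderivative of $\psi$ (choosing $\psi$ with vanishing first two moments so that $\Psi$ is compactly supported), plus smooth terms of size $O(h^{s+1}\cdot h^{J})$ for any $J$ if $\psi$ has many vanishing moments.
\item The singular part is localized near $\theta_k+\pi$ with magnitude $h^{s}\cdot h^2$, giving the needed $\int|\cdot|^2\asymp a^2h^{2s+5}$ per cell. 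This is the natural rate: a rate $n^{-(2s+2)/(2s+5)}$ needs total separation $M\cdot h^{2s+5}\cdot h^{-?}$, which I would recheck. The accounting is that the derivative of $f$ lives at smoothness $s+1$, so per-cell squared error should be $h^{2(s+1)}\cdot h$ and total $h^{2s+2}$, matching $n^{-(2s+2)/(2s+5)}$. I would therefore recheck the amplitude (the bump height should be $h^{s}$ in $g$, giving $f'$ perturbation $h^{s+1}$ via the $k^{-2}$ multiplier at frequency $1/h$). Concretely, I would compute in Fourier: $|\widehat{(\psi_h*p)'}(j)|\asymp |j|\,|\hat p(j)|\,|\hat\psi_h(j)|\asymp h^{s}\cdot h\cdot |j|^{-1}\cdot h\,|\hat\psi(jh)|$. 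Summing over $|j|\sim 1/h$ gives $\asymp h^{2s+3}$ per bump, and $M h^{2s+3}\asymp h^{2s+2}$.
\end{enumerate}
Disjointness across $k$ then needs the cross terms between different bumps' contributions to be negligible. To handle this I would use Assouad with the $L^2$ loss over the full sum and lower bound via the Fourier orthogonality after noting that the loss is quadratic in the perturbation. The nonlinearity coming from $1/f$ and the wrapping mod $2\pi$ is handled by the linearization above, with errors of order $O(a)$ relative.

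\textbf{KL control.} For neighbours $\omega,\omega'$ differing in one coordinate, $\KL(f_\omega^{\otimes n}\|f_{\omega'}^{\otimes n})\le n\chi^2\lesssim n\|f_\omega-f_{\omega'}\|_{L^2}^2/\alpha$. Here $\|\psi_h*p\|_{L^2}^2\asymp\sum_j h^{2s}h^2|\hat\psi(jh)|^2 j^{-4}\asymp h^{2s+5}$, which is $\lesssim 1/n$ by the choice of $h$. Assouad's lemma then yields a risk $\gtrsim M\cdot h^{2s+3}\asymp h^{2s+2}=n^{-(2s+2)/(2s+5)}$, with constants depending on $\sigma^2$ through $c_\sigma$, on $\alpha,\beta,L$, and on $s$.
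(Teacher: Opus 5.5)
Your skeleton matches the paper's. It is Assouad over a hypercube of $\asymp h^{-1}$ perturbations of the uniform density, with $h\asymp n^{-1/(2s+5)}$ and amplitude $h^{s}$ (the paper's $\gamma\psi_{p,z}$ with $2^{-p}\asymp h$). KL is controlled by $n\|f_\omega-f_{\omega'}\|_{L^2}^2\asymp nh^{2s+5}\lesssim 1$, and separation is $\asymp h^{2s+3}$ per flipped coordinate after linearizing $\sigma^2 f'/f$. Your final Fourier count is right, though the displayed multiplier carries a stray factor $h$. The one structural difference is that you perturb by compactly supported bumps $u_k:=ah^s\psi((\cdot-\theta_k)/h)$, whereas the paper uses periodic wavelets with Fourier support in $|k|\asymp 2^p$. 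However, you misidentify the smoothing mechanism, and some steps fail as written.

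\textbf{The kernel is not $C^1$.} On $(-\pi,\pi)$ we have $p_\sigma'(t)=-t\sigma^{-2}p_\sigma(t)$, so $p_\sigma'$ jumps by $2\pi\sigma^{-2}p_\sigma(\pi)$ at the cut point, where the squared distance has a corner. Two integrations by parts give $p_\sigma^{\ast}(k)=(-1)^{k+1}(2\pi)^{1/2}\sigma^{-2}p_\sigma(\pi)\,k^{-2}+O_\sigma(k^{-4})$. This is the true source of the $k^{-2}$ decay. It holds for $|k|\ge k_0(\sigma)$, with a constant exponentially small in $1/\sigma^2$, which the theorem allows. A $C^1$ kernel whose second derivative merely jumps would have $k^{-3}$ coefficients and a different rate, so your Steps 1--2 must be redone. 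The singular part is a linear kink, and
\[
(u_k*p_\sigma)'=2\pi\sigma^{-2}p_\sigma(\pi)\,U_k(\cdot-\pi)+\text{remainder},
\]
where $U_k$ is the \emph{first} antiderivative of $u_k$, of height $h^{s+1}$. This gives $h^{2s+3}$ per antipodal cell, not $h^{2s+5}$, consistent with your Fourier count.

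\textbf{Mean zero alone breaks the KL step.} Suppose $\int t\psi(t)\diff t\neq 0$. Then $|u_k^{\ast}(\pm1)|\asymp h^{s+2}$, while $p_\sigma^{\ast}(\pm1)>0$ because $p_\sigma$ is decreasing in $|t|$. Hence $\|u_k*p_\sigma\|_{L^2}^2\gtrsim h^{2s+4}$ and $n\|f_\omega-f_{\omega'}\|_{L^2}^2\gtrsim h^{-1}\to\infty$, so neighbouring hypotheses become distinguishable and Assouad yields nothing. You must impose $\int\psi=\int t\psi(t)\diff t=0$. Then $\hat\psi(\xi)=O(\xi^2)$, and frequencies $|j|\lesssim h^{-1}$ contribute only $O(h^{2s+5})$. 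You do invoke two vanishing moments in Step 1, but for a reason that disappears once the kink is corrected: mean zero already makes $U_k$ compactly supported. It is the KL bound that forces the second moment condition.

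\textbf{Cross terms need an actual argument.} Translates of $\psi$ are not Fourier-orthogonal, so the cross terms for $d_{\mathrm H}(\omega,\omega')>1$ need an actual argument. With two vanishing moments this is easy. The leading terms $U_k(\cdot-\pi)$ are disjointly supported. The remainders, from the smooth part of $p_\sigma$ and the non-affine part of the kink, are $O(h^{s+3})$ uniformly. So flipping $D$ coordinates produces a remainder with squared $L^2$ norm $\lesssim D^2h^{2s+6}\lesssim Dh^{2s+5}$, which is negligible against the $\asymp Dh^{2s+3}$ from the leading terms. A periodization argument on the Fourier side also works.

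\textbf{What each construction buys.} The paper's band-limited wavelets sidestep all three issues. The multiplier is uniformly $\asymp 2^{-2p}$ on their Fourier support, so both the KL bound and the separation follow from the Riesz property alone. The price is the $L^\infty$ bound \eqref{eqn:wavelet-Linf} for non-compactly supported wavelets. Your bumps give that bound for free, together with the $\Hcal^s$ bound via disjoint supports and interpolation.
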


The proof of this result reveals that the hardness of denoising coincides with the hardness of estimating the derivative of $f$, and the rate of convergence reflects the fact that $f$ inherits 2 more degrees of smoothness than $g$ because of the convolution structure; indeed, the rate of convergence should be understood as $n^{-2p/(2p+1)}$ for $p=(s+2)-1$.
The obstruction to extending this analysis to general $\Mcal$ is that we do not have a precise understanding of the smoothing properties of Riemannian Gaussian mixture models in cases other than $\Mcal=\mathbb{S}^1$, which is the same difficulty we encountered in  Subsection~\ref{subsec:sobolev}.
In some sense, these non-parametric rates are a manifestation of the well-known ``smeariness'' phenomenon in non-Euclidean statistics, whereby the rates of convergence can suffer in problems where the likelihood is not differentiable at the cut locus of its Fr\'echet mean \cite{HotzHuckemann,EltznerHuckemann,Eltzner,HundrieserEltznerHuckemann}.

\section{Numerical Simulations}\label{sec:sim}

In this section we give some numerical simulations to illustrate the theory of Section~\ref{sec:theory}, and discuss some computational aspects of the denoising procedures that will be used in the applications of Section~\ref{sec:appl}.
The simulations are designed with two goals in mind.
First, we are interested in comparing the risks of $\delta_{\T}$ with those of $\delta_{\N}$ and $\delta_{\B}$ as $\sigma^2\to 0$ (as in Subsection~\ref{subsec:oracle}).
Second, we are interested in comparing the distance between $\hat{\delta}_{\T}$ and $\delta_{\T}$ (as in Subsection~\ref{subsec:empirical}).

As in the rest of the paper, the latent variables $\Theta_1,\ldots, \Theta_n$ are i.i.d.~samples from some distribution $G$, and the measurements $X_1,\ldots, X_n$ are conditionally independent Riemannian Gaussian random variables given $\{\Theta_1=\theta_1,\ldots, \Theta_n=\theta_n\}$ where $X_i$ has location parameter $\theta_i$ and some scale parameter $\sigma>0$, for all $1\le i\le n$.
Also recall that for any denoiser $\delta$ we define the risk $R(\delta,\sigma^2)$ via~\eqref{eqn:def-risk} and the distance $D_{\T}(\delta)$ from the oracle $\delta_{\T}$ via~\eqref{eqn:def-dist}.

In Appendix~\ref{app:implementation} we give a detailed description of the implementation of our denoisers and our simulations, but we briefly summarize a few important points here.
First, we note that, even with oracle knowledge of the latent variable distribution $G$, evaluating the oracle tangential Bayes denoiser $\delta_{\T}$ requires numerical integration because evaluating the marginal density $f(x)=\int_{\Mcal}p_{\theta,\sigma^2}(x)\diff G(\theta)$ already requires numerical integration; indeed, we use a Monte Carlo approximation of $f$ by drawing additional oracle samples of $\Theta$ from $G$, and then we use the identity $\delta_{\T}(x) = \Exp_x(\sigma^2\nabla_xf(x)/f(x))$.
Second, we compute the oracle Bayes denoiser $\delta_{\B}$ by first approximating the {conditional Fr\'echet mean of $\Theta$ given $\{X=x\}$ via Nadaraya-Watson smoothing (see~\eqref{eqn:pop-FM} and Appendix~\ref{app:implementation})} using additional oracle samples of $(\Theta,X)$ from \eqref{eqn:pop-model}, and then by numerically computing the Fr\'echet mean of the resulting distribution using the built-in methods from \texttt{Geomstats} \cite{GeomStats}.
Lastly, we note that choosing the optimal parameters $\ell_n$ and $\rho$ (recall~\eqref{eq:Density-Est} and~\eqref{eq:Est-Denoiser}) requires some oracle knowledge of $G$; for $\ell_n$, in this section we follow the oracle choices suggested in Subsection~\ref{subsec:empirical}, since our goal is simply to verify our theoretical developments, while $\rho$ is set to the 1st percentile of the positive values taken on by the density $\hat{f}_n$.
Later, {in the applications section, we will choose both parameters in a fully data-driven way with cross-validation of the Riemannian version of Hyv\"arinen's score-matching objective \cite{hyvarinen2005,vincent2011connection,de2022riemannian}.}

All of our simulations have the same form.
That is, for various structured distributions $G$ on a Riemannian manifold $\Mcal$, we compute the following.
First, we compute the oracle-level na\"ive, Bayes, and tangential Bayes denoisers on the basis of 10,000 oracle samples and we let $\sigma^2$ vary over the interval $(0,0.25]$ in 6 linearly-spaced increments, then we plot the risks $R(\delta_{\N},\sigma^2)$, $R(\delta_{\B},\sigma^2)$ and $R(\delta_{\T},\sigma^2)$ as a function of $\sigma^2$.
Second, we fix {$\sigma^2=0.15$ and we let $n$ vary over the interval $\{100, \ldots, 10,000\}$ in 5 logarithmically-spaced increments}, then we plot $\Ebb[D_{\T}(\hat{\delta}_{\T})]$ as a function of $n$, on a log-log scale; in this way, the slope of the resulting line reflects the rate of convergence of the empirical tangential Bayes denoiser $\hat{\delta}_{\T}$, as predicted by Theorem~\ref{thm:EB-risk}.
In the rest of this section, we discuss several simulations in the circle $\mathbb{S}^1$ and the sphere $\mathbb{S}^2$.

\subsection{Circle, $\mathbb{S}^1$}\label{subsec:sim-circle}
The first setting of interest is the circle, $\Mcal=\mathbb{S}^1$, in which our methodology and results already reveal some interesting new phenomena.
Throughout this subsection, we represent $\mathbb{S}^1 =\Rbb/2\pi \Zbb$ for convenience in some expressions.

In order to compute the empirical Bayes denoiser $\hat{\delta}_{\T}$, we need to be able to compute sums of eigenfunctions of the Laplace-Beltrami operator $\Delta$, and derivatives thereof.
To do this, we recall from standard Fourier analysis that the eigenvalue $\lambda_0=0$ occurs with multiplicity 1 and that $\lambda_{m}=m^2$ occurs with multiplicity 2 for $m\ge 1$.
As such, it is natural to index our eigenfunctions by integers $k\in\Zbb$, so that the eigenvalue $\lambda_{k}=k^2$ has corresponding eigenfunction $\phi_{k}(x) = (2\pi)^{-1/2}e^{ik x}$, where we recall that we have chosen to represent $\mathbb{S}^1=\Rbb/2\pi\Zbb$.
Thus if we construct the kernel $K$ by fixing $M>0$ and summing over $|k|\le M$, we have 
\begin{equation*}
    K(x,y,M) = \sum_{|k|\le M}\frac{1}{2\pi}e^{ik(x-y)} := \frac{1}{2\pi}D_M(x-y)
\end{equation*}
where $\{D_M\}_{M\in\Nbb}$ is the \textit{Dirichlet kernel} (see \cite[p. 206]{TaylorPDE}) defined by
\begin{equation}\label{eqn:Dirichlet-kernel}
    D_M(z) := \frac{\sin\left(\left(M+\frac{1}{2}\right)z\right)}{\sin\left(\frac{1}{2}z\right)},
\end{equation}
for $z\neq 0$, and $D_M(0)=2M+1$ by continuity.
Recall that the Dirichlet kernel is responsible for the ``ringing'' phenomenon whereby high-density regions close to low-density regions can create the appearance of waves (e.g., in the density estimate) which are not truly present in the data; this phenomenon will be seen in the application to protein structure in Subsection~\ref{subsec:protein}.
Note that $M$ is related to $\ell_n$ via $\ell_n = 2M+1$, i.e., $M\asymp \ell_n$.

We consider the following examples for the latent variable distribution $G$ and corresponding hyperparameter choices.
The first is the uniform distribution (i.e., $\diff G(\theta) \propto\diff \theta$), in which we set $M \propto \log n$. The second is the uniform distribution on a cap (i.e., $\diff G(\theta) \propto\ind\{a\le \theta\le b\}\diff \theta$), in which (Example~\ref{ex:empirical-S1-smooth}) we set $M\asymp n^{1/6}$ since direct calculation reveals $\|g\|_{\Hcal^s(\mathbb{S}^1)}<\infty$ if and only if $s<1/2$.
The remaining distributions are themselves Riemannian Gaussian mixture models with varying numbers of equally-spaced components and varying component scale parameters $\tau>0$, i.e.,
in the 2-mode case (i.e., antipodal components) as $\tau^2=0.1$, in the 3-mode case as $\tau^2=0.05$, and in the 5-mode case as $\tau^2=0.0005$; in these cases, we set (Example~\ref{ex:empirical-S1-smooth}) $M\asymp \log n$ since the theory suggests letting $M$ grow slower than $n^{\varepsilon}$ for all $0<\varepsilon<1$.
We always set $\rho>0$ to the first percentile of the empirical density $\hat{f}_n$.

\begin{figure}[t]
    \centering
    \includegraphics[width=1\linewidth]{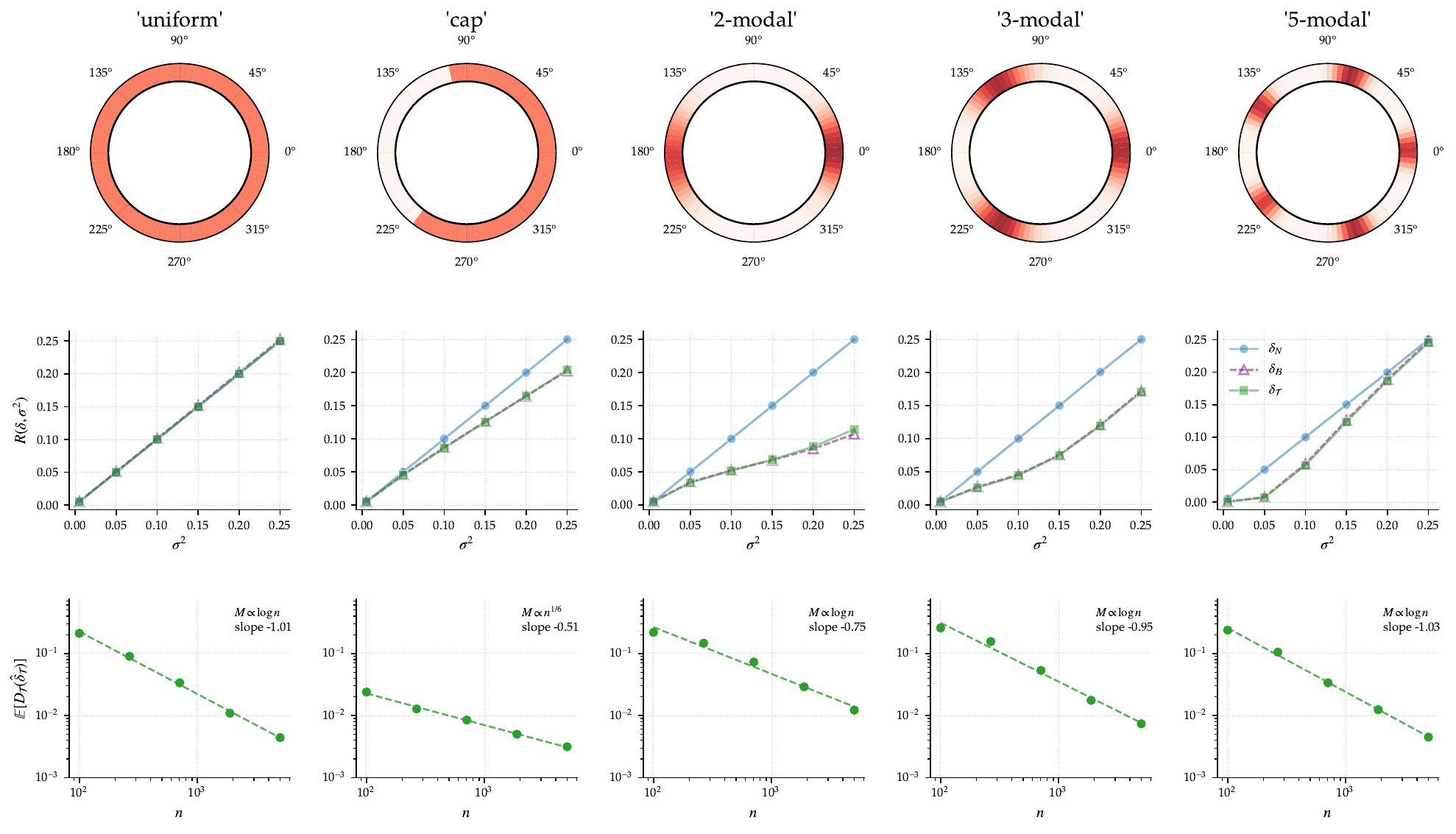}
    \caption{A simulation of the risks of the oracle and empirical denoisers in the circle $\mathbb{S}^1$.
    We show the latent variable distribution $G$ (top row), the risks of the oracle-level denoisers $\delta_{\N}$, $\delta_{\T}$, and $\delta_{\B}$ when $n$ is fixed and $\sigma^2$ decreases (middle row), and the distance between the empirical-level and oracle-level denoisers $\hat{\delta}_{\T}$ and $\delta_{\T}$ when $\sigma^2$ is fixed and $n$ increases (bottom row).}
    \label{fig:S1_sims}
\end{figure}

The results of these simulations are shown in Figure~\ref{fig:S1_sims}, in which the top row shows the 5 different choices of $G$, the middle row shows the risks of the oracle denoisers $\delta_{\N},\delta_{\T}$, and $\delta_{\B}$, and the bottom row shows the distance $D_{\T}(\hat{\delta}_{\T})$ of the empirical tangential Bayes denoiser from the oracle tangential Bayes denoiser.
Next we make some remarks about these plots and their relationship with the theory of Section~\ref{sec:theory}.
At the oracle level, we observe that the risks of $\delta_{\B}$ and $\delta_{\T}$ are nearly indistinguishable (Theorem~\ref{thm:uncond-main}) while the risk of $\delta_{\N}$ is approximately equal to $\sigma^2$ (Proposition~\ref{prop:naive-risk}); the only exception is the uniform distribution in which case all of the risks coincide (Example~\ref{ex:oracle-uniform}).
At the empirical level, we note that the slope of the line in the case of the cap example is $-0.5$ which corresponds to a rate of convergence of $n^{-1/2}$  (Example~\ref{ex:empirical-S1-smooth}), and in all other cases the slope is approximately $-1$ which corresponds to the nearly-parametric rate of convergence $n^{-1}$ (Example~\ref{ex:empirical-S1-smooth}).


\subsection{Sphere, $\mathbb{S}^2$}\label{subsec:sim-sphere}

In this next example, we consider the sphere, $\Mcal=\mathbb{S}^2$.
This is one of the fundamental examples in directional statistics, and will be the focus of our application to astronomy in Subsection~\ref{subsec:GRB}.
We emphasize that visualizing data on the sphere requires choosing a particular two-dimensional projection of $\mathbb{S}^2$ onto the plane; in this work we always use the \textit{Mollweide projection}, which is an equal-area projection, hence it is well-suited to visualizing scatter plots and other statistical objects, and is commonly used in astronomy \cite{Tirion}.
However, we emphasize that our denoising procedure takes place in the intrinsic geometry of $\mathbb{S}^2$, not in the projected geometry; in this regard, other projections can be chosen for the visualization step without changing the underlying methodology.

In order to apply our method we need to recall some aspects of the eigendecomposition of the Laplace-Beltrami operator on $\mathbb{S}^2$.
It turns out that the eigenvalues occur with multiplicities, so the eigendecomposition can be conveniently parameterized by integer pairs $(m,k)$ with $|k|\le m$;
more precisely, the eigenvalue $\lambda_{m} = m(m+1)$ occurs with multiplicity $2m+1$, and its corresponding eigenfunctions are the so-called \textit{spherical harmonics} $\phi_{m,-m},\ldots, \phi_{m,m}:\mathbb{S}^2\to\Cbb$.
We can also simplify the calculation of our kernel by choosing coordinates for $\mathbb{S}^2$ by representing it as $\{x\in\Rbb^3: \|x\|=1\}$; then, by fixing $M$ and summing over all $(m,k)$ with $m\le M$, the addition formula for spherical harmonics yields
\begin{equation*}
    K(x,y,M):=\sum_{m=0}^{M}\sum_{|k|\le m}\phi_{m,k}(x)\overline{\phi_{m,k}(y)} = \sum_{m=0}^{M}\frac{2m+1}{4\pi}P_{m}\left(x^{\top} y\right)
\end{equation*}
where $\{P_{m}\}_{m\in\Nbb}$ are the so-called \textit{Legendre polynomials}.
Note that $M$ is related to $\ell_n$ via $\ell_n =\sum_{m=0}^{M}(2m+1)=(M+1)^2$, i.e., $M\asymp \ell_n^{1/2}$.

The examples of our latent variable distribution $G$ and hyperparameter choices are as follows.
As before, we consider 5 choices for $G$; the first is the uniform distribution (i.e., $\diff G(\theta) \propto\diff \theta$), and the second is the uniform distribution on a cap (i.e., $\diff G(\theta) \propto\ind\{\theta\in C\}\diff \theta$ for suitable $C\subseteq \mathbb{S}^2$).
The third is approximately supported on the equator; more precisely it has a density given by $g(\theta)=\int_{\Mcal}p_{\eta,\tau^2}(\theta)\diff \nu(\eta)$ where $\nu$ is the uniform distribution on the equator and $\tau^2= 0.0001$.
The remaining are Riemannian Gaussian mixture models with varying numbers of components and varying component scale parameters $\tau^2>0$.
While the locations of the components cannot be exactly equally-spaced in this setting, we make them approximately so by placing them at the locations of a Fibonacci spiral.
The component scale parameters are chosen in the 5-mode case as $\tau^2=0.05$, and in the 10-mode case as $\tau^2=0.001$.
In all cases we set $M$ according to the conservative estimate $M\asymp n^{1/6}$, i.e., $\ell_n=n^{1/3}$ (Example~\ref{ex:EB-sphere-rate}).

\begin{figure}[t]
    \centering
    \includegraphics[width=1\linewidth]{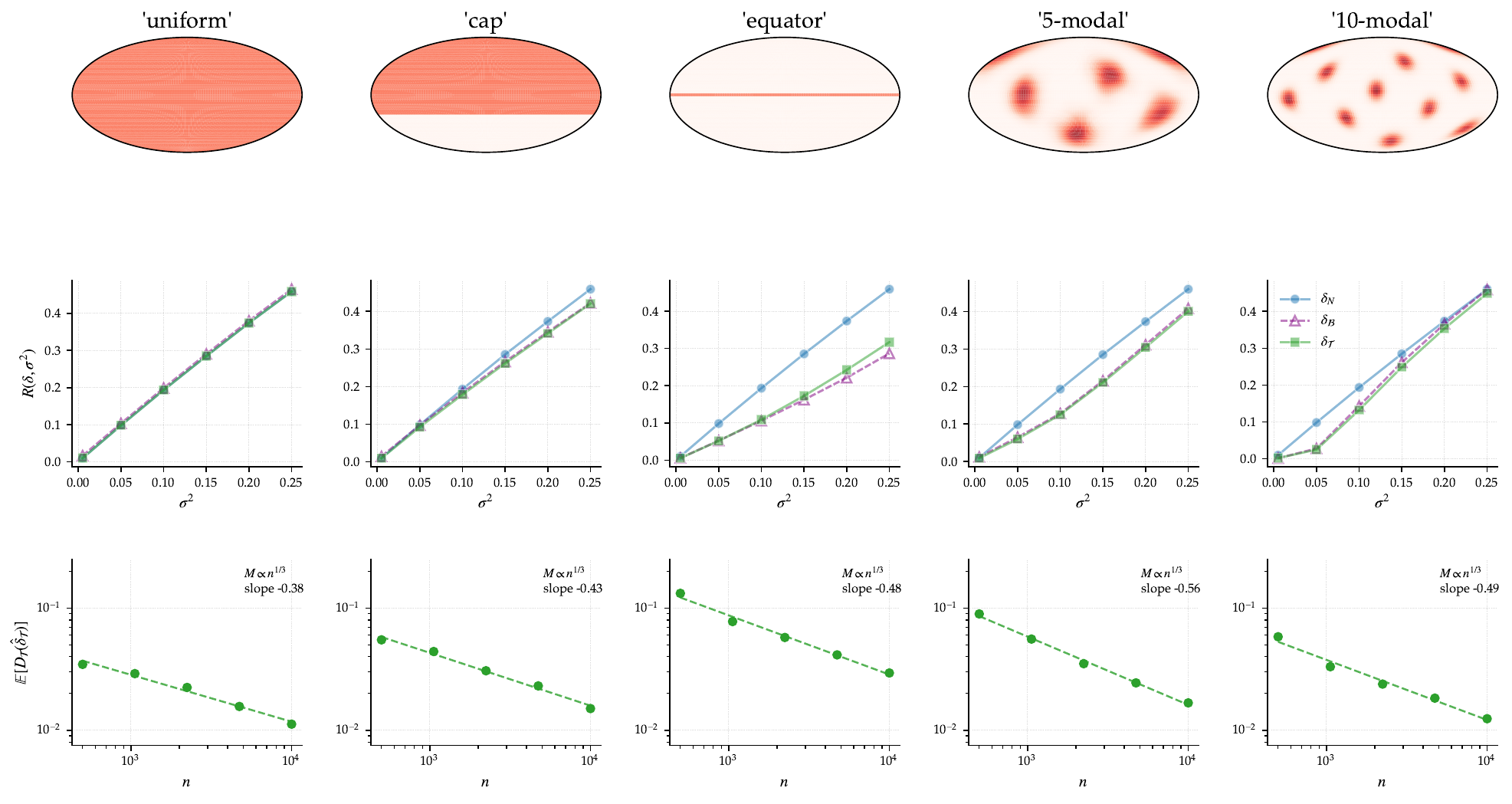}
    \caption{A simulation of the risks of the oracle and empirical denoisers in the sphere $\mathbb{S}^2$.
    We show the latent variable distribution $G$ (top row), the risks of the oracle-level denoisers $\delta_{\N}$, $\delta_{\T}$, and $\delta_{\B}$ when $n$ is fixed and $\sigma^2$ decreases (middle row), and the distance between the empirical-level and oracle-level denoisers $\hat{\delta}_{\T}$ and $\delta_{\T}$ when $\sigma^2$ is fixed and $n$ increases (bottom row).}
    \label{fig:S2_sims}
\end{figure}

We show the results of these simulations in Figure~\ref{fig:S2_sims}, where the top row shows the 5 different choices of $G$, the middle row shows the risks of the oracle denoisers, and the bottom row shows the distance between the oracle and empirical tangential Bayes denoisers, and we make some comments.
At the oracle level, we note as before that $\delta_{\B}$ and $\delta_{\T}$ have nearly equal risks (Theorem~\ref{thm:uncond-main}), and that the risk of $\delta_{\N}$ is approximately equal to $2\sigma^2$ (Proposition~\ref{prop:naive-risk}); we emphasize in the equator example that the risks of $\delta_{\B}$ and $\delta_{\T}$ are approximately $\sigma^2$, which reflects the fact that the latent variable distribution is approximately supported on a one-dimensional submanifold. At the empirical level, we observe slopes approximately equal to, or steeper
than, $-1/3$, corresponding to convergence at least as fast as $n^{-1/3}$ in
these simulations (Example~\ref{ex:EB-sphere-rate}).
Also note in the 10-modal example that the our approximation of the oracle Bayes denoiser $\delta_{\B}$ has larger risk than the oracle tangential Bayes denoiser $\delta_{\T}$, and the difference is due both to Monte Carlo error and to numerical precision of the approximation; this highlights that both denoisers have essentially the same risk in this regime.

\section{Scientific Applications}\label{sec:appl}

In this section we apply our methodology to two denoising problems from concrete scientific applications, where the goal is to estimate some latent manifold-valued random variables $\Theta_1,\ldots, \Theta_n$ from noisy manifold-valued measurements $X_1,\ldots, X_n$ thereof; that is, we compute the empirical tangential Bayes denoiser $\hat{\delta}_{\T}$ and then the denoised data set $\hat{\delta}_{\T}(X_1),\ldots,\hat{\delta}_{\T}(X_n)$.
In each application we also display our estimate $\nabla \hat f_n/ \max\{\rho, \hat {f}_n\}$ of the score field, which (by slight abuse of notation) we denote $\nabla \log \hat{f}_n$; this helps to visualize the fine properties of the empirical tangential Bayes denoiser.

Since the distribution $G$ is not known in practice, we cannot use the oracle choice of $M$ and $\rho$ described in Theorem~\ref{thm:EB-risk} for hyperparameter selection; instead, we use cross-validation of the Riemannian version of Hyv\"arinen's score-matching objective \cite{de2022riemannian}, i.e. for a candidate density $\tilde{f}:\Mcal\to\Rbb$ computed on some training data we compute
\begin{equation}\label{eqn:Hyvarinen-Riemannian}
    \hat{\mathcal{J}}(\tilde{f}) = \frac{1}{n}\sum_{i=1}^{n}\left(\|\nabla\log\tilde{f}(X_i)\|_{X_i}^2 + 2\,\Delta\log\tilde{f}(X_i)\right)
\end{equation}
on a test set of hold-out data.
We also use an Akaike Information Criterion (AIC) penalty to prevent overfitting.
See Appendix~\ref{app:implementation} for further detail on computation and implementation.
(Recall the sign convention for $\Delta$ in the Riemannian setting, which may cause confusion about the $+$ sign in \eqref{eqn:Hyvarinen-Riemannian}.)

\subsection{Localization of Gamma Ray Bursts}\label{subsec:GRB}

Our first application concerns denoising the locations of a catalog of gamma ray bursts (GRBs), which are short-lived celestial objects consisting of an extremely powerful emission of light; see \cite{Piran} for a detailed introduction.
Because of their transient nature, estimating the positions of GRBs, a task referred to as \textit{localization}, is known to be difficult and has attracted a great deal of research attention \cite{UlyssesGRB, Connaughton_2015, GRB_errors}.
Moreover, GRBs are situated at an extremely large distance away from Earth, so their positions are often recorded as points on a sphere, in the so-called \textit{celestial coordinates} of declination and right-ascension.
Thus, we naturally regard GRB localization as a sphere-valued denoising problem.
We use the data from the BATSE-4Br catalog\footnote{Available at \url{https://gammaray.nsstc.nasa.gov/batse/grb/catalog/4b/4br_basic.html}}, which includes the positions of GRBs observed during 1991-1996.

To cast this problem in our framework, we set $\Mcal := \mathbb{S}^2$ to be the two-dimensional sphere, we write $\Theta_1,\ldots, \Theta_n$ for the latent positions of $n=2702$ GRBs, which we assume are i.i.d.~samples from some $G\in\Pcal(\mathbb{S}^2)$ which possesses a density with respect to the volume form, and we write $X_1,\ldots, X_n$ for measurements of $\Theta_1,\ldots, \Theta_n$, respectively.
We follow the conservative estimates from \cite{GRB_errors} that the error of the measurements is roughly 10 degrees, yielding $\sigma^2 = (10\pi/180)^2\approx 3.046\times 10^{-2}$.

Implementing the empirical tangential Bayes denoiser $\hat{\delta}_{\T}$ requires some computations with the eigenfunctions of the Laplace-Beltrami operator, which in this case are the spherical harmonics; to this end, we follow the set-up in Subsection~\ref{subsec:sim-sphere}. 

We also note that the positions of GRBs are not typically modeled with Riemannian Gaussian distributions but rather with von Mises-Fisher distributions \cite[Section~5]{Connaughton_2015}, but that these distributions share many qualitative features (unimodality, squared-exponential decay away from the location parameter, etc.) and are extremely similar when $\sigma^2$ is small.

The results are shown in Figure~\ref{fig:astro}.
As in Section~\ref{sec:sim}, we use the Mollweide projection to visualize the data, but the denoising takes place in the intrinsic geometry of the sphere and hence does not rely on this choice of visualization.
The selected hyperparameters are $\ell_n=576$ (i.e., $M=23$) and $\rho \approx 0.017$.
While the marginal distribution of the measurements appears to be close to uniform, we observe that the denoised data set has many small clusters which themselves seem to be approximately uniformly distributed over the sphere.

\begin{figure}[t]
    \centering
    \includegraphics[width=1.0\linewidth]{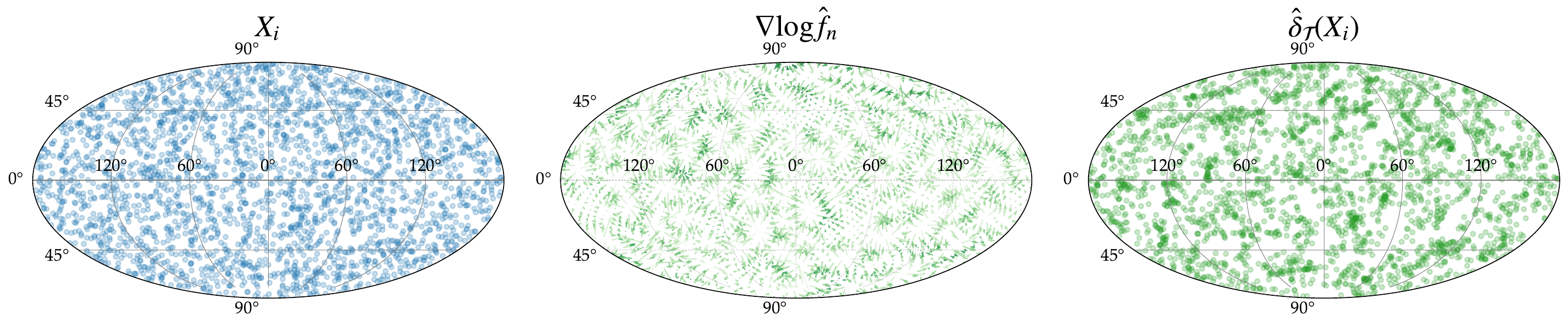}
    \caption{Denoising the locations of a catalog of gamma ray bursts, visualized with the Mollweide projection of the sphere $\mathbb{S}^2$.
	We plot the measurements $X_1,\ldots, X_n$ (left), the estimated score field $\nabla \log\hat{f}_n$ (center), and the denoised data set $\hat{\delta}_{\T}(X_1),\ldots, \hat{\delta}_{\T}(X_n)$ (right).}
    \label{fig:astro}
\end{figure}

\subsection{Protein Structure from Ramachandran Plots}\label{subsec:protein}

Our second application concerns denoising the pairs of torsion angles formed by adjacent amino acids along the backbone of a given protein; the plot of all such pairs, referred to as a \textit{Ramachandran plot}, is a scatter plot on the torus which contains a great deal of information regarding the structure of a protein \cite{Ramachandran,Hollingsworth,Pertsemlidis,Park}.
We focus on the ten enzymes of the glycolytic pathway (hexokinase, phosphoglucose isomerase, phosphofructokinase, aldolase, triosephosphate isomerase, GAPDH, phosphoglycerate kinase, phosphoglycerate mutase, enolase, and pyruvate kinase), obtained from the Protein Data Bank (PDB)\footnote{Available at: \url{https://www.rcsb.org/} via 1HKG, 1IRI, 1PFK, 1ADO, 2YPI, 1GD1, 3PGK, 1E58, 1ENO, and 1A3W}.
Analyzing a functionally related set of enzymes from the same metabolic pathway allows us to study shared structural features in the Ramachandran plot across proteins with diverse sequences.

To put this in the framework above, we set $\Mcal := \mathbb{S}^1\times \mathbb{S}^1$ to be the two-dimensional flat torus, and for each enzyme $1\le j\le 10$ we write $\Theta_1,\ldots, \Theta_{n_j}$ for the pairs of latent adjacent torsion angles, which we assume are i.i.d.~samples from some $G\in\Pcal(\mathbb{S}^1\times \mathbb{S}^1)$ possessing a density with respect to the volume form, and $X_1,\ldots, X_{n_j}$ for measurements thereof.
We pool all $n = \sum_j n_j$ torsion-angle pairs and fit a single density $\hat f_n$ to the pooled data, then apply the resulting denoiser to each enzyme separately.
In order to determine $\sigma^2$, we note that dihedral angles are not measured directly but rather are computed from measurements of atomic positions; as such, we take, as a crude estimate, $\sigma:=\arctan(\tau/b)$ where $\tau^2 = \bar{B}/(8\pi^2)$ is the mean-square atomic displacement derived from the pooled mean B-factor $\bar{B} \approx 26.42$\AA$^2$ (as in \cite{Rupp2009,Bfactors}) and $b \approx 1.5$\AA\ is a typical bond length, leading to the pooled estimate $\sigma^2 \approx 0.136$.
Although our assumptions of independence need not hold in this setting, we believe that correlations among $\Theta_1,\ldots, \Theta_n$ (and conditional correlations among $X_1,\ldots, X_n$ given $\Theta_1,\ldots, \Theta_n$) are weak and hence the methodology still provides a reasonable application of the method in practice.

Implementing our results requires an explicit form of the eigendecomposition of the Laplace-Beltrami operator on $\mathbb{S}^1\times \mathbb{S}^1$; this is easy since we studied the eigendecomposition of $\Delta$ on $\mathbb{S}^1$ in Subsection~\ref{subsec:sim-circle}, and the Laplace-Beltrami operator tensorizes on product manifolds.
More precisely, if we coordinatize $\mathbb{S}^1\times \mathbb{S}^1$ by representing $x\in\mathbb{S}^1\times \mathbb{S}^1$ as $x=(x_1,x_2)\in(\Rbb/2\pi\Zbb)\times (\Rbb/2\pi\Zbb)$, we may fix $M>0$ and take our kernel to be
\begin{equation*}
    K(x,y,M) = \sum_{|k_1|,|k_2|\le M}\frac{1}{(2\pi)^2}e^{ik_1(x_1-y_1) + ik_2(x_2-y_2)} = \frac{1}{(2\pi)^2}D_M(x_1-y_1)D_M(x_2-y_2)
\end{equation*}
where $\{D_M\}_{M\in\Nbb}$ is the Dirichlet kernel defined in~\eqref{eqn:Dirichlet-kernel}.

We show the results in Figures~\ref{fig:chemistry} (and Figure~\ref{fig:chemistry-perenzyme} in Appendix~\ref{app:additional-figs}), where the selected hyperparameters are {$\ell=38$ (i.e., $M=2$)} and $\rho \approx 0.006$.
We observe that the denoiser exhibits some shrinkage near the upper-left, center-left, and center-right regions of the torus, which are known to correspond to the so-called \textit{$\beta$-sheets}, \textit{right-handed $\alpha$-helices}, and \textit{left-handed $\alpha$-helices}, respectively.
Overall, the empirical Bayes denoiser exhibits the same qualitative behavior as in the Euclidean setting: shrinkage towards high-density regions and stretching away from low-density regions of $\hat{f}_n$.
However, we notice some ``ringing'' phenomenon near the coordinates $(\frac{\pi}{2},-\frac{\pi}{4})$, whereby the shrinkage is not due to concentration of left-handed $\alpha$-helix torsion angles but rather due to nearby concentration of $\beta$-sheet torsion angles.

We make one last remark on the statistical interpretation of the denoised Ramachandran plot.
In many settings the Ramachandran plot is meant to be compared with a known decomposition of the torus $\mathbb{S}^1\times \mathbb{S}^1$ into ``favorable'', ``allowed'', and ``disallowed'' regions, in which case the denoised data set may not be a good surrogate for the latent variables; indeed, it is a manifestation of the shrinkage phenomenon that $\hat{\delta}_{\T}(X_1),\ldots, \hat{\delta}_{\T}(X_n)$ and $\Theta_1,\ldots,\Theta_n$ can be close in the sense that their mean squared error is small while being far in the sense that their distributions differ.
Performing empirical Bayes denoising subject to some constraints that the distributions match, would require adapting methodologies like those in \cite{ConstrainedEB} to data on the torus or more general Riemannian manifolds.

\begin{figure}[t]
\centering
\includegraphics[width=0.85\linewidth]{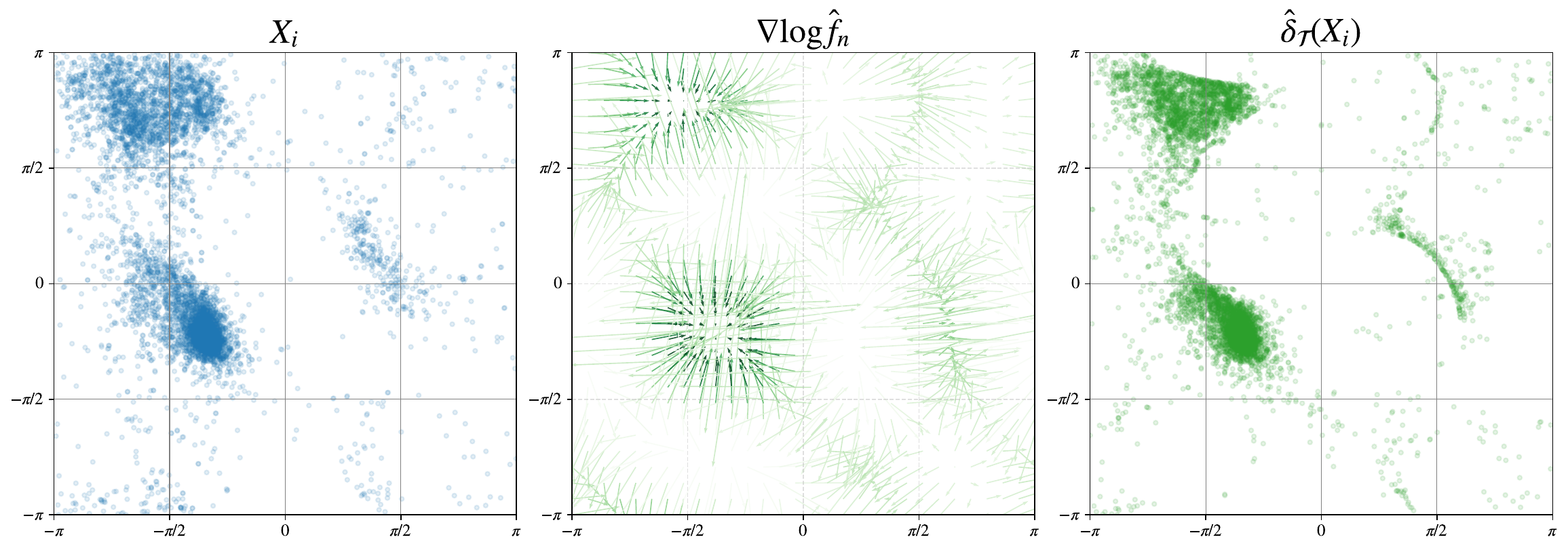}
\caption{Denoising the Ramachandran plot of pairs of torsion angles $(\phi,\psi)\in \mathbb{S}^1\times \mathbb{S}^1$ of adjacent amino acids across ten glycolytic enzymes.
	We plot the measurements $X_1,\ldots, X_n$ (left), the estimated score field $\nabla \log\hat{f}_n$ (center), and the denoised data set $\hat{\delta}_{\T}(X_1),\ldots, \hat{\delta}_{\T}(X_n)$ (right).}
\label{fig:chemistry}
\end{figure}

\section{Future Work}

In this paper we have developed an empirical Bayes methodology for denoising, which (in the terminology of Efron \cite{Efron2014}) is based on $f$-modeling, but it is natural to also consider strategies based on $g$-modeling.
That is, we may first estimate the density $g$ of the distribution of $\Theta$ (or, we may estimate the distribution $G$ itself), and then we plug this into the definition of $\delta_{\T}$ or $\delta_{\B}$ to form our empirical Bayes denoiser.
Although $g$-modeling does not involve the Tweedie-Eddington formula in the construction of the denoiser, it is still used crucially in the development of statistical theory; as such, our Tweedie-Eddington formula (Lemma~\ref{lem:Tweedie}) can provide the first step in future work on $g$-modeling approaches to nonparametric Riemannian empirical Bayes. 
It is generally known that $g$-modeling is preferable to $f$-modeling (see \cite{ShenWu} for a detailed discussion), and allows one to accommodate heteroskedasticity in the likelihood, as in our applications in Section~\ref{sec:appl}.

Another direction for future work is to extend our theory from the empirical Bayes setting to the compound decisions setting, namely the setting where $\Theta_1,\ldots,\Theta_n\in\Mcal$ are fixed (i.e., non-random) but unknown \cite{Zhang1997, Soloff, PolyanskiyWuPoisson}.
This is closely related to our empirical Bayes setting but where the distribution $G$ is replaced by the empirical measure of $\Theta_1,\ldots,\Theta_n$, denoted $\bar{G}_n$, which does not even have a density.
While our Tweedie-Eddington formula (Lemma~\ref{lem:Tweedie}) fails at some points since $\bar{G}_n$ has atoms, it is still well-defined almost everywhere (with respect to the volume form).
The compound decisions setting thus introduces further technical difficulties beyond what we have addressed in this work.

Lastly, we discuss the potential application of the methodology in this work to the mathematical theory of cryogenic electron microscopy (cryo-EM).
In cryo-EM, many identical copies of a particle of interest are suspended in an aqueous solution, rapidly frozen, and then imaged with an electron microscope; in this process, each particle leads to exactly one image since the imaging procedure destroys the particle.
While the random orientations are sometimes viewed as a nuisance parameter, several recent works \cite{Harpaz_Shkolnisky_2023, Mendez, BayesianOrientationCryoEM} have argued that the subroutine of estimating the latent orientations is of independent interest, e.g., since it is known empirically that particles do not experience re-orientations uniformly at random.
The work \cite{BayesianOrientationCryoEM} proposes a Bayesian methodology for the orientation estimation problem, where priors are carefully designed to reflect expected symmetries in the unknown molecule.
However, as the molecule and its symmetries are unknown in practice, it is of interest to develop methods that can borrow strength across particles while making minimal assumptions about the structure of the latent rotations; we believe that our empirical Bayes methodology balances these practical considerations.
We view this as an interesting future direction.

\begin{acks}[Acknowledgments]
We thank Nikolaos Ignatiadis for many useful conversations about empirical Bayes, and we thank Philip H.~Woods for recommending the application to Ramachandran plots.
LS gratefully acknowledges support from SNSF grant 10001283 to Victor M.~Panaretos.
BS gratefully acknowledges support from NSF (DMS-2311062 and DMS-2515520).
\end{acks}

\bibliographystyle{imsart-number} 
\bibliography{refs}       

\begin{appendix}

\section{Proofs of Results}\label{App:Proofs}

In this appendix we prove the main results from Section~\ref{sec:theory} of the paper, as well as some useful auxiliary results that are needed along the way.

\subsection{Proofs from Subsection~\ref{subsec:prelim}}\label{subsec:proofs-prelim}

First we give proofs of our results on Riemannian Gaussian mixture models.

\begin{proof}[Proof of Lemma~\ref{lem:Tweedie}]
    The second claim follows from the first, since the cut locus of each point has measure zero under the volume form; see \cite[p.~64]{Chavel}. 
    To prove the first claim, fix $x\in\Mcal$ such that $G(C_x)=0$.
    The compactness of $\Mcal$ implies that there exists $L>0$ such that for every $\theta\in\Mcal$ the map $y\mapsto d^2(y,\theta)$ is $L$-Lipschitz on $\Mcal$, and that we have 
    \begin{align*}
        \exp(-A(\sigma^2,\theta)) \le \frac{1}{\int_{\Mcal}\exp\left(-\frac{(\diam(\Mcal))^2}{2\sigma^2}\right)\diff x} = \frac{\exp\left(\frac{(\diam(\Mcal))^2}{2\sigma^2}\right)}{\int_{\Mcal}1\diff x}=:R
    \end{align*}
    for all $\theta\in\Mcal$.

    Now we let $\gamma:[0,1]\to\Mcal$ be a constant-speed geodesic with $\gamma(0)=x$ and $v:=\gamma'(0)\in\Tan_x(\Mcal)$, and we justify applying dominated convergence in order to differentiate $f$ under the integral along $\gamma$.
    Note that, if $\theta\notin C_x$, then $y\mapsto d^2(y,\theta)$ is differentiable at $x$ and satisfies
    \[
        \nabla_x d^2(x,\theta) = -2\Log_x(\theta)
    \]
    hence
    \begin{align*}
        \lim_{t\to 0}\frac{p_{\theta,\sigma^2}(\gamma(t))-p_{\theta,\sigma^2}(x)}{t}
        &= \left\langle \nabla_x p_{\theta,\sigma^2}(x),v\right\rangle_x \\
        &= \left\langle
            -\frac{1}{2\sigma^2}\nabla_x d^2(x,\theta)\,
            p_{\theta,\sigma^2}(x),\,v
        \right\rangle_x \\
        &= \left\langle
            \sigma^{-2}\Log_x(\theta)\,p_{\theta,\sigma^2}(x),\,v
        \right\rangle_x
    \end{align*}
    by the chain rule.
    Also, use that $a\mapsto e^{-a}$ is $1$-Lipschitz on $[0,\infty)$ to get
    \begin{equation}\label{eqn:ptheat-diff-quotient}
        \begin{split}
            \frac{\left|p_{\theta,\sigma^2}(\gamma(t))-p_{\theta,\sigma^2}(x)\right|}{t}
        &= \frac{\exp(-A(\sigma^2,\theta))}{t}
        \left|
            \exp\left(-\frac{d^2(\gamma(t),\theta)}{2\sigma^2}\right)
            -
            \exp\left(-\frac{d^2(x,\theta)}{2\sigma^2}\right)
        \right| \\
        &\le \frac{\exp(-A(\sigma^2,\theta))}{2\sigma^2\,t}
        \left|d^2(\gamma(t),\theta)-d^2(x,\theta)\right| \\
        &\le \frac{LR}{2\sigma^2\,t}d(\gamma(t),x) \\
        &\le \frac{LR\,\|v\|_x}{2\sigma^2},
        \end{split}
    \end{equation}
    for all $0< t\le 1$.

    The preceding paragraph shows that the difference quotients are dominated by an integrable constant independent of $\theta$, and that they converge for $G$-almost every $\theta$ because $G(C_x)=0$.
    Therefore, we can differentiate under the integral to obtain
    \begin{align*}
        \left\langle \nabla f(x),v\right\rangle_x
        &= \lim_{t\to 0}\frac{f(\gamma(t))-f(x)}{t} \\
        &= \lim_{t\to 0}\int_{\Mcal}\frac{p_{\theta,\sigma^2}(\gamma(t))-p_{\theta,\sigma^2}(x)}{t}\,\diff G(\theta) \\
        &= \int_{\Mcal}
        \left\langle
            \sigma^{-2}\Log_x(\theta)\,p_{\theta,\sigma^2}(x),\,v
        \right\rangle_x
        \diff G(\theta) \\
        &= \left\langle
            \sigma^{-2}\int_{\Mcal}\Log_x(\theta)\,p_{\theta,\sigma^2}(x)\,\diff G(\theta),\,v
        \right\rangle_x.
    \end{align*}
    Since this holds for every $v\in\Tan_x(\Mcal)$, we conclude that $f$ is differentiable at $x$, and
    \[
        \nabla f(x)=\sigma^{-2}\int_{\Mcal}\Log_x(\theta)\,p_{\theta,\sigma^2}(x)\,\diff G(\theta).
    \]
    Dividing by $f(x)$ gives
    \[
        \nabla_x\log f(x)
        =
        \frac{\sigma^{-2}\int_{\Mcal}\Log_x(\theta)\,p_{\theta,\sigma^2}(x)\,\diff G(\theta)}{f(x)}.
    \]
    Lastly, note that $q_x(\theta):=p_{\theta,\sigma^2}(x)/f(x)$ is the density, with respect to $G$, of the conditional distribution of $\Theta$ given $\{X=x\}$; so, rearranging this yields~\eqref{eqn:tweedie}.
\end{proof}

The rest of the results in this subsection rely on some technical preliminaries that we now develop.
First, it will be convenient to parameterize $\Mcal$ by coordinates which are centered at $x\in\Mcal$, usually called \textit{normal coordinates}.
That is, we fix an open domain $D_x\subseteq \Tan_x(\Mcal)$ such that $\Exp_x:D_x\to\Mcal$ is a smooth bijection, possibly up to sets of zero volume form and we parameterize $\Mcal$ by $D_x$; we refer to $D_x$ as an \textit{exponential domain}.
Since $\Mcal$ is compact, there exist $0<r<R$ (depending only on $\Mcal$) such that we have $B_r(0) \subseteq D_x \subseteq B_R(0)$ for all $x\in\Mcal$, where $B_r(0),B_R(0)$ denote the open balls of radii $r,R$, respectively, in $\Tan_x(\Mcal)$.
When we wish to integrate a function $h:\Mcal\to\Rbb$ using normal coordinates at $x\in\Mcal$, we write
\begin{equation}\label{eq:normal-coor}
    \int_{\Mcal}h(y)\diff y = \int_{D_x }h(\Exp_x(v))j_x(v)\diff v
\end{equation}
where $j_x: \Tan_x(\Mcal)\to\Rbb$ is the Jacobian determinant of the map $\Exp_x:D_x\to\Mcal$; it is known that $(x,v)\mapsto j_x(v)$ is smooth (since $(x,v)\mapsto \Exp_x(v)$ is smooth, e.g., \cite[Chapter~5.5.1]{Petersen}) and that $x\mapsto j_x(v)$ satisfies $\lim_{v\to 0}\sup_{x\in \Mcal}|j_x(v) - 1| = 0$ for $\Mcal$ compact \cite[Appendix~B]{PennecRiemNorm}.
(Also note that $\diff y$ represents the volume form of $\Mcal$ on the left side of~\eqref{eq:normal-coor}, whereas $\diff v$ represents the Lebesgue measure of $\Tan_x(\Mcal)$ on the right side.)

First, we show in the following that the Jacobian determinant is uniformly bounded.

\begin{lemma}\label{lem:jac-det-bdd}
If $\Mcal$ is a compact connected Riemannian manifold, then
$\sup_{y\in\Mcal}\sup_{v\in D_y} j_y(v)<\infty$.
\end{lemma}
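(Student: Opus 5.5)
The plan is to deduce the bound from joint smoothness of $(y,v)\mapsto j_y(v)$ together with compactness. We use the facts recalled just before the statement: the map $(x,v)\mapsto \Exp_x(v)$ is smooth on the whole tangent bundle, so its Jacobian determinant $(y,v)\mapsto j_y(v)$ is smooth, hence continuous, on all of $\{(y,v): y\in\Mcal,\ v\in\Tan_y(\Mcal)\}$; and every exponential domain satisfies $D_y\subseteq B_R(0)$ for one radius $R$ depending only on $\Mcal$. The claim then reduces to a continuous function attaining a maximum on a compact set.

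\textbf{Step 1 (enlarge the domain).} We would bound
\[
\sup_{y}\sup_{v\in D_y} j_y(v)\;\le\;\sup\{\,|j_y(v)| : y\in\Mcal,\ \|v\|_y\le R\,\}.
\]
Here $j_y(v)$ means the absolute value of the determinant of $d(\Exp_y)_v$, computed with respect to orthonormal bases of $\Tan_y(\Mcal)$ and $\Tan_{\Exp_y(v)}(\Mcal)$. This expression makes sense for every $v$, not only for $v\in D_y$. It is also independent of the chosen bases, so it is a well-defined function on the tangent bundle. It is continuous there, because $\Exp$ is smooth and orthonormal frames can be chosen smoothly in local trivializations.

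\textbf{Step 2 (compactness).} The closed disk bundle $\{(y,v): \|v\|_y\le R\}$ is compact. To see this, cover $\Mcal$ by finitely many coordinate charts carrying orthonormal frames; over each chart the disk bundle is homeomorphic to (closure of a chart piece)$\times$(closed ball in $\Rbb^m$), which is compact. A continuous function on a compact set is bounded, so the supremum in Step 1 is finite. This gives the lemma.

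The only delicate point is Step 1: checking that $j$ extends continuously beyond $D_y$ and is well-defined independently of the frames. Once it is written in orthonormal frames, both properties follow from the joint smoothness of $\Exp$ cited from \cite[Chapter~5.5.1]{Petersen}. The uniform inclusion $D_y\subseteq B_R(0)$ was asserted earlier and can also be taken as $R=\diam(\Mcal)$, since a minimizing exponential domain consists of vectors of norm at most $\diam(\Mcal)$.
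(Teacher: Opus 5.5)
Your proposal is correct and follows the paper's own proof: the uniform inclusion $D_y\subseteq B_R(0)$, continuity of $(y,v)\mapsto j_y(v)$, and compactness of the resulting (pre)compact region of the tangent bundle together give the bound. Your extra care in defining $j_y(v)$ as an absolute determinant in orthonormal frames, and in using only its continuity on the closed disk bundle, makes explicit what the paper asserts in one line. This is more precise than the paper's claim of global smoothness, since the absolute determinant need not be smooth at conjugate points.
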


\begin{proof}
    Recall that the exponential domains can be assumed to satisfy the property that there exists $R>0$ such that $D_x\subseteq B_R(0)$ for all $x\in \Mcal$.
    Then, just use the fact that $\{(x,v): x\in \Mcal, v\in D_x\}$ is pre-compact and $(x,v)\mapsto j_x(v)$ is globally smooth hence uniformly bounded.
\end{proof}

Second, we use normal coordinates to show that the normalizing constant of the Riemannian Gaussian distributions is comparable to the analogous normalizing constant for Euclidean Gaussian distributions.

\begin{lemma}\label{lem:normalizing-const}
    If $\Mcal$ is a compact connected Riemannian manifold, then there exist constants $c_{\Mcal},C_{\Mcal},\varepsilon_{\Mcal}>0$ such that we have
    \begin{equation*}
        c_{\Mcal}\Pbb(\|\Ncal(0,\sigma^2I_m)\|\le \varepsilon_{\Mcal})\le\frac{\exp(A(\sigma^2,\theta))}{(\sqrt{2\pi}\sigma)^m}\le C_{\Mcal}
    \end{equation*}
    for all $\theta\in\Mcal$ and $\sigma^2>0$.
    Consequently, we have
    \begin{equation*}
        \frac{\exp(A(\sigma^2,\theta))}{(\sqrt{2\pi}\sigma)^m}=\Theta(1)\qquad \textnormal{ as }\qquad \sigma^2\to 0,
    \end{equation*}
    uniformly in $\theta\in\Mcal$ and
    \begin{equation*}
        c_{\Mcal}'\le\frac{\exp(A(\sigma^2,\theta))}{(\sqrt{2\pi}\sigma)^m}\le C_{\Mcal} \qquad \textnormal{ for all }\qquad \sigma^2\le (\diam(\Mcal))^2
    \end{equation*}
    for all $\theta\in\Mcal$, where $c_{\Mcal}' := c_{\Mcal}\Pbb(\|\Ncal(0,(\diam(\Mcal))^2I_m)\|\le \varepsilon_{\Mcal})>0$.
\end{lemma}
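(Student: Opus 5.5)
The plan is to write the normalizing constant in normal coordinates centered at $\theta$ using~\eqref{eq:normal-coor}:
\begin{equation*}
    \exp(A(\sigma^2,\theta)) = \int_{D_\theta}\exp\left(-\frac{\|v\|_\theta^2}{2\sigma^2}\right)j_\theta(v)\,\diff v,
\end{equation*}
where we use $d(\Exp_\theta(v),\theta)=\|v\|_\theta$ for $v\in D_\theta$. This holds because on the exponential domain the geodesic $t\mapsto \Exp_\theta(tv)$, $t\in[0,1]$, is minimizing. Dividing by $(\sqrt{2\pi}\sigma)^m$ turns the integrand into the $\Ncal(0,\sigma^2 I_m)$ density, after identifying $\Tan_\theta(\Mcal)$ isometrically with $\Rbb^m$. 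Hence
\begin{equation*}
    \frac{\exp(A(\sigma^2,\theta))}{(\sqrt{2\pi}\sigma)^m} = \Ebb\left[j_\theta(Z)\ind\{Z\in D_\theta\}\right], \qquad Z\sim\Ncal(0,\sigma^2I_m).
\end{equation*}

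For the upper bound, use Lemma~\ref{lem:jac-det-bdd}: $j_\theta\le C_{\Mcal}:=\sup_y\sup_{v\in D_y}j_y(v)$. The expectation is therefore at most $C_{\Mcal}$.

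For the lower bound, use $\lim_{v\to0}\sup_x|j_x(v)-1|=0$ to choose $\varepsilon_{\Mcal}\in(0,r)$ with $j_x(v)\ge 1/2$ for all $x$ and all $\|v\|_x\le\varepsilon_{\Mcal}$. Here $r$ is the uniform radius with $B_r(0)\subseteq D_x$. Restricting the expectation to $\{\|Z\|\le\varepsilon_{\Mcal}\}\subseteq D_\theta$ gives the lower bound $\tfrac12\Pbb(\|\Ncal(0,\sigma^2I_m)\|\le\varepsilon_{\Mcal})$, so $c_{\Mcal}=1/2$.

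The consequences follow directly. As $\sigma^2\to0$, $\Pbb(\|\Ncal(0,\sigma^2I_m)\|\le\varepsilon_{\Mcal})\to1$, so the ratio is bounded above and below uniformly in $\theta$, i.e.\ it is $\Theta(1)$. For $\sigma^2\le(\diam(\Mcal))^2$, the probability $\Pbb(\|\sigma Z_1\|\le\varepsilon)$ with $Z_1$ standard normal is decreasing in $\sigma$. It is therefore at least its value at $\sigma=\diam(\Mcal)$, which gives $c'_{\Mcal}$.

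The only delicate point is the change of variables. We need $D_\theta$ to cover $\Mcal$ up to a null set (the cut locus), with $\|v\|=d$ on it. Both facts are built into the definition of the exponential domain, so no real obstacle is expected.
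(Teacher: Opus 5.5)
Your proposal is correct and follows essentially the same route as the paper: you pass to normal coordinates at $\theta$, use the uniform Jacobian bound from Lemma~\ref{lem:jac-det-bdd} (after extending to the whole tangent space) for the upper estimate, restrict to a small uniform ball on which $j_\theta$ is bounded below for the lower estimate, and derive the consequences from $\Pbb(\|\Ncal(0,\sigma^2I_m)\|\le\varepsilon_{\Mcal})\to 1$ and its monotonicity in $\sigma$. The only cosmetic difference is that you obtain the explicit constant $c_{\Mcal}=1/2$ from the uniform convergence $j_x(v)\to 1$, whereas the paper takes $c$ to be an infimum of $j_\theta$ over a small ball, justified by continuity.
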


\begin{proof}
    First, use normal coordinates~\eqref{eq:normal-coor} and $\|\Log_x\theta\|_x =d(x,\theta)$ to write
    \begin{equation*}
        \exp(A(\sigma^2,\theta))= \int_{\Mcal}\exp\left(-\frac{d^2(x,\theta)}{2\sigma^2}\right)\diff x = \int_{D_\theta}\exp\left(-\frac{\|v\|^2_{\theta}}{2\sigma^2}\right)j_{\theta}(v)\diff v.
    \end{equation*}
    Then, for the upper bound, use Lemma~\ref{lem:jac-det-bdd} to get $C:=\sup_{\theta\in\Mcal}\sup_{v\in D_\theta} j_\theta(v)<\infty$, hence
    \begin{align*}
        \exp(A(\sigma^2,\theta)) &= \int_{D_\theta}\exp\left(-\frac{\|v\|^2_{\theta}}{2\sigma^2}\right)j_{\theta}(v)\diff v \\
        &\le C\int_{D_\theta}\exp\left(-\frac{\|v\|^2_{\theta}}{2\sigma^2}\right)\diff v \\
        &\le C\int_{\Tan_{\theta}(\Mcal)}\exp\left(-\frac{\|v\|^2_{\theta}}{2\sigma^2}\right)\diff v \\
        &= C(\sqrt{2\pi}\sigma)^m.
    \end{align*}
    For the lower bound, get some sufficiently small $\varepsilon>0$ such that we have $B_{\varepsilon}(0)\subseteq D_\theta$ for all $\theta\in\Mcal$ and, using continuity of $(\theta,v)\mapsto j_\theta(v)$,  also $c:=\inf_{\theta\in\Mcal}\inf_{v\in B_{\varepsilon}(0)}j_\theta(v)>0$; thus we can bound
    \begin{align*}
        \exp(A(\sigma^2,\theta)) &= \int_{D_\theta}\exp\left(-\frac{\|v\|^2_{\theta}}{2\sigma^2}\right)j_{\theta}(v)\diff v \\
        &\ge \int_{B_{\varepsilon}(0)}\exp\left(-\frac{\|v\|^2_{\theta}}{2\sigma^2}\right)j_{\theta}(v)\diff v \\
        &\ge c\int_{B_{\varepsilon}(0)}\exp\left(-\frac{\|v\|^2_{\theta}}{2\sigma^2}\right)\diff v \\
        &= c(\sqrt{2\pi}\sigma)^m\Pbb(\|\Ncal(0,\sigma^2I_m)\|\le \varepsilon),
    \end{align*}
    and this establishes the first claim.
    To obtain the second claim, note that 
    \begin{equation*}
        \Pbb(\|\Ncal(0,\sigma^2I_m)\|\le \varepsilon)\to 1\qquad \textnormal{ as }\qquad \sigma^2\to 0,
    \end{equation*}
    and, to obtain the third claim, note that
    \begin{equation*}
        \Pbb(\|\Ncal(0,\sigma^2I_m)\|\le \varepsilon)\ge \Pbb(\|\Ncal(0,(\diam(\Mcal))^2I_m)\|\le \varepsilon)\qquad \textnormal{ for }\qquad \sigma^2\le (\diam(\Mcal))^2,
    \end{equation*}
    This finishes the proof.    
\end{proof}

Now we prove the results stated in the main body.

\begin{proof}[Proof of Lemma~\ref{lem:f-lower-bd}]
    The proof is similar to that of Lemma~\ref{lem:normalizing-const}.
    Use normal coordinates~\eqref{eq:normal-coor} and $\|\Log_x\theta\|_x =d(x,\theta)$ to rewrite the integral expression for $f(x)$, then Lemma~\ref{lem:normalizing-const} to bound
    \begin{align*}
        f(x) &= \int_{\Mcal}p_{\theta,\sigma^2}(x)g(\theta)\diff \theta \\
        &=\int_{\Mcal}\exp\left(-\frac{d^2(x,\theta)}{2\sigma^2}-A(\sigma^2,\theta)\right)g(\theta)\diff \theta \\
        & = \int_{D_x}\exp\left(-\frac{\|v\|_x^2}{2\sigma^2}-A(\sigma^2,\Exp_x(v))\right)g(\Exp_x(v))j_x(v)\diff v \\
        & \asymp_{\Mcal} \frac{1}{(\sqrt{2\pi}\sigma)^m}\int_{D_x}\exp\left(-\frac{\|v\|_x^2}{2\sigma^2}\right)g(\Exp_x(v))j_x(v)\diff v,
    \end{align*}
    where $\asymp_{\Mcal}$ denotes equality up to constants that depend only on $\Mcal$, and $j_x$ is the usual Jacobian determinant.
    For the upper bound, use Lemma~\ref{lem:jac-det-bdd} to get $C:=\sup_{x\in\Mcal}\sup_{v\in D_x} j_x(v)<\infty$, hence
    \begin{align*}
        f(x) &\asymp_{\Mcal} \frac{1}{(\sqrt{2\pi}\sigma)^m}\int_{D_x}\exp\left(-\frac{\|v\|_x^2}{2\sigma^2}\right)g(\Exp_x(v))j_x(v)\diff v \\
        &\lesssim_{\Mcal} \left(\underset{\theta\in\Mcal}{\textnormal{ess}\sup}\,g(\theta)\right)\frac{1}{(\sqrt{2\pi}\sigma)^m}\int_{D_x}\exp\left(-\frac{\|v\|_x^2}{2\sigma^2}\right)j_x(v)\diff v \\
        &\le C\left(\underset{\theta\in\Mcal}{\textnormal{ess}\sup}\,g(\theta)\right)\frac{1}{(\sqrt{2\pi}\sigma)^m}\int_{D_x}\exp\left(-\frac{\|v\|_x^2}{2\sigma^2}\right)\diff v \\
        &\le C\left(\underset{\theta\in\Mcal}{\textnormal{ess}\sup}\,g(\theta)\right)\frac{1}{(\sqrt{2\pi}\sigma)^m}\int_{\Tan_x(\Mcal)}\exp\left(-\frac{\|v\|_x^2}{2\sigma^2}\right)\diff v \\
        &\le C\,\underset{\theta\in\Mcal}{\textnormal{ess}\sup}\,g(\theta).
    \end{align*}
    For the lower bound, get some sufficiently small $\varepsilon>0$ such that we have $B_{\varepsilon}(0)\subseteq D_x$ for all $x\in\Mcal$ and, using continuity of $(x,v)\mapsto j_x(v)$,  also $c:=\inf_{x\in\Mcal}\inf_{v\in B_{\varepsilon}(0)}j_x(v)>0$; then,
    \begin{align*}
        f(x) &\asymp_{\Mcal} \frac{1}{(\sqrt{2\pi}\sigma)^m}\int_{D_x}\exp\left(-\frac{\|v\|_x^2}{2\sigma^2}\right)g(\Exp_x(v))j_x(v)\diff v \\
        &\gtrsim_{\Mcal} \left(\underset{\theta\in\Mcal}{\textnormal{ess}\inf}\,g(\theta)\right)\frac{1}{(\sqrt{2\pi}\sigma)^m}\int_{D_x}\exp\left(-\frac{\|v\|_x^2}{2\sigma^2}\right)j_x(v)\diff v \\
        &\ge \left(\underset{\theta\in\Mcal}{\textnormal{ess}\inf}\,g(\theta)\right)\frac{1}{(\sqrt{2\pi}\sigma)^m}\int_{B_{\varepsilon}(0)}\exp\left(-\frac{\|v\|_x^2}{2\sigma^2}\right)j_x(v)\diff v \\
        &\ge c\left(\underset{\theta\in\Mcal}{\textnormal{ess}\inf}\,g(\theta)\right)\frac{1}{(\sqrt{2\pi}\sigma)^m}\int_{B_{\varepsilon}(0)}\exp\left(-\frac{\|v\|_x^2}{2\sigma^2}\right)\diff v \\
        &\ge c\left(\underset{\theta\in\Mcal}{\textnormal{ess}\inf}\,g(\theta)\right)\Pbb(\|\Ncal(0,\sigma^2I_m)\|\le \varepsilon) \\
        &\ge c\left(\underset{\theta\in\Mcal}{\textnormal{ess}\inf}\,g(\theta)\right)\Pbb(\|\Ncal(0,(\diam(\Mcal))^2I_m)\|\le \varepsilon) \\
        &\gtrsim_{\Mcal} {\textnormal{ess}\inf}_{\theta\in\Mcal}g(\theta).
    \end{align*}
    This completes the proof.
\end{proof}

\begin{proof}[Proof of Lemma~\ref{lem:f-diff-1}]
By Lemma~\ref{lem:Tweedie}, $f$ is differentiable and we may differentiate under the integral once to compute 
    \begin{equation*}
        \nabla f(x) = \sigma^{-2}\int_{\Mcal}\Log_x\theta \,p_{\theta,\sigma^2}(x)g(\theta)\diff\theta.
    \end{equation*}
    Now use normal coordinates~\eqref{eq:normal-coor}, $\|\Log_x\theta\|_x =d(x,\theta)$, Lemma~\ref{lem:normalizing-const}, and $C:=\sup_{x\in\Mcal}\sup_{v\in D_x}j_x(v)<\infty$ from Lemma~\ref{lem:jac-det-bdd} to get:
    \begin{align*}
        \|\nabla f(x)\|_x &= \left\|\sigma^{-2}\int_{\Mcal}\Log_x\theta \,p_{\theta,\sigma^2}(x)g(\theta)\diff\theta\right\|_x \\
        &\le \sigma^{-2}\int_{\Mcal}\|\Log_x\theta\|_x \,p_{\theta,\sigma^2}(x)g(\theta)\diff\theta \\
        &= \sigma^{-2}\int_{D_x}\|v\|_x\exp\left(-\frac{\|v\|_x^2}{2\sigma^2}-A(\sigma^2,\Exp_x(v))\right)g(\Exp_x(v))j_x(v)\diff v \\
        &\lesssim_{\Mcal} \frac{\sigma^{-2}}{(\sqrt{2\pi}\sigma)^m}\int_{D_x}\|v\|_x\exp\left(-\frac{\|v\|_x^2}{2\sigma^2}\right)g(\Exp_x(v))j_x(v)\diff v \\
        &\le \frac{C\|g\|_{L^{\infty}(\Mcal)}\sigma^{-2}}{(\sqrt{2\pi}\sigma)^m}\int_{D_x}\|v\|_x\exp\left(-\frac{\|v\|_x^2}{2\sigma^2}\right)\diff v \\
        &\le \frac{C\|g\|_{L^{\infty}(\Mcal)}\sigma^{-2}}{(\sqrt{2\pi}\sigma)^m}\int_{\Tan_x(\Mcal)}\|v\|_x\exp\left(-\frac{\|v\|_x^2}{2\sigma^2}\right)\diff v.
    \end{align*}
    Lastly, note that, if $Z$ denotes a standard Gaussian random variable in $\Rbb^m$ (i.e., with mean 0 and covariance matrix $I_m$), then we have
    \begin{equation*}
        \frac{1}{(\sqrt{2\pi}\sigma)^m}\int_{\Tan_x(\Mcal)}\|v\|_x\exp\left(-\frac{\|v\|_x^2}{2\sigma^2}\right)\diff v = \sigma\,\Ebb\left[\|Z\|\right]
    \end{equation*}
    where the expectation depends only on $m$.
    This finishes the proof.
    \end{proof}

\subsection{Proofs from Subsection~\ref{subsec:oracle}}\label{subsec:proofs-oracle}

Next we give proofs for our results concerning the risks of the oracle denoisers $\delta_{\N},\delta_{\B}$, and $\delta_{\T}$.

Our first goal is to establish the risk of the na\"ive denoiser $\delta_{\N}$.
This requires the following general result that, in the limit $\sigma^2\to 0$, many functions of the geodesic distance can be calculated by expectations of a corresponding Euclidean Gaussian distribution.
We remind the reader that the symbol $\sim$ refers to the statement that the ratio of expressions converges to one in the limit.

\begin{lemma}\label{lem:integral-limit}
If $\Mcal$ is a compact connected Riemannian manifold, then for any $k\in\Nbb$ we have
\begin{equation*}
\int_{\Mcal}d^k(x,\theta)\,p_{\theta,\sigma^2}(x)\diff x
\sim
\Ebb[\|Z\|^k]
\end{equation*}
as $\sigma^2\to 0$, uniformly in $\theta\in\Mcal$, where
$Z\sim \Ncal(0,\sigma^2I_m)$.
\end{lemma}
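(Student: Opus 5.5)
The plan is to pass to normal coordinates at $\theta$ via~\eqref{eq:normal-coor} and compare with a Euclidean Gaussian integral, in the same way as the proof of Lemma~\ref{lem:normalizing-const}. Using $d(\Exp_\theta(v),\theta)=\|v\|_\theta$ for $v\in D_\theta$, we can write
\begin{equation*}
\int_{\Mcal}d^k(x,\theta)\,p_{\theta,\sigma^2}(x)\diff x = \frac{\int_{D_\theta}\|v\|_\theta^k\exp\left(-\frac{\|v\|_\theta^2}{2\sigma^2}\right)j_\theta(v)\diff v}{\int_{D_\theta}\exp\left(-\frac{\|v\|_\theta^2}{2\sigma^2}\right)j_\theta(v)\diff v}.
\end{equation*}
The target is $\Ebb[\|Z\|^k]=\sigma^k\Ebb[\|Z_1\|^k]$ with $Z_1\sim\Ncal(0,I_m)$. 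So it suffices to show that, after multiplying by $(\sqrt{2\pi}\sigma)^{-m}\sigma^{-k}$, the numerator tends to $\Ebb[\|Z_1\|^k]$ uniformly in $\theta$, and that after multiplying by $(\sqrt{2\pi}\sigma)^{-m}$ the denominator tends to $1$ uniformly in $\theta$. The denominator is the case $k=0$ of the numerator, so one argument handles both.

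Fix $\eta>0$ and choose $0<\varepsilon\le r$, with $r$ the uniform inner radius of the exponential domains, so that $\sup_{\theta}\sup_{\|v\|_\theta\le\varepsilon}|j_\theta(v)-1|\le\eta$. This uses the stated fact $\lim_{v\to 0}\sup_x|j_x(v)-1|=0$. We then split $D_\theta$ into $B_\varepsilon(0)$ and $D_\theta\setminus B_\varepsilon(0)$.

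\textbf{Inner ball.} On $B_\varepsilon(0)$, substituting $v=\sigma u$ gives
\begin{equation*}
(\sqrt{2\pi}\sigma)^{-m}\sigma^{-k}\int_{B_\varepsilon(0)}\|v\|^k e^{-\|v\|^2/2\sigma^2}j_\theta(v)\diff v = (1+O(\eta))\,\Ebb\left[\|Z_1\|^k\ind\{\|Z_1\|\le\varepsilon/\sigma\}\right].
\end{equation*}
This tends to $(1+O(\eta))\Ebb\|Z_1\|^k$ uniformly in $\theta$, since the truncation no longer depends on $\theta$.

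\textbf{Outer region.} On $D_\theta\setminus B_\varepsilon(0)\subseteq B_R(0)$, bound $j_\theta$ by the constant from Lemma~\ref{lem:jac-det-bdd}. The rescaled contribution is then at most $C\,\Ebb[\|Z_1\|^k\ind\{\|Z_1\|>\varepsilon/\sigma\}]\to0$, again uniformly in $\theta$.

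Letting $\sigma\to0$ and then $\eta\to0$ gives both limits, and taking the ratio yields the claim.

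The only delicate point is uniformity in $\theta$. It rests entirely on three facts being uniform: the inner radius $r$, the bound from Lemma~\ref{lem:jac-det-bdd}, and the uniform convergence of $j_\theta$ to $1$ near the origin. All three are supplied by compactness and by the facts already quoted before~\eqref{eq:normal-coor}. No further geometric input, such as behavior near the cut locus, is needed, because the cut locus has measure zero and the outer region is handled by a crude bound.
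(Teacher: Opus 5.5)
Your proof is correct and follows essentially the same route as the paper's: normal coordinates at $\theta$, then a split of $D_\theta$ into a small ball where $j_\theta$ is uniformly close to $1$, and an outer region controlled by Lemma~\ref{lem:jac-det-bdd} and Gaussian tail decay. The only difference is that you get the normalizing-constant asymptotics $\exp(A(\sigma^2,\theta))\sim(\sqrt{2\pi}\sigma)^m$ as the $k=0$ case of the same computation, whereas the paper cites \cite[Theorem~5]{PennecRiemNorm}; your version is self-contained and makes the uniformity in $\theta$ explicit.
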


\begin{proof}
    First, recall from \cite[Theorem~5]{PennecRiemNorm} that we have
    \begin{equation}\label{eqn:eA-limit}
        \lim_{\sigma^2\to 0}\sup_{\theta\in\Mcal}\frac{\exp(A(\sigma^2,\theta))}{(\sqrt{2\pi}\sigma)^m} = 1,
    \end{equation}
    which is similar to Lemma~\ref{lem:normalizing-const} with the sharp constant 1, but only asymptotically as $\sigma^2\to 0$.
    Now fix $0<\varepsilon<1$, and use the uniform convergence $\lim_{v\to 0}\sup_{\theta\in \Mcal}|j_{\theta}(v)-1|= 0$ to get some $r>0$ such that we have $|j_\theta(v)-1| < \varepsilon$ for all $\theta\in\Mcal$ and $v\in B_r(0)\subseteq\Tan_\theta(\Mcal)$; by taking $r>0$ smaller if necessary, using the fact that $\Mcal$ being compact implies a positive injectivity radius, we may also assume $B_r(0)\subseteq D_{\theta}$ for all $\theta\in\Mcal$.
    Now, convert to normal coordinates~\eqref{eq:normal-coor} at $\theta$ and use $d(x,\theta)=\|\Log_x\theta\|_x$ to get
    \begin{equation}\label{eqn:integral-asymp}
        \begin{split}
        \int_{\Mcal}d^k(x,\theta)p_{\theta,\sigma^2}(x)\diff x &=\int_{\Mcal}d^k(x,\theta)\exp\left(-\frac{d^2(x,\theta)}{2\sigma^2}-A(\sigma^2,\theta)\right)\diff x\\
        &\sim\frac{1}{(\sqrt{2\pi}\sigma)^m}\int_{\Mcal}d^k(x,\theta)\exp\left(-\frac{d^2(x,\theta)}{2\sigma^2}\right)\diff x \\
        &=\frac{1}{(\sqrt{2\pi}\sigma)^m}\int_{D_{\theta}}\|v\|^k_{\theta}\exp\left(-\frac{\|v\|^2_{\theta}}{2\sigma^2}\right)j_{\theta}(v)\diff v \\
        &=\frac{1}{(\sqrt{2\pi}\sigma)^m}\int_{B_r(0)}\|v\|^k_{\theta}\exp\left(-\frac{\|v\|^2_{\theta}}{2\sigma^2}\right)j_{\theta}(v)\diff v \\
        &\qquad+\frac{1}{(\sqrt{2\pi}\sigma)^m}\int_{D_{\theta}\setminus B_r(0)}\|v\|^k_{\theta}\exp\left(-\frac{\|v\|^2_{\theta}}{2\sigma^2}\right)j_{\theta}(v)\diff v 
        \end{split}
    \end{equation}
    uniformly in $\theta$.
    Next, note
    \begin{equation*}
        \frac{1}{(\sqrt{2\pi}\sigma)^m}\int_{\Tan_{\theta}(\Mcal)\setminus B_r(0)}\|v\|^k_{\theta}\exp\left(-\frac{\|v\|^2_{\theta}}{2\sigma^2}\right)\diff v  \lesssim \exp\left(-\frac{r^2}{2\sigma^2}\right)
    \end{equation*}
    which vanishes as $\sigma^2\to 0$.
    In particular, combining with Lemma~\ref{lem:jac-det-bdd} implies that the second term of \eqref{eqn:integral-asymp} vanishes, i.e.
    \begin{align*}
        &\frac{1}{(\sqrt{2\pi}\sigma)^m}\int_{D_{\theta}\setminus B_r(0)}\|v\|^k_{\theta}\exp\left(-\frac{\|v\|^2_{\theta}}{2\sigma^2}\right)j_{\theta}(v)\diff v \\
        &\lesssim_{\Mcal} \frac{1}{(\sqrt{2\pi}\sigma)^m}\int_{\Tan_{\theta}(\Mcal)\setminus B_r(0)}\|v\|^k_{\theta}\exp\left(-\frac{\|v\|^2_{\theta}}{2\sigma^2}\right)\diff v
    \end{align*}
    For the first term of \eqref{eqn:integral-asymp}, note that we have the upper bound    
    \begin{align*}
        \frac{1}{(\sqrt{2\pi}\sigma)^m}&\int_{B_r(0)}\|v\|^k_{\theta}\exp\left(-\frac{\|v\|^2_{\theta}}{2\sigma^2}\right)j_{\theta}(v)\diff v \\
        &\le \frac{1+\varepsilon}{(\sqrt{2\pi}\sigma)^m}\int_{B_r(0)}\|v\|^k_{\theta}\exp\left(-\frac{\|v\|^2_{\theta}}{2\sigma^2}\right)\diff v \\
        &= \frac{1+\varepsilon}{(\sqrt{2\pi}\sigma)^m}\int_{\Tan_{\theta}(\Mcal)}\|v\|_{\theta}^k\exp\left(-\frac{\|v\|^2_{\theta}}{2\sigma^2}\right)\diff v \\
        &\qquad- \frac{1+\varepsilon}{(\sqrt{2\pi}\sigma)^m}\int_{\Tan_{\theta}(\Mcal)\setminus B_{r}(0)}\|v\|^k_{\theta}\exp\left(-\frac{\|v\|^2_{\theta}}{2\sigma^2}\right)\diff v \\
        &= (1+\varepsilon)\Ebb\left[\|Z\|^k\right] - (1+\varepsilon)\Ebb\left[\|Z\|^k\ind\{\|Z\|\ge r\}\right].
    \end{align*}
    Since $\Ebb[\|Z\|^k\ind\{\|Z\|\ge r\}]=o(\Ebb[\|Z\|^k])$ as $\sigma^2\to 0$, we have shown
    \begin{equation*}
        \limsup_{\sigma^2\to 0}\frac{\frac{1}{(\sqrt{2\pi}\sigma)^m}\int_{B_r(0)}\|v\|^k_{\theta}\exp\left(-\frac{\|v\|^2_{\theta}}{2\sigma^2}\right)j_{\theta}(v)\diff v}{\Ebb[\|Z\|^k]} \le 1+\varepsilon.
    \end{equation*}
    Similarly, the first term of \eqref{eqn:integral-asymp} satisfies the lower bound
    \begin{align*}
        \frac{1}{(\sqrt{2\pi}\sigma)^m}&\int_{B_r(0)}\|v\|^k_{\theta}\exp\left(-\frac{\|v\|^2_{\theta}}{2\sigma^2}\right)j_{\theta}(v)\diff v \\
        &\ge \frac{1-\varepsilon}{(\sqrt{2\pi}\sigma)^m}\int_{B_r(0)}\|v\|^k_{\theta}\exp\left(-\frac{\|v\|^2_{\theta}}{2\sigma^2}\right)\diff v \\
        &= \frac{1-\varepsilon}{(\sqrt{2\pi}\sigma)^m}\int_{\Tan_{\theta}(\Mcal)}\|v\|^k_{\theta}\exp\left(-\frac{\|v\|^2_{\theta}}{2\sigma^2}\right)\diff v \\
        &\qquad- \frac{1-\varepsilon}{(\sqrt{2\pi}\sigma)^m}\int_{\Tan_{\theta}(\Mcal)\setminus B_{r}(0)}\|v\|^k_{\theta}\exp\left(-\frac{\|v\|^2_{\theta}}{2\sigma^2}\right)\diff v \\
        &= (1-\varepsilon)\Ebb\left[\|Z\|^k\right] - (1-\varepsilon)\Ebb\left[\|Z\|^k\ind\{\|Z\|\ge r\}\right].
    \end{align*}
    hence    
     \begin{equation*}
        \liminf_{\sigma^2\to 0}\frac{\frac{1}{(\sqrt{2\pi}\sigma)^m}\int_{B_r(0)}\|v\|^k_{\theta}\exp\left(-\frac{\|v\|^2_{\theta}}{2\sigma^2}\right)j_{\theta}(v)\diff v}{\Ebb[\|Z\|^k]} \ge 1-\varepsilon.
    \end{equation*}
    As $0<\varepsilon<1$ was arbitrary, we have shown
    \begin{equation*}
        \lim_{\sigma^2\to 0}\frac{\frac{1}{(\sqrt{2\pi}\sigma)^m}\int_{D_{\theta}}\|v\|^k_{\theta}\exp\left(-\frac{\|v\|^2_{\theta}}{2\sigma^2}\right)j_{\theta}(v)\diff v}{\Ebb[\|Z\|^k]} =1,
    \end{equation*}
    as claimed.
\end{proof}

Next, we use this to calculate the risk of $\delta_{\N}$, as follows.

\begin{proof}[Proof of Proposition~\ref{prop:naive-risk}]
    Immediate from Lemma~\ref{lem:integral-limit} with $k=2$.
\end{proof}

Next we turn to the comparisons of the oracle risks of $\delta_{\B}$ and $\delta_{\T}$.
As we discussed in the main body, this requires some auxiliary results that we now prove.

\begin{proof}[Proof of Proposition~\ref{prop:mux-sigmax-estimates}]
For convenience, we introduce the notation
    \begin{equation}\label{eq:V_x}
        V_x := \Log_x \Theta, \qquad \mu_x := \Ebb[V_x \mid X=x], \qquad \Sigma_x := \Cov(V_x \mid X=x),
    \end{equation}
    for all $x\in\Mcal$.
    Also, we introduce normal coordinates in order to make some explicit calculations.
    Let us write $D_x$ for an exponential domain at $x\in\Mcal$, and recall that the compactness of $\Mcal$ implies that there exists $r>0$ such that we have $B_r(0)\subseteq D_x$ for all $x\in\Mcal$.

    First, let us rewrite the integral expression for $\mu_x$ in normal coordinates using~\eqref{eq:normal-coor} and $\|\Log_x\theta\|_x = d(x,\theta)$, then use Lemma~\ref{lem:normalizing-const} to bound:
    \begin{equation}\label{eqn:mux-1}
        \begin{split}
        \mu_x &= \frac{\int_{\Mcal}\Log_x\theta \;p_{\theta,\sigma^2}(x)\diff G(\theta)}{\int_{\Mcal} p_{\theta,\sigma^2}(x)\diff G(\theta)} \\
        &= \frac{\int_{D_x}v \exp\left(-\frac{\|v\|_x^2}{2\sigma^2}-A(\sigma^2,\Exp_x(v))\right)g(\Exp_x(v))j_x(v)\diff v}{\int_{D_x} \exp\left(-\frac{\|v\|_x^2}{2\sigma^2}-A(\sigma^2,\Exp_x(v))\right)g(\Exp_x(v))j_x(v)\diff v} \\
        &\asymp_{\Mcal} \frac{(\sqrt{2\pi}\sigma)^{-m}\int_{D_x}v \exp\left(-\frac{\|v\|_x^2}{2\sigma^2}\right)g(\Exp_x(v))j_x(v)\diff v}{(\sqrt{2\pi}\sigma)^{-m}\int_{D_x} \exp\left(-\frac{\|v\|_x^2}{2\sigma^2}\right)g(\Exp_x(v))j_x(v)\diff v},
        \end{split}
    \end{equation}
    where $\asymp_{\Mcal}$ denotes equality up to constants that depend only on $\Mcal$ and where $j_x$ is the usual Jacobian determinant. Now let us control the numerator and denominator of the right side separately.

    For the numerator, the idea is that the fast decay of the integrand allows us to replace the domain $D_x$ with the smaller domain $B_r(0)$ while only incurring an exponentially small error.
    More precisely, we write
    \begin{equation}\label{eqn:mux-2}
        \begin{split}
        &(\sqrt{2\pi}\sigma)^{-m}\int_{D_x}v \exp\left(-\frac{\|v\|_x^2}{2\sigma^2}\right)g(\Exp_x(v))j_x(v)\diff v \\
        &= (\sqrt{2\pi}\sigma)^{-m}\int_{B_r(0)}v \exp\left(-\frac{\|v\|_x^2}{2\sigma^2}\right)g(\Exp_x(v))j_x(v)\diff v \\
        &\qquad \quad + (\sqrt{2\pi}\sigma)^{-m}\int_{D_x\setminus B_r(0)}v \exp\left(-\frac{\|v\|_x^2}{2\sigma^2}\right)g(\Exp_x(v))j_x(v)\diff v,
        \end{split}
    \end{equation}
    and we will show that the first term is $O(\sigma^2)$ and the second term is exponentially small hence also $O(\sigma^2)$.
    For the first term, develop a Taylor expansion with a first-order remainder
\[
g(\Exp_x(v))\,j_x(v)=g(x)+r(x,v),
\]
where $r$ is continuous and satisfies the uniform bound
\[
\sup_{x\in\Mcal}\sup_{\|v\|_x\le r}\frac{|r(x,v)|}{\|v\|_x}<\infty,
\]
which holds since $g$ is continuously differentiable, $j_x$ is smooth in $(x,v)$ on $\{\|v\|_x\le r\}$, and $\Mcal$ is compact.
    Then compute:
    \begin{align*}
        &(\sqrt{2\pi}\sigma)^{-m}\int_{B_r(0)}v \exp\left(-\frac{\|v\|_x^2}{2\sigma^2}\right)g(\Exp_x(v))j_x(v)\diff v \\
        &= g(x)(\sqrt{2\pi}\sigma)^{-m}\int_{B_r(0)}v \exp\left(-\frac{\|v\|_x^2}{2\sigma^2}\right)\diff v + (\sqrt{2\pi}\sigma)^{-m}\int_{B_r(0)} v r(x,v)\exp\left(-\frac{\|v\|_x^2}{2\sigma^2}\right)\diff v.
    \end{align*}
Note that the first term vanishes by symmetry while the second term is $O(\sigma^{2})$ uniformly in $x\in\Mcal$.
    For the second term of \eqref{eqn:mux-2}, use compactness of $\Mcal$, uniform boundedness of the exponential domains, and continuity of $(x,v)\mapsto g(\Exp_x(v))j_x(v)$ to get $C:=\sup_{x\in\Mcal}\sup_{v\in D_x}g(\Exp_x(v))j_x(v)<\infty$, then use this to bound
    \begin{equation}
        \begin{split}
            &(\sqrt{2\pi}\sigma)^{-m}\left\|\int_{D_x\setminus B_r(0)}v \exp\left(-\frac{\|v\|_x^2}{2\sigma^2}\right)g(\Exp_x(v))j_x(v)\diff v\right\| \\
            &\le (\sqrt{2\pi}\sigma)^{-m}\int_{D_x\setminus B_r(0)}\|v\|_x \exp\left(-\frac{\|v\|_x^2}{2\sigma^2}\right)g(\Exp_x(v))j_x(v)\diff v \\
        &\le C(\sqrt{2\pi}\sigma)^{-m}\int_{\Tan_x(\Mcal)\setminus B_r(0)}\|v\|_x \exp\left(-\frac{\|v\|_x^2}{2\sigma^2}\right)\diff v \\
        &\lesssim C(\sqrt{2\pi}\sigma)^{-m}\sigma\exp\left(-\frac{r^2}{\sigma^2}\right),
        \end{split}
    \end{equation}
    which is certainly $O(\sigma^{2})$, uniformly in $x\in\Mcal$.
    
    Lastly, we turn to the denominator of \eqref{eqn:mux-1}.
    To control this, use continuity of $(x,v)\mapsto g(\Exp_x(v))j_x(v)$ to get some sufficiently small $0<\varepsilon<r$ such that we have $$c:=\inf_{x\in\Mcal}\inf_{v\in B_{\varepsilon}(0)}g(\Exp_x(v))j_x(v)>0.$$
    Then, compute:
    \begin{equation}\label{eqn:denom}
        \begin{split}
        (\sqrt{2\pi}\sigma)^{-m}\int_{D_x} \exp\left(-\frac{\|v\|_x^2}{2\sigma^2}\right)g(\Exp_x(v))j_x(v)\diff v &\ge c(\sqrt{2\pi}\sigma)^{-m}\int_{B_{\varepsilon}(0)} \exp\left(-\frac{\|v\|_x^2}{2\sigma^2}\right)\diff v \\
        &= c  \cdot\Pbb(\|\Ncal(0,\sigma^2I_m)\|\le \varepsilon).
        \end{split}
    \end{equation}
    Note that the probability converges to 1 as $\sigma^2\to 0$, hence the right hand side is $\Omega(1)$, uniformly in $x\in\Mcal$.
    By combining this calculation with that of the numerator, we have shown
    \begin{equation}\label{eqn:mu-x}
        \sup_{x\in\Mcal}\|\mu_x\| = \frac{O(\sigma^{2})}{\Omega(1)} = O(\sigma^2),
    \end{equation}
    as claimed.

    Now we turn to the second claim.
    To prove it, again use normal coordinates~\eqref{eq:normal-coor} and $\|\Log_x\theta\|_x= d(x,\theta)$ to write
   \begin{equation}\label{eqn:sigmax-1}
        \begin{split}
        \Sigma_x &= \frac{\int_{\Mcal}\Log_x\theta(\Log_x\theta)^{\top} \;p_{\theta,\sigma^2}(x)\diff G(\theta)}{\int_{\Mcal} p_{\theta,\sigma^2}(x)\diff G(\theta)} - \mu_x\mu_x^{\top} \\
        &= \frac{(\sqrt{2\pi}\sigma)^{-m}\int_{D_x}vv^{\top} \exp\left(-\frac{\|v\|_x^2}{2\sigma^2}\right)g(\Exp_x(v))j_x(v)\diff v}{(\sqrt{2\pi}\sigma)^{-m}\int_{D_x} \exp\left(-\frac{\|v\|_x^2}{2\sigma^2}\right)g(\Exp_x(v))j_x(v)\diff v} - \mu_x\mu_x^{\top}.
        \end{split}
    \end{equation}
    Note that \eqref{eqn:mu-x} establishes that the subtracted term is $O(\sigma^4)$, so it suffices to show
    \begin{equation*}
        \frac{(\sqrt{2\pi}\sigma)^{-m}\int_{D_x}vv^{\top} \exp\left(-\frac{\|v\|_x^2}{2\sigma^2}\right)g(\Exp_x(v))j_x(v)\diff v}{(\sqrt{2\pi}\sigma)^{-m}\int_{D_x} \exp\left(-\frac{\|v\|_x^2}{2\sigma^2}\right)g(\Exp_x(v))j_x(v)\diff v} = O(\sigma^2).
    \end{equation*}
    We already have~\eqref{eqn:denom} showing that the denominator is $\Omega(1)$, so it suffices to focus on the numerator.
    As before, we use the constant $C$ to bound:
    \begin{equation*}
        \begin{split}
            &\left\|(\sqrt{2\pi}\sigma)^{-m}\int_{D_x}vv^{\top} \exp\left(-\frac{\|v\|_x^2}{2\sigma^2}\right)g(\Exp_x(v))j_x(v)\diff v\right\| \\
            &\le (\sqrt{2\pi}\sigma)^{-m}\int_{D_x}\|v\|_x^2 \exp\left(-\frac{\|v\|_x^2}{2\sigma^2}\right)g(\Exp_x(v))j_x(v)\diff v \\
        &\le C(\sqrt{2\pi}\sigma)^{-m}\int_{D_x}\|v\|_x^2 \exp\left(-\frac{\|v\|_x^2}{2\sigma^2}\right)\diff v \\
        &\le C(\sqrt{2\pi}\sigma)^{-m}\int_{\Tan_x(\Mcal)}\|v\|_x^2 \exp\left(-\frac{\|v\|_x^2}{2\sigma^2}\right)\diff v \\
        &\le Cm\sigma^2.
        \end{split}
    \end{equation*}
    Thus, we have shown
    \begin{equation}\label{eqn:sigma-x}
        \sup_{x\in\Mcal}\|\Sigma_x\| \le \frac{O(\sigma^{2})}{\Omega(1)} + O(\sigma^4)  = O(\sigma^2)
    \end{equation}
    as claimed.
\end{proof}

Our next result involves a Taylor expansion of the squared geodesic distance, which requires us to introduce some concepts related to the curvature of $\Mcal$.
Recall  (see \cite[equation~(14.1)]{VillaniOldAndNew}) that if $K_x(v,w)\in\Rbb$ denotes the \textit{sectional curvature} of the plane spanned by $v,w\in \Tan_x(\Mcal)$ at the point $x\in\Mcal$ and, then we have
\begin{equation}\label{eqn:sq-dist-Taylor}
    d^2(\Exp_x(v),\Exp_x(w)) = \|v-w\|_x^2 - \frac{1}{3} K_x(v,w)(\|v\|^2_x\|w\|_x^2-\langle v,w\rangle_x^2) + O(\|v\|_x^5 + \|w\|_x^5)
\end{equation}
as $\|v\|_x,\|w\|_x\to 0$, where the remainder is a continuous function of $x\in\Mcal$.

\medskip

\begin{proof}[Proof of Proposition~\ref{prop:wB-estimates}]
    Let us write $\mathcal{W}$ for the vector space of all vector fields $\{w_x\}_{x\in\Mcal}$ on $\Mcal$ (i.e., $w_x\in\Tan_x(\Mcal)$ for all $x\in\Mcal$) satisfying $\int_{\Mcal}\|w_x\|_x^2\diff x<\infty$, which is a Hilbert space when endowed with the inner product $\langle w,w'\rangle :=\int_{\Mcal}\langle w_x,w_x'\rangle_x\diff x$.
    We observe that any denoiser $\delta:\Mcal\to\Mcal$ satisfying $\delta(x)\notin C_x$ for all $x\in\Mcal$ can be written as $\delta = \delta_w$ for some $\{w_x\}_{x\in\Mcal}\in\mathcal{W}$, where $\delta_w(x):=\Exp_x(w_x)$; indeed, just set $w_x:=\Log_x(\delta(x))$.
    Consequently, we may parameterize all such denoisers by elements of $\mathcal{W}$.
    In fact, if we write $R>0$ for a constant depending only on $\Mcal$ such that we have $D_x\subseteq B_R(0)$ for all $x\in\Mcal$, then all such denoisers $\delta$ can be represented by some $w\in\mathcal{W}$ satisfying the stronger condition $\sup_{x\in\Mcal}\|w_x\|
    _x\le R$.
    
    Now let $w_{\B}\in\mathcal{W}$ denote the vector field corresponding to the Bayes denoiser $\delta_{\B}$, and let us explore the first-order optimality conditions for $w_{\B}$.
    For convenience, we adopt the notation from the proof of Proposition~\ref{prop:mux-sigmax-estimates}.
    Recall that $w_{\B}$ is, by definition a minimizer of the conditional risk which we can easily calculate; fix $x\in\Mcal$, and use the Taylor series \eqref{eqn:sq-dist-Taylor} to get the following, where we write $\|\cdot\|$ in place of $\|\cdot\|_x$ in order to keep the notation simpler:
    \begin{equation}\label{eqn:conditional-risk}
    \begin{split}
        &\Ebb[d^2(\Theta,\delta_w(X))\,|\,X=x] \\
        &= \Ebb\left[ { d^2(\Exp_x (V_x),\Exp_X(w_x))} \mid X=x\right]\\
        & = \Ebb\left[\lVert V_x-w_x\rVert^2 - \frac{K_x(V_x,w_x)}{3}\left(\lVert V_x\rVert^2 \lVert w_x\rVert^2 - \langle V_x, w_x\rangle_{x}^2\right)  + O\!\left(\lVert V_x\rVert^5+\lVert  w_x\rVert^5\right)\mid X=x\right] \\
        &= \trace(\Sigma_x) + \|\mu_x- w_x\|^2 \\
        &\qquad- \frac{\Ebb[K_x(V_x,w_x)\,|\,X=x]}{3}\left(\| w_x\|^2\trace(\Sigma_x)  - \langle w_x,\Sigma_x w_x\rangle + \|w_x\|^2\|\mu_x\|^2 - \langle w_x,\mu_x\rangle^2\right) \\
        &\qquad+ \Ebb[O\!\left(\lVert V_x\rVert^5+\lVert  w_x\rVert^5\right)\,|\,X=x].
    \end{split}
\end{equation}
    In the last equality, we simply expanded all squared norms and plugged in the definitions in~\eqref{eq:V_x}.
    Because on a Riemannian manifold all squared distances and Taylor expansions thereof are smooth away from their cut loci, we can take the gradient of the expression above with respect to $w$, and the result must vanish at $w=w_{\B}$.
    Thus we obtain
    \begin{align*}
        0 &= 2(w_{\B x}-\mu_x) - \frac{2\Ebb[K_x(V_x,w_{\B x})\,|\,X=x]}{3}\left( w_{\B x}\trace(\Sigma_x)  - \Sigma_x w_{\B x} + w_{\B x}\|\mu_x\|^2 - \mu_x\langle w_{\B x},\mu_x\rangle\right) \\
        &+O\!\left(\lVert  w_{\B x}\rVert^4\right)-\frac{2\Ebb[\nabla K_x(V_x,w_{\B x})\,|\,X=x]}{3}\left(\| w_x\|^2\trace(\Sigma_x)  - \langle w_x,\Sigma_x w_x\rangle + \|w_x\|^2\|\mu_x\|^2 - \langle w_x,\mu_x\rangle^2\right),
    \end{align*}
    where $\nabla K_x(v,w)$ denotes the gradient of the sectional curvature as a function of its second argument.
    Now apply the elementary bounds
    \begin{align*}
        \bigg\|w_x\trace(\Sigma_x) - \Sigma_x w_x\bigg\| &\lesssim \|w_x\|\|\Sigma_x\|, \\
        \bigg\|w_x\|\mu_x\|^2 - \mu_x\langle w_x,\mu_x\rangle\bigg\| &\lesssim \|w_x\|\|\mu_x\|\|w_x-\mu_x\|, \\
        \bigg\|\|w_x\|^2\trace(\Sigma_x) - \langle w_x,\Sigma_x w_x\rangle\bigg\| &\lesssim \|w_x\|^2\|\Sigma_x\| \le R\|w_x\|\|\Sigma_x\|, \\
        \bigg\|\|w_x\|^2\|\mu\|^2-\langle w_x,\mu_x\rangle^2\bigg\| &\lesssim \|w_x\|^2\|\mu_x\|\|w_x-\mu_x\| \le R\|w_x\|\|\mu_x\|\|w_x-\mu_x\| 
    \end{align*}
    along with the fact that $\Mcal$ being compact implies that
    \begin{equation*}
        \sup_{x\in\Mcal}\sup_{v,w\in\Tan_x(\Mcal)}\left(|K_x(v,w)| + \|\nabla K_x(v,w)\|_x\right) <\infty
    \end{equation*}
    to see that there exists a constant $C$, depending only on $\Mcal$, such that we have
    \begin{equation}\label{eqn:wB-implicit}
        \|w_{\B x}-\mu_x\| \le C\left(\|w_{\B x}\|\|\Sigma_x\| + \|w_{\B x}\| \|\mu_x\|\|w_{\B x} - \mu_x\| + \|w_{\B x}\|^4\right).
    \end{equation}
    Now plug in the left side of \eqref{eqn:wB-implicit} into the right side of itself, and iterate to get
    \begin{equation}\label{eqn:wB-implicit-iter}
        \begin{split}
            &\|w_{\B x}-\mu_x\| \\
            &\le C\left(\|w_{\B x}\|\|\Sigma_x\|+\|w_{\B x}\|^4\right)\sum_{j=0}^{k}C^j\|w_{\B x}\|^j\|\mu_x\|^j+ C^{k+1}\|w_{\B x}\|^{k+1}\|\mu_x\|^{k+1} \|w_{\B x}-\mu_x\|^{k+1}
        \end{split}
    \end{equation}
    for all $k\in\Nbb$.
    Since Proposition~\ref{prop:mux-sigmax-estimates} shows that the value $\|\mu_x\|$ is strictly less than $(C\sup_{x\in\Mcal}\|w_{\B x}\|)^{-1}$ for sufficiently small $\sigma^2$, we see that \eqref{eqn:wB-implicit-iter} converges as $k\to\infty$, yielding
    \begin{equation*}
        \|w_{\B x}-\mu_x\| \le \frac{C\left(\|w_{\B x}\|\|\Sigma_x\| + \|w_{\B x}\|^4\right)}{1-C\|w_{\B x}\|\|\mu_x\|}.
    \end{equation*}
    Thus, we get
    \begin{equation}\label{eqn:wB-explicit}
        \|w_{\B x}-\mu_x\| \lesssim \|w_{\B x}\|\sigma^2 + \|w_{\B x}\|^4,
    \end{equation}
    by applying the estimates of Proposition~\ref{prop:mux-sigmax-estimates}.

    Next, we use this to control the size of $\|w_{\B x}\|$.
    First,  we show the qualitative statement that $\|w_{\B x}\|\to 0$ as $\sigma^2\to 0$.
    To see this, note that 
    $\delta_{\B}(x)$ is the Fr\'echet mean of the conditional distribution of $\Theta$ given $\{X=x\}$, and this converges weakly to $\delta_x$ as $\sigma^2\to 0$ due to the assumption that $\Theta$ has full support.
    Since $\Mcal$ is compact, weak convergence implies $W_2$ convergence, hence standard continuity results for Fr\'echet means (e.g., \cite{EvansJaffeSLLN, JaffeInfDim}) imply that $\delta(x)$ converges to $x$ as $\sigma^2>0$.
    Thus, the continuity of the logarithm gives the desired result.
    Second, we use \eqref{eqn:wB-explicit} to upgrade this to the quantitative statement that $\|w_{\B x}\| = O(\sigma^2)$ as $\sigma^2\to 0$, as follows.
    By combining \eqref{eqn:wB-explicit}, the triangle inequality, and $\|\mu_x\| = O(\sigma^2)$, we obtain
    $$
    \|w_{\mathcal B x}\|
    \lesssim
    \sigma^2 + \|w_{\mathcal B x}\|\sigma^2 + \|w_{\mathcal B x}\|^4.
    $$
    Now write $C$ for the constant on the right side, which depends only on $\Mcal$, and use $\|w_{\B x}\| = o(1)$ to get $\|w_{\B x}\|^4 \le \frac{1}{2C}\|w_{\B x}\|$ for sufficiently small $\sigma^2$.
    Then the rearranging the previous display shows
    \begin{equation*}
        \left(\frac{1}{2C}-\sigma^2\right)\|w_{\mathcal B x}\|
    \le
    \sigma^2
    \end{equation*}
    for sufficiently small $\sigma^2$, hence $\|w_{\B x}\| = O(\sigma^2)$.
    Finally, plugging the first claim into~\eqref{eqn:wB-explicit} gives the second claim. In fact, all of the estimates above hold uniformly over $x\in\Mcal$ since $\Mcal$ is compact.    
\end{proof}

\begin{proof}[Proof of Theorem~\ref{thm:uncond-main}]
We adopt the notation $V_x,\mu_x,$ and $\Sigma_x$ from the proof of Proposition~\ref{prop:mux-sigmax-estimates}, and we write $\|\cdot\|$ for all norms (i.e., dropping subscripts) for the sake of simplicity.
We recall
 $\mu_x = \Log_x(\delta_{\T}(x))$ from Lemma~\ref{lem:Tweedie}, and we use~\eqref{eqn:sq-dist-Taylor} to see that we have the following Taylor expansion of the conditional risk of $\delta_{\T}$:
\begin{equation*}
    \begin{split}
        &\Ebb[d^2(\Theta,\delta_{\T}(X))\,|\,X=x] \\
        &= \trace(\Sigma_x) - \frac{\Ebb[K_x(V_x,\mu_x)\,|\,X=x]}{3}\left(\| \mu_x\|^2\trace(\Sigma_x)  - \langle \mu_x,\Sigma_x \mu_x\rangle\right) \\
        &\qquad+ \Ebb\left[O\!\left(\lVert V_x\rVert^5+\lVert  \mu_x\rVert^5\right)\,|\,X=x\right].
    \end{split}
    \end{equation*}
    Now recall that $\Mcal$ being compact implies $\sup_{x\in\Mcal}\sup_{v,w\in\Tan_x(\Mcal)}|K_x(v,w)|<\infty$, and also that we have the elementary inequality
    \begin{equation*}
        \left|\|\mu\|^2\trace(\Sigma)-\langle\mu,\Sigma\mu\rangle\right| \lesssim \|\mu\|^2\|\Sigma\|.
    \end{equation*}
    Therefore, combining the previous displays with the estimates of Proposition~\ref{prop:mux-sigmax-estimates} and Proposition~\ref{prop:wB-estimates} shows
    \begin{equation}\label{eqn:conditional-risk-T}
        \Ebb[d^2(\Theta,\delta_{\T}(X))\,|\,X=x] = \trace(\Sigma_x) + O(\sigma^6) + O(\sigma^{6}) + \Ebb\left[O\!\left(\lVert V_x\rVert^5\right)\,|\,X=x\right] + O(\sigma^{10})        
    \end{equation}
    We follow a similar approach for $\delta_{\B}$, where we define $w_{\B x}:=\Log_x(\delta_{\B}(x))$ and use~\eqref{eqn:sq-dist-Taylor} to obtain:
    \begin{equation*}
        \begin{split}
            &\Ebb[d^2(\Theta,\delta_{\B}(X))\,|\,X=x] \\
            &= \trace(\Sigma_x) + \|\mu_x- w_{\B x}\|^2 \\
            &\qquad-\frac{\Ebb[K_x(V_x,w_{\B x})\,|\,X=x]}{3}\left(\| w_{\B x}\|^2\trace(\Sigma_x)  - \langle w_{\B x},\Sigma_x w_{\B x}\rangle + \|w_{\B x}\|^2\|\mu_x\|^2 - \langle w_{\B x},\mu_x\rangle^2\right) \\
            &\qquad+ \Ebb\left[O\!\left(\lVert V_x\rVert^5+\lVert  w_{\B x}\rVert^5\right)\,|\,X=x\right].
        \end{split}
    \end{equation*}
    Then use the estimates of Proposition~\ref{prop:mux-sigmax-estimates} and Proposition~\ref{prop:wB-estimates} along with the elementary inequalities
    \begin{align*}
        \left|\|w\|^2\trace(\Sigma)-\langle w,\Sigma w\rangle\right| &\lesssim \|w\|^2\|\Sigma\| \\
        \left|\|w\|^2\|\mu\|^2-\langle w,\mu\rangle^2\right| &\lesssim \|w\|^2\|\mu\|\,\|\mu-w\|
    \end{align*}
    to obtain
    \begin{equation}\label{eqn:conditional-risk-B}
        \begin{split}
            \Ebb[d^2(\Theta,\delta_{\B}(X))\,|\,X=x] &= \trace(\Sigma_x) + O(\sigma^8) + O(\sigma^6) \\
            &+ O(\sigma^{10}) + O(\sigma^{6}) + \Ebb\left[O\!\left(\lVert V_x\rVert^5\right)\,|\,X=x\right] + O(\sigma^{10})   
        \end{split}
    \end{equation}    
    By subtracting \eqref{eqn:conditional-risk-B} from \eqref{eqn:conditional-risk-T}, we get
    \begin{equation*}
        \Ebb[d^2(\Theta,\delta_{\T}(X))\,|\,X=x]  - \Ebb[d^2(\Theta,\delta_{\B}(X))\,|\,X=x] = O(\sigma^{6}) + \Ebb\left[O\!\left(\lVert V_x\rVert^5\right)\,|\,X=x\right]
    \end{equation*}
    so taking the expectation yields
    \begin{equation*}
        R(\delta_{\T},\sigma^2) - R(\delta_{\B},\sigma^2) = O(\sigma^6) + \Ebb\left[O\!\left(\lVert V_X\rVert^5\right)\right].
    \end{equation*}
    Finally, use Lemma~\ref{lem:integral-limit} with $k=5$ to get
    \begin{equation*}
        \Ebb[\|V_X\|^5] = \Ebb[d^5(X,\Theta)] \sim c_m\sigma^5
    \end{equation*}
    where $c_m>0$ depends only on the dimension $m$ of $\Mcal$.
\end{proof}

\subsection{Proofs from Subsection~\ref{subsec:sobolev}}\label{subsec:proofs-sobolev}

In this subsection, we prove some results establishing Sobolev bounds on the density of Riemannian Gaussian mixtures.

In some cases we bound the Sobolev norm of a function by first uniformly bounding its derivatives, and then using the following result to upgrade this to a bound on the Sobolev norm. 

\begin{lemma}\label{lem:equiv-Sobolev-defs}
    For integer $r\ge 1$ and sufficiently smooth $h:\Mcal\to\Rbb$, we have
    \begin{equation*}
        \|h\|^2_{\Hcal^r(\Mcal)} \asymp_{\Mcal,r}\|h\|^2_{L^2(\Mcal)}+\int_{\Mcal}\|\nabla^rh(x)\|_{x,r}^2\diff x
    \end{equation*}
\end{lemma}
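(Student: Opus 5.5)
We will prove the equivalence through the spectral identity $\|h\|^2_{\Hcal^r(\Mcal)}\asymp_r \|h\|^2_{L^2(\Mcal)}+\|\Delta^{r/2}h\|^2_{L^2(\Mcal)}$, and then compare $\|\Delta^{r/2}h\|_{L^2}$ with $\|\nabla^r h\|_{L^2}$ using elliptic regularity. The first step is elementary. Since $(1+\lambda)^r\asymp_r 1+\lambda^r$ for $\lambda\ge0$, Parseval gives $\|h\|^2_{\Hcal^r}\asymp_r\sum_j(1+\lambda_j^r)|h^\ast(j)|^2=\|h\|_{L^2}^2+\|\Delta^{r/2}h\|^2_{L^2}$. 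Here $\Delta^{r/2}$ is defined spectrally. For even $r=2k$ it equals $\Delta^k$ applied to smooth $h$. For odd $r=2k+1$, the computation preceding \eqref{eqn:Sobolev-1-equiv} gives $\|\Delta^{r/2}h\|^2_{L^2}=\int_{\Mcal}\|\nabla\Delta^kh\|^2_x\diff x$.

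The upper bound ``$\lesssim$'' comes next. In local terms $\Delta^k$ (respectively $\nabla\Delta^k$) is a contraction of the covariant derivatives $\nabla^{2k}h$ (respectively $\nabla^{2k+1}h$) with the metric, plus lower-order terms that arise from commuting covariant derivatives. Those lower-order terms involve curvature tensors and their derivatives, which are uniformly bounded because $\Mcal$ is compact. Hence $\|\Delta^{r/2}h\|^2_{L^2}\lesssim_{\Mcal,r}\sum_{i\le r}\int\|\nabla^ih\|^2_{x,i}\diff x$. To remove the intermediate orders $1\le i<r$, we use the interpolation inequality $\int\|\nabla^ih\|^2\lesssim \|h\|_{L^2}^2+\int\|\nabla^rh\|^2$. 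This inequality follows by repeatedly integrating by parts and applying Cauchy--Schwarz with $\varepsilon$: for example, $\int\|\nabla^i h\|^2\le \|\nabla^{i-1}h\|_{L^2}\|\nabla^{i+1}h\|_{L^2}$ up to a curvature term, and one then absorbs.

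The reverse bound ``$\gtrsim$'' is the main obstacle. We need a Bochner/Weitzenb\"ock-type estimate $\int\|\nabla^rh\|^2\lesssim_{\Mcal,r}\|h\|^2_{L^2}+\|\Delta^{r/2}h\|^2_{L^2}$, which we will prove by induction on $r$. The case $r=1$ is exactly \eqref{eqn:Sobolev-1-equiv}. For $r=2$, Bochner's formula gives $\int\|\nabla^2h\|^2=\int(\Delta h)^2-\int\textnormal{Ric}(\nabla h,\nabla h)$. Since $\textnormal{Ric}$ is bounded on compact $\Mcal$, the last term is at most $C\int\|\nabla h\|^2\le C(\|h\|^2_{L^2}+\|\Delta h\|^2_{L^2})$, using $\lambda\le 1+\lambda^2$ spectrally. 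For general $r$, we will integrate by parts to move one $\nabla$ across: $\int\|\nabla^rh\|^2=\int\langle\nabla^{r-2}\Delta h,\ \nabla^{r-2}\Delta h\rangle$ plus commutator terms of the form (curvature)$\ast\nabla^{j}h\ast\nabla^{r}h$ with $j\le r-1$. We then bound the main term by the induction hypothesis applied to $\Delta h$ at order $r-2$. The commutator terms are handled with Cauchy--Schwarz with $\varepsilon$, absorbing the $\nabla^r$ factor into the left side and controlling the lower orders by induction together with the spectral bound $\lambda^j\le 1+\lambda^r$. Alternatively, we can simply cite the standard elliptic estimate for $\Delta+1$ on compact manifolds (as in \cite[Chapter~2]{Aubin}), which gives $\|h\|_{W^{r,2}}\lesssim\|(1+\Delta)^{r/2}h\|_{L^2}$ directly. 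We would fall back on that citation if the commutator bookkeeping becomes unwieldy.

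Finally, combining both directions with the first step gives the claimed two-sided equivalence, with constants depending only on $\Mcal$ and $r$.
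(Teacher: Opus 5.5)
Your proposal is correct and follows the same architecture as the paper's proof. You reduce $\|h\|_{\Hcal^r(\Mcal)}$ to $\|h\|_{L^2(\Mcal)}+\|\Delta^{r/2}h\|_{L^2(\Mcal)}$, which you derive directly from $(1+\lambda)^r\asymp_r 1+\lambda^r$ rather than citing it, then control $\nabla^r h$ by an elliptic estimate, with \eqref{eqn:Sobolev-1-equiv} handling odd $r$; your Bochner induction just makes explicit the commutator bookkeeping hidden in the paper's ``iteratively apply'' of the G{\aa}rding bound $\|\nabla^2h\|_{L^2}\lesssim\|h\|_{L^2}+\|\Delta h\|_{L^2}$.

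One simplification: the easy direction needs no lower-order terms or interpolation. Because the metric is parallel, $\Delta^k h$ and $\nabla\Delta^k h$ are exact metric contractions of $\nabla^{2k}h$ and $\nabla^{2k+1}h$, so $\|\Delta^{r/2}h\|_{L^2(\Mcal)}^2\lesssim_{m,r}\int_{\Mcal}\|\nabla^rh(x)\|_{x,r}^2\diff x$ follows immediately from a pointwise bound.
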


\begin{proof}
    It is standard (see \cite[Chapter~2]{Aubin}) that we have 
    \begin{equation*}
        \|h\|_{\Hcal^r(\Mcal)} \asymp_{\Mcal,r}\|h\|_{L^2(\Mcal)}+\|\Delta^{r/2}h\|_{L^2(\Mcal)}
    \end{equation*}
    so it suffices to show
    \begin{equation*}
        \int_{\Mcal}\|\nabla^rh(x)\|_{x,r}^2\diff x\asymp_{\Mcal,r}\|\Delta^{r/2}h\|_{L^2(\Mcal)}^2.
    \end{equation*}
    To do this, note that if $r$ is even then we can iteratively apply the following elliptic regularity bound (which follows, e.g., from G{\aa}rding's inequality \cite[Chapter 7.6, Vol II]{TaylorPDE}):
    \begin{equation*}
        \|\nabla^2h\|_{L^2(\Mcal)} \lesssim_{\Mcal}\|h\|_{L^2(\Mcal)}+\|\Delta h\|_{L^2(\Mcal)}.
    \end{equation*}
    If $r$ is odd, then we combine this with the identity in \eqref{eqn:Sobolev-1-equiv}.
\end{proof}

Now we turn to the proofs of the main results from the body.


\begin{proof}[Proof of Lemma~\ref{lem:general-Sobolev}]
By~\eqref{eqn:Sobolev-1-equiv},
\[
    \|f\|_{\Hcal^1(\Mcal)}^2
    =
    \|f\|_{L^2(\Mcal)}^2
    +
    \int_{\Mcal}\|\nabla f(x)\|_x^2\diff x.
\]
By Lemma~\ref{lem:f-lower-bd}, we have
\[
    \|f\|_{L^2(\Mcal)}^2
    \le
    \vol(\Mcal)\|f\|_{L^\infty(\Mcal)}^2
    \lesssim_\Mcal
    \|g\|_{L^\infty(\Mcal)}^2.
\]
By Lemma~\ref{lem:f-diff-1},
\[
    \int_{\Mcal}\|\nabla f(x)\|_x^2\diff x
    \le
    \vol(\Mcal)\sup_{x\in\Mcal}\|\nabla f(x)\|_x^2
    \lesssim_\Mcal
    \sigma^{-2}\|g\|_{L^\infty(\Mcal)}^2.
\]
Since $\sigma^2\le(\diam(\Mcal))^2$, the first term is absorbed into the second term, up to a constant depending only on $\Mcal$.
Taking square roots gives the result.
\end{proof}

\begin{proof}[Proof of Lemma~\ref{lem:Sobolev-est-Euclidean}]
    Write $H_r$ for the $r$th multivariate Hermite polynomial in $\Rbb^m$, i.e., the $r$-linear form defined by the relation
    \begin{equation*}
        \nabla^r\exp\left(-\frac{\|x\|^2}{2}\right) = (-1)^r H_r(x)\exp\left(-\frac{\|x\|^2}{2}\right).
    \end{equation*}
    Then we can compute $\nabla^r f(x)$ by differentiating under the integral $r$ times to obtain the following, where $\|\cdot\|_r$ denotes any norm on the finite-dimensional vector space of $r$-linear forms:
    \begin{align*}
        \|\nabla^rf(x)\|_{r} &= \left\|\frac{1}{(\sqrt{2\pi}\sigma)^m}\int_{\Rbb^m}\nabla^r\exp\left(-\frac{\|x-\theta\|^2}{2\sigma^2}\right)g(\theta)\diff \theta\right\|_{r} \\
        &= \left\|\frac{\sigma^{-r}}{(\sqrt{2\pi}\sigma)^m}\int_{\Rbb^m}H_r\left(\frac{x-\theta}{\sigma}\right)\exp\left(-\frac{\|x-\theta\|^2}{2\sigma^2}\right)g(\theta)\diff \theta\right\|_{r} \\
        &= \left\|\frac{\sigma^{-r}}{(\sqrt{2\pi}\sigma)^m}\int_{\Rbb^m}H_r\left(-\frac{v}{\sigma}\right)\exp\left(-\frac{\|v\|^2}{2\sigma^2}\right)g(x+v)\diff v\right\|_{r} \\
        &\le \frac{\sigma^{-r}}{(\sqrt{2\pi}\sigma)^m}\int_{\Rbb^m}\left\|H_r\left(-\frac{v}{\sigma}\right)\right\|_{r}\exp\left(-\frac{\|v\|^2}{2\sigma^2}\right)g(x+v)\diff v \\
        &\le \frac{\sigma^{-r}\|g\|_{L^{\infty}(\Rbb^m)}}{(\sqrt{2\pi}\sigma)^m}\int_{\Rbb^m}\left\|H_r\left(\frac{v}{\sigma}\right)\right\|_{r}\exp\left(-\frac{\|v\|^2}{2\sigma^2}\right)\diff v.
    \end{align*}
    Now write $Z$ for a standard Gaussian random variable in $\Rbb^m$ (i.e., with mean 0 and covariance matrix $I_m$), so that the expression above is just
    \begin{equation*}
        \|\nabla^rf(x)\|_{r} \le \sigma^{-r}\|g\|_{L^{\infty}(\Rbb^m)}\Ebb\left[\left\|H_r\left(Z\right)\right\|_{r}\right].
    \end{equation*}
    Since the expectation depends only on $m$ and $r$, this finishes the proof.
\end{proof}

\begin{proof}[Proof of Lemma~\ref{lem:Sobolev-Lie}]
    Define $d_0:\Mcal\to[0,\infty)$ so that $d_0(y)\ge 0$ equals the distance from $y\in\Mcal$ to the identity element.
    Then we have
    \begin{equation*}
        p_{\theta,\sigma^2}(x)=\exp\left(-\frac{d_0^2(x^{-1}\theta)}{2\sigma^2}-A(\sigma^2)\right)
    \end{equation*}
    hence, by changing variables and using the invariance of the volume form:
    \begin{align*}
        f(x) &= \int_{\Mcal}\exp\left(-\frac{d_0^2(x^{-1}\theta)}{2\sigma^2}-A(\sigma^2)\right)g(\theta)\diff\theta \\
        &= \int_{\Mcal}\exp\left(-\frac{d_0^2(\theta)}{2\sigma^2}-A(\sigma^2)\right)g(x\theta)\diff\theta.
    \end{align*}
    Now write $R_{\theta}:\Mcal\to\Mcal$ for the operator of right multiplication by $\theta$, which is an isometry and is infinitely-differentiable since $\Mcal$ is a Lie group endowed with a bi-invariant metric tensor.
    Consequently, $x\mapsto g(x\theta) = (g\circ R_{\theta})(x)$ is infinitely differentiable, and we can differentiate it by the chain rule, yielding $\nabla^r(g\circ R_{\theta})(x) = R_{\theta}^{\ast}(\nabla^rg(x\theta))$ where $R_{\theta}^{\ast}$ is the isometry of $r$-linear forms on $\Tan_x(\Mcal)$ which is induced by $R_{\theta}$.
    Thus, differentiating $r$ times under the integral (which is valid because $\Mcal$ is compact and all derivatives $\nabla^kg$ for $k\le r$ are uniformly bounded) yields   
    \begin{equation*}
        \nabla^rf(x) =\int_{\Mcal}\exp\left(-\frac{d_0^2(\theta)}{2\sigma^2}-A(\sigma^2)\right)R_{\theta}^{\ast}(\nabla^rg(x))\diff\theta
    \end{equation*}
    hence
    \begin{align*}
        \|\nabla^rf(x)\|_{x,r} &=\left\|\int_{\Mcal}\exp\left(-\frac{d_0^2(\theta)}{2\sigma^2}-A(\sigma^2)\right)R_{\theta}^{\ast}(\nabla^r g(x))\diff\theta\right\|_{x,r} \\
        &\le\int_{\Mcal}\exp\left(-\frac{d_0^2(\theta)}{2\sigma^2}-A(\sigma^2)\right)\|R_{\theta}^{\ast}(\nabla^rg(x))\|_{x,r}\diff\theta \\
        &\le\int_{\Mcal}\exp\left(-\frac{d_0^2(\theta)}{2\sigma^2}-A(\sigma^2)\right)\diff\theta\cdot \sup_{\theta\in\Mcal}\|\nabla^rg(\theta)\|_{\theta,r} \\
        &=\sup_{\theta\in\Mcal}\|\nabla^rg(\theta)\|_{\theta,r}.
    \end{align*}
    Now, use Lemma~\ref{lem:equiv-Sobolev-defs} and Lemma~\ref{lem:f-lower-bd} to get
    \begin{align*}
        \|f\|_{\Hcal^r(\Mcal)}^2 &\lesssim_{\Mcal,r}\|f\|^2_{L^2(\Mcal)}+\int_{\Mcal}\|\nabla^rf(x)\|^2_{x,r}\diff x \\
        &\le\|f\|_{L^2(\Mcal)}^2 + \vol(\Mcal)\sup_{x\in\Mcal}\|\nabla^rf(x)\|_{x,r}^2 \\
        &\lesssim_{\Mcal}\vol(\Mcal)\|g\|_{L^{\infty}(\Mcal)}^2 + \vol(\Mcal)\sup_{\theta\in\Mcal}\|\nabla^rg(\theta)\|_{\theta,r}^2
    \end{align*}
    and conclude.
\end{proof}

\begin{proof}[Proof of Lemma~\ref{lem:sphere-estimate}]
    Recall from Example~\ref{ex:sphere} that \cite{PennecHessian} allows us to compute the Hessian of the squared distance functional, by representing $\mathbb{S}^2 = \{x\in\Rbb^3:\|x\|=1\}$; combining this with the chain rule, we can directly calculate 
    \begin{align*}
        \nabla_x^2p_{\theta,\sigma^2}(x) &=-\frac{1}{\sigma^2}\left(\frac{\Log_x\theta (\Log_x\theta)^{\top}}{d^2(x,\theta)} + h(d(x,\theta))\left(I-xx^{\top}-\frac{\Log_x\theta (\Log_x\theta)^{\top}}{d^2(x,\theta)}\right)\right)p_{\theta,\sigma^2}(x) \\
        &\qquad+ \frac{\Log_x\theta(\Log_x\theta)^{\top}}{\sigma^4}p_{\theta,\sigma^2}(x)
    \end{align*}
    hence 
    \begin{align*}
        \|\nabla_x^2p_{\theta,\sigma^2}(x)\|_{x,2} &\le \left(\frac{1}{\sigma^2}\left(1+ |h(d(x,\theta))|\left(\|I-xx^{\top}\|_{x,2} + 1\right)\right)+\frac{d^2(x,\theta)}{\sigma^4}\right)p_{\theta,\sigma^2}(x),
    \end{align*}
    where $h(t) = t\cos(t)/\sin(t)$ for $0\le t\le \pi$.
    In particular, we have
    \begin{align*}
        &\|\nabla^2f(x)\|_{x,2} \\
        &\le\int_{\Mcal}\|\nabla_x^2p_{\theta,\sigma^2}(x)\|_{x,2} \,\diff G(\theta) \\
        &\le \int_{\Mcal}\left(\frac{1}{\sigma^2} +\frac{d^2(x,\theta)}{\sigma^4}\right)p_{\theta,\sigma^2}(x)\diff G(\theta) + \left(\|I-xx^{\top}\|_{x,2} + 1\right)\frac{1}{\sigma^2}\int_{\Mcal}|h(d(x,\theta))|p_{\theta,\sigma^2}(x)\diff G(\theta),
    \end{align*}
    and we can control these two terms separately.
    
    For the first term, use Lemma~\ref{lem:normalizing-const}, the normal coordinates~\eqref{eq:normal-coor}, the identity $d(x,y) = \|\Log_x\theta\|_x$, and Lemma~\ref{lem:jac-det-bdd} to get:
    \begin{align*}
        \int_{\Mcal}&\left(\frac{1}{\sigma^2} +\frac{d^2(x,\theta)}{\sigma^4}\right)p_{\theta,\sigma^2}(x)\diff G(\theta) \\
        &\lesssim_{\Mcal} \frac{1}{(\sqrt{2\pi}\sigma)^m}\int_{\Mcal}\left(\frac{1}{\sigma^2} +\frac{d^2(x,\theta)}{\sigma^4}\right)\exp\left(-\frac{d^2(x,\theta)}{2\sigma^2}\right)\diff G(\theta) \\
        &= \frac{1}{(\sqrt{2\pi}\sigma)^m}\int_{D_x}\left(\frac{1}{\sigma^2} +\frac{\|v\|_x^2}{\sigma^4}\right)\exp\left(-\frac{\|v\|_x^2}{2\sigma^2}\right)g(\Exp_x(v))j_x(v)\diff v \\
        &\lesssim_{\Mcal} \frac{\|g\|_{L^{\infty}(\Mcal)}}{(\sqrt{2\pi}\sigma)^m}\int_{D_x}\left(\frac{1}{\sigma^2} +\frac{\|v\|_x^2}{\sigma^4}\right)\exp\left(-\frac{\|v\|_x^2}{2\sigma^2}\right)\diff v \\
        &\le \frac{\|g\|_{L^{\infty}(\Mcal)}}{(\sqrt{2\pi}\sigma)^m}\int_{\Tan_x(\Mcal)}\left(\frac{1}{\sigma^2} +\frac{\|v\|_x^2}{\sigma^4}\right)\exp\left(-\frac{\|v\|_x^2}{2\sigma^2}\right)\diff v \\
        &= \|g\|_{L^{\infty}(\Mcal)}\left(\frac{1}{\sigma^2} +\frac{m\sigma^2}{\sigma^4}\right) \\
        &\lesssim_{\Mcal} \sigma^{-2}\|g\|_{L^{\infty}(\Mcal)}.
    \end{align*}
    For the second term, use Lemma~\ref{lem:normalizing-const}, write the integral in spherical polar coordinates centered at $x$, and define $t=d(x,\theta)$, to compute
    \begin{equation*}
        \begin{split}
        &\frac{1}{\sigma^2}
        \int_{\mathbb S^2}
        |h(d(x,\theta))|p_{\theta,\sigma^2}(x)g(\theta)\diff\theta \\
        &\qquad\lesssim_\Mcal
        \frac{\|g\|_{L^\infty}}{\sigma^2(\sqrt{2\pi}\sigma)^2}
        \int_0^\pi
        \left|\frac{t\cos t}{\sin t}\right|
        \exp\left(-\frac{t^2}{2\sigma^2}\right)
        \sin t\,\diff t \\
        &\qquad=
        \frac{\|g\|_{L^\infty}}{\sigma^2(\sqrt{2\pi}\sigma)^2}
        \int_0^\pi
        t|\cos t|
        \exp\left(-\frac{t^2}{2\sigma^2}\right)\diff t \\
        &\qquad\lesssim
        \frac{\|g\|_{L^\infty}}{\sigma^2\sigma^2}
        \int_0^\infty
        t\exp\left(-\frac{t^2}{2\sigma^2}\right)\diff t \\
        &\qquad\lesssim
        \sigma^{-2}\|g\|_{L^\infty}
        \end{split}
    \end{equation*}
    as needed.

    Finally, combine these two bounds with Lemma~\ref{lem:f-lower-bd} and Lemma~\ref{lem:equiv-Sobolev-defs} to get
    \begin{equation*}
        \|f\|_{\Hcal^2(\Mcal)}^2 \lesssim \|f\|^2_{L^{\infty}(\Mcal)} + \vol(\Mcal)\sup_{x\in\Mcal}\|\nabla^2f(x)\|^2_{x,2} \lesssim_{\Mcal} \|g\|^2_{L^{\infty}(\Mcal)} +\sigma^{-4}\|g\|^2_{L^{\infty}(\Mcal)},
    \end{equation*}
    and use $\sigma^2\le (\diam(\Mcal))^2$ to conclude.
\end{proof}

\begin{proof}[Proof of Lemma~\ref{lem:ex-uniform}]
    Directly compute $f^{\ast}(0) = (\vol(\Mcal))^{-1/2}$ and $f^{\ast}(j) = 0$ for all $j\ge 1$.
\end{proof}

\begin{proof}[Proof of Lemma~\ref{lem:Sobolev-bd-S1}]
First, we define the kernel $K_\sigma:\mathbb{S}^1\to\Rbb$ by
\begin{equation*}
    K_\sigma(x):=
\exp\left(
    -\frac{d^2(x,0)}{2\sigma^2}
    -A(\sigma^2)
\right),
\end{equation*}
which, by representing $\mathbb{S}^1=\Rbb/2\pi\Zbb$ can be written as
\[
K_\sigma(t):=\exp\left(-\frac{t^2}{2\sigma^2}-A(\sigma^2)\right)
\]
which extending periodically from $t\in[-\pi,\pi]$ to $t\in\Rbb$.
Note that $A(\sigma^2)$ does not depend on $\theta$ since $\Mcal$ is homogeneous.
Then note that
\[
f(x)=\int_{\mathbb{S}^1}K_\sigma(x-\theta)\,g(\theta)\,\diff\theta,
\]
which is exactly the convolution of $K_\sigma$ and $g$ on $\mathbb{S}^1$.

Next we bound the Fourier series coefficients of $K_{\sigma}$.
For convenience, we index the eigenvalues as $\lambda_k=k^2$ for $k\in\Zbb$, and we claim that
\begin{equation}\label{eqn:kernel-Fourier-bd}
    |K^{\ast}_\sigma(k)|\asymp \sigma^{-2}(1+k^2)^{-1}
\end{equation}
holds uniformly over $k\in\Zbb$ and $0<\sigma^2\le \pi^2$.
The claim is obvious for $k=0$, since $K_{\sigma}$ being a probability density implies $K_{\sigma}^{\ast}(0) \lesssim 1 \lesssim \sigma^{-2}$ for $\sigma^2\le \pi^2$.
So, fix $k\neq 0$, and use integration by parts to get
\[
K_\sigma^{\ast}(k)
=
\frac{1}{2\pi ik}\int_{-\pi}^{\pi}K_\sigma'(t)e^{-ikt}\,\diff t.
\]
For $-\pi<t<\pi$ we have
\[
K_\sigma'(t)=-\frac{t}{\sigma^2}K_\sigma(t),
\]
so $K_\sigma'$ is piecewise continuous and has a jump at $\pm\pi$.
Integrating by parts again gives
\begin{equation}\label{eqn:F-Fourier-series}
K_\sigma^{\ast}(k)
\le
\frac{1}{2\pi k^2}
\left(
\bigl|K_\sigma'(\pi^-)-K_\sigma'(-\pi^+)\bigr|
+
\int_{-\pi}^{\pi}|K_\sigma''(t)|\,\diff t
\right),
\end{equation}
and we can analyze the two terms in parentheses separately.
The first term of~\eqref{eqn:F-Fourier-series} is exactly
\[
|K_\sigma'(\pi^-)-K_\sigma'(-\pi^+)|
=
2\pi\sigma^{-2}K_\sigma(\pi)
=
2\pi\sigma^{-2}\exp\left(-\frac{\pi^2}{2\sigma^2}-A(\sigma^2)\right),
\]
and Lemma~\ref{lem:normalizing-const} implies
$\exp(-A(\sigma^2))\lesssim \sigma^{-1}$ for $0<\sigma^2\le \pi^2$, hence $|K_\sigma'(\pi^-)-K_\sigma'(-\pi^+)|\lesssim
\sigma^{-3}\exp(-\frac{\pi^2}{2\sigma^2})
\lesssim
\sigma^{-2}$.
For the second term of~\eqref{eqn:F-Fourier-series}, we compute $K_\sigma''(t)=(\sigma^{-4}t^2-\sigma^{-2})K_\sigma(t)$ for $-\pi<t<\pi$,
hence
\[
\int_{-\pi}^{\pi}|K_\sigma''(t)|\,\diff t
\le
\sigma^{-4}\int_{-\pi}^{\pi}t^2K_\sigma(t)\,\diff t
+
\sigma^{-2}\int_{-\pi}^{\pi}K_\sigma(t)\,\diff t.
\]
The second integral is exactly $1$, and the first integral is bounded by $\sigma^{-2}$, which follows from Lemma~\ref{lem:normalizing-const} and the
Gaussian moment bound $\int_{-\pi}^{\pi} t^2 e^{-t^2/(2\sigma^2)}\,\diff t \lesssim \sigma^3$, hence we have shown $\int_{-\pi}^{\pi}|K_\sigma''(t)|\,\diff t \asymp \sigma^{-2}.$
Substituting these bounds into~\eqref{eqn:F-Fourier-series} establishes
\[
|K_\sigma^{\ast}(k)|\asymp \sigma^{-2}k^{-2}
\]
for all $k\neq 0$, whence~\eqref{eqn:kernel-Fourier-bd}.

Lastly, we use the convolution identity for Fourier coefficients, the bound
\eqref{eqn:kernel-Fourier-bd}, and the definition of Sobolev norms to obtain
\begin{align*}
\|f\|_{\Hcal^{s+2}(\mathbb{S}^1)}^2
&=
\sum_{k\in\Zbb}(1+k^2)^{s+2}|f^{\ast}(k)|^2 \\
&=
\sum_{k\in\Zbb}(1+k^2)^{s+2}|K_{\sigma}^{\ast}(k)|^2|g^{\ast}(k)|^2 \\
&\asymp
\sigma^{-4}
\sum_{k\in\Zbb}(1+k^2)^{s}|g^{\ast}(k)|^2 \\
&=
\sigma^{-4}\|g\|_{\Hcal^s(\mathbb{S}^1)}^2.
\end{align*}
Taking square roots completes the proof.
\end{proof}

\subsection{Proofs from Subsection~\ref{subsec:empirical}}\label{subsec:proofs-empirical}

Lastly, we give the proofs for our results concerning our empirical Bayes approximation $\hat{\delta}_{\T}$ of $\delta_{\T}$.

\begin{proof}[Proof of Proposition~\ref{prop:kde-error}]    First, note that we have $f\in L^2(\Mcal)$ because of $\|f\|_{\Hcal^r(\Mcal)}<\infty$ for some $r>0$, and also that  $\hat{f}_n\in L^2(\Mcal)$ because it is written as a finite linear combination of $\{\phi_j\}_{j=0}^{\infty}\subseteq L^2(\Mcal)$.
Now observe that $f^{\ast},\hat{f}_n^{\ast}:\{0,1,\ldots,\}\to\Cbb$ are given by 
    \begin{equation*}
        f^{\ast}(j):= \int_{\Mcal}f(x)\overline{\phi_j(x)}\diff x
    \end{equation*}
    and
    \begin{equation*}
        \hat{f}_n^{\ast}(j):=\begin{cases}
            \frac{1}{n}\sum_{i=1}^{n}\overline{\phi_j(X_i)} &\textnormal{ if } j\le \ell \\
            0 &\textnormal{ if } j> \ell,
        \end{cases}
    \end{equation*}
    so that we have
    \begin{equation*}
        \hat f_n(x)=\sum_{j=0}^{\ell}\hat f_n^{\ast}(j)\phi_j(x)
    \end{equation*}
    and also that $\hat{f}_n^{\ast}(j)$ is an unbiased estimator of $f^{\ast}(j)$ for all $0\le j\le \ell$.

    The first claim is based on the following calculations.
    First, expand
    \begin{equation}\label{eqn:kde-1-k1}
        \begin{split}
            \int_{\Mcal}\left|\hat{f}_n(x)-f(x)\right|^2\diff x &= \sum_{j=0}^{\ell}\left|\hat{f}_n^{\ast}(j)-f^{\ast}(j)\right|^2+\sum_{j=\ell+1}^{\infty}|f^{\ast}(j)|^2 \\
            &= \sum_{j=0}^{\ell}\left|\frac{1}{n}\sum_{i=1}^{n}\phi_j(X_i)-f^{\ast}(j)\right|^2+\sum_{j= \ell+1}^{\infty}|f^{\ast}(j)|^2.
        \end{split}
    \end{equation}
    and let us now handle the two terms on the right side separately.
    For the first term (which represents the variance), we take the expectation over $X_1,\ldots, X_n$ then use Lemma~\ref{lem:f-lower-bd} to get:
    \begin{equation}\label{eqn:kde-4-k1}
        \begin{split}
        \Ebb\left[\sum_{j=0}^{ \ell}\left|\frac{1}{n}\sum_{i=1}^{n}\phi_j(X_i)-f^{\ast}(j)\right|^2\right] &= \frac{1}{n}\sum_{j=0}^{\ell}\,\Var(\phi_j(X_1))\\
        &\le \frac{1}{n}\sum_{j=0}^{\ell}\,\int_{\Mcal}|\phi_j(x)|^2f(x)\diff x \\
        &\le \|f\|_{L^\infty(\Mcal)}\frac{\ell}{n} \\
        &\lesssim_{\Mcal}\|g\|_{L^\infty(\Mcal)}\frac{\ell}{n} \\
        \end{split}
    \end{equation}
    for all $\ell\in\Nbb$.
    For the second term (which is non-random, and can be interpreted as the squared bias), use the assumption of~$\|f\|_{\Hcal^r(\Mcal)}<\infty$, and Weyl's law \cite[Section~VI.4]{Chavel}, to get
    \begin{equation}\label{eqn:kde-2-k1}
        \begin{split}
            \sum_{j= \ell+1}^{\infty}|f^{\ast}(j)|^2 &\le \lambda_{\ell}^{-r}\sum_{j= \ell+1}^{\infty}\lambda_{j}^{r}|f^{\ast}(j)|^2 \\
            &\le \lambda_{\ell}^{-r}\sum_{j=0}^{\infty}\lambda_{j}^{r}|f^{\ast}(j)|^2 \\
            &\le \lambda_{\ell}^{-r}\sum_{j=0}^{\infty}(1+\lambda_{j})^{r}|f^{\ast}(j)|^2 \\
            &= \lambda_{\ell}^{-r}\|f\|_{\Hcal^r(\Mcal)}^2 \\     &\lesssim_{\Mcal,r} \ell^{-\frac{2r}{m}}\|f\|_{\Hcal^r(\Mcal)}^2 \\
        \end{split}
    \end{equation}
    for all $\ell\in\Nbb$.
    Consequently, plugging in \eqref{eqn:kde-2-k1} and \eqref{eqn:kde-4-k1} into \eqref{eqn:kde-1-k1} yields
    \begin{equation}\label{eqn:kde-k1}
        \Ebb\left[\int_{\Mcal}\left|\hat{f}_n(x)-f(x)\right|^2\diff x\right] \lesssim_{\Mcal} \|g\|_{L^\infty(\Mcal)}\frac{\ell}{n} + \ell^{-\frac{2r}{m}}\|f\|_{\Hcal^r(\Mcal)}^2 
    \end{equation}
    for all $\ell\in\Nbb$.

    The second claim is based on similar calculations, as follows.
    First, use~\eqref{eqn:Sobolev-1-equiv} to expand
    \begin{equation}\label{eqn:kde-1-k2}
        \begin{split}
            \int_{\Mcal}\left\|\nabla\hat{f}_n(x)-\nabla f(x)\right\|_x^2\diff x &= \sum_{j=0}^{\ell}\lambda_j\left|\hat{f}_n^{\ast}(j)-f^{\ast}(j)\right|^2+\sum_{j=\ell+1}^{\infty}\lambda_j|f^{\ast}(j)|^2 \\
            &= \sum_{j=0}^{\ell}\lambda_j\left|\frac{1}{n}\sum_{i=1}^{n}\phi_j(X_i)-f^{\ast}(j)\right|^2+\sum_{j= \ell+1}^{\infty}\lambda_j|f^{\ast}(j)|^2,
        \end{split}
    \end{equation}
    and we control these terms as before.
    For the first term, take expectation and use Lemma~\ref{lem:f-lower-bd} and Weyl's law:
    \begin{equation}\label{eqn:kde-4-k2}
        \begin{split}
        \Ebb\left[\sum_{j=0}^{ \ell}\lambda_j\left|\frac{1}{n}\sum_{i=1}^{n}\phi_j(X_i)-f^{\ast}(j)\right|^2\right] &= \sum_{j=0}^{\ell}\lambda_j\Ebb\left[\left|\frac{1}{n}\sum_{i=1}^{n}\phi_j(X_i)-f^{\ast}(j)\right|^2\right] \\
        &= \frac{1}{n}\sum_{j=0}^{\ell}\,\lambda_j\Var(\phi_j(X_1))\\
        &\le \frac{1}{n}\sum_{j=0}^{\ell}\,\lambda_j\int_{\Mcal}|\phi_j(x)|^2f(x)\diff x \\
        &\le \frac{\|f\|_{L^\infty(\Mcal)}}{n}\sum_{j=0}^{\ell}\,\lambda_j \\
        &\lesssim_{\Mcal} \frac{\|g\|_{L^\infty(\Mcal)}}{n}\sum_{j=0}^{\ell}\,\lambda_j \\
        &\le\frac{\|g\|_{L^\infty(\Mcal)}}{n}\ell\lambda_{\ell} \\
        &\lesssim_{\Mcal}\frac{\|g\|_{L^\infty(\Mcal)}}{n}\ell^{1+\frac{2}{m}} \\
        \end{split}
    \end{equation}
    for all $\ell\in\Nbb$.
    For the second term, use~$\|f\|_{\Hcal^r(\Mcal)}<\infty$, and Weyl's law:   \begin{equation}\label{eqn:kde-2-k2}
        \begin{split}
            \sum_{j= \ell+1}^{\infty}\lambda_j|f^{\ast}(j)|^2 &= \sum_{j= \ell+1}^{\infty}\lambda_j^{1-r}\lambda_{j}^{r}|f^{\ast}(j)|^2 \\
            &\le \lambda_{\ell}^{1-r}\sum_{j= \ell+1}^{\infty}\lambda_{j}^{r}|f^{\ast}(j)|^2 \\
            &\le \lambda_{\ell}^{1-r}\sum_{j=0}^{\infty}\lambda_{j}^{r}|f^{\ast}(j)|^2 \\
            &\le \lambda_{\ell}^{1-r}\sum_{j=0}^{\infty}(1+\lambda_{j})^{r}|f^{\ast}(j)|^2 \\
            &= \lambda_{\ell}^{1-r}\|f\|_{\Hcal^r(\Mcal)}^2 \\            &\lesssim_{\Mcal,r} \ell^{\frac{2(1-r)}{m}}\|f\|_{\Hcal^r(\Mcal)}^2 \\
        \end{split}
    \end{equation}
    for all $\ell\in\Nbb$.
    Now plug \eqref{eqn:kde-2-k2} and \eqref{eqn:kde-4-k2} into \eqref{eqn:kde-1-k2} to obtain
    \begin{equation}\label{eqn:kde-k2}
        \Ebb\left[\int_{\Mcal}\left\|\nabla\hat{f}_n(x)-\nabla f(x)\right\|_x^2\diff x\right] \lesssim_{\Mcal} \|g\|_{L^\infty(\Mcal)}\frac{\ell^{1+\frac{2}{m}}}{n} + \ell^{\frac{2(1-r)}{m}}\|f\|_{\Hcal^r(\Mcal)}^2 
    \end{equation}
    for all $\ell\in\Nbb$.

    Note that we can capture both~\eqref{eqn:kde-k1} and~\eqref{eqn:kde-k2} in the following expression for $k\in\{0,1\}$, where we use the norm $\|\cdot\|$ to denote $|\cdot|$ in the $k=0$ case and to denote $\|\cdot\|_x$ in the $k=1$ case:
    \begin{equation*}
        \Ebb\left[\int_{\Mcal}\left\|\nabla^k\hat{f}_n(x)-\nabla^k f(x)\right\|^2\diff x\right] \lesssim_{\Mcal} \|g\|_{L^\infty(\Mcal)}\frac{\ell^{1+\frac{2k}{m}}}{n} + \ell^{\frac{2(k-r)}{m}}\|f\|_{\Hcal^r(\Mcal)}^2. 
    \end{equation*}
    Thus, we complete the proof by plugging in the given form of $\ell_n$ to obtain
    \begin{equation}\label{eqn:density-derivs-est}
        \begin{split}
        \int_{\Mcal}\Ebb\left[\left\|\nabla^k\hat{f}_n(x)-\nabla^k f(x)\right\|^2\right]\diff x &\lesssim_{\Mcal,r} \frac{\|g\|_{L^{\infty}(\Mcal)}\ell^{1+\frac{2k}{m}}}{n} +\ell^{\frac{2(k-r)}{m}}\|f\|_{\Hcal^{r}(\Mcal)}^2 \\
        &=\left(\frac{\|g\|_{L^{\infty}(\Mcal)}\ell}{n} +\frac{\|f\|_{\Hcal^{r}(\Mcal)}^2}{\ell^{\frac{2r}{m}}}\right)\ell^{\frac{2k}{m}} \\
        &\asymp \frac{\|f\|_{\Hcal^{r}(\Mcal)}^{\frac{2m}{2r+m}}\|g\|_{L^{\infty}(\Mcal)}^{\frac{2r}{2r+m}}}{n^{\frac{2r}{2r+m}}} \ell^{\frac{2k}{m}} \\
        &= \frac{\|f\|_{\Hcal^{r}(\Mcal)}^{\frac{2m+4k}{2r+m}}\|g\|_{L^{\infty}(\Mcal)}^{\frac{2r-2k}{2r+m}}}{n^{\frac{2r-2k}{2r+m}}}.
        \end{split}
    \end{equation}
    This implies both claims by taking $k=0$ and $k=1$, respectively.
\end{proof}

\begin{proof}[Proof of Theorem~\ref{thm:EB-risk}]

    For convenience, write $w_{\T x}:=\sigma^2\nabla \log f(x)$ and $\hat{w}_{\T x}:= \sigma^2\nabla_x\hat{f}_n(x)/\max\{\hat{f}_n(x),\rho\}$, and note that Lemma~\ref{lem:Tweedie} implies
    \begin{equation*}
        \|w_{\T x}\|_x = \|\Ebb[\Log_x\Theta\,|\,X=x]\|_x \le \Ebb[\|\Log_x\Theta\|_x\,|\,X=x]  \le \diam(\Mcal).
    \end{equation*}
    We claim that there exists a constant $C_\Mcal>0$ such that, for every
    $x\in\Mcal$ and every $v\in\Tan_x(\Mcal)$,
    \[
        d^2\left(\Exp_x(w_{\T x}),\Exp_x(v)\right)
        \le
        C_\Mcal \|w_{\T x}-v\|_x^2.
    \]
    Indeed, if $\|v\|_x\le 2\diam(\Mcal)$, this follows from smoothness of
    $(x,v)\mapsto \Exp_x(v)$ on the compact set $\{(x,v):x\in\Mcal,\ \|v\|_x\le 2\diam(\Mcal)\}$; otherwise $\|v\|_x>2\diam(\Mcal)$ hence $\|w_{\T x}-v\|_x\ge \diam(\Mcal)$ and $d(\Exp_x(w_{\T x}),\Exp_x(v))\le \diam(\Mcal)$, so the bound holds after increasing $C_\Mcal$ if necessary.

    Applying this with $v:=\hat{w}_{\T x}$ gives
    \begin{equation}\label{eqn:EB-1}
        \Ebb\left[\int_{\Mcal}d^2\left(\hat{\delta}_{\T}(x),\delta_{\T}(x)\right)f(x)\diff x\right]
        \le
        C_\Mcal \sigma^4
        \Ebb\left[\int_{\Mcal}
        \left\|
        \frac{\nabla_x\hat f_n(x)}{\max\{\hat f_n(x),\rho\}}
        -
        \frac{\nabla_x f(x)}{f(x)}
        \right\|_x^2
        f(x)\diff x\right]
    \end{equation}
    for all $n\in\Nbb$.
    Now we write
    \begin{equation*}
        \frac{\nabla_x\hat{f}_n(x)}{\max\{\hat{f}_n(x),\rho\}}-\frac{\nabla_x f(x)}{f(x)} = \frac{\nabla_x\hat{f}_n(x)-\nabla_xf(x)}{\max\{\hat{f}_n(x),\rho\}} + \frac{\nabla_x f(x)}{f(x)}\cdot\frac{f(x) - \max\{\hat{f}_n(x),\rho\}}{\max\{\hat{f}_n(x),\rho\}},
    \end{equation*}
    so that we can expand the norm appearing on the right side above as
    \begin{align*}
        &\left\|\frac{\nabla_x\hat{f}_n(x)}{\max\{\hat{f}_n(x),\rho\}}-\frac{\nabla_x f(x)}{f(x)}\right\|^2_{x} \\
        &\lesssim \left\|\frac{\nabla_x\hat{f}_n(x)-\nabla_xf(x)}{\max\{\hat{f}_n(x),\rho\}}\right\|_{x}^2 + \left\|\frac{\nabla_x f(x)}{f(x)}\right\|^2\left|\frac{f(x) - \max\{\hat{f}_n(x),\rho\}}{\max\{\hat{f}_n(x),\rho\}}\right|^2.
    \end{align*}
    Next we will take the expectation and integral and bound these terms separately.

    The first term can be bounded directly by Lemma~\ref{lem:f-lower-bd} and Proposition~\ref{prop:kde-error}, as
    \begin{equation}\label{eqn:EB-2-new}
    \begin{split}
        \Ebb\left[\int_{\Mcal}\left\|\frac{\nabla_x\hat{f}_n(x)-\nabla_xf(x)}{\max\{\hat{f}_n(x),\rho\}}\right\|_{x}^2f(x)\diff x\right] &\le \frac{\|f\|_{L^{\infty}(\Mcal)}}{\rho^2}\,\Ebb\left[\int_{\Mcal}\left\|\nabla_x\hat{f}_n(x)-\nabla_xf(x)\right\|^2_x\diff x\right] \\
        &\lesssim_{\Mcal,r} \frac{\|g\|_{L^{\infty}(\Mcal)}}{\rho^2}\cdot\frac{\|f\|_{\Hcal^{r}(\Mcal)}^{\frac{2m+4}{2r+m}}\|g\|_{L^{\infty}(\Mcal)}^{\frac{2r-2}{2r+m}}}{n^{\frac{2r-2}{2r+m}}} \\
        &= \frac{\|f\|_{\Hcal^{r}(\Mcal)}^{\frac{2m+4}{2r+m}}\|g\|_{L^{\infty}(\Mcal)}^{\frac{4r+m-2}{2r+m}}}{\rho^2n^{\frac{2r-2}{2r+m}}}. \\
        \end{split}
    \end{equation}
    For the second term, use $|f(x)-\max\{\hat{f}_n(x),\rho\}|\le|f(x)-\hat{f}_n(x)|$, then Lemma~\ref{lem:f-diff-1} and Lemma~\ref{lem:f-lower-bd} to get
    \begin{equation*}
        \left\|\frac{\nabla_x f(x)}{f(x)}\right\|^2\left|\frac{f(x) - \max\{\hat{f}_n(x),\rho\}}{\max\{\hat{f}_n(x),\rho\}}\right|^2f(x) \lesssim_{\Mcal} \frac{\|g\|^3_{L^{\infty}(\Mcal)}}{\rho^4\sigma^2}|f(x)-\hat{f}_n(x)|^2.
    \end{equation*}
    Therefore, we can use Proposition~\ref{prop:kde-error} to further bound:
    \begin{equation}\label{eqn:EB-3-new}
        \begin{split}
        &\Ebb\left[\int_{\Mcal}\left\|\frac{\nabla_x f(x)}{f(x)}\right\|^2\left|\frac{f(x) - \max\{\hat{f}_n(x),\rho\}}{\max\{\hat{f}_n(x),\rho\}}\right\|^2_{x}f(x)\diff x\right] \\
        &\lesssim_{\Mcal} \frac{\|g\|^3_{L^{\infty}(\Mcal)}}{\rho^4\sigma^2}\Ebb\left[\int_{\Mcal}|f(x)-\hat{f}_n(x)|^2\diff x\right] \\
        &\lesssim_{\Mcal,r} \frac{\|g\|^3_{L^{\infty}(\Mcal)}}{\rho^4\sigma^2}\cdot \frac{\|f\|_{\Hcal^{r}(\Mcal)}^{\frac{2m}{2r+m}}\|g\|_{L^{\infty}(\Mcal)}^{\frac{2r}{2r+m}}}{n^{\frac{2r}{2r+m}}} \\
        &= \frac{\|f\|^{\frac{2m}{2r+m}}_{\Hcal^{r}(\Mcal)}\|g\|^{\frac{8r+3m}{2r+m}}_{L^{\infty}(\Mcal)}}{\rho^4\sigma^2n^{\frac{2r}{2r+m}}}.
        \end{split}
    \end{equation}
    Combining \eqref{eqn:EB-2-new} and \eqref{eqn:EB-3-new} with \eqref{eqn:EB-1}, we obtain
    \begin{equation*}
        \begin{split}
        \Ebb&\left[\int_{\Mcal}d^2\left(\hat{\delta}_{\T}(x),\delta_{\T}(x)\right)f(x)\diff x\right] \\
        &\qquad\lesssim_{\Mcal,r}
        \sigma^4
        \frac{\|f\|_{\Hcal^{r}(\Mcal)}^{\frac{2m+4}{2r+m}}
        \|g\|_{L^{\infty}(\Mcal)}^{\frac{4r+m-2}{2r+m}}}{\rho^2n^{\frac{2r-2}{2r+m}}}
        +
        \sigma^2
        \frac{\|f\|_{\Hcal^{r}(\Mcal)}^{\frac{2m}{2r+m}}
        \|g\|_{L^{\infty}(\Mcal)}^{\frac{8r+3m}{2r+m}}}{\rho^4n^{\frac{2r}{2r+m}}}.
        \end{split}
    \end{equation*}
    In order to take the worst-case behavior on the right side in all variables, note that we have: 
    \begin{align*}
        \sigma^2 &\le (\diam(\Mcal))^2 \\
        n&\ge 1 \\
        \|g\|_{L^\infty(\Mcal)} &\ge (\vol(\Mcal))^{-1} \\
        \|f\|_{\Hcal^r(\Mcal)}&\ge \|f\|_{L^2(\Mcal)}\ge (\vol(\Mcal))^{-1/2} \\
        \rho&\le \min_{x\in\Mcal}f(x)\le (\vol(\Mcal))^{-1},
    \end{align*}
    hence we obtain the upper bound
    \begin{equation*}
        \Ebb\left[\int_{\Mcal}d^2\left(\hat{\delta}_{\T}(x),\delta_{\T}(x)\right)f(x)\diff x\right] \lesssim_{\Mcal,r} \frac{\sigma^{2}\|f\|^{\frac{2m+4}{2r+m}}_{\Hcal^{r}(\Mcal)}\|g\|^{\frac{8r+3m}{2r+m}}_{L^{\infty}(\Mcal)}}{\rho^4n^{\frac{2r-2}{2r+m}}}  \end{equation*}
    as claimed.
\end{proof}

\begin{proof}[Proof of Theorem~\ref{thm:S1-lower-bd}]
    Before the main part of the proof, we briefly introduce, for each $p\in\Nbb$, a collection of functions $\psi_{p,0},\ldots, \psi_{p,2^{p}-1}:\mathbb{S}^1\to\Rbb$ that will feature in our construction.
    The existence of these functions involves standard properties of wavelets on $\mathbb{S}^1$ which can be found in \cite[Theorem~7.2]{WalterShen} or \cite[Section~2]{Balid2009}.
    Specifically, we may construct such functions so that, for all $a_0,\ldots a_{2^p-1}\in\Rbb$, we have
    \begin{equation}\label{eqn:wavelet-L2}
        \left\|\sum_{z=0}^{2^p-1}a_z\psi_{p,z}\right\|_{L^2(\mathbb{S}^1)}^2\asymp\sum_{z=0}^{2^p-1}a_z^2
    \end{equation}
    and, for all $\varepsilon_0,\ldots \varepsilon_{2^p-1}\in\{0,1\}$, we have 
    \begin{equation}\label{eqn:wavelet-Linf}
        \left\|\sum_{z=0}^{2^p-1}\varepsilon_z\psi_{p,z}\right\|_{L^{\infty}(\mathbb{S}^1)}\lesssim 2^{p/2}.
    \end{equation}   
    In fact, we further have the property that, for all $z$ we have $\psi_{p,z}^{\ast}(k)\neq 0$ only if $|k|\asymp 2^p$, which allows us to generalize \eqref{eqn:wavelet-L2} as follows; for all $r\in\Rbb$ and all $a_0,\ldots a_{2^p-1}\in\Rbb$, we have \begin{equation}\label{eqn:wavelet-Hr}
        \begin{split}
        \left\|\sum_{z=0}^{2^p-1}a_z\psi_{p,z}\right\|_{\Hcal^r(\mathbb{S}^1)}^2 = \sum_{k\in\Zbb}(1+k^2)^r\left|\sum_{z=0}^{ 2^p-1}a_z\psi_{p,z}^{\ast}(k)\right|^2 &\asymp_r 2^{2pr}\sum_{k\in\Zbb}\left|\sum_{z=0}^{2^p-1} a_z\psi_{p,z}^{\ast}(k)\right|^2 \\
        &=2^{2pr}\left\|\sum_{z=0}^{2^p-1}a_z\psi_{p,z}\right\|^2_{L^2(\mathbb{S}^1)}\asymp2^{2pr}\sum_{z=0}^{2^p-1}a_z^2,    
        \end{split}
    \end{equation}
    for all $p$.

    Next we define the functions $\zeta_{p,0},\ldots, \zeta_{p,2^p-1}:\mathbb{S}^1\to\Rbb$ as follows, for each $z$:
    \begin{equation*}
        \zeta_{p,z}(x)= \int_{\mathbb{S}^1}p_{\theta,\sigma^2}(x)\psi_{p,z}(\theta)\diff \theta = \int_{\mathbb{S}^1}\exp\left(-\frac{d^2(x,\theta)}{2\sigma^2}-A(\sigma^2)\right)\psi_{p,z}(\theta)\diff \theta.
    \end{equation*}
    Note that $\zeta_{p,z}$ has the interpretation of the ``density'' of $X$ when $\psi_{p,z}$ is the ``density'' of $\Theta$ in \eqref{eqn:pop-model}, although these functions can take negative values so are not bona fide densities.
    As we showed in the proof of Lemma~\ref{lem:Sobolev-bd-S1}, we have $\zeta_{p,z}^{\ast}(k) \asymp_{\sigma^2} (1+k^2)^{-1}\psi_{p,z}^{\ast}(k)$ for all $|k|\asymp 2^p$, from which we can deduce bounds similar to \eqref{eqn:wavelet-Hr}.
    Indeed, just compute
    \begin{equation}\label{eqn:zeta-Hr}
        \left\|\sum_{z=0}^{2^p-1}a_z\zeta_{p,z}\right\|_{L^2(\mathbb{S}^1)}^2 =\left\|\sum_{z=0}^{2^p-1}a_z\zeta_{p,z}\right\|_{\Hcal^{0}(\mathbb{S}^1)}^2 \asymp_{\sigma^2} \left\|\sum_{z=0}^{2^p-1}a_z\psi_{p,z}\right\|_{\Hcal^{-2}(\mathbb{S}^1)}^2 \asymp 2^{-4p}\sum_{z=0}^{2^p-1}a_z^2
    \end{equation}
    and
    \begin{equation}\label{eqn:grad-zeta-Hr}
        \begin{split}
        \left\|\sum_{z=0}^{2^p-1}a_z\nabla\zeta_{p,z}\right\|_{L^2(\mathbb{S}^1)}^2 = \sum_{k\in\Zbb}
        k^2\left|\sum_{z=0}^{2^p-1}a_z\zeta_{p,z}^{\ast}(k)\right|^2 &\asymp 2^{2p}\sum_{k\in\Zbb}\left|\sum_{z=0}^{2^p-1}a_z\zeta_{p,z}^{\ast}(k)\right|^2 \\
        &= 2^{2p}\left\|\sum_{z=0}^{2^p-1}a_z\zeta_{p,z}\right\|^2_{L^2(\mathbb{S}^1)} \asymp 2^{-2p}\sum_{z=0}^{2^p-1}a_z^2
        \end{split}
    \end{equation}
    for all $p$.

    Now we set up the proof using Assouad's method.
    To do this, fix $p\in\Nbb$ such that $2^p\asymp n^{1/(2s+5)}$ and fix $\gamma>0$ such that $\gamma\asymp L2^{-p(s+\frac{1}{2})}=Ln^{-(s+\frac{1}{2})/(2s+5)}$.
    Then, for each $\varepsilon = (\varepsilon_0,\ldots, \varepsilon_{2^p-1})\in\{0,1\}^{2^p}$, set
    \begin{equation*}
        g_{\varepsilon}(\theta):= \frac{1}{2\pi}+\gamma\sum_{z=0}^{2^p-1}\varepsilon_z\psi_{p,z}(\theta).
    \end{equation*}
    In order to make sure that $g_{\varepsilon}\in \mathcal{G}_s(\alpha,\beta,L)$ for all $\varepsilon\in\{0,1\}^{2^p}$, note from \eqref{eqn:wavelet-Linf} and $\gamma\lesssim L2^{-p\left(s+\frac{1}{2}\right)}\lesssim 2^{-p/2}$ that we have $\alpha\le g_{\varepsilon}(\theta)\le \beta$ for all $\theta\in\mathbb{S}^1$, and similarly that from \eqref{eqn:wavelet-Hr} and $\gamma\le L2^{-p\left(s+\frac{1}{2}\right)}$ we have $\|g_{\varepsilon}\|_{\Hcal^s(\mathbb{S}^1)}\le L$.
    Now for convenience, write $f_{\varepsilon}$ for the marginal density of $X$ when $\Theta$ has density $g_{\varepsilon}$ in \eqref{eqn:pop-model}, so that $f_{\varepsilon} = (2 \pi)^{-1}+\gamma\sum_{z=0}^{2^p-1}\varepsilon_z\zeta_{p,z}$.
    The core of the rest of the proof is to show that $\{f_{\varepsilon}:\varepsilon\in \{0,1\}^{2^p}\}$ are close in terms of their KL divergence but far in terms of the displacement of their denoisers.
    Throughout the rest of the proof, we write $d_{\mathrm{H}}(\varepsilon,\varepsilon')$ for the Hamming distance between $\varepsilon,\varepsilon'\in\{0,1\}^{2^p}$.

    For the KL computation, suppose $d_{\mathrm{H}}(\varepsilon,\varepsilon')=1$, hence without loss of generality $f_{\varepsilon}-f_{\varepsilon'} = \gamma\zeta_{p,z_0}$ for some $z_0$.
    Then, use Lemma~\ref{lem:f-lower-bd} to get $f_{\varepsilon'}(x)\gtrsim \alpha$ for all $x\in\mathbb{S}^1$, and use \eqref{eqn:zeta-Hr} to compute
    \begin{equation*}
        \KL(f_{\varepsilon}\,|\,f_{\varepsilon'}) \lesssim_{\alpha}\|f_{\varepsilon}-f_{\varepsilon'}\|^2_{L^2(\mathbb{S}^1)} =  \gamma^2\left\|\zeta_{p,z_0}\right\|^2_{L^2(\mathbb{S}^1)}\asymp\gamma^22^{-4p}
        \end{equation*}
    Further, if we let $P_{\varepsilon}$ be the joint distribution of i.i.d. samples $X_1,\ldots, X_n$ from $f_{\varepsilon}$, and similarly for $P_{\varepsilon'}$, we have
    \begin{equation}\label{eqn:KL-computation}
        \KL(P_{\varepsilon}\,|\,P_{\varepsilon'}) = n\KL(f_{\varepsilon}\,|\,f_{\varepsilon'}) \lesssim_{\alpha}n\gamma^22^{-4p} 
        \end{equation}
    for all $n$.
    (Note that we chose $\gamma\asymp L2^{-p(s+\frac{1}{2})}$ and $2^p\asymp n^{1/(2s+5)}$ precisely to guarantee that these KL divergences on adjacent hypothesis are bounded by a small universal constant).

    For the displacement computation, we claim
    \begin{equation}\label{eqn:displacement-computation}
        \int_{\mathbb{S}^1}d^2(\delta_{\T\varepsilon}(x),\delta_{\T\varepsilon'}(x))\diff x \gtrsim_{\sigma^2,\alpha,\beta}\gamma^22^{-2p}d_{\mathrm{H}}(\varepsilon,\varepsilon')
    \end{equation}
    for all $\varepsilon,\varepsilon'\in\{0,1\}^{2^p}$, where $\delta_{\T\varepsilon}(x)= \Exp_x(\sigma^2\nabla \log f_{\varepsilon}(x))$ and $\delta_{\T\varepsilon'}(x)= \Exp_x(\sigma^2\nabla \log f_{\varepsilon'}(x))$ denote the oracle tangential Bayes denoisers when $\Theta$ has distribution with density given by $g_{\varepsilon}$ and $g_{\varepsilon'}$, respectively.
    To do this, we first show that for all $0<\sigma^2<\pi^2$ there exists a constant $0<C_{\alpha,\sigma^2}<\pi$ such that we have
    \begin{equation}\label{eqn:S1-tan-EB-bd}
        \sup_{\varepsilon\in\{0,1\}^{2^p}}\sup_{x\in\mathbb{S}^1}\|\Log_x(\delta_{\T\varepsilon}(x))\|_x\le C_{\alpha,\sigma^2}
    \end{equation}
    To see this, use the Tweedie-Eddington formula (Lemma~\ref{lem:Tweedie}), Jensen's inequality, and the identity, $\|\Log_x\theta\|_x=d(x,\theta)$ to obtain
    \begin{align*}
        \|\Log_x(\delta_{\T\varepsilon}(x))\|_x = \left\|\Ebb[\Log_x\Theta\,|\,X=x]\right\|_x&\le \Ebb[\|\Log_x\Theta\|_x\,|\,X=x] \\
        &= \Ebb[d(x,\Theta)\,|\,X=x] \\
        &= \frac{\int_{-\pi}^{\pi}d(x,\theta)\exp\left(-\frac{d^2(x,\theta)}{2\sigma^2}\right)g_{\varepsilon}(\theta)\diff\theta}{\int_{-\pi}^{\pi}\exp\left(-\frac{d^2(x,\theta)}{2\sigma^2}\right)g_{\varepsilon}(\theta)\diff\theta} \\
        &\le \pi - (\pi-\sigma)2\alpha\sigma\exp\left(-\frac{\alpha^2}{2\sigma^2}\right) =: C_{\alpha,\sigma^2},
    \end{align*}
    where the last line follows by direct calculation after splitting the integral into $[-\sigma,\sigma]$ and its complement $[-\pi,\pi]\setminus[-\sigma,\sigma]$.
    Next note that~\eqref{eqn:S1-tan-EB-bd} implies that there exists a constant $c_{\alpha,\sigma^2}>0$ such that we have
    \begin{equation*}
        d\left(\delta_{\T\varepsilon}(x),\delta_{\T\varepsilon'}(x)\right) \ge c_{\alpha,\sigma^2}\|\nabla \log f_{\varepsilon}(x)-\nabla \log f_{\varepsilon'}(x)\|_x,
    \end{equation*}
    for all $x\in\mathbb{S}^1$, hence
    \begin{equation*}
        \Ebb\left[\int_{\mathbb{S}^1}d^2(\delta_{\T\varepsilon}(x),\delta_{\T\varepsilon'}(x))\diff x\right] \gtrsim_{\alpha,\sigma^2} \Ebb\left[\int_{\mathbb{S}^1}\|\nabla \log f_{\varepsilon}(x)-\nabla \log f_{\varepsilon'}(x)\|^2_x\diff x\right].
    \end{equation*}
    To further this lower bound, use Lemma~\ref{lem:f-lower-bd} to get $\alpha \lesssim f_{\varepsilon}(x)\lesssim\beta$ for all $x\in\mathbb{S}^1$ and all $\varepsilon,\varepsilon'\in\{0,1\}^{2^p}$, hence
    \begin{align*}
        &\left\|\nabla \log f_{\varepsilon}-\nabla \log f_{\varepsilon'}\right\|_{L^2(\mathbb{S}^1)}^2 \\
        &= \left\|\frac{f_{\varepsilon}\nabla f_{\varepsilon'}-f_{\varepsilon'}\nabla f_{\varepsilon}}{f_{\varepsilon}f_{\varepsilon'}}\right\|_{L^2(\mathbb{S}^1)}^2 \\
        &\gtrsim_{\beta}\left\|f_{\varepsilon}\nabla f_{\varepsilon'}-f_{\varepsilon'}\nabla f_{\varepsilon}\right\|_{L^2(\mathbb{S}^1)}^2 \\
        &= \left\|f_{\varepsilon}\left(\nabla f_{\varepsilon}+\gamma\sum_{z=0}^{2^p-1}(\varepsilon'_z-\varepsilon_z)\nabla \zeta_{p,z}\right)-\left(f_{\varepsilon}+\gamma\sum_{z=0}^{2^p-1}(\varepsilon'_z-\varepsilon_z)\zeta_{p,z}\right)\nabla f_{\varepsilon}\right\|_{L^2(\mathbb{S}^1)}^2 \\
        &= \gamma^2\left\|f_{\varepsilon}\sum_{z=0}^{2^p-1}(\varepsilon'_z-\varepsilon_z)\nabla\zeta_{p,z} -\left(\sum_{z=0}^{2^p-1}(\varepsilon'_z-\varepsilon_z)\zeta_{p,z}\right)\nabla f_{\varepsilon}\right\|_{L^2(\mathbb{S}^1)}^2 \\
        &\ge \gamma^2\left(\left\|f_{\varepsilon}\sum_{z=0}^{2^p-1}(\varepsilon'_z-\varepsilon_z)\nabla\zeta_{p,z}\right\|_{L^2(\mathbb{S}^1)} -\left\|\left(\sum_{z=0}^{2^p-1}(\varepsilon'_z-\varepsilon_z)\zeta_{p,z}\right)\nabla f_{\varepsilon}\right\|_{L^2(\mathbb{S}^1)}\right)^2.
    \end{align*}
    Now bound both of the terms on the right side as follows.
    For the first term, use $f_{\varepsilon}(x)\gtrsim \alpha$ then~\eqref{eqn:grad-zeta-Hr} to obtain
    \begin{equation*}
     \left\|f_{\varepsilon}\sum_{z=0}^{2^p-1}(\varepsilon'_z-\varepsilon_z)\nabla\zeta_{p,z}\right\|_{L^2(\mathbb{S}^1)}\gtrsim_{\alpha} \left\|\sum_{z=0}^{2^p-1}(\varepsilon'_z-\varepsilon_z)\nabla\zeta_{p,z}\right\|_{L^2(\mathbb{S}^1)}\asymp 2^{-2p}\sum_{z=0}^{2^p-1}(\varepsilon_z'-\varepsilon_z)^2.
    \end{equation*}
    For the second term, use Lemma~\ref{lem:f-diff-1} and~\eqref{eqn:zeta-Hr} to get 
    \begin{align*}
        \left\|\left(\sum_{z=0}^{2^p-1}(\varepsilon'_z-\varepsilon_z)\zeta_{p,z}\right)\nabla f_{\varepsilon}\right\|^2_{L^2(\mathbb{S}^1)} &\le \left\|\sum_{z=0}^{2^p-1}(\varepsilon'_z-\varepsilon_z)\zeta_{p,z}\right\|^2_{L^2(\mathbb{S}^1)}\|\nabla f_{\varepsilon}\|^2_{L^{\infty}(\mathbb{S}^1)} \\ &\lesssim_{\sigma^2} \left\|\sum_{z=0}^{2^p-1}(\varepsilon'_z-\varepsilon_z)\zeta_{p,z}\right\|^2_{L^2(\mathbb{S}^1)}\|g_{\varepsilon}\|_{L^{\infty}(\mathbb{S}^1)}^2 \\
        &\asymp_{\beta} 2^{-4p}\sum_{z=0}^{2^p-1}(\varepsilon'_z-\varepsilon_z)^2.
    \end{align*}
    Now combine these displays and note that $p\to\infty$, so the dominant term between $2^{-2p}$ and $2^{-4p}$ is the slower one, i.e., $2^{-2p}$.
    Thus,
    \begin{equation*}
        \left\|\nabla \log f_{\varepsilon}-\nabla \log f_{\varepsilon'}\right\|_{L^2(\mathbb{S}^1)}^2 \gtrsim_{\sigma^2,\alpha,\beta}\gamma^2\sum_{z=0}^{2^p-1}(\varepsilon'_z-\varepsilon_z)^2\left(2^{-2p}-2^{-4p}\right)^2 \gtrsim \gamma^22^{-2p}d_{\mathrm{H}}(\varepsilon,\varepsilon')
    \end{equation*}
    for all $\varepsilon,\varepsilon'\in\{0,1\}^{2^p}$, which is exactly~\eqref{eqn:displacement-computation}.

    To put the pieces together, we use Assouad's lemma (e.g., \cite[Theorem~31.2]{Polyanskiy_Wu_BOOK}) and Pinsker's inequality. Using the bounds in~\eqref{eqn:KL-computation} and~\eqref{eqn:displacement-computation} and the specification     $\gamma\asymp L2^{-p(s+\frac{1}{2})}$ and $2^p\asymp n^{1/(2s+5)}$, we obtain
    \begin{align*}
        &\inf_{\hat{\delta}}\sup_{g\in \mathcal{G}_s(\alpha,\beta,L)}\Ebb\left[\int_{\mathbb{S}^1}d^2(\hat{\delta}(x),\delta_{\T}(x))\diff x\right] \\
        &\gtrsim_{\sigma^2,\alpha,\beta} \gamma^22^{-2p}\cdot 2^p\left(1-\max_{d_{\mathrm{H}}(\varepsilon,\varepsilon')=1}\sqrt{\KL(P_{\varepsilon}|P_{\varepsilon'})}\right)\\
        &\asymp_{L} n^{-\frac{2s+1}{2s+5}}2^{-p}\left(1-n^{1/2}\gamma2^{-2p}\right)\\
        &\asymp n^{-\frac{2s+2}{2s+5}}.
    \end{align*}
    Finally, use Lemma~\ref{lem:f-lower-bd} to get $f(x)\gtrsim \alpha$ for all $x\in\mathbb{S}^1$ and all $g\in \mathcal{G}_s(\alpha,\beta,L)$, hence
    \begin{align*}
        &\inf_{\hat{\delta}}\sup_{g\in \mathcal{G}_s(\alpha,\beta,L)}\Ebb\left[\int_{\mathbb{S}^1}d^2(\hat{\delta}(x),\delta_{\T}(x))f(x)\diff x\right] \\
        &\gtrsim_{\alpha}\inf_{\hat{\delta}}\sup_{g\in \mathcal{G}_s(\alpha,\beta,L)}\Ebb\left[\int_{\mathbb{S}^1}d^2(\hat{\delta}(x),\delta_{\T}(x))\diff x\right] \gtrsim_{\sigma^2,\alpha,\beta,L} n^{-\frac{2s+2}{2s+5}}.
    \end{align*}
    This finishes the proof.
\end{proof}

\begin{proof}[Proof of Lemma~\ref{lem:risk-to-dist}]
    Recall \cite[Lemma~2.3]{EvansJaffeSLLN} that for each $\varepsilon>0$ there exists a constant $C_{\varepsilon}>0$ such that we have $d^2(x,z)\le (1+\varepsilon)d^2(x,y) + C_{\varepsilon}d^2(y,z)$ for all $x,y,z\in\Mcal$.
    Thus, for each $x,\theta\in\Mcal$ we have
    \begin{equation*}
        d^2\left(\delta(x),\theta\right)\le (1+\varepsilon)d^2(\delta_{\T}(x),\theta) + C_{\varepsilon}d^2\left(\delta(x),\delta_{\T}(x)\right).
    \end{equation*}
    Now integrate this over the joint distribution of $(\Theta,X)$ to get the result.
\end{proof}

\section{Implementation Details}\label{app:implementation}

In this section, we describe the details of implementing our denoising procedures.
Specifically, in Subsection~\ref{subsec:compute-oracle} we discuss approximately computing the oracle denoisers $\delta_{\B}$ and $\delta_{\T}$, in Subsection~\ref{subsec:compute-empirical} we discuss computing the empirical tangential Bayes denoiser $\hat{\delta}_{\T}$, and in Subsection~\ref{subsec:score-matching} we discuss the hyperparameter selection via the Riemannian score-matching objective.

\subsection{Approximating the Oracle Denoisers}\label{subsec:compute-oracle}

In the simulations of the main body, both oracle denoisers $\delta_{\B}$ and $\delta_{\T}$ are approximated by utilizing additional oracle samples from $G$.
In this subsection we explain this approximation and its computation in more detail.

\subsubsection{Oracle Bayes Denoiser}

First, we consider computing the oracle Bayes denoiser $\delta_{\B}(x)$ for fixed $x\in\Mcal$, defined implicitly via the optimization problem in \eqref{eqn:def-deltaB}.
To approximate it, we sample i.i.d pairs $(\Theta_1,X_1),\ldots, (\Theta_n,X_n)$ from the model~\eqref{eqn:pop-model}, and we estimate the conditional Fr\'echet mean of $\Theta$ given $\{X=x\}$ via Fr\'echet Nadaraya-Watson regression; that is, we use \texttt{Geomstats} to compute the Fr\'echet mean of the probability distribution
\begin{equation*}
    \sum_{i=1}^{n}w_i(x)\delta_{\Theta_i}
\end{equation*}
where
\begin{equation}
w_i(x)
=
\exp\!\left(-\frac{d^2(X_i,\,x)}{2h^2}\right),
\end{equation}
for some bandwidth parameter $h>0$ selected by cross-validation.

\subsubsection{Oracle Tangential Bayes Denoiser}

Second, we consider computing the oracle tangential Bayes denoiser $\delta_{\T}(x)$ for fixed $x\in\Mcal$, defined in~\eqref{eqn:def-deltaT}.
Since Lemma~\ref{lem:Tweedie} allows us to write
\begin{equation*}
    \delta_{\T}(x) = \Exp_x\left(\sigma^2\frac{\nabla f(x)}{f(x)}\right),
\end{equation*}
it follows that a straightforward approximation of $\delta_{\T}(x)$ is given as follows; sample i.i.d random variables $\Theta_1,\ldots, \Theta_n$ from $G$, and then compute
\begin{equation*}
    \Exp_x\left(\frac{\frac{1}{n}\sum_{i=1}^{n}\Log_x(\Theta_i)p_{\Theta_i,\sigma^2}(x)}{\frac{1}{n}\sum_{i=1}^{n}p_{\Theta_i,\sigma^2}(x)}\right).
\end{equation*}
If $\Mcal$ is homogeneous, then the normalizing constants above all cancel and we can simply compute
\begin{equation*}
    \Exp_x\left(\frac{\frac{1}{n}\sum_{i=1}^{n}\Log_x(\Theta_i)\exp\left(-\frac{d^2(\Theta_i,x)}{2\sigma^2}\right)}{\frac{1}{n}\sum_{i=1}^{n}\exp\left(-\frac{d^2(\Theta_i,x)}{2\sigma^2}\right)}\right)
\end{equation*}
In order to reduce the variance of this approximation in practice (recall that all our applications in practice are homogeneous), we fix a fine grid $S\subseteq\Mcal$ and we compute
\begin{equation*}
    \Exp_x\left(\frac{\sum_{\theta\in S}\Log_x(\theta)w(\theta)}{\sum_{\theta\in S}w(\theta)}\right)
\end{equation*}
where
\begin{equation*}
    w(\theta) := \frac{1}{n}\sum_{i=1}^{n}\exp\left(-\frac{d^2(\theta,x)}{2\sigma^2}\right).
\end{equation*}

\subsection{Computing Empirical Tangential Bayes Denoiser}\label{subsec:compute-empirical}

Even computing the empirical tangential Bayes denoiser $\hat{\delta}_{\T}$, as in the simulations and applications in the main body, requires some work.
The complication is that the expression~\eqref{eq:Est-Denoiser} for $\hat{\delta}_{\T}$ involves gradients of $\hat{f}_n$ which by~\eqref{eq:Density-Est} involve gradients of the eigenfunctions $\{\phi_j\}_{j=0}^{\infty}$ of the Laplace-Beltrami operator $\Delta$.
While there is no closed-form expression for such gradients in general, they can be computed by embedding $\Mcal$ into $\Rbb^k$ for some $k$ and then applying the well-known result (see~\cite[Chapter~3]{Boumal}) that the Riemannian gradient of a function $h:\Mcal\to\Rbb$ at $x\in\Mcal\subseteq\Rbb^k$ is just the projection of the ambient (Euclidean) gradient $\nabla h(x)\in\Rbb^k$ onto the (affine) tangent space $x+\Tan_x(\Mcal)\subseteq\Rbb^k$.

In the case of the circle $\Mcal=\mathbb{S}^1$, we may directly differentiate the Dirichlet kernel \eqref{eqn:Dirichlet-kernel}, which is equivalent to embedding $\mathbb{S}^1\subseteq\Cbb$ as the unit circle.

In the case of the sphere $\Mcal=\mathbb{S}^2$, we embed $\mathbb{S}^2\subseteq\Rbb^3$ via $\mathbb{S}^2=\{x\in\Rbb^3:\|x\|=1\}$.
Then use the summation formula to get that suitable sums of eigenfunctions are equal to suitable Legendre polynomials; we then differentiate these polynomials and multiply the result by the matrix $I_3-xx^{\top}$ which corresponds to the projection onto $\Tan_x(\mathbb{S}^2)$.

\subsection{Hyperparameter Selection}\label{subsec:score-matching}

In the applications of Section~\ref{sec:appl} where we do not have oracle information to guide our choices of $\ell_n$ and $\rho$, we instead selected these hyperparameters with the data-driven method of cross-validation, where the objective is an adaptation of Hyv\"arinen's score matching objective \cite{hyvarinen2005} to the Riemannian setting \cite{de2022riemannian}.
In this subsection we explain this in further detail.

\subsubsection{Derivation of Riemannian Score Matching}

First, we explain how to derive the objective $\hat{\mathcal{J}}$ from~\eqref{eqn:Hyvarinen-Riemannian}.
Note that our empirical tangential Bayes denoiser directly involves the vector field $\nabla \log f(x)$ on $\Mcal$, where $f:\Mcal\to\Rbb$ is the marginal density of $X$, hence a natural loss function for a candidate density $\tilde f:\Mcal\to\Rbb$ is
\begin{equation}
\mathcal{L}(\tilde f)
=
\mathbb{E}\big[
\|\nabla \log \tilde{f}(X) - \nabla \log f(X)\|_X^2
\big].
\end{equation}
Of course, this cannot be computed in practice since it involves the unknown $f$.
Nonetheless, we can expand the square and then use integration by parts to get
\begin{align*}
    \mathcal{L}(\tilde{f}) &= \mathbb{E}\big[\|\nabla \log \tilde f (X)\|_X^2\big] - 2\,\mathbb{E}\big[\langle \nabla \log \tilde f(X), \nabla \log f(X)\rangle_X\big] + \mathbb{E}\big[\|\nabla \log f (X)\|_X^2\big] \\
    &= \mathbb{E}\big[\|\nabla \log \tilde f (X)\|_X^2\big] + 2\,\mathbb{E}\big[\Delta \log \tilde{f}(X)\big] + \mathbb{E}\big[\|\nabla \log f (X)\|_X^2\big].
\end{align*}
Notice that the last term does not depend on $\tilde{f}$, hence optimizing $\mathcal{L}$ is equivalent to optimizing $\mathcal{J}$.

Next note that $\mathcal{J}(\tilde{f})$ still depends on $f$, but only implicitly through the integral over the marginal distribution of $X$.
By estimating this integral via the natural Monte Carlo estimate built from the measurements $X_1,\ldots, X_n$, we are led to consider
\begin{equation*}
    \hat{\mathcal{J}}(\tilde{f}) =
    \frac{1}{n}\sum_{i=1}^n \left(\|\nabla \log \tilde{f}(X_i)\|_{X_i}^2 + 2\Delta \log \tilde{f}(X_i)\right),
\end{equation*}
which is an unbiased estimator of $\mathcal{J}(\tilde{f})$ for each fixed $\tilde{f}$. 
This is precisely the objective we defined in~\eqref{eqn:Hyvarinen-Riemannian} in the main body.

\subsubsection{Computing Riemannian Score Matching Objective}

For general $\tilde{f}:\Mcal\to\Rbb$ we cannot easily compute $\hat{\mathcal{J}}(\tilde{f})$, but it turns out that the specific form~\eqref{eq:Density-Est} of our candidate densities $\hat{f}_n$ allows us to do this computation exactly.

First, we re-write the objective in terms of $\tilde{f}$ instead of $\log\tilde{f}$:
\begin{equation*}
    \hat{\mathcal{J}}(\tilde{f}) =
    \frac{1}{n}\sum_{i=1}^n \left(\|\nabla \log \tilde{f}(X_i)\|_{X_i}^2 + 2\Delta \log \tilde{f}(X_i)\right)=
    \frac{1}{n}\sum_{i=1}^n \left(\frac{\|\nabla \tilde{f}(X_i)\|_{X_i}^2}{\tilde{f}(X_i)} + \frac{2\Delta \tilde{f}(X_i)}{\tilde{f}(X_i)}\right),
\end{equation*}
As such, we need to evaluate derivatives of $\tilde{f}$ of order up to 2.

Now consider $\tilde{f}=\hat{f}_n$.
We know that computing $\hat{f}_n$ only requires computing sums of eigenfunctions of $\Delta$, and we showed in Subsection~\ref{subsec:compute-empirical} that in concrete cases of interest one can also exactly compute gradients of sums of eigenfunctions of $\Delta$.
In order to compute the second derivative, we simply note that $\Delta$ acts diagonally on the span of $\{\phi_j\}_{j=0}^{\infty}$, i.e., we have
\[
\Delta_x K(x,y,\ell)
=
\sum_{j\le \ell} \lambda_j \phi_j(x)\overline{\phi_j(y)},
\]
hence\[
\Delta \hat f(x)
=
\frac{1}{n}
\sum_{i=1}^n
\sum_{j\le \ell}
\lambda_j \phi_j(x)\overline{\phi_j(X_i)}.
\]

\subsubsection{Cross-Validation}

We perform $K$-fold cross-validation of the Riemannian score-matching objective in the usual way, namely by splitting the data into $K$ folds and for each fold computing the density estimate on the resulting training set and evaluating the score-matching objective $\hat{\mathcal{J}}(\hat{f}_n)$ on the complementary test set.
In the applications of Section~\ref{sec:appl} we chose $K=5$.

We also include a penalty in our cross-validation procedure, in order to prevent over-fitting.
We follow the usual Akaike information criterion (AIC) where we penalize proportional to the effective dimension of the model, which in our setting corresponds to the number of eigenfunctions of the Laplace-Beltrami operator. 
More precisely, we minimize
\begin{equation*}
\hat{\mathcal{J}}(\hat{f}_n)+
\frac{2\ell}{n}
\end{equation*}
which encourages more parsimonious models.

Lastly, we comment on the hyperparameter ranges over which we search.
For $\ell$, we set its upper bound for $\ell$ to be equal to the conservative oracle choice from Theorem~\ref{thm:EB-risk}.
For $\rho$, we parameterize it in terms of a percentile of the positive values taken by the density $\hat{f}_n$, and we set its upper bound as the 10 percentile.

\section{Additional Figures}\label{app:additional-figs}

In this section we include some further figures that were omitted from the main body for the sake of brevity.

\subsection{Visualizations from Subsection~\ref{subsec:sim-sphere}}

First, we provide visualizations of the denoising procedure for the examples of Subsection~\ref{subsec:sim-sphere} on the sphere $\Mcal=\mathbb{S}^2$; more precisely we show versions of Figure~\ref{fig:intro} (i.e., the latent variables, their measurements, the oracle and empirical denoisers, and the estimated score field.) when the latent variable distribution $G$ is taken to be uniform, supported on a cap, or supported on the equator.

In Figure~\ref{fig:S2_uniform} we show the uniform example.
As discussed in Example~\ref{ex:oracle-uniform}, the oracle denoisers $\delta_{\N},\delta_{\B}$, and $\delta_{\T}$ are all equal, and the score field $\nabla \log \hat{f}_n$ is approximately identically zero.
In this particular simulation (and some of the simulations below) we see that our approximation of $\delta_{\B}$ achieves a worse risk than all of $\delta_{\N},\delta_{\T}$, and $\hat{\delta}_{\T}$, which is because its numerical optimization is more difficult and can lead to sub-optimal denoising in practice.

    \begin{figure}[h!]
        \centering
        \includegraphics[width=0.9\linewidth]{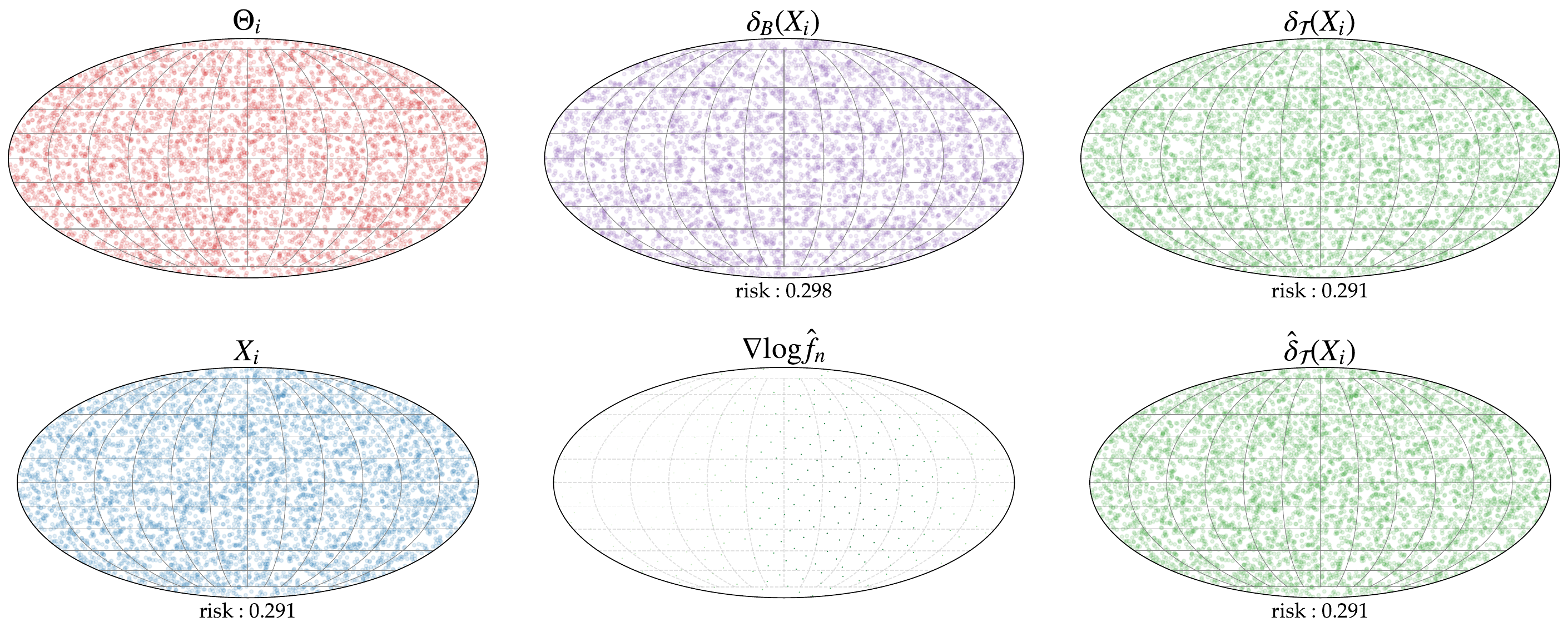}
        \caption{
        Illustration of the `uniform' example in the sphere $\mathbb{S}^2$.}
    \label{fig:S2_uniform}
    \end{figure}

    In Figure~\ref{fig:S2_cap} we show the cap example.
    We observe that the estimated score field $\nabla \log \hat{f}_n$ is  strongest near the boundary of the support of $G$, and that it is approximately identically zero away from the boundary; this is what allows $\hat{\delta}_{\T}$ to exhibit a sharp boundary in the denoised data set.    

    \begin{figure}[h!]
        \centering
        \includegraphics[width=0.9\linewidth]{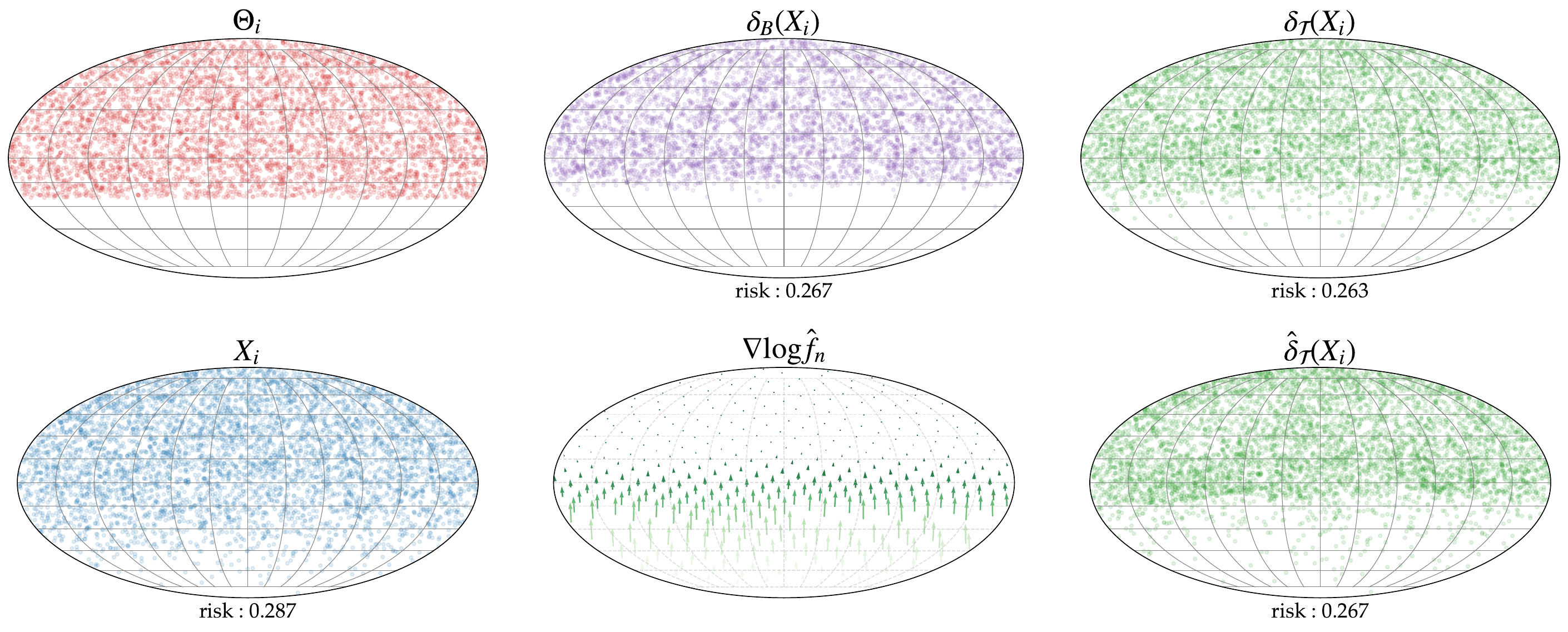}
        \caption{
        Illustration of the `cap' example in the sphere $\mathbb{S}^2$.}
    \label{fig:S2_cap}
    \end{figure}

    In Figure~\ref{fig:S2_equator} we show the equator example.
    Since the support $S\subseteq\mathbb{S}^2$ of $G$ is geodesically convex, we see that $\delta_{\B}(X_i)$ is contained in $S$ almost surely, meaning $\delta_{\B}$ exactly matches this latent one-dimensional sub-manifold structure.
    The denoisers
    $\delta_{\T}$ and $\hat{\delta}_{\T}$ approximately recover this structure, and have comparable risk.

    \begin{figure}[h!]
        \centering
        \includegraphics[width=0.9\linewidth]{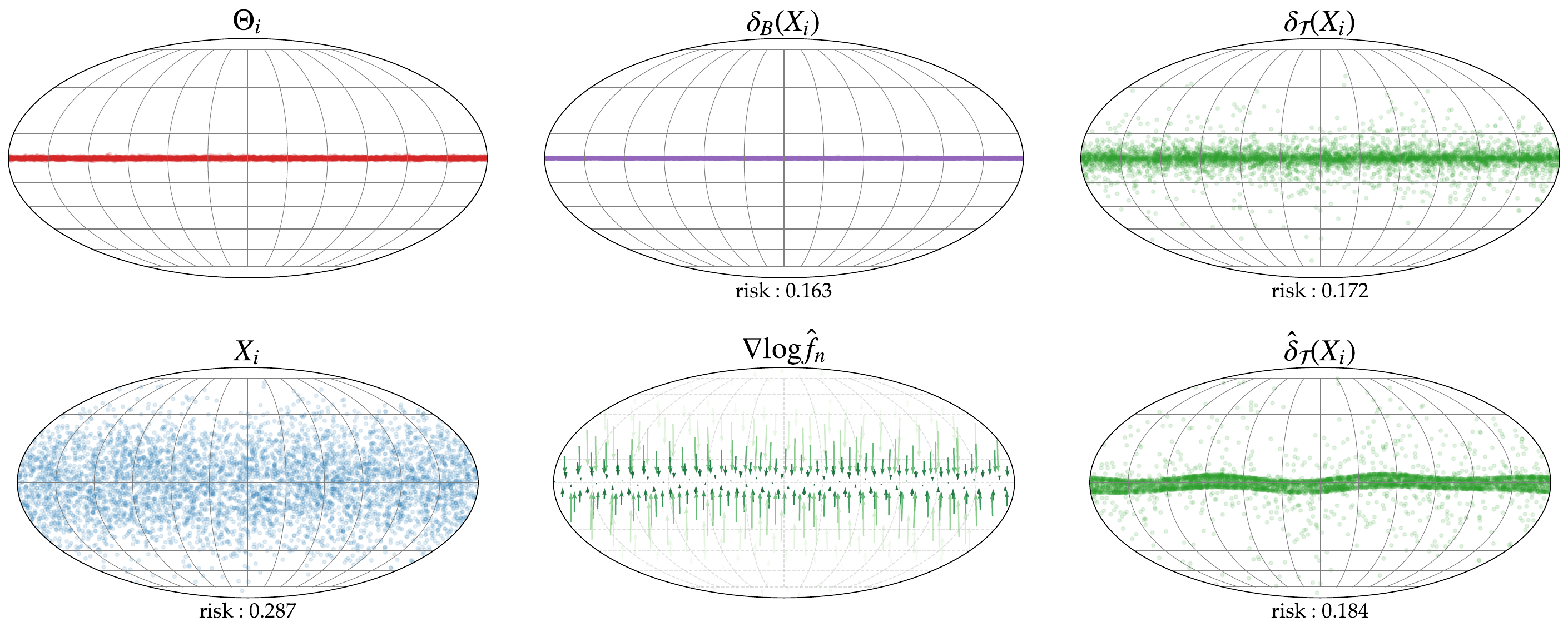}
        \caption{
        Illustration of the `equator' example in the sphere $\mathbb{S}^2$.}
    \label{fig:S2_equator}
    \end{figure}

    In Figure~\ref{fig:S2_10_model} we show the 10-mode example.
    Note that the measurements appear to be nearly uniformly-distributed on the sphere, but the denoisers recover the latent structure.
    Also observe that $\delta_{\B}(X_i)$ is concentrated on the union of the components of $G$ and the geodesics between them; this is reflected in $\delta_{\T}$ and $\hat{\delta}_{\T}$ as well. 

    \begin{figure}[h!]
        \centering
        \includegraphics[width=0.9\linewidth]{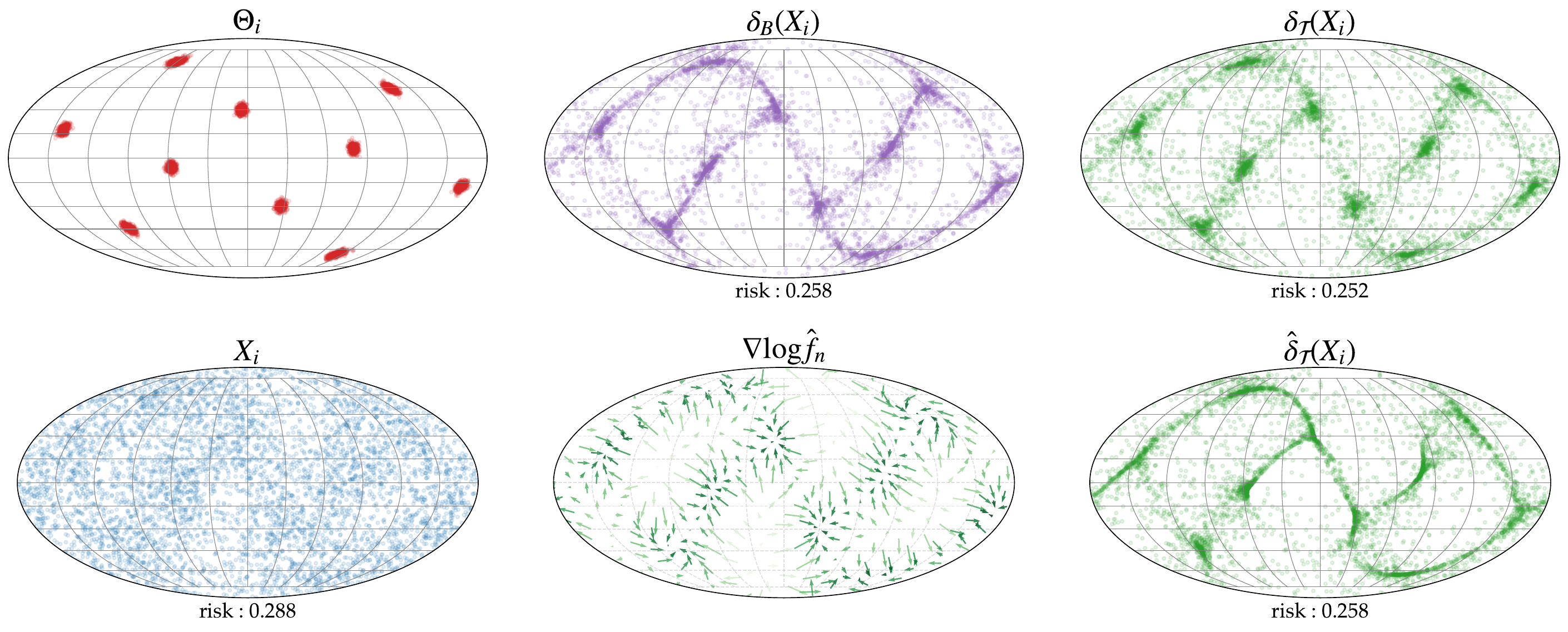}
        \caption{
        Illustration of the `10-modal' example in the sphere $\mathbb{S}^2$.}
    \label{fig:S2_10_model}
    \end{figure}

\subsection{Stratified Ramachandran Plots from Subsection~\ref{subsec:protein}}

Second, we provide versions of Figure~\ref{fig:chemistry} where each enzyme is plotted separately.
We emphasize that the denoising still utilizes the full pooled data set; but, viewing the denoised data in a stratified way allows one to understand the effect of shrinkage on each individual enzyme.

\begin{figure}[h!]
\centering
\includegraphics[width=0.9\linewidth]{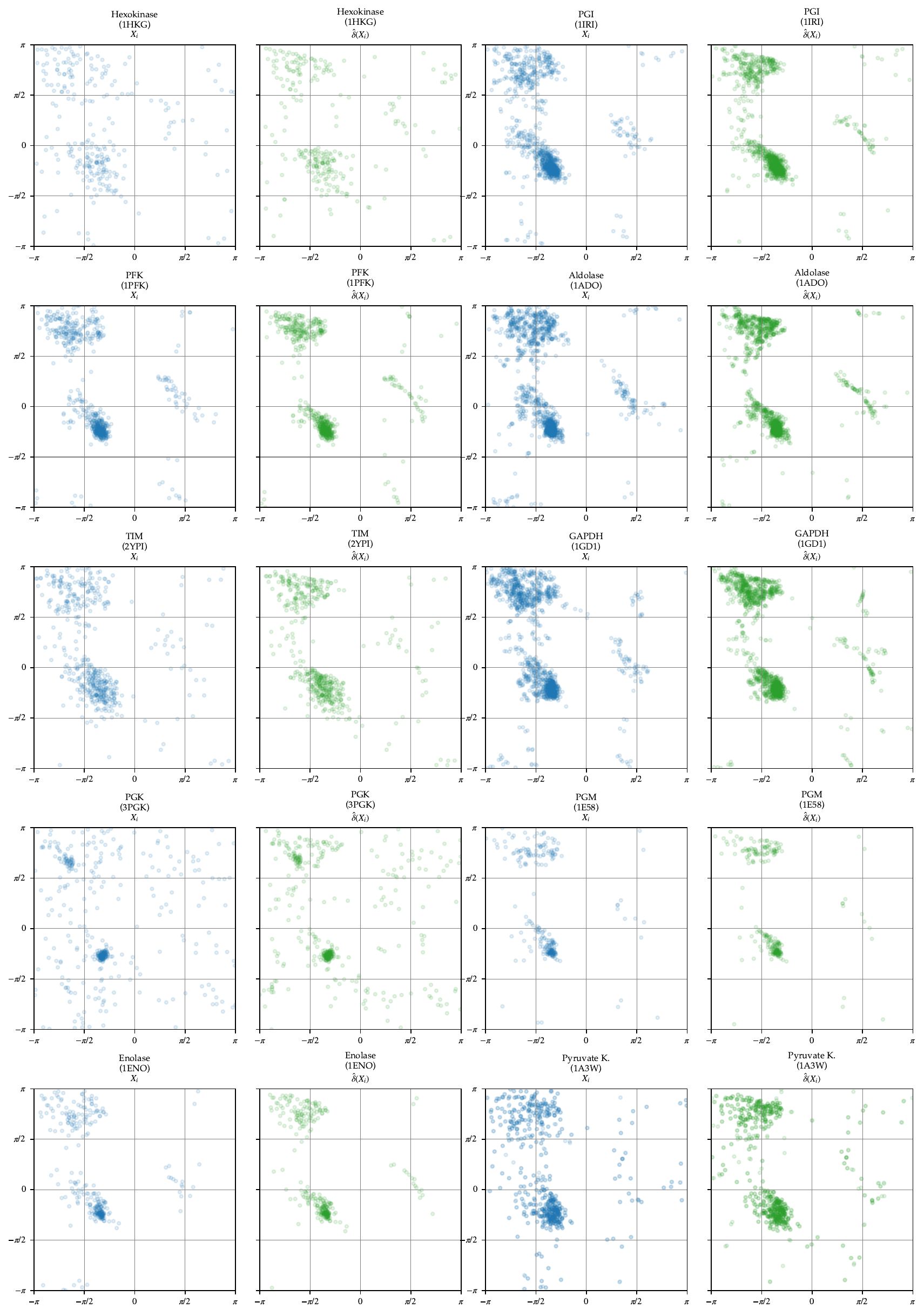}
\caption{Visualizing the denoised Ramachandran plots for the ten glycolytic enzymes of Subsection~\ref{subsec:protein}.
For each enzyme $1\le j\le 10$, we show the measurements $X_1,\ldots, X_{n_j}$ (first column, third column) and the denoised data set $\hat{\delta}_{\T}(X_1),\ldots, \hat{\delta}_{\T}(X_{n_j})$ (second column, fourth column).}
\label{fig:chemistry-perenzyme}
\end{figure}

\end{appendix}

\end{document}